\theoremstyle{plain}
\newtheorem{theorem}{Theorem}[section]
\newtheorem{lemma}[theorem]{Lemma}
\newtheorem{corollary}[theorem]{Corollary}
\newtheorem{proposition}[theorem]{Proposition}
\newtheorem{observation}[theorem]{Observation}
\newtheorem{definition}[theorem]{Definition}
\theoremstyle{definition}
\newtheorem{remark}[theorem]{Remark}
\newtheorem{example}[theorem]{Example}
\newtheorem{maintheorem}{Theorem}
\newtheorem{maincorollary}{Corollary}
\newtheorem{mainconjecture}{Conjecture}
\newtheorem{mainquestion}{Question}
\newtheorem*{THEC}{Theorem~\ref{maintheorem:ddim-complexity-is-FminusS}}
\newcommand{\ZZ}{\mathbb{Z}}			%
\newcommand{\ZZd}{{\mathbb{Z}^d}}		%
\newcommand{\NN}{\mathbb{N}}			%
\newcommand{\RR}{\mathbb{R}}			%
\newcommand{\QQ}{\mathbb{Q}}			%
\newcommand{\symb}[1]{\mathtt{#1}}		%
\newcommand{\isdef}{\coloneqq}			%
\DeclarePairedDelimiter\norm{\lVert}{\rVert}	%
\DeclareMathOperator{\Ker}{Ker}
\newcommand{\indicator}[1]{\mathbbm{1}_{#1}}					%
\newcommand{\A}{\mathcal{A}}
\newcommand{\Lcal}{\mathcal{L}}
\newcommand{\Pcal}{\mathcal{P}}
\newcommand{\Xcal}{\mathcal{X}}
\newcommand{\balpha}{\alpha}
\newcommand{\bk}{k}
\newcommand{\bn}{n}
\newcommand{\bm}{m}
\newcommand{\be}{e}
\newcommand{\bg}{g}
\newcommand{\bu}{u}
\newcommand{\bv}{v}
\newcommand{\bzero}{0}
\newcommand{\occ}{\mathsf{occ}}
\newcommand{\Orb}{\mathrm{Orb}}
\newcommand{\alfa}[1]{\{\symb{0},\symb{1},\dots,#1\}}
\newcommand{\alfad}{\{\symb{0},\symb{1},\dots,d\}}
\newcommand{\dynsys}[3]{#2\overset{#3}{\curvearrowright}#1}
\newcommand{\xConfig}[2]{%
	\begin{tikzpicture}[
		baseline=-\the\dimexpr\fontdimen22\textfont2\relax,ampersand replacement=\&]
		\matrix[
			matrix of math nodes,
			nodes={
				minimum size=1.2ex,text width=1.2ex,
				text height=1.2ex,inner sep=3pt,draw={gray!20},align=center,
				anchor=base
			}, row sep=1pt,column sep=1pt
		] (config) {#1};
		\node[draw,rectangle,dashed,help lines,fit=(config), inner sep=0.5ex] {};
		#2
	\end{tikzpicture}
}
\newcommand{\define}[1]{\textbf{#1}}
\title[Indistinguishable asymptotic pairs over $\mathbb{Z}^d$]
      {Indistinguishable asymptotic pairs and\\multidimensional Sturmian configurations}
\author[S.~Barbieri]{Sebasti\'an Barbieri}
\address[S.~Barbieri]{Departamento de Matem\'atica y Ciencia de la
Computaci\'on, Universidad de Santiago de Chile, Las Sophoras 173. Estaci\'on
Central. Santiago. Chile.}
\email{sebastian.barbieri@usach.cl}
\author[S.~Labb\'e]{S\'ebastien Labb\'e}
\address[S.~Labb\'e]{Univ. Bordeaux, CNRS,  Bordeaux INP, LaBRI, UMR 5800, F-33400, Talence, France}
\email{sebastien.labbe@labri.fr}
\keywords{Asymptotic pairs, Sturmian sequences, complexity,
bispecial pattern, symbolic dynamics, cut and project scheme}
\subjclass[2020]{Primary 37B10; Secondary 37C29, 52C23, 68R15}
\date{}
\begin{document}

\begin{abstract}
Two asymptotic configurations on a full $\mathbb{Z}^d$-shift are indistinguishable if for every finite pattern the associated sets of occurrences in each configuration coincide up to a finitely supported permutation of $\mathbb{Z}^d$. We prove that indistinguishable asymptotic pairs satisfying a ``flip condition'' are characterized by their pattern complexity on finite connected supports. Furthermore, we prove that uniformly recurrent indistinguishable asymptotic pairs satisfying the flip condition are described by codimension-one (dimension of the internal space) cut and project schemes, which symbolically correspond to multidimensional Sturmian configurations. Together the two results provide a generalization to $\mathbb{Z}^d$ of the characterization of Sturmian sequences by their factor complexity $n+1$. Many open questions are raised by the current work and are listed in the introduction.
\end{abstract}
	
    \maketitle
	
    \setcounter{tocdepth}{1}
	\tableofcontents

	\section{Introduction}

	Asymptotic pairs, also known as homoclinic pairs, are pairs of points in a dynamical system whose orbits coalesce. These were first studied by Poincar\'e~\cite{Andersson1994} in the context of the three body problem and used to model chaotic behavior. Namely, two orbits which remain arbitrarily close outside a finite window of time may be used to represent pairs of trajectories that despite having similar behavior for an arbitrarily long time, present abrupt local differences.
	
	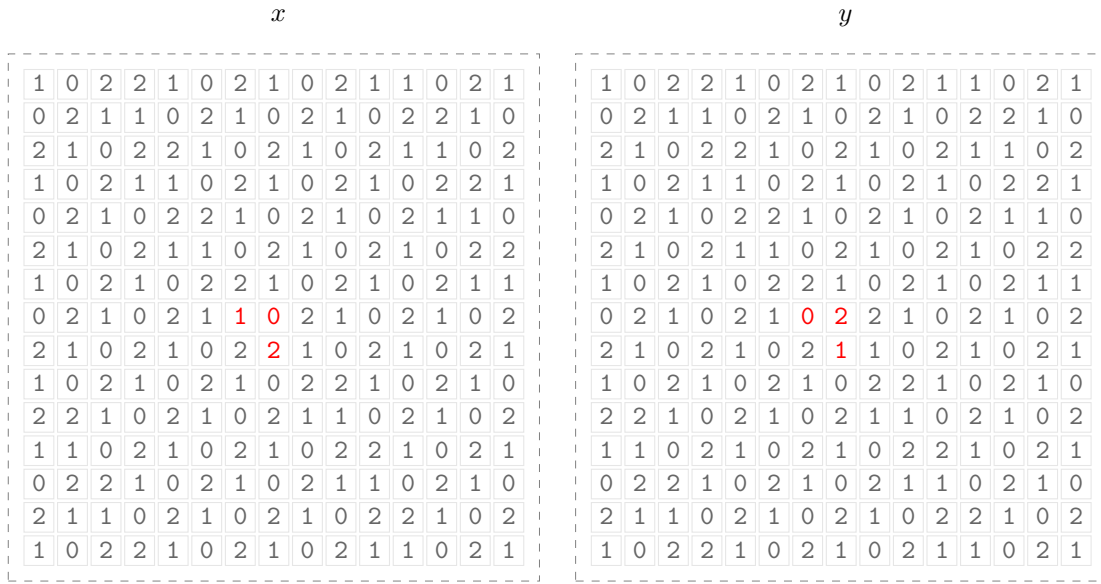
\begin{figure}[ht]
		\begin{center}
			\begin{tabular}{cc}
                $x$  & $y$\\[3mm]
				\begin{tikzpicture}
[baseline=-\the\dimexpr\fontdimen22\textfont2\relax,ampersand replacement=\&]
  \matrix[matrix of math nodes,nodes={
       minimum size=1.2ex,text width=1.2ex,
       text height=1.2ex,inner sep=3pt,draw={gray!20},align=center,
       anchor=base
     }, row sep=1pt,column sep=1pt
  ] (config) {
{\color{black!60}\symb{1}}\&{\color{black!60}\symb{0}}\&{\color{black!60}\symb{2}}\&{\color{black!60}\symb{2}}\&{\color{black!60}\symb{1}}\&{\color{black!60}\symb{0}}\&{\color{black!60}\symb{2}}\&{\color{black!60}\symb{1}}\&{\color{black!60}\symb{0}}\&{\color{black!60}\symb{2}}\&{\color{black!60}\symb{1}}\&{\color{black!60}\symb{1}}\&{\color{black!60}\symb{0}}\&{\color{black!60}\symb{2}}\&{\color{black!60}\symb{1}}\\
{\color{black!60}\symb{0}}\&{\color{black!60}\symb{2}}\&{\color{black!60}\symb{1}}\&{\color{black!60}\symb{1}}\&{\color{black!60}\symb{0}}\&{\color{black!60}\symb{2}}\&{\color{black!60}\symb{1}}\&{\color{black!60}\symb{0}}\&{\color{black!60}\symb{2}}\&{\color{black!60}\symb{1}}\&{\color{black!60}\symb{0}}\&{\color{black!60}\symb{2}}\&{\color{black!60}\symb{2}}\&{\color{black!60}\symb{1}}\&{\color{black!60}\symb{0}}\\
{\color{black!60}\symb{2}}\&{\color{black!60}\symb{1}}\&{\color{black!60}\symb{0}}\&{\color{black!60}\symb{2}}\&{\color{black!60}\symb{2}}\&{\color{black!60}\symb{1}}\&{\color{black!60}\symb{0}}\&{\color{black!60}\symb{2}}\&{\color{black!60}\symb{1}}\&{\color{black!60}\symb{0}}\&{\color{black!60}\symb{2}}\&{\color{black!60}\symb{1}}\&{\color{black!60}\symb{1}}\&{\color{black!60}\symb{0}}\&{\color{black!60}\symb{2}}\\
{\color{black!60}\symb{1}}\&{\color{black!60}\symb{0}}\&{\color{black!60}\symb{2}}\&{\color{black!60}\symb{1}}\&{\color{black!60}\symb{1}}\&{\color{black!60}\symb{0}}\&{\color{black!60}\symb{2}}\&{\color{black!60}\symb{1}}\&{\color{black!60}\symb{0}}\&{\color{black!60}\symb{2}}\&{\color{black!60}\symb{1}}\&{\color{black!60}\symb{0}}\&{\color{black!60}\symb{2}}\&{\color{black!60}\symb{2}}\&{\color{black!60}\symb{1}}\\
{\color{black!60}\symb{0}}\&{\color{black!60}\symb{2}}\&{\color{black!60}\symb{1}}\&{\color{black!60}\symb{0}}\&{\color{black!60}\symb{2}}\&{\color{black!60}\symb{2}}\&{\color{black!60}\symb{1}}\&{\color{black!60}\symb{0}}\&{\color{black!60}\symb{2}}\&{\color{black!60}\symb{1}}\&{\color{black!60}\symb{0}}\&{\color{black!60}\symb{2}}\&{\color{black!60}\symb{1}}\&{\color{black!60}\symb{1}}\&{\color{black!60}\symb{0}}\\
{\color{black!60}\symb{2}}\&{\color{black!60}\symb{1}}\&{\color{black!60}\symb{0}}\&{\color{black!60}\symb{2}}\&{\color{black!60}\symb{1}}\&{\color{black!60}\symb{1}}\&{\color{black!60}\symb{0}}\&{\color{black!60}\symb{2}}\&{\color{black!60}\symb{1}}\&{\color{black!60}\symb{0}}\&{\color{black!60}\symb{2}}\&{\color{black!60}\symb{1}}\&{\color{black!60}\symb{0}}\&{\color{black!60}\symb{2}}\&{\color{black!60}\symb{2}}\\
{\color{black!60}\symb{1}}\&{\color{black!60}\symb{0}}\&{\color{black!60}\symb{2}}\&{\color{black!60}\symb{1}}\&{\color{black!60}\symb{0}}\&{\color{black!60}\symb{2}}\&{\color{black!60}\symb{2}}\&{\color{black!60}\symb{1}}\&{\color{black!60}\symb{0}}\&{\color{black!60}\symb{2}}\&{\color{black!60}\symb{1}}\&{\color{black!60}\symb{0}}\&{\color{black!60}\symb{2}}\&{\color{black!60}\symb{1}}\&{\color{black!60}\symb{1}}\\
{\color{black!60}\symb{0}}\&{\color{black!60}\symb{2}}\&{\color{black!60}\symb{1}}\&{\color{black!60}\symb{0}}\&{\color{black!60}\symb{2}}\&{\color{black!60}\symb{1}}\&{\color{red}\symb{1}}\&{\color{red}\symb{0}}\&{\color{black!60}\symb{2}}\&{\color{black!60}\symb{1}}\&{\color{black!60}\symb{0}}\&{\color{black!60}\symb{2}}\&{\color{black!60}\symb{1}}\&{\color{black!60}\symb{0}}\&{\color{black!60}\symb{2}}\\
{\color{black!60}\symb{2}}\&{\color{black!60}\symb{1}}\&{\color{black!60}\symb{0}}\&{\color{black!60}\symb{2}}\&{\color{black!60}\symb{1}}\&{\color{black!60}\symb{0}}\&{\color{black!60}\symb{2}}\&{\color{red}\symb{2}}\&{\color{black!60}\symb{1}}\&{\color{black!60}\symb{0}}\&{\color{black!60}\symb{2}}\&{\color{black!60}\symb{1}}\&{\color{black!60}\symb{0}}\&{\color{black!60}\symb{2}}\&{\color{black!60}\symb{1}}\\
{\color{black!60}\symb{1}}\&{\color{black!60}\symb{0}}\&{\color{black!60}\symb{2}}\&{\color{black!60}\symb{1}}\&{\color{black!60}\symb{0}}\&{\color{black!60}\symb{2}}\&{\color{black!60}\symb{1}}\&{\color{black!60}\symb{0}}\&{\color{black!60}\symb{2}}\&{\color{black!60}\symb{2}}\&{\color{black!60}\symb{1}}\&{\color{black!60}\symb{0}}\&{\color{black!60}\symb{2}}\&{\color{black!60}\symb{1}}\&{\color{black!60}\symb{0}}\\
{\color{black!60}\symb{2}}\&{\color{black!60}\symb{2}}\&{\color{black!60}\symb{1}}\&{\color{black!60}\symb{0}}\&{\color{black!60}\symb{2}}\&{\color{black!60}\symb{1}}\&{\color{black!60}\symb{0}}\&{\color{black!60}\symb{2}}\&{\color{black!60}\symb{1}}\&{\color{black!60}\symb{1}}\&{\color{black!60}\symb{0}}\&{\color{black!60}\symb{2}}\&{\color{black!60}\symb{1}}\&{\color{black!60}\symb{0}}\&{\color{black!60}\symb{2}}\\
{\color{black!60}\symb{1}}\&{\color{black!60}\symb{1}}\&{\color{black!60}\symb{0}}\&{\color{black!60}\symb{2}}\&{\color{black!60}\symb{1}}\&{\color{black!60}\symb{0}}\&{\color{black!60}\symb{2}}\&{\color{black!60}\symb{1}}\&{\color{black!60}\symb{0}}\&{\color{black!60}\symb{2}}\&{\color{black!60}\symb{2}}\&{\color{black!60}\symb{1}}\&{\color{black!60}\symb{0}}\&{\color{black!60}\symb{2}}\&{\color{black!60}\symb{1}}\\
{\color{black!60}\symb{0}}\&{\color{black!60}\symb{2}}\&{\color{black!60}\symb{2}}\&{\color{black!60}\symb{1}}\&{\color{black!60}\symb{0}}\&{\color{black!60}\symb{2}}\&{\color{black!60}\symb{1}}\&{\color{black!60}\symb{0}}\&{\color{black!60}\symb{2}}\&{\color{black!60}\symb{1}}\&{\color{black!60}\symb{1}}\&{\color{black!60}\symb{0}}\&{\color{black!60}\symb{2}}\&{\color{black!60}\symb{1}}\&{\color{black!60}\symb{0}}\\
{\color{black!60}\symb{2}}\&{\color{black!60}\symb{1}}\&{\color{black!60}\symb{1}}\&{\color{black!60}\symb{0}}\&{\color{black!60}\symb{2}}\&{\color{black!60}\symb{1}}\&{\color{black!60}\symb{0}}\&{\color{black!60}\symb{2}}\&{\color{black!60}\symb{1}}\&{\color{black!60}\symb{0}}\&{\color{black!60}\symb{2}}\&{\color{black!60}\symb{2}}\&{\color{black!60}\symb{1}}\&{\color{black!60}\symb{0}}\&{\color{black!60}\symb{2}}\\
{\color{black!60}\symb{1}}\&{\color{black!60}\symb{0}}\&{\color{black!60}\symb{2}}\&{\color{black!60}\symb{2}}\&{\color{black!60}\symb{1}}\&{\color{black!60}\symb{0}}\&{\color{black!60}\symb{2}}\&{\color{black!60}\symb{1}}\&{\color{black!60}\symb{0}}\&{\color{black!60}\symb{2}}\&{\color{black!60}\symb{1}}\&{\color{black!60}\symb{1}}\&{\color{black!60}\symb{0}}\&{\color{black!60}\symb{2}}\&{\color{black!60}\symb{1}}\\
};
\node[draw,rectangle,dashed,help lines,fit=(config), inner sep=0.5ex] {};
\end{tikzpicture} &
                \begin{tikzpicture}
[baseline=-\the\dimexpr\fontdimen22\textfont2\relax,ampersand replacement=\&]
  \matrix[matrix of math nodes,nodes={
       minimum size=1.2ex,text width=1.2ex,
       text height=1.2ex,inner sep=3pt,draw={gray!20},align=center,
       anchor=base
     }, row sep=1pt,column sep=1pt
  ] (config) {
{\color{black!60}\symb{1}}\&{\color{black!60}\symb{0}}\&{\color{black!60}\symb{2}}\&{\color{black!60}\symb{2}}\&{\color{black!60}\symb{1}}\&{\color{black!60}\symb{0}}\&{\color{black!60}\symb{2}}\&{\color{black!60}\symb{1}}\&{\color{black!60}\symb{0}}\&{\color{black!60}\symb{2}}\&{\color{black!60}\symb{1}}\&{\color{black!60}\symb{1}}\&{\color{black!60}\symb{0}}\&{\color{black!60}\symb{2}}\&{\color{black!60}\symb{1}}\\
{\color{black!60}\symb{0}}\&{\color{black!60}\symb{2}}\&{\color{black!60}\symb{1}}\&{\color{black!60}\symb{1}}\&{\color{black!60}\symb{0}}\&{\color{black!60}\symb{2}}\&{\color{black!60}\symb{1}}\&{\color{black!60}\symb{0}}\&{\color{black!60}\symb{2}}\&{\color{black!60}\symb{1}}\&{\color{black!60}\symb{0}}\&{\color{black!60}\symb{2}}\&{\color{black!60}\symb{2}}\&{\color{black!60}\symb{1}}\&{\color{black!60}\symb{0}}\\
{\color{black!60}\symb{2}}\&{\color{black!60}\symb{1}}\&{\color{black!60}\symb{0}}\&{\color{black!60}\symb{2}}\&{\color{black!60}\symb{2}}\&{\color{black!60}\symb{1}}\&{\color{black!60}\symb{0}}\&{\color{black!60}\symb{2}}\&{\color{black!60}\symb{1}}\&{\color{black!60}\symb{0}}\&{\color{black!60}\symb{2}}\&{\color{black!60}\symb{1}}\&{\color{black!60}\symb{1}}\&{\color{black!60}\symb{0}}\&{\color{black!60}\symb{2}}\\
{\color{black!60}\symb{1}}\&{\color{black!60}\symb{0}}\&{\color{black!60}\symb{2}}\&{\color{black!60}\symb{1}}\&{\color{black!60}\symb{1}}\&{\color{black!60}\symb{0}}\&{\color{black!60}\symb{2}}\&{\color{black!60}\symb{1}}\&{\color{black!60}\symb{0}}\&{\color{black!60}\symb{2}}\&{\color{black!60}\symb{1}}\&{\color{black!60}\symb{0}}\&{\color{black!60}\symb{2}}\&{\color{black!60}\symb{2}}\&{\color{black!60}\symb{1}}\\
{\color{black!60}\symb{0}}\&{\color{black!60}\symb{2}}\&{\color{black!60}\symb{1}}\&{\color{black!60}\symb{0}}\&{\color{black!60}\symb{2}}\&{\color{black!60}\symb{2}}\&{\color{black!60}\symb{1}}\&{\color{black!60}\symb{0}}\&{\color{black!60}\symb{2}}\&{\color{black!60}\symb{1}}\&{\color{black!60}\symb{0}}\&{\color{black!60}\symb{2}}\&{\color{black!60}\symb{1}}\&{\color{black!60}\symb{1}}\&{\color{black!60}\symb{0}}\\
{\color{black!60}\symb{2}}\&{\color{black!60}\symb{1}}\&{\color{black!60}\symb{0}}\&{\color{black!60}\symb{2}}\&{\color{black!60}\symb{1}}\&{\color{black!60}\symb{1}}\&{\color{black!60}\symb{0}}\&{\color{black!60}\symb{2}}\&{\color{black!60}\symb{1}}\&{\color{black!60}\symb{0}}\&{\color{black!60}\symb{2}}\&{\color{black!60}\symb{1}}\&{\color{black!60}\symb{0}}\&{\color{black!60}\symb{2}}\&{\color{black!60}\symb{2}}\\
{\color{black!60}\symb{1}}\&{\color{black!60}\symb{0}}\&{\color{black!60}\symb{2}}\&{\color{black!60}\symb{1}}\&{\color{black!60}\symb{0}}\&{\color{black!60}\symb{2}}\&{\color{black!60}\symb{2}}\&{\color{black!60}\symb{1}}\&{\color{black!60}\symb{0}}\&{\color{black!60}\symb{2}}\&{\color{black!60}\symb{1}}\&{\color{black!60}\symb{0}}\&{\color{black!60}\symb{2}}\&{\color{black!60}\symb{1}}\&{\color{black!60}\symb{1}}\\
{\color{black!60}\symb{0}}\&{\color{black!60}\symb{2}}\&{\color{black!60}\symb{1}}\&{\color{black!60}\symb{0}}\&{\color{black!60}\symb{2}}\&{\color{black!60}\symb{1}}\&{\color{red}\symb{0}}\&{\color{red}\symb{2}}\&{\color{black!60}\symb{2}}\&{\color{black!60}\symb{1}}\&{\color{black!60}\symb{0}}\&{\color{black!60}\symb{2}}\&{\color{black!60}\symb{1}}\&{\color{black!60}\symb{0}}\&{\color{black!60}\symb{2}}\\
{\color{black!60}\symb{2}}\&{\color{black!60}\symb{1}}\&{\color{black!60}\symb{0}}\&{\color{black!60}\symb{2}}\&{\color{black!60}\symb{1}}\&{\color{black!60}\symb{0}}\&{\color{black!60}\symb{2}}\&{\color{red}\symb{1}}\&{\color{black!60}\symb{1}}\&{\color{black!60}\symb{0}}\&{\color{black!60}\symb{2}}\&{\color{black!60}\symb{1}}\&{\color{black!60}\symb{0}}\&{\color{black!60}\symb{2}}\&{\color{black!60}\symb{1}}\\
{\color{black!60}\symb{1}}\&{\color{black!60}\symb{0}}\&{\color{black!60}\symb{2}}\&{\color{black!60}\symb{1}}\&{\color{black!60}\symb{0}}\&{\color{black!60}\symb{2}}\&{\color{black!60}\symb{1}}\&{\color{black!60}\symb{0}}\&{\color{black!60}\symb{2}}\&{\color{black!60}\symb{2}}\&{\color{black!60}\symb{1}}\&{\color{black!60}\symb{0}}\&{\color{black!60}\symb{2}}\&{\color{black!60}\symb{1}}\&{\color{black!60}\symb{0}}\\
{\color{black!60}\symb{2}}\&{\color{black!60}\symb{2}}\&{\color{black!60}\symb{1}}\&{\color{black!60}\symb{0}}\&{\color{black!60}\symb{2}}\&{\color{black!60}\symb{1}}\&{\color{black!60}\symb{0}}\&{\color{black!60}\symb{2}}\&{\color{black!60}\symb{1}}\&{\color{black!60}\symb{1}}\&{\color{black!60}\symb{0}}\&{\color{black!60}\symb{2}}\&{\color{black!60}\symb{1}}\&{\color{black!60}\symb{0}}\&{\color{black!60}\symb{2}}\\
{\color{black!60}\symb{1}}\&{\color{black!60}\symb{1}}\&{\color{black!60}\symb{0}}\&{\color{black!60}\symb{2}}\&{\color{black!60}\symb{1}}\&{\color{black!60}\symb{0}}\&{\color{black!60}\symb{2}}\&{\color{black!60}\symb{1}}\&{\color{black!60}\symb{0}}\&{\color{black!60}\symb{2}}\&{\color{black!60}\symb{2}}\&{\color{black!60}\symb{1}}\&{\color{black!60}\symb{0}}\&{\color{black!60}\symb{2}}\&{\color{black!60}\symb{1}}\\
{\color{black!60}\symb{0}}\&{\color{black!60}\symb{2}}\&{\color{black!60}\symb{2}}\&{\color{black!60}\symb{1}}\&{\color{black!60}\symb{0}}\&{\color{black!60}\symb{2}}\&{\color{black!60}\symb{1}}\&{\color{black!60}\symb{0}}\&{\color{black!60}\symb{2}}\&{\color{black!60}\symb{1}}\&{\color{black!60}\symb{1}}\&{\color{black!60}\symb{0}}\&{\color{black!60}\symb{2}}\&{\color{black!60}\symb{1}}\&{\color{black!60}\symb{0}}\\
{\color{black!60}\symb{2}}\&{\color{black!60}\symb{1}}\&{\color{black!60}\symb{1}}\&{\color{black!60}\symb{0}}\&{\color{black!60}\symb{2}}\&{\color{black!60}\symb{1}}\&{\color{black!60}\symb{0}}\&{\color{black!60}\symb{2}}\&{\color{black!60}\symb{1}}\&{\color{black!60}\symb{0}}\&{\color{black!60}\symb{2}}\&{\color{black!60}\symb{2}}\&{\color{black!60}\symb{1}}\&{\color{black!60}\symb{0}}\&{\color{black!60}\symb{2}}\\
{\color{black!60}\symb{1}}\&{\color{black!60}\symb{0}}\&{\color{black!60}\symb{2}}\&{\color{black!60}\symb{2}}\&{\color{black!60}\symb{1}}\&{\color{black!60}\symb{0}}\&{\color{black!60}\symb{2}}\&{\color{black!60}\symb{1}}\&{\color{black!60}\symb{0}}\&{\color{black!60}\symb{2}}\&{\color{black!60}\symb{1}}\&{\color{black!60}\symb{1}}\&{\color{black!60}\symb{0}}\&{\color{black!60}\symb{2}}\&{\color{black!60}\symb{1}}\\
};
\node[draw,rectangle,dashed,help lines,fit=(config), inner sep=0.5ex] {};
\end{tikzpicture} %
			\end{tabular}
		\end{center}
        \caption{The indistinguishable asymptotic configurations
        $x,y\in\{\symb{0},\symb{1},\symb{2}\}^{\ZZ^2}$ are shown on the support 
            $\llbracket -7,7\rrbracket \times \llbracket -7,7\rrbracket$. 
            The two configurations are equal except on their difference set
            $F=\{0,-\be_1,-\be_2\}$ shown in red.}
		\label{fig:intro-sturmian-config-pair}
	\end{figure}

	In this work we consider asymptotic pairs of zero-dimensional expansive actions of $\ZZd$. Concretely, given a finite set $\Sigma$, we consider the space of configurations $\Sigma^{\ZZd} = \{ x \colon \ZZd \to \Sigma\}$ endowed with the prodiscrete topology and the shift action $\ZZd \overset{\sigma}{\curvearrowright} \Sigma^{\ZZd}$. In this setting, two configurations $x,y \in \Sigma^{\ZZd}$ are \define{asymptotic} if $x$ and $y$ differ in finitely many sites of $\ZZd$. The finite set $F = \{ \bv \in \ZZd : x_\bv \neq y_\bv\}$ is called the \define{difference set} of $(x,y)$. An example of an asymptotic pair when $d=2$ is shown in \Cref{fig:intro-sturmian-config-pair}.

    Given two asymptotic configurations $x,y \in \Sigma^{\ZZd}$, we want to compare the
    number of occurrences of patterns.  A pattern is a function
    $p \colon S \to \Sigma$ where $S$, called the \define{support} of $p$, is a finite subset of $\ZZd$.
    The occurrences of a pattern $p\in\Sigma^S$ in a configuration
    $x\in\Sigma^\ZZd$ is the set $\occ_p(x)\isdef \{n\in\ZZd\colon \sigma^n(x)|_S = p\}$.
    The \define{language} of a configuration
    $x\in\Sigma^\ZZd$ over a finite support $S\subset\ZZd$ is
    $\Lcal_S(x)=\{p\in\Sigma^S\colon \occ_p(x)\neq\varnothing\}$.
    When $x,y \in \Sigma^{\ZZd}$ are asymptotic configurations,
    the difference $\occ_p(x)\setminus \occ_p(y)$ is finite
    because the occurrences of $p$ are the same far from the difference set of $x$ and $y$.
    We say that $(x,y)$ is
    an \define{indistinguishable asymptotic pair} 
    if $(x,y)$ is asymptotic and 
    the following equality holds
    \begin{equation}\label{eq:indistinguishable-introduction}
        \#\left(\occ_p(x)\setminus \occ_p(y)\right) = 
        \#\left(\occ_p(y)\setminus \occ_p(x)\right)
    \end{equation}
    for every pattern $p$ of finite support. 

    In other words, an asymptotic pair $(x,y)$ is indistinguishable if every
    pattern appears the same number of times in $x$ and in $y$ while
    overlapping the difference set. 
    The pair of configurations $x$ and $y$ shown
    in \Cref{fig:intro-sturmian-config-pair} is an example of an
    indistinguishable asymptotic pair:
    we may check by hand that \Cref{eq:indistinguishable-introduction} holds
    for patterns with small supports such as symbols (patterns of shape
    $\{0\}$), dominoes (patterns of shape $\{0,\be_1\}$ and $\{0,\be_2\}$),
    etc.
    For instance, the configurations $x$ and $y$ in
    \Cref{fig:intro-sturmian-config-pair} contain eight different patterns with support $\{0,\be_1,2\be_1,\be_2\}$, each occurring exactly once while overlapping the difference
    set, see \Cref{fig:8-L-shaped-patterns-in-x-y}.

\begin{figure}[ht]
\begin{center}
    \input{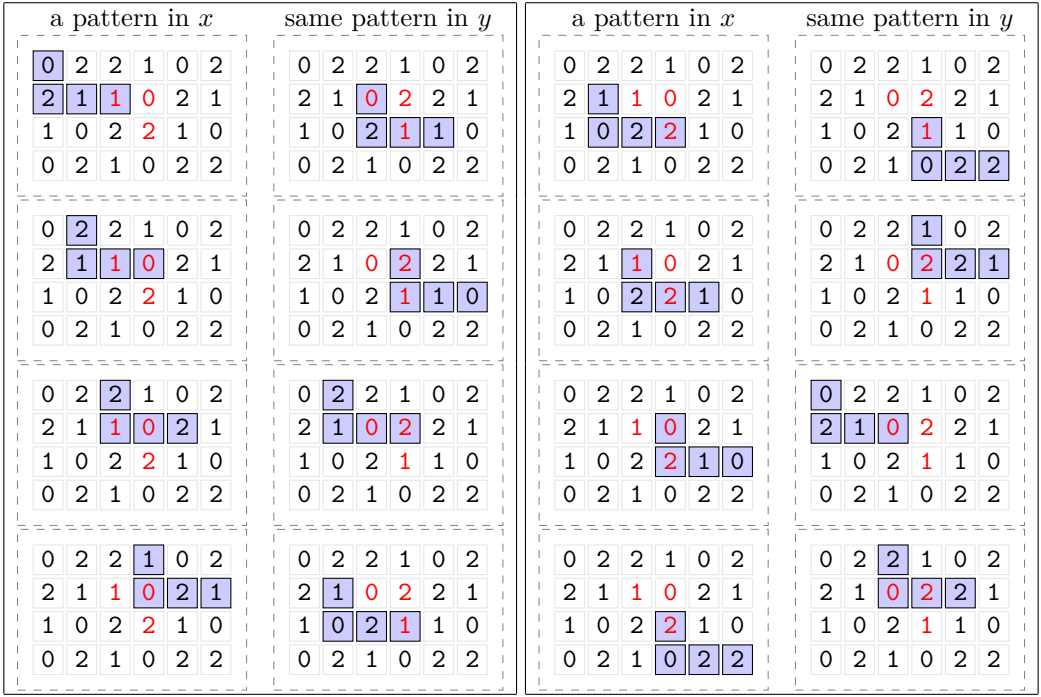}
\end{center}
    \caption{The 8 patterns of shape $\{0,\be_1,2\be_1,\be_2\}$ appearing in the 
    configurations $x$ and $y$. All of them appear intersecting the difference
    set in $x$ and $y$.}
    \label{fig:8-L-shaped-patterns-in-x-y}
\end{figure}

	The notion of indistinguishable asymptotic pairs appears naturally in Gibbs theory. This theory studies measures on symbolic dynamical systems which are at equilibrium in the sense that the conditional pressure for every finite region of
	the lattice is maximized, so that every finite region is in equilibrium with its surrounding. See~\cite{Geo88,LanRue69,Rue04,BarGomMarTaa20} for further background. An important component of Gibbs measures, the \define{specification}, can be formalized by means of a shift-invariant cocycle in the equivalence relation of asymptotic pairs, see~\cite{ChaMey16,Barbieri_Gomez_Marcus_Meyerovitch_Taati_2020}. With an appropriate norm, the space of continuous shift invariant cocycles on the asymptotic relation becomes a Banach space, and every asymptotic pair induces a continuous linear functional through the canonical evaluation map.
	
	The set of indistinguishable asymptotic pairs are precisely those which induce the trivial linear functional and thus a natural question is if there is an underlying dynamical structure behind this property. We shall not speak any further of Gibbs theory in this work and study indistinguishable asymptotic pairs without further reference to their origin in Gibbs theory. An interested reader can find out more about the role of indistinguishable asymptotic pairs in the aforementioned setting by reading sections 2 and 3 of~\cite{Barbieri_Gomez_Marcus_Meyerovitch_Taati_2020}.

	In the case of dimension $d=1$, it was shown that for the difference set $F= \{-1,0\}\subset \ZZ$, indistinguishable asymptotic pairs are precisely the \'etale limits of characteristic bi-infinite Sturmian sequences (\cite[Theorem B]{BarLabSta2021}). In the case where one of the configurations in the indistinguishable pair is recurrent, the asymptotic pair can only be a pair of characteristic bi-infinite Sturmian sequences associated to a fixed irrational value (\cite[Theorem A]{BarLabSta2021}). Furthermore, it was shown that any indistinguishable asymptotic pair in $\Sigma^{\ZZ}$ can be obtained from these base cases through the use of a substitution and the shift map (\cite[Theorem C]{BarLabSta2021}), thus providing a full characterization of indistinguishable asymptotic pairs in~$\ZZ$.

    \subsection*{Main results}
	
    In this article, we extend \cite[Theorem A]{BarLabSta2021} to the
    multidimensional setting. 
    It is based on the following additional condition made on the difference set.
    Let $\{\be_1,\ldots, \be_d\}$ denote the canonical basis of $\ZZd$. We say
    that two indistinguishable asymptotic configurations $x,y \in
    \alfad^{\ZZd}$ satisfy the \define{flip condition} if their difference
    set is $F = \{ \bzero, -\be_1,\dots,-\be_d   \}$, 
    every symbol in
    $\alfad$ occurs in $x$ and $y$ at the support $F$, and
    the map defined by $x_\bn\mapsto y_\bn$ for every $\bn\in F$
    is a cyclic permutation on the alphabet $\alfad$.
    Without lost of generality, we assume that $x_{\bzero} = \symb{0}$ and
    $y_\bn = x_\bn - \symb{1} \bmod (d+\symb{1})$ for every $\bn\in F$.
    For example, the indistinguishable asymptotic pair $(x,y)$ illustrated
    in \Cref{fig:intro-sturmian-config-pair} satisfies the flip condition
    with $(x_0,x_{-\be_1},x_{-\be_2})=(\symb{0},\symb{1},\symb{2})$
    and $(y_0,y_{-\be_1},y_{-\be_2})=(\symb{2},\symb{0},\symb{1})$.

    It is a well known fact that Sturmian configurations in dimension one can be characterized by their complexity~\cite{MR0000745,MR0322838}, that is, they are exactly the bi-infinite recurrent words in which exactly $n+1$ subwords of length $n$ occur for every $n \in \NN$. We first prove the following result providing a similar characterization of 
    indistinguishable
    asymptotic pairs satisfying the flip condition
    by their pattern complexity which does not require uniform recurrence, or even recurrence, as an hypothesis.

\newcommand\MainTheoremComplexityStatement{
    Let $d\geq1$ and $x,y \in\alfad^{\ZZd}$ be an asymptotic pair
    satisfying the flip condition with difference set 
    $F = \{ \bzero, -\be_1,\dots,-\be_d\}$. The following are equivalent:
    \begin{enumerate}[(i)]
        \item For every nonempty finite connected subset $S\subset\ZZd$ and $p
            \in \Lcal_S(x)\cup\Lcal_S(y)$, we have
            \[
                \#\left(\occ_p(x)\setminus \occ_p(y)\right)
                = 1 =
                \#\left(\occ_p(y)\setminus \occ_p(x)\right).
            \]
		\item The asymptotic pair $(x,y)$ is indistinguishable.
        \item For every nonempty finite connected subset $S\subset\ZZd$, the
            pattern complexity of $x$ and $y$ is \[\#\Lcal_S(x)=\#\Lcal_S(y) = \#(F-S).\]
    \end{enumerate}
}

\begin{maintheorem} \label{maintheorem:ddim-complexity-is-FminusS}
    \MainTheoremComplexityStatement
\end{maintheorem}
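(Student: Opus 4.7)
My approach hinges on analyzing the restriction maps
\[
\Phi_x \colon F-S \to \Sigma^S, \qquad \Phi_x(\bn) \isdef \sigma^\bn(x)|_S,
\]
and analogously $\Phi_y$, for each finite support $S\subset\ZZd$. The foundational remark is that $\occ_p(x)\triangle\occ_p(y) \subseteq F-S$ for every pattern $p$ on $S$, since $x$ and $y$ coincide outside $F$. Moreover, the flip condition forces $\Phi_x(\bn) \neq \Phi_y(\bn)$ for every $\bn \in F-S$: the two restrictions must differ at every site of the nonempty set $(\bn + S) \cap F$, on which the cyclic permutation relating $x$ and $y$ is fixed-point free. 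Consequently
\[
\#\bigl(\occ_p(x)\setminus\occ_p(y)\bigr) = \#\Phi_x^{-1}(p) \quad\text{and}\quad \#\bigl(\occ_p(y)\setminus\occ_p(x)\bigr) = \#\Phi_y^{-1}(p),
\]
so condition (ii) at support $S$ amounts to $\#\Phi_x^{-1}(p) = \#\Phi_y^{-1}(p)$ for every $p$.

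For (i)$\Rightarrow$(iii), I show that $\Phi_x$ is a bijection from $F-S$ onto $\Lcal_S(x)$ for every connected $S$. Surjectivity: any $p \in \Lcal_S(x)$ without an occurrence in $F-S$ would satisfy $\occ_p(x)=\occ_p(y)$, giving count zero and contradicting (i). Injectivity: two distinct preimages of a single $p$ would both lie in $\occ_p(x)\setminus\occ_p(y)$, giving count at least two. Hence $\#\Lcal_S(x)=\#(F-S)$, and symmetrically for $y$. For (i)$\Rightarrow$(ii), given any (possibly disconnected) finite $S$ and pattern $p$ on $S$, I pick a connected $S'\supseteq S\cup F$ and apply the bijectivity of $\Phi_x^{S'}$ and $\Phi_y^{S'}$, together with $\Lcal_{S'}(x)=\Lcal_{S'}(y)$ (which follows directly from (i), since both counts there are at least $1$), to obtain
\[
\sum_{\bn \in F-S'} [\sigma^\bn(x)|_S = p]
= \#\{\tilde p \in \Lcal_{S'}(x) : \tilde p|_S = p\}
= \sum_{\bn \in F-S'} [\sigma^\bn(y)|_S = p].
\]
Positions in $(F-S')\setminus(F-S)$ contribute identically to both sides, so the equality descends to $F-S$, yielding $\#\Phi_x^{-1}(p)=\#\Phi_y^{-1}(p)$ and hence indistinguishability for $p$.

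To close the cycle I prove (iii)$\Rightarrow$(i). Under (iii), $\Phi_x$ maps $F-S$ into $\Lcal_S(x)$ and both sets have cardinality $\#(F-S)$; therefore injectivity, surjectivity, and bijectivity of $\Phi_x$ are equivalent, each being equivalent to the vanishing of the \emph{far set}
\[
\Lcal_S^{\mathrm{far}}(x) \isdef \{p \in \Lcal_S(x) : \occ_p(x)\cap(F-S) = \varnothing\}.
\]
The main technical task is to prove $\Lcal_S^{\mathrm{far}}(x) = \varnothing$ for every finite connected $S$. My plan is induction on $|S|$: the base $|S|=1$ is immediate, since the flip condition places every symbol of $\alfad$ on $F$ itself. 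For the inductive step, write $S = S_0 \cup \{\bv\}$ with $S_0\subsetneq S$ connected and $\bv$ adjacent to $S_0$; for a hypothetical $p\in\Lcal_S^{\mathrm{far}}(x)$, the restriction $p|_{S_0}$ has an occurrence at some $\bn\in F-S_0$ by the inductive hypothesis (and the resulting bijectivity of $\Phi_x^{S_0}$), and a bispecial-type analysis comparing $\Lcal_{S_0}(x)$ and $\Lcal_{S}(x)$ through the complexity equality of (iii) promotes this to an occurrence of $p$ at a point of $F-S$, contradicting the far hypothesis. Once bijectivity of $\Phi_x$ is established, each $p\in\Lcal_S(x)=\Lcal_S(y)$ has a unique preimage in $F-S$ under $\Phi_x$, so both counts in (i) equal $1$.

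The main obstacle is precisely this inductive step: upgrading a near-$F$ occurrence of $p|_{S_0}$ to a near-$F$ occurrence of $p$. This requires a geometric-combinatorial argument combining the corner structure of $F$ imposed by the flip condition with the complexity equality of (iii) to control which continuations at $\bv$ of a given $S_0$-pattern must be realized by an occurrence anchored near $F$; I expect the paper develops bispecial-pattern machinery paralleling the one-dimensional proof of~\cite{BarLabSta2021} to carry it out.
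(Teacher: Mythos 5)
Your preliminary reductions are correct, and the two implications you actually carry out, (i)$\Rightarrow$(iii) and (i)$\Rightarrow$(ii), are sound (the latter is in substance the paper's reduction to connected supports, \Cref{prop:trivialite}, made explicit by passing to a connected superset $S'$ and using bijectivity of $\Phi_x^{S'}$ and $\Phi_y^{S'}$). The structural problem is that your three implications are (i)$\Rightarrow$(ii), (i)$\Rightarrow$(iii) and (iii)$\Rightarrow$(i): no implication takes (ii) as a hypothesis, so the three-way equivalence is not established --- as far as your argument goes, an indistinguishable pair satisfying the flip condition could still fail (i) and (iii). The missing direction is exactly where the paper works hardest: \Cref{lem:exists-occ-intersecting-F} shows that indistinguishability together with the flip condition forces every pattern of $\Lcal_S(x,y)$ to occur with support intersecting $F$, hence $\#\Lcal_S(x,y)\leq\#(F-S)$, which combined with the flip-condition-only lower bound of \Cref{lem:ddim-complexity-is-at-least-FminusS} yields (ii)$\Rightarrow$(iii). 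This is not automatic: under (ii) alone, a pattern could a priori occur in $x$ and in $y$ only far from the difference set, with both counts equal to $0$; ruling this out is a genuine argument (the paper builds the generating set $\mathcal{G}$ from the flip permutation and runs a minimality argument in the associated word metric, repeatedly invoking indistinguishability to move occurrences closer to $F$), and nothing in your proposal plays this role.

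The second gap is that (iii)$\Rightarrow$(i), the technical core of the theorem, is only a programme: you explicitly defer the inductive step --- promoting an occurrence of $p|_{S_0}$ meeting $F$ to an occurrence of $p$ meeting $F$ --- to bispecial machinery you do not supply. In the paper this step is carried by the extension-graph analysis: under the inductive condition (\textbf{IND}) the number of connected components of $E^{\ell,r}(w)$ is bounded by $\#\Gamma_{\star}(w)$ (\Cref{lem:lema_fundamental_grafitos}), a counting identity coming from the complexity hypothesis shows each $E^{\ell,r}(w)$ is acyclic (\Cref{lem:suma_odiosa}), while a pattern with two occurrences meeting $F$ in the way you hypothesize forces $\Gamma_{\ell}(w)\cap\Gamma_{r}(w)\neq\varnothing$ and hence a cycle (\Cref{lem:lemadelfinal}); moreover the induction in \Cref{prop:complexity-implies-one-occurrence} is run after extracting a path between the two offending positions, not on an arbitrary decomposition $S_0\cup\{\bv\}$. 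Without an argument of comparable substance your induction does not close. A smaller loose end: even once $\Phi_x$ and $\Phi_y$ are bijections onto $\Lcal_S(x)$ and $\Lcal_S(y)$ respectively, condition (i) for every $p\in\Lcal_S(x)\cup\Lcal_S(y)$ also requires $\Lcal_S(x)=\Lcal_S(y)$, which your sketch never establishes.
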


The proof of \Cref{maintheorem:ddim-complexity-is-FminusS} relies on an extension of
the notion of bispecial factor to the setting of multidimensional
configurations. Given a language, a bispecial factor is a  word that can be extended in more than one way
to the left and to the right. The bilateral
multiplicity of bispecial factors in a one-dimensional language is closely related to
the complexity of that language, see~\cite{MR2759107}. Here, for a
connected support $S\subset\ZZd$ and two distinct positions $a,b\in\ZZd\setminus S$ such that
$S\cup\{a\}$, $S\cup\{b\}$ and $S \cup \{a,b\}$ are connected, we say that a pattern
$w\colon S\to\A$ is bispecial if it can be extended in more than one way at
position $a$ and at position $b$. The description of the bispecial patterns of indistinguishable asymptotic pairs and their multiplicities,
provides us a tool for bounding their pattern complexity.
Reciprocally, the rigid pattern complexity given in~\Cref{maintheorem:ddim-complexity-is-FminusS} 
forces the extension graphs associated to the bispecial patterns to have no cycle, which in turn provides us a way to show that the configurations are indistinguishable.
In one dimension, sequences such as the extension graphs of bispecial factors are trees
are known as dendric words~\cite{MR3845381} and thus we may think of our construction as multidimensional analogues of those.

When $S$ is a $d$-dimensional rectangular block, the number $\#(F-S)$ from~\Cref{maintheorem:ddim-complexity-is-FminusS} admits a nice form.
When $d=1$, we compute $\#(F-S)=\#(\{0,-1\}-\{0,1,\dots,n-1\})=n+1$
which is the factor complexity function for
$1$-dimensional Sturmian words.
When $d=2$, $\#(F-S)=\#(\{(0,0),(-1,0),(0,-1)\}-\{(i,j)\colon 0\leq i<n, 0\leq j<m\})=mn+m+n$
is the rectangular pattern complexity of a 
discrete plane with totally irrational (irrational and rationally independent) slope, see~\cite{MR1782038} for further references. 
With our result above, we can provide an explicit formula for every dimension.

\begin{maincorollary}\label{maincorollary:ddim-complexity}
    Let $d\geq1$ and $(m_1,\dots,m_d)\in \NN^d$.
	The rectangular pattern complexity 
    of an indistinguishable asymptotic pair $x,y \in\alfad^{\ZZd}$ satisfying the flip condition
    is
	\begin{equation*}\label{eq:ddim-complexity}
		\#\Lcal_{(m_1,\dots,m_d)}(x) =\#\Lcal_{(m_1,\dots,m_d)}(y)= m_1\cdots m_d \left(1+\frac{1}{m_1}+\dots+\frac{1}{m_d}\right).
	\end{equation*}
\end{maincorollary}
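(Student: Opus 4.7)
The plan is to apply Theorem A (\Cref{maintheorem:ddim-complexity-is-FminusS}) directly to the rectangular support $S = \prod_{k=1}^{d} \llbracket 0, m_k-1 \rrbracket$, which is a nonempty finite connected subset of $\ZZd$, so that both complexities coincide with $\#(F-S)$. The remaining work is purely combinatorial: count $\#(F-S)$ where $F = \{\bzero, -\be_1, \dots, -\be_d\}$.

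First, I would observe that $\#(F-S) = \#(S - F) = \#\bigl(S \cup (S+\be_1) \cup \dots \cup (S+\be_d)\bigr)$, since negation is a bijection on $\ZZd$. Each translate $S + \be_k$ equals $\llbracket 0, m_1-1\rrbracket \times \dots \times \llbracket 1, m_k \rrbracket \times \dots \times \llbracket 0, m_d-1\rrbracket$, so the whole union is contained in the enlarged box $B = \prod_{k=1}^{d} \llbracket 0, m_k \rrbracket$ of size $\prod_k (m_k+1)$.

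Next, I would characterize which points of $B$ lie in the union. A point $(i_1,\dots,i_d) \in B$ belongs to $S+\be_k$ exactly when $i_k \geq 1$ and $i_j < m_j$ for every $j \neq k$, while it belongs to $S$ when $i_k < m_k$ for every $k$. A direct case analysis shows that a point of $B$ is covered iff it has \emph{at most one} coordinate equal to its maximum $m_k$. Hence the union splits into two disjoint pieces: the $\prod_k m_k$ points with all coordinates in $\llbracket 0, m_k - 1 \rrbracket$, and, for each $k$, the $\prod_{j\neq k} m_j$ points with $i_k = m_k$ and $i_j < m_j$ for $j \neq k$. Summing gives
\[
\#(F-S) \;=\; m_1 \cdots m_d \;+\; \sum_{k=1}^{d} \prod_{j \neq k} m_j \;=\; m_1 \cdots m_d \left(1 + \sum_{k=1}^{d}\frac{1}{m_k}\right),
\]
which combined with Theorem A proves the corollary.

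There is no real obstacle here beyond a careful combinatorial bookkeeping; the content is entirely encapsulated in the main theorem, and the only step that requires attention is verifying that a point of $B$ fails to lie in the union precisely when at least two of its coordinates are maximal, so that inclusion–exclusion (or the direct count above) yields the stated closed form.
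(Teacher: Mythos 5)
Your proposal is correct and follows essentially the same route as the paper: apply \Cref{maintheorem:ddim-complexity-is-FminusS} to the rectangular support and then count $\#(F-S)$, which the paper phrases as the volume of the box plus the volumes of its $(d-1)$-dimensional faces (one per direction), exactly matching your decomposition into points with no maximal coordinate and points with a single maximal coordinate. Your explicit verification of the ``at most one maximal coordinate'' characterization is just a more detailed write-up of the paper's ``simple counting argument.''
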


    Our main result provides a beautiful connection between indistinguishable
    asymptotic pairs satisfying the flip condition and
    codimension-one (dimension of the internal space) cut and project schemes,
    see~\cite{MR3480345} for further background,
    and more precisely with multidimensional Sturmian configurations.
    The definition of
    multidimensional Sturmian configurations from
    codimension-one cut and project schemes is fully described in
    \Cref{sec:alternate-def-cut-and-project}.
    A quick and easy definition of multidimensional Sturmian configurations
    can be given with the following formulas.
	Given a totally irrational vector $\balpha=(\alpha_1,\dots,\alpha_d)\in[0,1)^d$, the \define{lower} and \define{upper characteristic $d$-dimensional Sturmian configurations} with slope $\alpha$ are given by
    \begin{equation}\label{eq:characteristic-sturmian-formula-in-intro}
        \begin{array}{rccl}
        c_{\balpha}:&\ZZd & \to & \alfad\\
        &\bn & \mapsto &  \sum\limits_{i=1}^d \left(\lfloor\alpha_i+\bn\cdot\balpha\rfloor
        -\lfloor\bn\cdot\balpha\rfloor\right)
        \end{array}
        \text{ and }
        \begin{array}{rccl}
        c'_{\balpha}:&\ZZd & \to & \alfad\\
        &\bn & \mapsto & \sum\limits_{i=1}^d \left(\lceil\alpha_i+\bn\cdot\balpha\rceil
        -\lceil\bn\cdot\balpha\rceil\right).
        \end{array}
    \end{equation}
	It turns out that these configurations are examples of indistinguishable asymptotic pairs which satisfy the flip condition. In fact, we show that a pair of uniformly recurrent asymptotic configurations is indistinguishable and satisfies the flip condition if and only if it is a pair of characteristic $d$-dimensional Sturmian configurations for some totally irrational slope.

	\begin{maintheorem}\label{thm:multidim_sturmian_characterization}
        Let $d\geq1$ and $x,y\in\alfad^\ZZd$ such that $x$ is uniformly recurrent.
        The pair $(x,y)$ is an indistinguishable asymptotic pair satisfying the flip condition 
        if and only if 
        there exists a totally irrational vector $\alpha \in [0,1)^d$ such that $x=c_{\alpha}$
        and $y=c'_{\alpha}$ are the lower and upper characteristic
        $d$-dimensional Sturmian configurations with slope $\alpha$.
	\end{maintheorem}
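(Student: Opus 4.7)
The plan is to prove the two implications separately. For the reverse direction, assume $\balpha\in[0,1)^d$ is totally irrational and set $x=c_\balpha$, $y=c'_\balpha$. A direct computation from the formulas in \eqref{eq:characteristic-sturmian-formula-in-intro} shows that $c_\balpha(\bn)\neq c'_\balpha(\bn)$ happens exactly when $\bn\cdot\balpha$ or $(\bn+\be_i)\cdot\balpha$ is an integer for some $i$; total irrationality then pins the difference set down to $F=\{\bzero,-\be_1,\dots,-\be_d\}$. Sorting the $\alpha_i$ in increasing order, the values $(x_\bn)_{\bn\in F}$ realize all of $\alfad$ and $x_\bn\mapsto y_\bn$ is the cyclic shift $k\mapsto k-1\bmod(d+1)$, so the flip condition holds. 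Uniform recurrence of $c_\balpha$ follows from minimality of the translation by $\balpha$ on $\RR/\ZZ$. For indistinguishability I would use the cut-and-project realization developed in \Cref{sec:alternate-def-cut-and-project}: each occurrence of a pattern $p$ on a support $S$ corresponds to the orbit of the rotation hitting a finite union of arcs of $\RR/\ZZ$ (one arc per translate of $S$ by an element of $F$), and passing from $x$ to $y$ just toggles which endpoint of each arc is included, yielding a canonical bijection between the occurrences of $p$ that overlap $F$ in $x$ and in $y$, hence \eqref{eq:indistinguishable-introduction}.

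For the forward direction, let $(x,y)$ be a uniformly recurrent indistinguishable asymptotic pair satisfying the flip condition. By \Cref{maintheorem:ddim-complexity-is-FminusS}, the pattern complexity on every nonempty finite connected $S\subset\ZZd$ is $\#(F-S)$, and uniform recurrence makes $X=\overline{\Orb(x)}$ a minimal subshift. The tree structure of the multidimensional bispecial extension graphs that underlies \Cref{maintheorem:ddim-complexity-is-FminusS} restricts local continuations of patterns enough to force unique ergodicity of $X$, so every symbol admits a well defined frequency $\mu_k$. I would extract $\balpha=(\alpha_1,\dots,\alpha_d)\in[0,1)^d$ by realising $(\mu_0,\dots,\mu_d)$ as the lengths of the $d+1$ intervals cut from $\RR/\ZZ$ by the points $\{-\alpha_1,\dots,-\alpha_d,0\}$. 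Total irrationality is then forced: any nontrivial integer relation $\bn\cdot\balpha\in\ZZ$ would produce a proper closed invariant subset of $\RR/\ZZ$, whose coding by the induced partition would have strictly smaller complexity than $\#(F-S)$ on some large connected support, contradicting \Cref{maintheorem:ddim-complexity-is-FminusS}.

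The remaining step, and the main obstacle, is to identify $x$ pointwise with $c_\balpha$. The plan is to construct a factor map $\pi\colon X\to \RR/\ZZ$ semi-conjugating the shift action of $\ZZd$ on $X$ to translation by $\balpha$, whose fibres have cardinality at most two and are singletons outside a countable set. In dimension one this is the classical route of Morse, Hedlund and Coven from low complexity to a rotation factor; here it is to be carried out using the dendric structure of the multidimensional bispecial patterns from \Cref{maintheorem:ddim-complexity-is-FminusS}, via an exhaustion of $\ZZd$ by connected supports $S$ producing a coherent family of partitions of $\RR/\ZZ$ into $d+1$ arcs, whose refinement limit is the coding partition. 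Once $\pi$ is built, the asymptotic pair $(x,y)$ must form the unique two-point fibre above $0\in\RR/\ZZ$, and the flip condition selects $x$ as the left-continuous coding $c_\balpha$ and $y$ as the right-continuous coding $c'_\balpha$. The construction of $\pi$ is the genuinely delicate point, since transferring the combinatorial rigidity of a $\ZZd$-language into a topological factor onto a codimension-one equicontinuous system has no off-the-shelf analogue of one-dimensional Sturmian theory, and will require essential use of both the flip condition and the indistinguishability hypothesis to fix the boundary points of the coding intervals.
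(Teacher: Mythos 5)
Your reverse direction is essentially the paper's argument: the difference set computation, the flip condition via sorting the $\alpha_i$, and indistinguishability via the interval/arc picture where each pattern $p$ on a connected support corresponds to an interval $I_p$ whose left (resp.\ right) endpoint gives the unique occurrence of $p$ in $c_\balpha$ (resp.\ $c'_\balpha$) intersecting $F-S$; this is \Cref{lem:partition_into_intervals}, \Cref{lem:ddim-1-occ-if-interval} and \Cref{thm:ddim-sturmian-is-indistinguishable}, so that half is fine.

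The forward direction, however, has a genuine gap: the step you yourself flag as ``the genuinely delicate point'' --- constructing the factor map $\pi\colon X\to\RR/\ZZ$ semi-conjugating the shift to the translation by $\balpha$ --- is exactly the content of the theorem and is not carried out; describing it as ``to be carried out using the dendric structure'' is a plan, not a proof. Moreover the intermediate claims you feed into it are unsubstantiated and partly misdirected. Unique ergodicity of $X$ is never established in the paper and does not follow in any known way from the acyclicity of the extension graphs in $\ZZd$; even granting well-defined symbol frequencies, they only determine the lengths of the $d+1$ coding intervals (i.e.\ the multiset of the $\alpha_i$ differences), not a semiconjugacy to a rotation, so ``extracting $\balpha$ from frequencies'' does not by itself produce the equivariant map you need. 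Likewise your total-irrationality argument (a rational relation would force ``strictly smaller complexity than $\#(F-S)$'') is not justified and is not how the paper handles it: there, total irrationality is proved \emph{after} the factor map $g$ exists, by showing a relation $\bn\cdot\widetilde\alpha\in\ZZ$ would force $\sigma^{\bn}$ to act trivially on the minimal system, contradicting the finiteness of the difference set. The paper's actual route is quite different from yours: reduce to the ordered flip condition (\Cref{lem:flip-reduces-to-ordered-flip}), induct on the dimension by restricting to the sublattice $\be_1^\perp$ after the symbol-merging projection $\pi$ (\Cref{prop:d-1dim-flip-condition}, \Cref{lem:the_restriction_of_URwFC_is_UR}), invoke the known $d=1$ theorem as base case, and then --- this is the technical heart, \Cref{prop:unique_rho} --- extend the $(d-1)$-dimensional factor map to a $d$-dimensional one by proving there is a single well-defined rotation number $\rho_1$ in the $\be_1$ direction, an argument that leans heavily on indistinguishability (unique occurrences of the patterns $H_x,H_y$ intersecting the difference set). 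Finally one shows $g([\symb{0}])\cap g([\symb{1}])$ is a singleton, so both images are intervals (\Cref{lem:singleton-implies-intervals}), and identifies $x=c_{\widetilde\alpha}$, $y=c'_{\widetilde\alpha}$. Without an argument replacing \Cref{prop:unique_rho} (or some other construction of the rotation factor), your proof of the forward implication is incomplete.
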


    The indistinguishable asymptotic pair 
    shown in
	\Cref{fig:intro-sturmian-config-pair}
    is an example as such, where
    $x=c_{\balpha}$ and
    $y=c'_{\balpha}$ with $\balpha=(\alpha_1,\alpha_2)=(\sqrt{2}/2,\sqrt{19}-4)$.
    Notice that $c_\alpha$ and $c'_\alpha$ are uniformly
    recurrent when $\alpha$ contains at least an irrational coordinate (see
    \Cref{lem:sturmian_is_unirec}), so that hypothesis is really only used in
    one direction of the theorem. 
    Note that a version of \Cref{thm:multidim_sturmian_characterization} for
    rational vector $\alpha\in\QQ^d$ was considered in \cite{MR3351761} with an
    infinite difference set of the form $F+K$ where $K\subset\ZZd$ is some
    lattice.

    The link with codimension-one cut and project schemes can be illustrated as
    follows.
    The configurations 
    $x=c_{\balpha}$ and
    $y=c'_{\balpha}$
    encode the rhombi obtained as the projection
    of the cube faces in a
    discrete plane of normal vector $(1-\alpha_1,\alpha_1-\alpha_2,\alpha_2)$,
    see \Cref{fig:intro-sturmian-config-pair-discrete-plane}.
    This three symbol coding of a discrete plane was proposed in \cite{Jamet_2005_coding},
    see also \cite{MR2742675}.

    \begin{figure}[ht]
		\begin{center}
			\begin{tabular}{cc}
                $x$  & $y$\\
                \includegraphics[width=.4\linewidth]{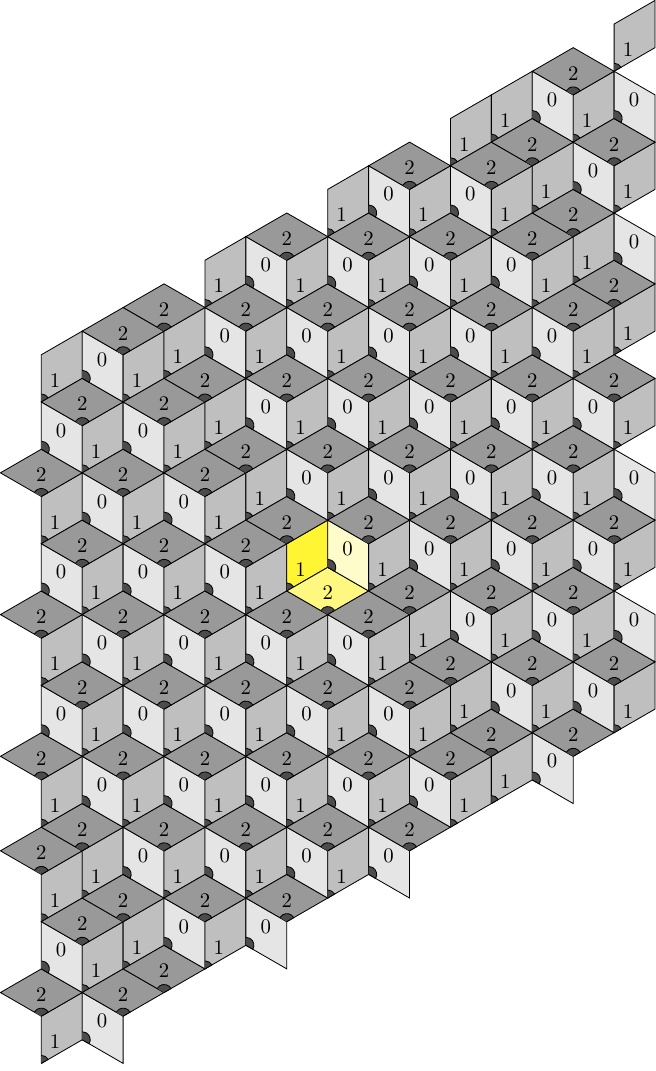} &
				\includegraphics[width=.4\linewidth]{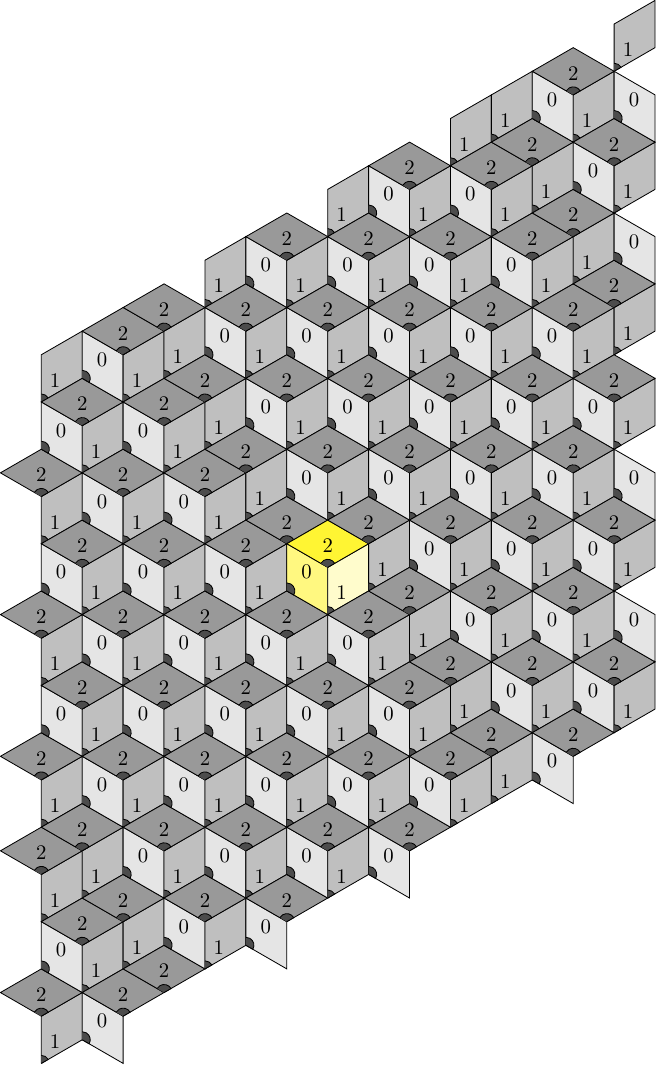}
			\end{tabular}
		\end{center}
        \caption{The configurations $x$ and $y$ from
            \Cref{fig:intro-sturmian-config-pair} are encoding a tiling of the
            plane \cite{MR2074952}
			by three types of pointed rhombus drawn using Jolivet's notation
            \cite[p.~112]{jolivet_phd_2013}.
            The tilings shown above correspond to the projection of the surface of a
            discrete plane of normal vector 
            $(1-\alpha_1,\alpha_1-\alpha_2,\alpha_2)
            \approx(0.293, 0.348, 0.359)$,
            with $\balpha=(\alpha_1,\alpha_2)=(\sqrt{2}/2,\sqrt{19}-4)$,
            in 3 dimensional space, and their difference can be interpreted as
            the flip of a unit cube shown in yellow.}
		\label{fig:intro-sturmian-config-pair-discrete-plane}
	\end{figure}

    We also prove a slightly more general version
    of~\Cref{thm:multidim_sturmian_characterization}.
    We say that two indistinguishable asymptotic configurations $x,y \in
    \Sigma^{\ZZd}$ satisfy the \define{affine flip condition} if their
    difference set $F$ has cardinality  $\#F = d+1$, there is $m \in F$ such
    that $(F-m)\setminus\{0\}$ is a base of $\ZZd$,
    the restriction $x|_F$ is a bijection $F\to\Sigma$
    and
    the map defined by
    $x_\bn\mapsto y_\bn$ for every $\bn\in F$
    is a cyclic permutation on $\Sigma$.

	\begin{maincorollary}\label{maincorollary:affine_multidim_sturmian_characterization}
        Let $d\geq1$ and $x,y\in\Sigma^\ZZd$ such that $x$ is uniformly recurrent.
        The pair $(x,y)$ is an indistinguishable asymptotic pair satisfying the
        affine flip condition if and only if 
        there exist a bijection $\tau\colon \alfad\to\Sigma$,
        an invertible affine transformation $A\in \operatorname{Aff}(\ZZd)$
        and
        a totally irrational vector $\alpha\in[0,1)^d$
        such that $x=\tau\circ c_{\alpha}\circ A$ and $y=\tau\circ c'_{\alpha}\circ A$.
	\end{maincorollary}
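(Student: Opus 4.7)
The plan is to reduce the affine flip condition to the standard flip condition of \Cref{thm:multidim_sturmian_characterization} via an invertible affine change of coordinates on $\ZZd$ together with a relabelling of the alphabet, then invoke the theorem and unfold the transformation. Given $(x,y)$ satisfying the affine flip condition with witness $\bm \in F$, enumerate $(F-\bm)\setminus\{\bzero\}=\{\bv_1,\dots,\bv_d\}$ as a $\ZZ$-basis of $\ZZd$, let $B\in\operatorname{GL}_d(\ZZ)$ be the linear isomorphism with $B(\bv_i)=-\be_i$, and set $A(\bn)\isdef B(\bn-\bm)\in\operatorname{Aff}(\ZZd)$, so that $A(F)=\{\bzero,-\be_1,\dots,-\be_d\}$. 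Denoting by $\pi_\Sigma$ the cyclic permutation of $\Sigma$ determined by $x_\bn\mapsto y_\bn$ on $F$, define $\tau\colon\alfad\to\Sigma$ by $\tau(k)\isdef \pi_\Sigma^{-k}(x_\bm)$; because $\pi_\Sigma$ is a $(d+1)$-cycle, $\tau$ is a bijection.

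I would then verify that $x'\isdef \tau^{-1}\circ x\circ A^{-1}$ and $y'\isdef \tau^{-1}\circ y\circ A^{-1}$ satisfy the hypotheses of \Cref{thm:multidim_sturmian_characterization}. Their difference set is $A(F)=\{\bzero,-\be_1,\dots,-\be_d\}$, and the definition of $\tau$ gives $x'_\bzero=\symb{0}$ together with $y'_\bn=x'_\bn-\symb{1}\bmod(d+\symb{1})$ on this set, yielding the flip condition. The map $z\mapsto \tau^{-1}\circ z\circ A^{-1}$ is a shift conjugacy up to the automorphism $\bn\mapsto B(\bn)$ of $\ZZd$: any pattern $p$ on $x$ with support $S$ corresponds to a pattern $p'$ on $x'$ with support $B(S)$, and a direct computation gives $\occ_{p'}(x')=B(\occ_p(x))$. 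Since $B$ is a $\ZZ$-linear bijection of $\ZZd$, it preserves syndeticity and cardinalities of set differences; hence uniform recurrence of $x$ transfers to $x'$, and indistinguishability of $(x,y)$ transfers to $(x',y')$. \Cref{thm:multidim_sturmian_characterization} then yields a totally irrational $\balpha\in[0,1)^d$ with $x'=c_\balpha$ and $y'=c'_\balpha$, so $x=\tau\circ c_\balpha\circ A$ and $y=\tau\circ c'_\balpha\circ A$.

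For the converse, starting from $x=\tau\circ c_\balpha\circ A$ and $y=\tau\circ c'_\balpha\circ A$, the same correspondence transports the conclusions of \Cref{thm:multidim_sturmian_characterization} about $(c_\balpha,c'_\balpha)$ back to $(x,y)$. The difference set becomes $A^{-1}(\{\bzero,-\be_1,\dots,-\be_d\})=\{\bm\}\cup\{\bm-B^{-1}(\be_i)\}_{i=1}^d$ with $\bm\isdef A^{-1}(\bzero)$, and $(-B^{-1}(\be_i))_{i=1}^d$ is a $\ZZ$-basis of $\ZZd$, verifying the affine flip condition.

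No step presents a genuine obstacle; the point requiring the most care is the alignment of $\tau$ with $\pi_\Sigma$ so that the arbitrary $(d+1)$-cycle on $\Sigma$ is conjugated to the specific cycle $k\mapsto k-\symb{1}\bmod(d+\symb{1})$ implicit in the flip condition of \Cref{thm:multidim_sturmian_characterization}, which is only possible thanks to the freedom in choosing the bijection $\tau$.
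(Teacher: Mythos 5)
Your proposal is correct and follows essentially the same route as the paper: conjugate $(x,y)$ by an affine map sending the difference set to $\{\bzero,-\be_1,\dots,-\be_d\}$ and by a relabeling bijection aligning the $(d+1)$-cycle with $k\mapsto k-\symb{1}\bmod(d+\symb{1})$, check that indistinguishability and uniform recurrence are preserved (the content of \Cref{prop:shifted_SI} and \Cref{prop:invariance_sliding_block_code}, which you essentially re-derive via the transformation of occurrence sets), apply \Cref{thm:multidim_sturmian_characterization}, and transport back for the converse. The only slip is cosmetic: the occurrence sets transform by $u\mapsto B(u-\bm)$ rather than by $B$ alone, which changes nothing in the cardinality and syndeticity arguments.
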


If we further assume that the configurations in the asymptotic pair are uniformly recurrent, we can put together~\Cref{maintheorem:ddim-complexity-is-FminusS} and~\Cref{thm:multidim_sturmian_characterization} and obtain the following characterization of uniformly recurrent multidimensional Sturmian configurations in terms of their pattern complexity. This generalizes the well-known theorem of
Morse-Hedlund-Coven to higher dimensions \cite{MR0000745,MR0322838}.

\begin{maincorollary}\label{corollary:language-vs-existence-alpha}
	Let $d\geq1$ and $x,y \in\alfad^{\ZZd}$ be an asymptotic pair such that $x$ is uniformly recurrent and which satisfies the flip condition with difference set 
	$F = \{ \bzero, -\be_1,\dots,-\be_d\}$. The following are equivalent:
	\begin{enumerate}[(i)]
        \item For every nonempty finite connected subset $S\subset\ZZd$ and $p
            \in \Lcal_S(x,y)$, we have
            \[
                \#\left(\occ_p(x)\setminus \occ_p(y)\right)
                = 1 =
                \#\left(\occ_p(y)\setminus \occ_p(x)\right).
            \]
		\item The asymptotic pair $(x,y)$ is indistinguishable.
        \item For every nonempty finite connected subset $S\subset\ZZd$, we
            have \[\#\Lcal_S(x)=\#\Lcal_S(y) = \#(F-S).\]
        \item There exists a totally irrational vector $\alpha \in [0,1)^d$
            such that $x = c_{\alpha}$ and $y=c'_{\alpha}$.
	\end{enumerate}
\end{maincorollary}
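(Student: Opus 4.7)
The proof is essentially a direct assembly of the two main results already established in the paper. The plan is to observe that \Cref{maintheorem:ddim-complexity-is-FminusS} gives the equivalences (i)~$\Leftrightarrow$~(ii)~$\Leftrightarrow$~(iii) without any recurrence hypothesis, while \Cref{thm:multidim_sturmian_characterization} supplies the equivalence (ii)~$\Leftrightarrow$~(iv) once uniform recurrence of $x$ is assumed. So the only work is to check that the hypotheses of those two theorems are satisfied here and that the direction (iv)~$\Rightarrow$~(ii) carries the flip-condition structure needed to feed back into \Cref{maintheorem:ddim-complexity-is-FminusS}.

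First I would dispose of (i)~$\Leftrightarrow$~(ii)~$\Leftrightarrow$~(iii). The hypothesis of the corollary exactly matches that of \Cref{maintheorem:ddim-complexity-is-FminusS}: we have an asymptotic pair $x,y\in\alfad^{\ZZd}$ satisfying the flip condition with difference set $F=\{\bzero,-\be_1,\dots,-\be_d\}$. Therefore applying \Cref{maintheorem:ddim-complexity-is-FminusS} verbatim yields the cyclic equivalence of the first three conditions; note that uniform recurrence of $x$ is not used at all for this part and is simply carried along.

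Next I would close the loop with (iv). For (ii)~$\Rightarrow$~(iv): since $x$ is uniformly recurrent by hypothesis, and $(x,y)$ is an indistinguishable asymptotic pair satisfying the flip condition, \Cref{thm:multidim_sturmian_characterization} produces a totally irrational $\alpha\in[0,1)^d$ with $x=c_{\alpha}$ and $y=c'_{\alpha}$. For the reverse direction (iv)~$\Rightarrow$~(ii), the same theorem states that the characteristic Sturmian pair $(c_\alpha,c'_\alpha)$ for any totally irrational $\alpha$ is an indistinguishable asymptotic pair satisfying the flip condition, which is exactly (ii) (and simultaneously re-confirms the flip-condition hypothesis we had already assumed).

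There is essentially no obstacle here beyond bookkeeping; the only point that deserves a one-line remark is that the uniform recurrence assumption on $x$ is needed only to invoke \Cref{thm:multidim_sturmian_characterization} in the direction (ii)~$\Rightarrow$~(iv), while the converse (iv)~$\Rightarrow$~(ii) holds unconditionally (and in fact $c_\alpha$ is automatically uniformly recurrent whenever $\alpha$ is totally irrational, by the lemma referenced in the discussion following \Cref{thm:multidim_sturmian_characterization}). Thus the corollary follows by combining the two main theorems, with the four conditions arranged in the cycle (i)~$\Rightarrow$~(ii)~$\Rightarrow$~(iii)~$\Rightarrow$~(i) from \Cref{maintheorem:ddim-complexity-is-FminusS} and (ii)~$\Leftrightarrow$~(iv) from \Cref{thm:multidim_sturmian_characterization}.
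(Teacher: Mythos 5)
Your proposal is correct and follows exactly the paper's own argument: the paper proves this corollary by citing Theorem~\ref{maintheorem:ddim-complexity-is-FminusS} for the equivalence of (i), (ii), (iii) and Theorem~\ref{thm:multidim_sturmian_characterization} for (ii)$\Leftrightarrow$(iv), just as you do. Your remark that uniform recurrence is only needed for the direction (ii)$\Rightarrow$(iv) is accurate and matches the discussion in the paper.
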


\subsection*{Open questions}

To fully generalize the theorem of Morse-Hedlund-Coven, we would hope the
equivalence holds for single configurations and not only for pairs of asymptotic
configurations satisfying the flip condition.
More precisely, in the case of uniformly recurrent configurations, we believe
that the pattern complexity characterizes multidimensional Sturmian
configurations. The formula defining 
$s_{\alpha,\rho}$ and
$s'_{\alpha,\rho}$ slightly extends 
\Cref{eq:characteristic-sturmian-formula-in-intro}
and can be found in \Cref{lem:sturmian-formual-from-cut-and-project}.

\begin{mainquestion}
	Let $d\geq1$ and $x \in\alfad^{\ZZd}$ be uniformly recurrent configuration.
	Let $F = \{ \bzero, -\be_1,\dots,-\be_d\}$. 
    Consider the following two statements:
	\begin{enumerate}[(i)]
        \item for every nonempty finite connected subset $S\subset\ZZd$, we
            have $\#\Lcal_S(x)= \#(F-S)$.
        \item there exists a totally irrational vector $\alpha \in [0,1)^d$ and
            a intercept $\rho\in[0,1)$ such that $x = s_{\alpha,\rho}$ or
            $x=s'_{\alpha,\rho}$.
	\end{enumerate}
    Since $s_{\alpha,\rho}$, $s'_{\alpha,\rho}$ and $c_\alpha$ have the same language 
    when $\alpha$ is totally irrational,
    we can deduce from \Cref{corollary:language-vs-existence-alpha} that (ii) implies (i). 
    Is it true that (i) and (ii) are equivalent?
\end{mainquestion}

Consider a sequence of totally irrational slopes $(\alpha_n)_{n \in \NN}$ for which both $c_{\alpha_n}$ and $c'_{\alpha_n}$ converge in the prodiscrete topology. Then $(c_{\alpha_n},c'_{\alpha_n})_{n \in \NN}$ converges in the asymptotic relation to an \'etale limit $(c,c')$,  see~\Cref{def:etale}. It turns out that \'etale limits preserve both the flip condition and indistinguishability, and will thus satisfy all of the equivalences stated in~\Cref{maintheorem:ddim-complexity-is-FminusS}. An example of such a limit is illustrated in~\Cref{fig:etale}.

\begin{figure}[!ht]
	\begin{center}
		\begin{tabular}{cc}
			$c$  & $c'$\\[3mm]
			\begin{tikzpicture}
[baseline=-\the\dimexpr\fontdimen22\textfont2\relax,ampersand replacement=\&]
  \matrix[matrix of math nodes,nodes={
       minimum size=1.2ex,text width=1.2ex,
       text height=1.2ex,inner sep=3pt,draw={gray!20},align=center,
       anchor=base
     }, row sep=1pt,column sep=1pt
  ] (config) {
{\color{black!60}\symb{0}}\&{\color{black!60}\symb{0}}\&{\color{black!60}\symb{0}}\&{\color{black!60}\symb{0}}\&{\color{black!60}\symb{0}}\&{\color{black!60}\symb{0}}\&{\color{black!60}\symb{0}}\&{\color{black!60}\symb{0}}\&{\color{black!60}\symb{0}}\&{\color{black!60}\symb{0}}\&{\color{black!60}\symb{0}}\&{\color{black!60}\symb{0}}\&{\color{black!60}\symb{0}}\&{\color{black!60}\symb{0}}\&{\color{black!60}\symb{0}}\\
{\color{black!60}\symb{0}}\&{\color{black!60}\symb{0}}\&{\color{black!60}\symb{0}}\&{\color{black!60}\symb{0}}\&{\color{black!60}\symb{0}}\&{\color{black!60}\symb{0}}\&{\color{black!60}\symb{0}}\&{\color{black!60}\symb{0}}\&{\color{black!60}\symb{0}}\&{\color{black!60}\symb{0}}\&{\color{black!60}\symb{0}}\&{\color{black!60}\symb{0}}\&{\color{black!60}\symb{0}}\&{\color{black!60}\symb{0}}\&{\color{black!60}\symb{0}}\\
{\color{black!60}\symb{0}}\&{\color{black!60}\symb{0}}\&{\color{black!60}\symb{0}}\&{\color{black!60}\symb{0}}\&{\color{black!60}\symb{0}}\&{\color{black!60}\symb{0}}\&{\color{black!60}\symb{0}}\&{\color{black!60}\symb{0}}\&{\color{black!60}\symb{0}}\&{\color{black!60}\symb{0}}\&{\color{black!60}\symb{0}}\&{\color{black!60}\symb{0}}\&{\color{black!60}\symb{0}}\&{\color{black!60}\symb{0}}\&{\color{black!60}\symb{0}}\\
{\color{black!60}\symb{0}}\&{\color{black!60}\symb{0}}\&{\color{black!60}\symb{0}}\&{\color{black!60}\symb{0}}\&{\color{black!60}\symb{0}}\&{\color{black!60}\symb{0}}\&{\color{black!60}\symb{0}}\&{\color{black!60}\symb{0}}\&{\color{black!60}\symb{0}}\&{\color{black!60}\symb{0}}\&{\color{black!60}\symb{0}}\&{\color{black!60}\symb{0}}\&{\color{black!60}\symb{0}}\&{\color{black!60}\symb{0}}\&{\color{black!60}\symb{0}}\\
{\color{black!60}\symb{1}}\&{\color{black!60}\symb{2}}\&{\color{black!60}\symb{0}}\&{\color{black!60}\symb{0}}\&{\color{black!60}\symb{0}}\&{\color{black!60}\symb{0}}\&{\color{black!60}\symb{0}}\&{\color{black!60}\symb{0}}\&{\color{black!60}\symb{0}}\&{\color{black!60}\symb{0}}\&{\color{black!60}\symb{0}}\&{\color{black!60}\symb{0}}\&{\color{black!60}\symb{0}}\&{\color{black!60}\symb{0}}\&{\color{black!60}\symb{0}}\\
{\color{black!60}\symb{0}}\&{\color{black!60}\symb{0}}\&{\color{black!60}\symb{1}}\&{\color{black!60}\symb{2}}\&{\color{black!60}\symb{0}}\&{\color{black!60}\symb{0}}\&{\color{black!60}\symb{0}}\&{\color{black!60}\symb{0}}\&{\color{black!60}\symb{0}}\&{\color{black!60}\symb{0}}\&{\color{black!60}\symb{0}}\&{\color{black!60}\symb{0}}\&{\color{black!60}\symb{0}}\&{\color{black!60}\symb{0}}\&{\color{black!60}\symb{0}}\\
{\color{black!60}\symb{0}}\&{\color{black!60}\symb{0}}\&{\color{black!60}\symb{0}}\&{\color{black!60}\symb{0}}\&{\color{black!60}\symb{1}}\&{\color{black!60}\symb{2}}\&{\color{black!60}\symb{0}}\&{\color{black!60}\symb{0}}\&{\color{black!60}\symb{0}}\&{\color{black!60}\symb{0}}\&{\color{black!60}\symb{0}}\&{\color{black!60}\symb{0}}\&{\color{black!60}\symb{0}}\&{\color{black!60}\symb{0}}\&{\color{black!60}\symb{0}}\\
{\color{black!60}\symb{0}}\&{\color{black!60}\symb{0}}\&{\color{black!60}\symb{0}}\&{\color{black!60}\symb{0}}\&{\color{black!60}\symb{0}}\&{\color{black!60}\symb{0}}\&{\color{red}\symb{2}}\&{\color{red}\symb{0}}\&{\color{black!60}\symb{0}}\&{\color{black!60}\symb{0}}\&{\color{black!60}\symb{0}}\&{\color{black!60}\symb{0}}\&{\color{black!60}\symb{0}}\&{\color{black!60}\symb{0}}\&{\color{black!60}\symb{0}}\\
{\color{black!60}\symb{0}}\&{\color{black!60}\symb{0}}\&{\color{black!60}\symb{0}}\&{\color{black!60}\symb{0}}\&{\color{black!60}\symb{0}}\&{\color{black!60}\symb{0}}\&{\color{black!60}\symb{0}}\&{\color{red}\symb{1}}\&{\color{black!60}\symb{2}}\&{\color{black!60}\symb{0}}\&{\color{black!60}\symb{0}}\&{\color{black!60}\symb{0}}\&{\color{black!60}\symb{0}}\&{\color{black!60}\symb{0}}\&{\color{black!60}\symb{0}}\\
{\color{black!60}\symb{0}}\&{\color{black!60}\symb{0}}\&{\color{black!60}\symb{0}}\&{\color{black!60}\symb{0}}\&{\color{black!60}\symb{0}}\&{\color{black!60}\symb{0}}\&{\color{black!60}\symb{0}}\&{\color{black!60}\symb{0}}\&{\color{black!60}\symb{0}}\&{\color{black!60}\symb{1}}\&{\color{black!60}\symb{2}}\&{\color{black!60}\symb{0}}\&{\color{black!60}\symb{0}}\&{\color{black!60}\symb{0}}\&{\color{black!60}\symb{0}}\\
{\color{black!60}\symb{0}}\&{\color{black!60}\symb{0}}\&{\color{black!60}\symb{0}}\&{\color{black!60}\symb{0}}\&{\color{black!60}\symb{0}}\&{\color{black!60}\symb{0}}\&{\color{black!60}\symb{0}}\&{\color{black!60}\symb{0}}\&{\color{black!60}\symb{0}}\&{\color{black!60}\symb{0}}\&{\color{black!60}\symb{0}}\&{\color{black!60}\symb{1}}\&{\color{black!60}\symb{2}}\&{\color{black!60}\symb{0}}\&{\color{black!60}\symb{0}}\\
{\color{black!60}\symb{0}}\&{\color{black!60}\symb{0}}\&{\color{black!60}\symb{0}}\&{\color{black!60}\symb{0}}\&{\color{black!60}\symb{0}}\&{\color{black!60}\symb{0}}\&{\color{black!60}\symb{0}}\&{\color{black!60}\symb{0}}\&{\color{black!60}\symb{0}}\&{\color{black!60}\symb{0}}\&{\color{black!60}\symb{0}}\&{\color{black!60}\symb{0}}\&{\color{black!60}\symb{0}}\&{\color{black!60}\symb{1}}\&{\color{black!60}\symb{2}}\\
{\color{black!60}\symb{0}}\&{\color{black!60}\symb{0}}\&{\color{black!60}\symb{0}}\&{\color{black!60}\symb{0}}\&{\color{black!60}\symb{0}}\&{\color{black!60}\symb{0}}\&{\color{black!60}\symb{0}}\&{\color{black!60}\symb{0}}\&{\color{black!60}\symb{0}}\&{\color{black!60}\symb{0}}\&{\color{black!60}\symb{0}}\&{\color{black!60}\symb{0}}\&{\color{black!60}\symb{0}}\&{\color{black!60}\symb{0}}\&{\color{black!60}\symb{0}}\\
{\color{black!60}\symb{0}}\&{\color{black!60}\symb{0}}\&{\color{black!60}\symb{0}}\&{\color{black!60}\symb{0}}\&{\color{black!60}\symb{0}}\&{\color{black!60}\symb{0}}\&{\color{black!60}\symb{0}}\&{\color{black!60}\symb{0}}\&{\color{black!60}\symb{0}}\&{\color{black!60}\symb{0}}\&{\color{black!60}\symb{0}}\&{\color{black!60}\symb{0}}\&{\color{black!60}\symb{0}}\&{\color{black!60}\symb{0}}\&{\color{black!60}\symb{0}}\\
{\color{black!60}\symb{0}}\&{\color{black!60}\symb{0}}\&{\color{black!60}\symb{0}}\&{\color{black!60}\symb{0}}\&{\color{black!60}\symb{0}}\&{\color{black!60}\symb{0}}\&{\color{black!60}\symb{0}}\&{\color{black!60}\symb{0}}\&{\color{black!60}\symb{0}}\&{\color{black!60}\symb{0}}\&{\color{black!60}\symb{0}}\&{\color{black!60}\symb{0}}\&{\color{black!60}\symb{0}}\&{\color{black!60}\symb{0}}\&{\color{black!60}\symb{0}}\\
};
\node[draw,rectangle,dashed,help lines,fit=(config), inner sep=0.5ex] {};
\end{tikzpicture} &
			\begin{tikzpicture}
[baseline=-\the\dimexpr\fontdimen22\textfont2\relax,ampersand replacement=\&]
  \matrix[matrix of math nodes,nodes={
       minimum size=1.2ex,text width=1.2ex,
       text height=1.2ex,inner sep=3pt,draw={gray!20},align=center,
       anchor=base
     }, row sep=1pt,column sep=1pt
  ] (config) {
{\color{black!60}\symb{0}}\&{\color{black!60}\symb{0}}\&{\color{black!60}\symb{0}}\&{\color{black!60}\symb{0}}\&{\color{black!60}\symb{0}}\&{\color{black!60}\symb{0}}\&{\color{black!60}\symb{0}}\&{\color{black!60}\symb{0}}\&{\color{black!60}\symb{0}}\&{\color{black!60}\symb{0}}\&{\color{black!60}\symb{0}}\&{\color{black!60}\symb{0}}\&{\color{black!60}\symb{0}}\&{\color{black!60}\symb{0}}\&{\color{black!60}\symb{0}}\\
{\color{black!60}\symb{0}}\&{\color{black!60}\symb{0}}\&{\color{black!60}\symb{0}}\&{\color{black!60}\symb{0}}\&{\color{black!60}\symb{0}}\&{\color{black!60}\symb{0}}\&{\color{black!60}\symb{0}}\&{\color{black!60}\symb{0}}\&{\color{black!60}\symb{0}}\&{\color{black!60}\symb{0}}\&{\color{black!60}\symb{0}}\&{\color{black!60}\symb{0}}\&{\color{black!60}\symb{0}}\&{\color{black!60}\symb{0}}\&{\color{black!60}\symb{0}}\\
{\color{black!60}\symb{0}}\&{\color{black!60}\symb{0}}\&{\color{black!60}\symb{0}}\&{\color{black!60}\symb{0}}\&{\color{black!60}\symb{0}}\&{\color{black!60}\symb{0}}\&{\color{black!60}\symb{0}}\&{\color{black!60}\symb{0}}\&{\color{black!60}\symb{0}}\&{\color{black!60}\symb{0}}\&{\color{black!60}\symb{0}}\&{\color{black!60}\symb{0}}\&{\color{black!60}\symb{0}}\&{\color{black!60}\symb{0}}\&{\color{black!60}\symb{0}}\\
{\color{black!60}\symb{0}}\&{\color{black!60}\symb{0}}\&{\color{black!60}\symb{0}}\&{\color{black!60}\symb{0}}\&{\color{black!60}\symb{0}}\&{\color{black!60}\symb{0}}\&{\color{black!60}\symb{0}}\&{\color{black!60}\symb{0}}\&{\color{black!60}\symb{0}}\&{\color{black!60}\symb{0}}\&{\color{black!60}\symb{0}}\&{\color{black!60}\symb{0}}\&{\color{black!60}\symb{0}}\&{\color{black!60}\symb{0}}\&{\color{black!60}\symb{0}}\\
{\color{black!60}\symb{1}}\&{\color{black!60}\symb{2}}\&{\color{black!60}\symb{0}}\&{\color{black!60}\symb{0}}\&{\color{black!60}\symb{0}}\&{\color{black!60}\symb{0}}\&{\color{black!60}\symb{0}}\&{\color{black!60}\symb{0}}\&{\color{black!60}\symb{0}}\&{\color{black!60}\symb{0}}\&{\color{black!60}\symb{0}}\&{\color{black!60}\symb{0}}\&{\color{black!60}\symb{0}}\&{\color{black!60}\symb{0}}\&{\color{black!60}\symb{0}}\\
{\color{black!60}\symb{0}}\&{\color{black!60}\symb{0}}\&{\color{black!60}\symb{1}}\&{\color{black!60}\symb{2}}\&{\color{black!60}\symb{0}}\&{\color{black!60}\symb{0}}\&{\color{black!60}\symb{0}}\&{\color{black!60}\symb{0}}\&{\color{black!60}\symb{0}}\&{\color{black!60}\symb{0}}\&{\color{black!60}\symb{0}}\&{\color{black!60}\symb{0}}\&{\color{black!60}\symb{0}}\&{\color{black!60}\symb{0}}\&{\color{black!60}\symb{0}}\\
{\color{black!60}\symb{0}}\&{\color{black!60}\symb{0}}\&{\color{black!60}\symb{0}}\&{\color{black!60}\symb{0}}\&{\color{black!60}\symb{1}}\&{\color{black!60}\symb{2}}\&{\color{black!60}\symb{0}}\&{\color{black!60}\symb{0}}\&{\color{black!60}\symb{0}}\&{\color{black!60}\symb{0}}\&{\color{black!60}\symb{0}}\&{\color{black!60}\symb{0}}\&{\color{black!60}\symb{0}}\&{\color{black!60}\symb{0}}\&{\color{black!60}\symb{0}}\\
{\color{black!60}\symb{0}}\&{\color{black!60}\symb{0}}\&{\color{black!60}\symb{0}}\&{\color{black!60}\symb{0}}\&{\color{black!60}\symb{0}}\&{\color{black!60}\symb{0}}\&{\color{red}\symb{1}}\&{\color{red}\symb{2}}\&{\color{black!60}\symb{0}}\&{\color{black!60}\symb{0}}\&{\color{black!60}\symb{0}}\&{\color{black!60}\symb{0}}\&{\color{black!60}\symb{0}}\&{\color{black!60}\symb{0}}\&{\color{black!60}\symb{0}}\\
{\color{black!60}\symb{0}}\&{\color{black!60}\symb{0}}\&{\color{black!60}\symb{0}}\&{\color{black!60}\symb{0}}\&{\color{black!60}\symb{0}}\&{\color{black!60}\symb{0}}\&{\color{black!60}\symb{0}}\&{\color{red}\symb{0}}\&{\color{black!60}\symb{2}}\&{\color{black!60}\symb{0}}\&{\color{black!60}\symb{0}}\&{\color{black!60}\symb{0}}\&{\color{black!60}\symb{0}}\&{\color{black!60}\symb{0}}\&{\color{black!60}\symb{0}}\\
{\color{black!60}\symb{0}}\&{\color{black!60}\symb{0}}\&{\color{black!60}\symb{0}}\&{\color{black!60}\symb{0}}\&{\color{black!60}\symb{0}}\&{\color{black!60}\symb{0}}\&{\color{black!60}\symb{0}}\&{\color{black!60}\symb{0}}\&{\color{black!60}\symb{0}}\&{\color{black!60}\symb{1}}\&{\color{black!60}\symb{2}}\&{\color{black!60}\symb{0}}\&{\color{black!60}\symb{0}}\&{\color{black!60}\symb{0}}\&{\color{black!60}\symb{0}}\\
{\color{black!60}\symb{0}}\&{\color{black!60}\symb{0}}\&{\color{black!60}\symb{0}}\&{\color{black!60}\symb{0}}\&{\color{black!60}\symb{0}}\&{\color{black!60}\symb{0}}\&{\color{black!60}\symb{0}}\&{\color{black!60}\symb{0}}\&{\color{black!60}\symb{0}}\&{\color{black!60}\symb{0}}\&{\color{black!60}\symb{0}}\&{\color{black!60}\symb{1}}\&{\color{black!60}\symb{2}}\&{\color{black!60}\symb{0}}\&{\color{black!60}\symb{0}}\\
{\color{black!60}\symb{0}}\&{\color{black!60}\symb{0}}\&{\color{black!60}\symb{0}}\&{\color{black!60}\symb{0}}\&{\color{black!60}\symb{0}}\&{\color{black!60}\symb{0}}\&{\color{black!60}\symb{0}}\&{\color{black!60}\symb{0}}\&{\color{black!60}\symb{0}}\&{\color{black!60}\symb{0}}\&{\color{black!60}\symb{0}}\&{\color{black!60}\symb{0}}\&{\color{black!60}\symb{0}}\&{\color{black!60}\symb{1}}\&{\color{black!60}\symb{2}}\\
{\color{black!60}\symb{0}}\&{\color{black!60}\symb{0}}\&{\color{black!60}\symb{0}}\&{\color{black!60}\symb{0}}\&{\color{black!60}\symb{0}}\&{\color{black!60}\symb{0}}\&{\color{black!60}\symb{0}}\&{\color{black!60}\symb{0}}\&{\color{black!60}\symb{0}}\&{\color{black!60}\symb{0}}\&{\color{black!60}\symb{0}}\&{\color{black!60}\symb{0}}\&{\color{black!60}\symb{0}}\&{\color{black!60}\symb{0}}\&{\color{black!60}\symb{0}}\\
{\color{black!60}\symb{0}}\&{\color{black!60}\symb{0}}\&{\color{black!60}\symb{0}}\&{\color{black!60}\symb{0}}\&{\color{black!60}\symb{0}}\&{\color{black!60}\symb{0}}\&{\color{black!60}\symb{0}}\&{\color{black!60}\symb{0}}\&{\color{black!60}\symb{0}}\&{\color{black!60}\symb{0}}\&{\color{black!60}\symb{0}}\&{\color{black!60}\symb{0}}\&{\color{black!60}\symb{0}}\&{\color{black!60}\symb{0}}\&{\color{black!60}\symb{0}}\\
{\color{black!60}\symb{0}}\&{\color{black!60}\symb{0}}\&{\color{black!60}\symb{0}}\&{\color{black!60}\symb{0}}\&{\color{black!60}\symb{0}}\&{\color{black!60}\symb{0}}\&{\color{black!60}\symb{0}}\&{\color{black!60}\symb{0}}\&{\color{black!60}\symb{0}}\&{\color{black!60}\symb{0}}\&{\color{black!60}\symb{0}}\&{\color{black!60}\symb{0}}\&{\color{black!60}\symb{0}}\&{\color{black!60}\symb{0}}\&{\color{black!60}\symb{0}}\\
};
\node[draw,rectangle,dashed,help lines,fit=(config), inner sep=0.5ex] {};
\end{tikzpicture} \\ [37mm]
		\end{tabular}
	\end{center}
    \caption{An indistinguishable asymptotic pair $(c,c')$ which satisfies the flip condition obtained by taking the limit of the Sturmian configurations given by $\alpha_n = (\frac{1}{n}(\sqrt{2}-1),\frac{1}{n}(\sqrt{3}-1))$.}
	\label{fig:etale}
\end{figure}
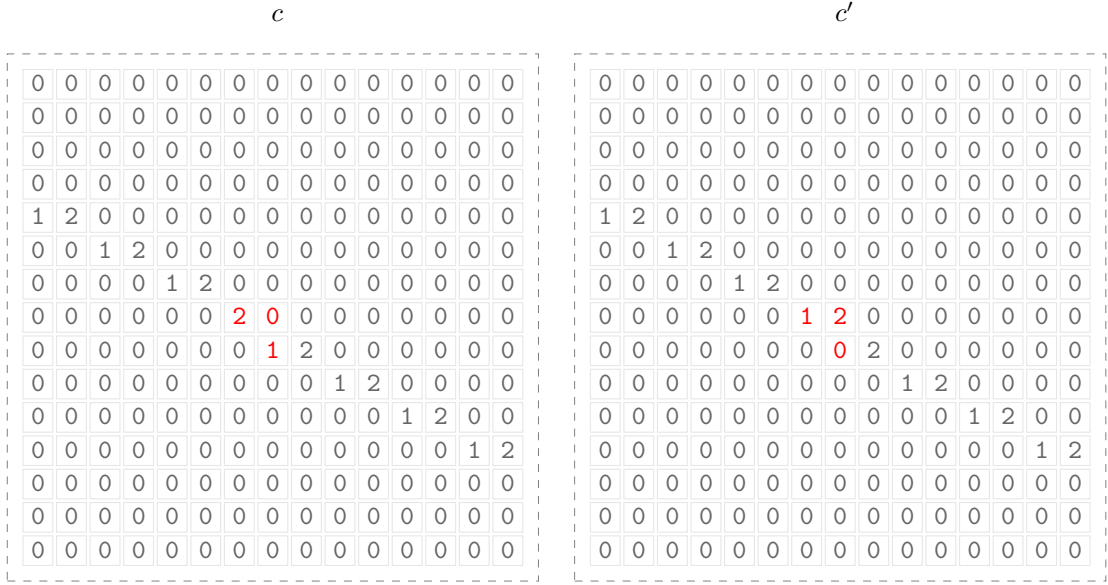

We believe that in fact every indistinguishable asymptotic pair on $\ZZd$ which satisfies the flip condition can be obtained through an \'etale limit as above.

\begin{mainconjecture}\label{conj:etale-limite}
	Let $d\geq1$ and $x,y \in\alfad^{\ZZd}$ be an indistinguishable asymptotic pair which satisfies the flip condition. Then there exists a sequence of totally irrational vectors $(\alpha_n)_{n \in \NN}$ such that $(x,y)$ is the \'etale limit of the sequence of asymptotic pairs $(c_{\alpha_n},c'_{\alpha_n})_{n \in \NN}$.
\end{mainconjecture}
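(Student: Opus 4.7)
The plan is to construct the approximating sequence $(\alpha_n)_{n \in \NN}$ by exhausting $\ZZd$ with an increasing family of finite connected windows around the difference set and, for each window, producing a totally irrational slope whose Sturmian pair reproduces $(x,y)$ on that window. Fix a nested sequence $W_1 \subset W_2 \subset \cdots$ of finite connected subsets of $\ZZd$ with $F = \{\bzero, -\be_1, \dots, -\be_d\} \subset W_1$ and $\bigcup_n W_n = \ZZd$. The goal reduces to producing, for each $n$, a totally irrational $\alpha_n \in [0,1)^d$ such that $c_{\alpha_n}|_{W_n} = x|_{W_n}$ and $c'_{\alpha_n}|_{W_n} = y|_{W_n}$. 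Given such a sequence, both $c_{\alpha_n} \to x$ and $c'_{\alpha_n} \to y$ hold in the prodiscrete topology; moreover every pair $(c_{\alpha_n}, c'_{\alpha_n})$ has difference set equal to $F$ with the same cyclic flip as $(x,y)$, so the convergence is automatically in the étale topology of \Cref{def:etale} and the conjecture follows.

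To realize a patch by a Sturmian, I would translate the prescription $c_\alpha(\bn) = x_\bn$ into the language of the cut and project scheme from \Cref{sec:alternate-def-cut-and-project}. Using the formula of \Cref{eq:characteristic-sturmian-formula-in-intro}, each cell $\bn \in W_n$ constrains $\alpha$ to lie in an explicit intersection of half-open strips of $[0,1)^d$ defined by $\lfloor \alpha_i + \bn \cdot \alpha \rfloor$ and $\lfloor \bn \cdot \alpha \rfloor$; the analogous formula for $c'_\alpha$ adds compatible ceiling-type constraints. Together they cut out a semialgebraic region $R_n \subset [0,1)^d$, and by density of totally irrational slopes in any open set, it is enough to show that $R_n$ has non-empty interior. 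Geometrically, $R_n$ is the set of normal directions for which a discrete hyperplane produces exactly the patch $x|_{W_n}$ at the prescribed origin.

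The main obstacle is showing $R_n \neq \varnothing$, and this is where \Cref{maintheorem:ddim-complexity-is-FminusS} must be exploited. I propose to argue by induction on the enumeration of $W_n$, adding one cell at a time. At each step, the new cell $\bn$ either imposes a non-trivial half-space constraint on $\alpha$ or is already determined by the previous constraints. The complexity equality $\#\Lcal_S(x)=\#(F-S)$ on every connected subset $S \subset W_n$, combined with the extension-graph tree property established in the proof of Theorem A, should prevent the inductive hypothesis from failing: any infeasibility would manifest as either an extra bispecial extension, contradicting the tree property, or as a pattern count exceeding $\#(F-S)$. Essentially, the extension graphs of $(x,y)$ have the same combinatorial structure as those of characteristic Sturmian pairs, which is exactly the structure dictated by codimension-one cut and project schemes; hence the corresponding system of slope inequalities is simultaneously feasible with full-dimensional solution set.

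The final and most delicate technical point is the behavior of $\alpha_n$ as $n \to \infty$, since the limit slope $\alpha^* = \lim \alpha_n$ may have rational or vanishing coordinates, as illustrated in \Cref{fig:etale} where $\alpha_n \to \bzero$. Such limit slopes correspond precisely to the non-uniformly-recurrent indistinguishable pairs that fall outside the scope of \Cref{thm:multidim_sturmian_characterization}. No claim is needed about $\alpha^*$ itself; only that the individual $\alpha_n$ are totally irrational and that $(c_{\alpha_n}, c'_{\alpha_n})|_{W_n} = (x,y)|_{W_n}$. Provided this is in hand, a diagonal extraction delivers étale convergence to $(x,y)$, completing the proof of \Cref{conj:etale-limite}. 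I expect the inductive feasibility argument in the previous paragraph, which is the multidimensional analogue of the classical fact that every factor of a Sturmian shift extends to a Sturmian sequence with irrational slope, to be the core difficulty and the step most deserving of a detailed combinatorial treatment using the dendric-like tree structure of the extension graphs.
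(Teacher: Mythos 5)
This statement is not a theorem of the paper but an open conjecture: the authors prove it only for $d=1$ (by citing \cite{BarLabSta2021}) and explicitly state that the case $d>1$ remains open because the many possible modes of convergence of $(\alpha_n)$ have no known combinatorial description. So there is no proof in the paper to compare against, and your proposal should be judged on whether it actually closes the conjecture. It does not. Your reduction is sound as far as it goes: if for every finite connected window $W_n\supset F$ one can find a totally irrational $\alpha_n$ with $c_{\alpha_n}|_{W_n}=x|_{W_n}$ (whence also $c'_{\alpha_n}|_{W_n}=y|_{W_n}$, since the flip at $F$ is forced), then étale convergence to $(x,y)$ follows immediately, because all the pairs share the difference set $F$. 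In other words, the conjecture is equivalent to the finite realizability statement $R_n\neq\varnothing$.

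The gap is that you give no argument for $R_n\neq\varnothing$; you assert that the complexity equality $\#\Lcal_S(x)=\#(F-S)$ and the acyclicity of extension graphs ``should prevent'' infeasibility, but this is precisely the missing converse direction. Theorem~\ref{maintheorem:ddim-complexity-is-FminusS} shows that indistinguishability plus the flip condition forces Sturmian-like complexity and dendric-like extension graphs; it does not show that a finite pattern with these combinatorial properties, anchored at the difference set, is geometrically realizable as the central patch of a codimension-one cut and project configuration. That implication is only established in the paper under uniform recurrence (Theorem~\ref{thm:multidim_sturmian_characterization}, whose proof needs the factor map onto a circle rotation built by induction on dimension), and the non-uniformly-recurrent pairs are exactly the ones the conjecture is about --- the example of \Cref{fig:etale} shows the target pairs need not be in the orbit closure of any single $c_\alpha$. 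In $d=1$ the analogous feasibility was obtained by an explicit case analysis of limits from above and below; for $d>1$ no such analysis exists, and your ``inductive feasibility argument adding one cell at a time'' would have to rule out that the half-space constraints on $\alpha$ accumulated from $x|_{W_n}$ become inconsistent, which is not a consequence of any lemma in the paper. As written, the proposal restates the conjecture in the language of nonempty semialgebraic slope regions and identifies the difficulty correctly, but does not prove it.
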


It was proved that \Cref{conj:etale-limite} holds when $d=1$, see
\cite[Theorem B]{BarLabSta2021}. Proving it for $d>1$ is harder due to the
various ways a sequence $(\alpha_n)_{n\in\NN}$ can converge to some vector
$\alpha\in[0,1)^d$ leading to infinitely many \'etale limits associated to a single vector.
When $d=1$, there are only two such ways: from above or from below. Describing
combinatorially what happens in these two cases was sufficient in
\cite{BarLabSta2021} to prove the result. An analogue combinatorial description of all different behaviors when $d>1$ is still open.

In \cite[Theorem C]{BarLabSta2021}, indistinguishable asymptotic pairs were
totally described when $d=1$ by the image under substitutions of characteristic
Sturmian sequences. Describing indistinguishable asymptotic pairs in general
when $d>1$ (other than those satisfying the flip condition or some affine version of it) remains an open
question.

\begin{mainquestion}\label{Q:derived-from}
	Let $d\geq1$ and $x,y \in\alfad^{\ZZd}$ be an indistinguishable asymptotic pair. 
    Does there exists a sequence of totally irrational vectors $(\alpha_n)_{n
    \in \NN}$ such that $(x,y)$ can be derived from the \'etale limit of the sequence of asymptotic pairs
    $(c_{\alpha_n},c'_{\alpha_n})_{n \in \NN}$?
\end{mainquestion}

Our current work also leads to another interesting question.
In dimension 1, it is known at least since~\cite{MR1507944}
that a sequence of factor complexity less than or equal to $n$ is eventually periodic.
In two dimensions, it is still an open problem 
\cite{MR3356945,MR4073398}
known as Nivat's conjecture
\cite{1997-nivat-talk}
whether a configuration $x$
for which there are $n,m \in \NN$ with $\#\Lcal_{(n,m)}(x)\leq nm$
is periodic or not.
Another question which seems to have been overlooked due to the difficulty of
settling Nivat's conjecture is to describe the minimal complexity of an aperiodic
configuration (trivial stabilizer under the shift map, that is $\sigma^n(x)=x$ only holds for $n =0$) which admits a totally irrational vector of symbols frequencies.
When $d=1$, we know that such sequences have complexity at least $n+1$ and are realized by Sturmian configurations.
However, when $d=2$, configurations with rectangular pattern complexity $mn+1$ 
are not uniformly recurrent and
do not have a totally irrational vector of symbol frequencies~\cite{MR1719387}. As the symbol frequencies of the multidimensional Sturmian configurations $c_\alpha$ and $c'_\alpha$ is $\alpha$, it follows by \Cref{thm:multidim_sturmian_characterization} and
\Cref{maintheorem:ddim-complexity-is-FminusS} that they provide an upper bound for this problem, namely, that these sequences can be realized with 
complexity $\#(F-S)$ for every pattern of connected support $S$.
According to Cassaigne and Moutot (personal communication, January 2023),
there exist $2$-dimensional configurations with totally irrational vector
of symbol frequencies with pattern complexity strictly less than $\#(F-S)$ for infinitely many connected supports $S$.
Therefore, we ask the following question.

\begin{mainquestion}
    Let $d\geq1$. Let $x\in\alfad^{\ZZd}$ be a configuration with trivial
    stabilizer and assume that the frequencies of symbols in $x$ 
    exist and form a totally irrational vector.
    Let $S\subset\ZZd$ be a nonempty connected finite support.
    What is the greatest lower bound for the pattern complexity $\#\Lcal_S(x)$?
\end{mainquestion}

It is known that bispecial factors within the language of a Sturmian sequence
of slope $\alpha\in[0,1)$ are related to the convergents of the continued
fraction expansion of $\alpha$ \cite{MR1468450}. Since our work extends the
notion of bispecial factors to the setup of multidimensional Sturmian
configurations (see \Cref{fig:bispecial-pattern}), it is natural to ask the
following question about simultaneous Diophantine approximation
\cite{MR568710}.

\begin{figure}[ht]
\begin{center}
    \newcommand{\symbb}[1]{$\mathtt{#1}$}		%

\begin{tabular}{cccc}

\begin{tikzpicture}
[baseline=-\the\dimexpr\fontdimen22\textfont2\relax,
    fonce/.style={fill=blue!20,draw=black}, 
    normal/.style={draw=gray!20}, 
    vide/.style={draw=none}, 
    ampersand replacement=\&]
  \matrix[nodes={
       minimum size=1.2ex,text width=1.2ex,
       text height=1.2ex,inner sep=3pt,draw={gray!20},align=center,
       anchor=base,
   }, row sep=1pt,column sep=1pt,
  ] (config) {
    \& \& \& \& \\
    \& \& \& \& \node [normal] {\symbb{2}};\\
    \& \& \& \& \node [normal] {\symbb{1}};\\
    \& \& \& \& \node [normal] {\symbb{0}};\\
    \& \& \& \& \node [normal] {\symbb{2}};\\
    \&
    \node [normal] {\symbb{1}};\&
    \node [normal] {\symbb{0}};\&
    \node [normal] {\symbb{2}};\&
    \node [normal] {\symbb{1}};\\
};
\node[draw,rectangle,dashed,help lines,fit=(config), inner sep=0.5ex] {};
\end{tikzpicture}

    &

\begin{tikzpicture}
[baseline=-\the\dimexpr\fontdimen22\textfont2\relax,
    fonce/.style={fill=blue!20,draw=black}, 
    normal/.style={draw=gray!20}, 
    vide/.style={draw=none}, 
    ampersand replacement=\&]
  \matrix[nodes={
       minimum size=1.2ex,text width=1.2ex,
       text height=1.2ex,inner sep=3pt,draw={gray!20},align=center,
       anchor=base,
   }, row sep=1pt,column sep=1pt,
  ] (config) {
    \& \& \& \& \node [fonce] {\symbb{0}};\\
    \& \& \& \& \node [normal] {\symbb{2}};\\
    \& \& \& \& \node [normal] {\symbb{1}};\\
    \& \& \& \& \node [normal] {\symbb{0}};\\
    \& \& \& \& \node [normal] {\symbb{2}};\\
    \node [fonce] {\symbb{1}};\&
    \node [normal] {\symbb{1}};\&
    \node [normal] {\symbb{0}};\&
    \node [normal] {\symbb{2}};\&
    \node [normal] {\symbb{1}};\\
};
\node[draw,rectangle,dashed,help lines,fit=(config), inner sep=0.5ex] {};
\end{tikzpicture}

    &

\begin{tikzpicture}
[baseline=-\the\dimexpr\fontdimen22\textfont2\relax,
    fonce/.style={fill=blue!20,draw=black}, 
    normal/.style={draw=gray!20}, 
    vide/.style={draw=none}, 
    ampersand replacement=\&]
  \matrix[nodes={
       minimum size=1.2ex,text width=1.2ex,
       text height=1.2ex,inner sep=3pt,draw={gray!20},align=center,
       anchor=base,
   }, row sep=1pt,column sep=1pt,
  ] (config) {
    \& \& \& \& \node [fonce] {\symbb{0}};\\
    \& \& \& \& \node [normal] {\symbb{2}};\\
    \& \& \& \& \node [normal] {\symbb{1}};\\
    \& \& \& \& \node [normal] {\symbb{0}};\\
    \& \& \& \& \node [normal] {\symbb{2}};\\
    \node [fonce] {\symbb{2}};\&
    \node [normal] {\symbb{1}};\&
    \node [normal] {\symbb{0}};\&
    \node [normal] {\symbb{2}};\&
    \node [normal] {\symbb{1}};\\
};
\node[draw,rectangle,dashed,help lines,fit=(config), inner sep=0.5ex] {};
\end{tikzpicture}

    &

\begin{tikzpicture}
[baseline=-\the\dimexpr\fontdimen22\textfont2\relax,
    fonce/.style={fill=blue!20,draw=black}, 
    normal/.style={draw=gray!20}, 
    vide/.style={draw=none}, 
    ampersand replacement=\&]
  \matrix[nodes={
       minimum size=1.2ex,text width=1.2ex,
       text height=1.2ex,inner sep=3pt,draw={gray!20},align=center,
       anchor=base,
   }, row sep=1pt,column sep=1pt,
  ] (config) {
    \& \& \& \& \node [fonce] {\symbb{1}};\\
    \& \& \& \& \node [normal] {\symbb{2}};\\
    \& \& \& \& \node [normal] {\symbb{1}};\\
    \& \& \& \& \node [normal] {\symbb{0}};\\
    \& \& \& \& \node [normal] {\symbb{2}};\\
    \node [fonce] {\symbb{2}};\&
    \node [normal] {\symbb{1}};\&
    \node [normal] {\symbb{0}};\&
    \node [normal] {\symbb{2}};\&
    \node [normal] {\symbb{1}};\\
};
\node[draw,rectangle,dashed,help lines,fit=(config), inner sep=0.5ex] {};
\end{tikzpicture}

\end{tabular}
\end{center}
    \caption{On the left, an L-shape pattern of support
    $\{(1,0),(2,0),(3,0),(4,0),(4,1),(4,2),(4,3),(4,4)\}$ is shown.
    It is bispecial at positions $a=(0,0)$ and $b=(4,5)$ because
    it can be extended in more than one way at these positions
    within the language of the configurations $x$ and $y$ 
    shown in \Cref{fig:intro-sturmian-config-pair}.
    Thus $b-a=(4,5)\in V_\alpha$
    when $\balpha=(\sqrt{2}/2,\sqrt{19}-4)$.}
    \label{fig:bispecial-pattern}
\end{figure}

\begin{mainquestion}
    Let $d\geq1$ and $\alpha \in [0,1)^d$ be a totally irrational vector.
    What is the relation between the set
    \[
        V_\alpha=\{b-a\colon \text{ there exists } w\in\Lcal_S(c_\alpha)
                    \text{ which is bispecial at positions } a,b\in\ZZd\}
    \]
    and simultaneous Diophantine approximations 
    of the vector $\alpha$?
\end{mainquestion}

	\textbf{Structure of the article}.
    Preliminary properties of indistinguishable asymptotic pairs are presented
    in \Cref{sec:preliminaries}.
    In \Cref{sec:proof_of_thm_B}, we study the pattern complexity
    of multidimensional indistinguishable asymptotic pairs satisfying the flip condition
    and we prove \Cref{maintheorem:ddim-complexity-is-FminusS}.
    In \Cref{sec:multidim_sturmian}, we define characteristic Sturmian configurations in $\ZZd$
    from codimension-one cut and project schemes.
    We prove that they are indistinguishable asymptotic pairs satisfying the flip
    condition.
    In \Cref{sec:proof_of_thm_A}, we complete the proof of
    \Cref{thm:multidim_sturmian_characterization}, more precisely that
    uniformly recurrent indistinguishable asymptotic pairs satisfying the flip
    condition are multidimensional Sturmian configurations.
    In the appendix (\Cref{sec:appendix}), we provide an analogous notion of
    indistinguishable pairs for pairs of asymptotic configurations on a countable group and provide proofs of their basic properties for further reference.

	\textbf{Acknowledgments}. 
    We are very grateful to an anonymous reviewer who suggested several improvements. The two authors were supported by the Agence Nationale de la Recherche through the
    projects CODYS (ANR-18-CE40-0007), CoCoGro (ANR-16-CE40-0005) and
    IZES (ANR-22-CE40-0011). S. Barbieri was also supported by the FONDECYT grants 11200037 and 1240085. 
    This work originated from a previous work with \v{S}. Starosta 
    and a visit of the two authors in
    October 2019 supported by PHC Barrande, a France-Czech Republic bilateral
    funding and grant no. 7AMB18FR048 of MEYS of Czech Republic.

	\section{Preliminaries}\label{sec:preliminaries}
	
	We denote by $\NN$ the set of non-negative integers. Intervals consisting of integers are written using the notation $\llbracket n,m \rrbracket = \{ k \in \ZZ : n \leq k \leq m  \}$, for $n,m \in \ZZ$ with $n \leq m$. A finite subset $S\subset\ZZd$ is \define{connected} if the subgraph induced by the vertices $S$ within the graph with vertices $V = \ZZd$ and edges
		$E=\{(u,u+e_i)\colon u\in\ZZd,1\leq i\leq d\}$ is connected, where $e_i$ is the canonical vector with $1$ on position $i$ and $0$ elsewhere.

	Let $\Sigma$ be a finite set which we call \define{alphabet} and $d$ a positive integer. An element $x \in \Sigma^{\ZZd} = \{x \colon \ZZd \to \Sigma  \}$ is called a \define{configuration}. For $u \in \ZZd$ we denote the value $x(u)$ by $x_u$. We endow set of all configurations $\Sigma^{\ZZd}$ with the prodiscrete topology.
	The \define{shift action} $\ZZd \overset{\sigma}{\curvearrowright} \Sigma^{\ZZd}$ is given by the map $\sigma\colon \ZZd \times \Sigma^{\ZZd}\to \Sigma^{\ZZd}$ where
	\[ \sigma^u(x)_v \isdef \sigma(u,x)_v =  x_{u+v} \quad \mbox{ for every } u,v \in \ZZd, x \in \Sigma^{\ZZd}.  \]

	The orbit of $x \in \Sigma^{\ZZd}$ is the set $\Orb(x) = \{\sigma^{v}(x) : v \in \ZZd \}$. For a finite subset $S \subset \ZZd$, a function $p \colon S \to \Sigma$ 
    is called a \define{pattern} and the set $S$ is its \define{support}. 
    We denote it as $p \in \Sigma^S$.
    Given a pattern $p \in \Sigma^S$, the \define{cylinder} centered at $p$ is $[p] = \{ x \in \Sigma^{\ZZd} \colon x|_S = p \}$. 
	For finite subset $S \subset \ZZd$, the \define{language with support $S$} of a configuration $x$ is the set of patterns \[\Lcal_{S}(x) = \{ p \in \Sigma^S : \mbox{ there is } u \in \ZZd \mbox{ such that } \sigma^u(x) \in [p]\}. \]
	The \define{language of $x$} is the union $\Lcal(x)$ of the sets $\Lcal_S(x)$ for every finite $S \subset \ZZd$. 
    We say a pattern $p$ \define{appears} in $x \in \Sigma^{\ZZd}$ if there exists $u \in \ZZd$ such that $\sigma^u(x) \in [p]$. Let us also denote by $\occ_p(x) = \{u \in \ZZd \colon \sigma^u(x) \in [p]\}$ the set of \define{occurrences} of a pattern $p$ in the configuration $x \in \Sigma^{\ZZd}$.
	
    \begin{definition}%
		We say that two configurations $x,y$ are \define{asymptotic}, or that $(x,y)$ is an asymptotic pair, if the set $F = \{ u \in \ZZd \colon x_u \neq y_u\}$ is finite. The set $F$ is called the \define{difference set} of $(x,y)$. If $x=y$ we say that the asymptotic pair is \define{trivial}.
	\end{definition}

    Observe that when $x,y \in \Sigma^{\ZZd}$ are asymptotic sequences,
    the difference $\occ_p(x)\setminus \occ_p(y)$ is finite
    because the occurrences of $p$ are the same far from the difference set.
    More precisely, 
    let $F$ denote the difference set of an asymptotic pair $x,y$.
    Let $S$ denote the support of a pattern $p$, 
    then for every $u \in \ZZd \setminus (F-S)$ and every
    $s \in S$, we have $s+u \notin F$ and thus \[\sigma^u(x)_s = x_{u+s} =
    y_{u+s} = \sigma^u(y)_s,\] which implies 
    in turn that
    $u\in \occ_p(x)$ if and only if $u\in\occ_p(y)$
    for all $u\in\ZZd\setminus(F-S)$.
    Therefore,
    \[
        \occ_p(x)\setminus \occ_p(y) \subseteq F-S
        = \{ g-s \colon g \in F, s \in S \}.
    \] 
    In particular, the set $\occ_p(x)\setminus \occ_p(y)$ is finite. Moreover, since $F$
    is the difference set of $x$ and $y$, we have
    \[
        \occ_p(x)\setminus \occ_p(y)
        = 
        \occ_p(x)\cap (F-S).
    \] 

    \begin{definition}\label{def:indistinguishable}
        We say that two asymptotic configurations $x,y \in \Sigma^{\ZZd}$ are
        \define{indistinguishable} if 
        for every pattern $p$ of finite support, we have
    \[
        \#\left(\occ_p(x)\setminus \occ_p(y)\right) = 
        \#\left(\occ_p(y)\setminus \occ_p(x)\right).
    \] 
	\end{definition}

    Notice that \Cref{def:indistinguishable} holds only for asymptotic pairs. A
    more general notion, known as \define{local indistinguishability} exists in
    the context of tilings of $\RR^d$, see \cite[\S
    5.1.1]{baake_aperiodic_2013}.  In terms of subshifts, two configurations
    $x,y\in\Sigma^\ZZd$ are \define{locally indistinguishable}, or LI for short,
    if they have the same language, i.e., $\Lcal(x)=\Lcal(y)$. In this work, we
    always write ``indistinguishable asymptotic pair'' to emphasize the context
    in which \Cref{def:indistinguishable} holds.

	An example of an indistinguishable asymptotic pair over $\ZZ^2$ 
    is shown on~\Cref{fig_same_orbit_example}, see also \Cref{fig:intro-sturmian-config-pair}
    in the introduction.
	\begin{figure}[ht!]
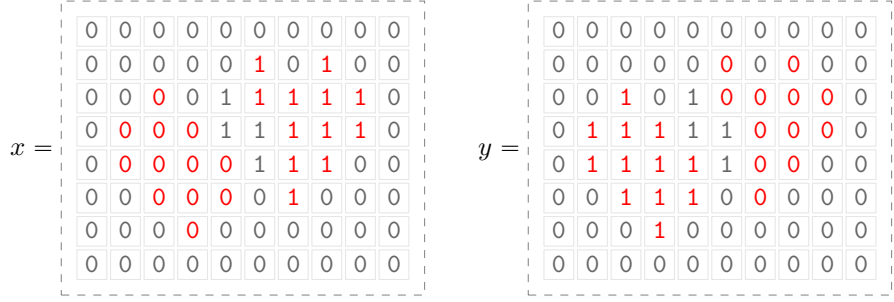

		\begin{align*}
		x = \xConfig{
			{\color{black!60}\symb{0}}\& {\color{black!60}\symb{0}}\& {\color{black!60}\symb{0}}\& {\color{black!60}\symb{0}}\& {\color{black!60}\symb{0}}\& {\color{black!60}\symb{0}}\& {\color{black!60}\symb{0}}\& {\color{black!60}\symb{0}}\& {\color{black!60}\symb{0}}\& {\color{black!60}\symb{0}}\\
			{\color{black!60}\symb{0}}\& {\color{black!60}\symb{0}}\& {\color{black!60}\symb{0}}\& {\color{black!60}\symb{0}}\& {\color{black!60}\symb{0}}\& {\color{red}\symb{1}} \& {\color{black!60}\symb{0}}\& {\color{red}\symb{1}} \& {\color{black!60}\symb{0}}\& {\color{black!60}\symb{0}}\\
			{\color{black!60}\symb{0}}\& {\color{black!60}\symb{0}}\& {\color{red}\symb{0}} \& {\color{black!60}\symb{0}}\& {\color{black!60}\symb{1}} \& {\color{red}\symb{1}} \& {\color{red}\symb{1}} \& {\color{red}\symb{1}} \& {\color{red}\symb{1}} \& {\color{black!60}\symb{0}}\\
			{\color{black!60}\symb{0}}\& {\color{red}\symb{0}} \& {\color{red}\symb{0}} \& {\color{red}\symb{0}} \& {\color{black!60}\symb{1}} \& {\color{black!60}\symb{1}} \& {\color{red}\symb{1}} \& {\color{red}\symb{1}} \& {\color{red}\symb{1}} \& {\color{black!60}\symb{0}}\\
			{\color{black!60}\symb{0}}\& {\color{red}\symb{0}} \& {\color{red}\symb{0}} \& {\color{red}\symb{0}} \& {\color{red}\symb{0}} \& {\color{black!60}\symb{1}} \& {\color{red}\symb{1}} \& {\color{red}\symb{1}} \& {\color{black!60}\symb{0}}\& {\color{black!60}\symb{0}}\\
			{\color{black!60}\symb{0}}\& {\color{black!60}\symb{0}}\& {\color{red}\symb{0}} \& {\color{red}\symb{0}} \& {\color{red}\symb{0}} \& {\color{black!60}\symb{0}}\& {\color{red}\symb{1}} \& {\color{black!60}\symb{0}}\& {\color{black!60}\symb{0}}\& {\color{black!60}\symb{0}}\\
			{\color{black!60}\symb{0}}\& {\color{black!60}\symb{0}}\& {\color{black!60}\symb{0}}\& {\color{red}\symb{0}} \& {\color{black!60}\symb{0}}\& {\color{black!60}\symb{0}}\& {\color{black!60}\symb{0}}\& {\color{black!60}\symb{0}}\& {\color{black!60}\symb{0}}\& {\color{black!60}\symb{0}}\\
			{\color{black!60}\symb{0}}\& {\color{black!60}\symb{0}}\& {\color{black!60}\symb{0}}\& {\color{black!60}\symb{0}}\& {\color{black!60}\symb{0}}\& {\color{black!60}\symb{0}}\& {\color{black!60}\symb{0}}\& {\color{black!60}\symb{0}}\& {\color{black!60}\symb{0}}\& {\color{black!60}\symb{0}}\\
		}
		\qquad \qquad
		y = \xConfig{
			{\color{black!60}\symb{0}}\& {\color{black!60}\symb{0}}\& {\color{black!60}\symb{0}}\& {\color{black!60}\symb{0}}\& {\color{black!60}\symb{0}}\& {\color{black!60}\symb{0}}\& {\color{black!60}\symb{0}}\& {\color{black!60}\symb{0}}\& {\color{black!60}\symb{0}}\& {\color{black!60}\symb{0}}\\
			{\color{black!60}\symb{0}}\& {\color{black!60}\symb{0}}\& {\color{black!60}\symb{0}}\& {\color{black!60}\symb{0}}\& {\color{black!60}\symb{0}}\& {\color{red}\symb{0}} \& {\color{black!60}\symb{0}}\& {\color{red}\symb{0}} \& {\color{black!60}\symb{0}}\& {\color{black!60}\symb{0}}\\
			{\color{black!60}\symb{0}}\& {\color{black!60}\symb{0}}\& {\color{red}\symb{1}} \& {\color{black!60}\symb{0}}\& {\color{black!60}\symb{1}} \& {\color{red}\symb{0}} \& {\color{red}\symb{0}} \& {\color{red}\symb{0}} \& {\color{red}\symb{0}} \& {\color{black!60}\symb{0}}\\
			{\color{black!60}\symb{0}}\& {\color{red}\symb{1}} \& {\color{red}\symb{1}} \& {\color{red}\symb{1}} \& {\color{black!60}\symb{1}} \& {\color{black!60}\symb{1}} \& {\color{red}\symb{0}} \& {\color{red}\symb{0}} \& {\color{red}\symb{0}} \& {\color{black!60}\symb{0}}\\
			{\color{black!60}\symb{0}}\& {\color{red}\symb{1}} \& {\color{red}\symb{1}} \& {\color{red}\symb{1}} \& {\color{red}\symb{1}} \& {\color{black!60}\symb{1}} \& {\color{red}\symb{0}} \& {\color{red}\symb{0}} \& {\color{black!60}\symb{0}}\& {\color{black!60}\symb{0}}\\
			{\color{black!60}\symb{0}}\& {\color{black!60}\symb{0}}\& {\color{red}\symb{1}} \& {\color{red}\symb{1}} \& {\color{red}\symb{1}} \& {\color{black!60}\symb{0}}\& {\color{red}\symb{0}} \& {\color{black!60}\symb{0}}\& {\color{black!60}\symb{0}}\& {\color{black!60}\symb{0}}\\
			{\color{black!60}\symb{0}}\& {\color{black!60}\symb{0}}\& {\color{black!60}\symb{0}}\& {\color{red}\symb{1}} \& {\color{black!60}\symb{0}}\& {\color{black!60}\symb{0}}\& {\color{black!60}\symb{0}}\& {\color{black!60}\symb{0}}\& {\color{black!60}\symb{0}}\& {\color{black!60}\symb{0}}\\
			{\color{black!60}\symb{0}}\& {\color{black!60}\symb{0}}\& {\color{black!60}\symb{0}}\& {\color{black!60}\symb{0}}\& {\color{black!60}\symb{0}}\& {\color{black!60}\symb{0}}\& {\color{black!60}\symb{0}}\& {\color{black!60}\symb{0}}\& {\color{black!60}\symb{0}}\& {\color{black!60}\symb{0}}\\
		} \;
		\end{align*}
		\caption{A non-trivial indistinguishable asymptotic pair for $\Sigma = \{\symb{0},\symb{1}\}$ and $d=2$ where $y = \sigma^{(3,1)}(x)$. The difference set is highlighted in red and the portions of the configurations which are not shown consist only of the symbol $\symb{0}$.}
		\label{fig_same_orbit_example}
	\end{figure}

	Next we state equivalent conditions for indistinguishability which we will use interchangeably in the proofs that follow. We use the symbol $\indicator{A}$ to indicate the characteristic function of a set $A$.

	\begin{remark}
        The following conditions are equivalent:
        \begin{enumerate}
        \item $x$ and $y$ are indistinguishable asymptotic configurations with difference set $F$,
        \item for every pattern $p$ with finite support $S \subset \ZZd$, we have 
            \[
                \#\left(\occ_p(x)\cap (F-S)\right) = 
                \#\left(\occ_p(y)\cap (F-S)\right),
            \] 
        \item for every pattern $p$ with finite support $S \subset \ZZd$, we have 
            \[ 
            \Delta_p (x,y) 
            \isdef \sum_{u \in F-S } \indicator{[p]}(\sigma^u(y))-\indicator{[p]}(\sigma^u(x))
            = 0.
            \]
        \end{enumerate}
	\end{remark}
	
	\subsection{Properties of indistinguishable asymptotic pairs}

		\begin{proposition}\label{prop:trivialite}
			Let $S_1 \subset S_2$ be finite subsets of $\ZZd$, and let $p \in \Sigma^{S_1}$. 
			We have \[ \Delta_p(x,y) = \sum_{q \in \Sigma^{S_2}, [q] \subset [p] } \Delta_q(x,y).\]
		\end{proposition}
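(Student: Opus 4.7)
The plan is to rewrite both sides as sums indexed over all of $\ZZd$ (which are secretly finite) and use additivity of the indicator function over a partition of the cylinder $[p]$ into finer cylinders.

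First, I would observe that the definition of $\Delta_p(x,y)$ is written as a sum over $u \in F - S$, where $S$ is the support of $p$, but the summand $\indicator{[p]}(\sigma^u(y)) - \indicator{[p]}(\sigma^u(x))$ vanishes for every $u \notin F - S$. Indeed, the remark preceding the proposition shows that if $u \notin F - S$ then $\sigma^u(x)|_S = \sigma^u(y)|_S$, so $\sigma^u(x) \in [p]$ if and only if $\sigma^u(y) \in [p]$. Consequently, for any finite support $S$ containing the support of $p$, we may as well write
\[
    \Delta_p(x,y) = \sum_{u \in \ZZd} \bigl(\indicator{[p]}(\sigma^u(y)) - \indicator{[p]}(\sigma^u(x))\bigr),
\]
with only finitely many nonzero terms. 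This harmonizes the two sides of the identity, which a priori are indexed over $F - S_1$ and $F - S_2$ respectively.

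Next, the key set-theoretic step: since $S_1 \subset S_2$, the cylinder $[p]$ decomposes as the disjoint union
\[
    [p] = \bigsqcup_{\substack{q \in \Sigma^{S_2} \\ [q] \subset [p]}} [q],
\]
the $q$ in question being exactly the extensions of $p$ from $S_1$ to $S_2$. Taking characteristic functions,
\[
    \indicator{[p]} = \sum_{\substack{q \in \Sigma^{S_2} \\ [q] \subset [p]}} \indicator{[q]}.
\]

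Finally, substituting this identity into the reformulated expression for $\Delta_p(x,y)$ and swapping the two finite sums yields
\[
    \Delta_p(x,y) = \sum_{\substack{q \in \Sigma^{S_2} \\ [q] \subset [p]}} \sum_{u \in \ZZd} \bigl(\indicator{[q]}(\sigma^u(y)) - \indicator{[q]}(\sigma^u(x))\bigr) = \sum_{\substack{q \in \Sigma^{S_2} \\ [q] \subset [p]}} \Delta_q(x,y),
\]
as required. There is no real obstacle here: the only subtlety is the bookkeeping reconciliation that $\Delta_p$ and $\Delta_q$ are originally written as sums over different index sets $F - S_1$ and $F - S_2$, which is resolved by the observation that extending either sum to all of $\ZZd$ introduces only zero summands.
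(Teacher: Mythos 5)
Your proof is correct and follows essentially the same route as the paper: partition $[p]$ into the cylinders $[q]$ with $q\in\Sigma^{S_2}$, note the summand vanishes outside the relevant translate of the difference set so the index sets can be reconciled, and swap the (effectively finite) sums. The only cosmetic difference is that you extend both sums to all of $\ZZd$, whereas the paper extends the sum for $\Delta_p$ from $F-S_1$ to $F-S_2$; the substance is identical.
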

		
		\begin{proof}
			Notice that $[p]$ is the disjoint union of all $[q]$ where $q \in \Sigma^{S_2}$ and $[q] \subset [p]$.  It follows that for any $z \in \Sigma^{\ZZd}$ we have $ \indicator{[p]}(z) =  1$ if and only if there is a unique $q \in \Sigma^{S_2}$ such that $[q] \subset [p]$ and $\indicator{[q]}(z)=1$. Letting $F$ be the difference set of $x,y$ we obtain,
			
			\begin{align*}
			\Delta_p(x,y) & = \sum_{u \in F-S_1}\indicator{[p]}(\sigma^u(y))-\indicator{[p]}(\sigma^u(x)) \\
			& = \sum_{v \in F-S_2}\indicator{[p]}(\sigma^v(y))-\indicator{[p]}(\sigma^v(x)) \\
			& = \sum_{v \in F-S_2}  \sum_{\substack{q \in \Sigma^{S_2} \\ [q] \subset [p]} }\indicator{[q]}(\sigma^v(y))-\indicator{[q]}(\sigma^v(x)).
			\end{align*}
			Exchanging the order of the sums yields the result.\end{proof}
		
		In particular, to prove that two asymptotic configurations are indistinguishable, it suffices to verify the condition $\Delta_p(x,y)=0$ on patterns $p$ whose supports form a collection of finite subsets of $\ZZd$ with the property that every finite subset of $\ZZd$ is contained in some set in the collection. In particular, we may consider the collection of all rectangles (products of bounded integer intervals) or the collection of all connected finite subsets of $\ZZd$.
		
		The affine group $\operatorname{Aff}(\ZZd)$ of $\ZZd$ is the group of all invertible affine transformations from $\ZZd$ into itself. We can represent it as the semidirect product $\operatorname{Aff}(\ZZd) = \ZZd \rtimes \operatorname{GL}_d(\ZZ)$, where $\operatorname{GL}_d(\ZZ)$ is the group of all invertible $d \times d$ matrices with integer entries, which represents the automorphisms of $\ZZd$, and the factor $\ZZd$ on the left represents translations.

	\begin{proposition}\label{prop:shifted_SI}
		Let $(x,y)$ be an indistinguishable asymptotic pair, then
		\begin{enumerate}
			\item $(\sigma^u(x),\sigma^u(y))$ is an indistinguishable asymptotic pair for every $u \in \ZZd$.
			\item $(x\circ A, y \circ A)$ is an indistinguishable asymptotic pair for every $A \in \operatorname{GL}_d(\ZZ)$.
		\end{enumerate}
	In particular, the set of indistinguishable asymptotic pairs is invariant under the action of $\operatorname{Aff}(\ZZd)$.
	\end{proposition}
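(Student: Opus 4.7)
The plan is to verify that both assertions reduce to a natural bijection on occurrence sets, which automatically preserves the cardinality equalities in \Cref{def:indistinguishable}.

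For part (1), first I would observe that $(x,y)$ asymptotic with difference set $F$ immediately implies $(\sigma^u(x), \sigma^u(y))$ is asymptotic with difference set $F-u$. Then for any pattern $p$ with finite support $S$, a direct unfolding of the definition shows
\[
\occ_p(\sigma^u(x)) = \{v \in \ZZd : \sigma^{v+u}(x) \in [p]\} = \occ_p(x) - u,
\]
and likewise for $y$. Hence $\occ_p(\sigma^u(x)) \setminus \occ_p(\sigma^u(y)) = (\occ_p(x) \setminus \occ_p(y)) - u$, so the two sets $\occ_p(\sigma^u(x)) \setminus \occ_p(\sigma^u(y))$ and $\occ_p(\sigma^u(y)) \setminus \occ_p(\sigma^u(x))$ are translates of the corresponding sets for $(x,y)$. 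Since translation is a bijection, indistinguishability is preserved.

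For part (2), fix $A \in \operatorname{GL}_d(\ZZ)$. Since $(x \circ A)_v = x_{Av}$, the pair $(x \circ A, y \circ A)$ is asymptotic with difference set $A^{-1}F$. Given a pattern $p \in \Sigma^S$, I would introduce the auxiliary pattern $q \in \Sigma^{AS}$ defined by $q_{As} \isdef p_s$, which is well-defined because $A$ is injective on $\ZZd$. Computing
\[
\sigma^v(x \circ A)_s = x_{A(v+s)} = x_{Av + As} = \sigma^{Av}(x)_{As},
\]
we see that $\sigma^v(x \circ A)|_S = p$ if and only if $\sigma^{Av}(x)|_{AS} = q$, so $\occ_p(x \circ A) = A^{-1}\occ_q(x)$. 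The same identity holds for $y$, which yields
\[
\occ_p(x \circ A) \setminus \occ_p(y \circ A) = A^{-1}\bigl(\occ_q(x) \setminus \occ_q(y)\bigr),
\]
and symmetrically. Since $A^{-1}$ is a bijection of $\ZZd$, the cardinalities of these sets agree with those of the corresponding sets for $(x,y)$ and $q$; applying the indistinguishability hypothesis to $q$ gives the desired equality for $p$.

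Combining (1) and (2) with the decomposition $\operatorname{Aff}(\ZZd) = \ZZd \rtimes \operatorname{GL}_d(\ZZ)$ yields the final assertion. I expect no real obstacle: the argument is entirely a bookkeeping calculation, and the only point requiring a moment of care is tracking how a pattern $p$ on support $S$ transforms into a pattern $q$ on support $AS$ under precomposition with $A$, together with the corresponding substitution $v \mapsto Av$ in the occurrence indices.
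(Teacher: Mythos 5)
Your proposal is correct and follows essentially the same route as the paper: translation of occurrence sets for part (1), and for part (2) the same auxiliary pattern $q\in\Sigma^{AS}$ with $q_{As}=p_s$ together with the identity $\occ_p(x\circ A)=A^{-1}\occ_q(x)$, the only cosmetic difference being that the paper phrases the conclusion via $\Delta_p$ and a finitely supported permutation rather than directly via bijections of the set differences.
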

	
	\begin{proof}
		Let $F$ be the difference set of $(x,y)$. A straightforward computation shows that the difference set of $(\sigma^u(x),\sigma^u(y))$ is $F_1 = F-u$ and the difference set of $(x\circ A, x \circ A)$ is $F_2 = A^{-1}(F)$.
		
		 Let $S \subset \ZZd$ be a finite set and $p \in \Sigma^S$. For the first claim we have
		
		\begin{align*}
\Delta_p(\sigma^u(x),\sigma^u(y)) & = \sum_{v \in F_1-S}\indicator{[p]}(\sigma^{v}(\sigma^u(y)))-\indicator{[p]}(\sigma^v( \sigma^u(y)  ))\\
		& = \sum_{v \in (F-u)-S}\indicator{[p]}(\sigma^{v+u}(y))-\indicator{[p]}(\sigma^{v+u}(y))\\
		& = \sum_{t \in F-S}\indicator{[p]}(\sigma^{t}(y))-\indicator{[p]}(\sigma^{t}(y)) = \Delta_p(x,y) = 0.
		\end{align*}
		Thus $ (\sigma^u(x),\sigma^u(y))$ is an indistinguishable asymptotic pair.
		
		For the second claim, let $q \in \Sigma^{A(S)}$ be the pattern given by $q_{As}=p_s$ for every $s \in S$. We note that for any $v \in \ZZd$, $\sigma^v(x)\in [q]$ if and only if $\sigma^{A^{-1}v}(x \circ A) \in [p]$. This means that $v \in \occ_q(x)$ if and only if $A^{-1}(v) \in \occ_p(x \circ A)$.
		
		As $(x,y)$ is an indistinguishable asymptotic pair, there is a finitely supported permutation $\pi$ of $\ZZd$ so that $\occ_q(x) = \pi(\occ_q(y))$. Then $\pi' = A \circ \pi \circ A^{-1}$ is a finitely supported permutation of $\ZZd$ so that $\occ_p(x\circ A)= \pi'(\occ_p(y\circ A))$. We conclude that $\Delta_p(x\circ A,y\circ A)=0$ and thus they are indistinguishable.\end{proof}

	Let $\Sigma_1,\Sigma_2$ be alphabets. A map $\phi \colon \Sigma_1^{\ZZd} \to \Sigma_2^{\ZZd}$ is a \define{sliding block code} if there exists a finite set $D \subset \ZZd$ and map $\Phi \colon \Sigma_1^D \to \Sigma_2$ called the \define{block code} such that
	
	\[ \phi(x)_{u} = \Phi( \sigma^{u}(x)|_{D})  \mbox{ for every } u \in \ZZd, x \in \Sigma_1^{\ZZd}.\]
	
	Notice that sliding block codes are continuous maps which commute with the shift action, that is, $\sigma^{u}(\phi(x)) = \phi(\sigma^{u}(x))$ for every $u \in \ZZd$ and $x \in \Sigma_1^{\ZZd}$.

	\begin{proposition}\label{prop:invariance_sliding_block_code}
		Let $x,y \in \Sigma_1^{\ZZd}$ be an indistinguishable asymptotic pair and $\phi \colon \Sigma_1^{\ZZd} \to \Sigma_2^{\ZZd}$ a sliding block code. The pair $\phi(x),\phi(y) \in \Sigma_2^{\ZZd}$ is an indistinguishable asymptotic pair.
	\end{proposition}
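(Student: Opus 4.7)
The plan is to transfer the indistinguishability condition from $x,y$ to $\phi(x),\phi(y)$ by pulling back cylinders through the sliding block code. Since sliding block codes commute with the shift and have local rules, occurrences of a pattern in $\phi(z)$ correspond to occurrences of a finite collection of larger patterns in $z$.

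First, I would verify that $(\phi(x),\phi(y))$ is asymptotic. Let $D\subset\ZZd$ be the neighborhood and $\Phi\colon \Sigma_1^D\to\Sigma_2$ the block code of $\phi$, and let $F$ be the difference set of $(x,y)$. For any $u\in\ZZd\setminus(F-D)$ we have $(u+D)\cap F=\varnothing$, so $\sigma^u(x)|_D=\sigma^u(y)|_D$ and thus $\phi(x)_u=\phi(y)_u$. Hence the difference set $F'$ of $(\phi(x),\phi(y))$ is contained in the finite set $F-D$.

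Next, I would describe preimages of cylinders. Fix a finite support $S\subset\ZZd$ and a pattern $q\in\Sigma_2^S$. Set
\[
P \isdef \{\, p\in\Sigma_1^{S+D} : \Phi(p|_{s+D})=q_s \text{ for every } s\in S\,\}.
\]
Then for any $z\in\Sigma_1^{\ZZd}$, the condition $\phi(z)\in[q]$ depends only on $z|_{S+D}$ and is equivalent to $z|_{S+D}\in P$. Therefore $\phi^{-1}([q])=\bigsqcup_{p\in P}[p]$ as a disjoint union of cylinders, which gives the pointwise identity
\[
\indicator{[q]}\!\circ\phi \;=\; \sum_{p\in P}\indicator{[p]}.
\]

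Finally, I would compute $\Delta_q(\phi(x),\phi(y))$ by summing over a support large enough to cover everything. Since $F'\subseteq F-D$, we have $F'-S\subseteq F-(S+D)$; moreover, as noted in the preliminaries, for $u$ outside $F-(S+D)$ the values $\sigma^u(x)|_{S+D}$ and $\sigma^u(y)|_{S+D}$ coincide, so we may freely enlarge the summation set. Using $\sigma^u\circ\phi=\phi\circ\sigma^u$, this gives
\begin{align*}
\Delta_q(\phi(x),\phi(y))
&= \sum_{u\in F-(S+D)} \bigl(\indicator{[q]}(\phi(\sigma^u y))-\indicator{[q]}(\phi(\sigma^u x))\bigr) \\
&= \sum_{u\in F-(S+D)} \sum_{p\in P} \bigl(\indicator{[p]}(\sigma^u y)-\indicator{[p]}(\sigma^u x)\bigr)
= \sum_{p\in P}\Delta_p(x,y) = 0,
\end{align*}
where the last equality uses that $(x,y)$ is indistinguishable, so each $\Delta_p(x,y)$ vanishes.

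There is no real obstacle here; the only subtlety is choosing the correct support $F-(S+D)$ for the summation so that the pulled-back identity $\phi^{-1}([q])=\bigsqcup_{p\in P}[p]$ can be applied termwise and the sum rearranged. Once that is in place, the result follows immediately by linearity.
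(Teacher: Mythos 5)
Your proof is correct and follows essentially the same route as the paper: you verify asymptoticity via the window $F-D$, decompose $\phi^{-1}([q])$ as a disjoint union of cylinders on the support $S+D$, and then sum over a sufficiently large finite window so that the sum rearranges into $\sum_{p}\Delta_p(x,y)=0$. The only cosmetic difference is that you sum over $F-(S+D)$ and identify each inner sum literally with $\Delta_p(x,y)$, whereas the paper sums over $W-S$ for a large window $W$ and compares counts of occurrences directly.
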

	
	\begin{proof}
		Let $F$ be the difference set of $x,y$ and $D \subset \ZZd$, $\Phi \colon \Sigma_1^D \to \Sigma_2$ be respectively the set and block code which define $\phi$. If $u \notin F-D$, then $\sigma^{u}(x)|_D = \sigma^{u}(y)|_D$ and thus $\phi(x)_{u} = \phi(y)_{u}$. As $F-D$ is finite, it follows that $\phi(x),\phi(y)$ are asymptotic.
		
		Let $S\subset \ZZd$ be finite and $p\colon S\to\Sigma_2$ be a pattern.
		Let $\phi^{-1}(p) \subset (\Sigma_1)^{D+S}$ be the set of patterns $q$ with support $D+S$ so that for every $s \in S$, $\Phi((q_{d+s})_{d \in D}) = p_s$. It follows that $\phi^{-1}([p]) = \bigcup_{q \in \phi^{-1}(p)}[q]$.
		
		Let $W \subset \ZZd$ be a finite set which is large enough such that $W \supseteq F\cup (D+F)$. We have,
		\begin{align*}
		\#\{u \in W-S \mid \sigma^u(\phi(x))\in[p]\}
		&= \sum_{q \in \phi^{-1}(p)}\#\{u\in W-S \mid \sigma^u(x)\in[q]\}\\
		&= \sum_{q \in \phi^{-1}(p)}\#\{u\in W-S \mid \sigma^u(y)\in[q]\}\\
		&= \#\{u\in W-S \mid \sigma^u(\phi(y))\in[p]\}.
		\end{align*}
		As $W \supseteq F$, we conclude that $\Delta_p(\phi(x), \phi(y))=0$ and therefore $(\phi(x), \phi(y))$
		is an indistinguishable asymptotic pair.
	\end{proof}

\begin{remark}
	The property of being an indistinguishable asymptotic pair is also preserved by $d$-dimensional substitutions and the proof is essentially the same as~\cite[Lemma 5.2]{BarLabSta2021}. We will not make use of this fact anywhere in the article. However, we remark that substitutions might be helpful in order to answer~\Cref{Q:derived-from},
	since they were the tool that provides the characterization of indistinguishable asymptotic pairs 
	for $d=1$, see~\cite[Theorem C]{BarLabSta2021}.
\end{remark}

	Let us recall that a sequence $(x_n)_{n \in \NN}$ of configurations in $\Sigma^{\ZZd}$ converges to $x \in \Sigma^{\ZZd}$ if for every $u \in \ZZd$ we have that $(x_n)_{u} = x_{u}$ for all large enough $n \in \NN$. In what follows we use a notion of convergence for asymptotic pairs which is stronger than the convergence in the prodiscrete topology in order to ensure that limits of asymptotic pairs are themselves asymptotic. This notion comes from interpreting the equivalence relation of asymptotic pairs as an ``\'etale equivalence relation''. For more information on \'etale equivalence relations and their role in the theory of topological orbit equivalence of Cantor minimal systems the reader can refer to~\cite{Put18}. 
	
	\begin{definition}\label{def:etale}
		Let $(x_n,y_n)_{n \in \NN}$ be a sequence of asymptotic pairs. We say that $(x_n,y_n)_{n \in \NN}$ \define{converges in the asymptotic relation} to a pair $(x,y)$ if $(x_n)_{n \in \NN}$ converges to $x$, $(y_n)_{n \in \NN}$ converges to $y$, and there exists a finite set $F \subset \ZZd$ so that $x_n|_{\ZZd \setminus F} = y_n|_{\ZZd \setminus F}$ for all large enough $n \in \NN$.
	\end{definition}

	If $(x_n,y_n)_{n \in \NN}$ converges in the asymptotic relation to a pair $(x,y)$, then the pair $(x,y)$ is necessarily asymptotic. We call $(x,y)$ the \define{\'etale limit} of $(x_n,y_n)_{n \in \NN}$. In the next proposition, we see that indistinguishability is preserved by \'etale limits.
	 
    \begin{proposition}\label{prop:limit-of-indist-is-indist}
		Let $(x_n,y_n)_{n \in \NN}$ be a sequence of asymptotic pairs in $\Sigma^{\ZZd}$ which converges in the asymptotic relation to $(x,y)$. If for every $n \in \NN$ we have that $(x_n,y_n)$ is indistinguishable, then $(x,y)$ is indistinguishable.
	\end{proposition}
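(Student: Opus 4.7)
The plan is to reduce the problem to a finite computation on a single common support. By the definition of convergence in the asymptotic relation, there is a finite set $F\subset\ZZd$ and an integer $N$ such that for every $n\geq N$ the difference set of $(x_n,y_n)$ is contained in $F$; moreover the difference set of the limit pair $(x,y)$ is also contained in $F$, since $x_n|_{\ZZd\setminus F}=y_n|_{\ZZd\setminus F}$ passes to the pointwise limit. I would fix an arbitrary pattern $p$ with finite support $S$ and aim to show $\Delta_p(x,y)=0$.

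First I would observe that whenever a pair $(x',y')$ has difference set contained in $F$, one can harmlessly enlarge the index set of $\Delta_{p}$ from $\mathrm{diff}(x',y')-S$ to $F-S$: any index $u\in(F-S)$ whose translate $u+S$ misses the actual difference set contributes $0$, because then $x'|_{u+S}=y'|_{u+S}$ and the two indicators coincide. Hence for every $n\geq N$,
\[
\sum_{u\in F-S}\bigl(\indicator{[p]}(\sigma^u(y_n))-\indicator{[p]}(\sigma^u(x_n))\bigr)=\Delta_p(x_n,y_n)=0,
\]
using the indistinguishability of $(x_n,y_n)$. The same enlargement applied to the limit pair gives
\[
\Delta_p(x,y)=\sum_{u\in F-S}\bigl(\indicator{[p]}(\sigma^u(y))-\indicator{[p]}(\sigma^u(x))\bigr).
\]

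The key remaining step is to transfer the identity from $n$ to the limit. The set $W\isdef(F-S)+S$ is finite, so prodiscrete convergence $x_n\to x$ and $y_n\to y$ gives an integer $N'\geq N$ such that $x_n|_W=x|_W$ and $y_n|_W=y|_W$ for every $n\geq N'$. For any $u\in F-S$ we have $u+S\subseteq W$, hence $\sigma^u(x_n)|_S=\sigma^u(x)|_S$ and $\sigma^u(y_n)|_S=\sigma^u(y)|_S$, so the indicators $\indicator{[p]}(\sigma^u(x_n))$ and $\indicator{[p]}(\sigma^u(x))$ (and likewise for $y$) coincide. Combining this with the two displays above yields $\Delta_p(x,y)=\Delta_p(x_{N'},y_{N'})=0$, as required.

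There is no real obstacle beyond being careful that the fixed finite set $F$ serves simultaneously as an upper bound on the difference sets of every $(x_n,y_n)$ for $n$ large and of the limit $(x,y)$; the rest is just the standard observation that $\Delta_p$ only depends on the values of the configurations on the finite set $(F-S)+S$, on which prodiscrete convergence stabilizes.
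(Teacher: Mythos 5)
Your proposal is correct and follows essentially the same route as the paper's proof: bound all difference sets by the common finite set $F$ from the definition of convergence in the asymptotic relation, note that $\Delta_p$ (equivalently, the count of occurrences of $p$ in $F-S$) only depends on the restriction of the configurations to the finite window $(F-S)+S$, and transfer the vanishing of $\Delta_p(x_n,y_n)$ to the limit once the configurations stabilize on that window. The only cosmetic difference is that you phrase the argument via the sums $\Delta_p$ while the paper counts occurrences intersecting $F-S$, which are equivalent formulations.
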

	
	\begin{proof}
		Let $p \in \Sigma^S$ be a pattern. As $(x_n,y_n)_{n \in \NN}$ converges in the asymptotic relation to $(x,y)$, there exists a finite set $F \subset \ZZd$ and $N_1 \in \NN$ so that $x_n|_{\ZZd \setminus F} = y_n|_{\ZZd \setminus F}$ for every $n \geq N_1$. In particular we have that the difference sets of $(x,y)$ and $(x_n,y_n)$ for $n \geq N_1$ are contained in $F$. It suffices thus to show that \[ \# \{ \occ_p(x) \cap (F-S)\} = \#\{ \occ_p(y) \cap (F-S)\}.   \]
		As $(x_n)_{n\in \NN}$ converges to $x$ and $(y_n)_{n \in \NN}$ converges to $y$, there exists $N_2 \in \NN$ so that $x_n|_{F-S+S} = x|_{F-S+S}$ and $y_n|_{F-S+S} = y|_{F-S+S}$ for all $n \geq N_2$. Thus for $n \geq N_2$ and every $v\in F-S$ we have that $\sigma^{v}(x)|_S = \sigma^{v}(x_n)|_S$ and $\sigma^{v}(y)|_S = \sigma^{v}(y_n)|_S$. From this we obtain that $\occ_p(x) \cap (F-S) = \occ_p(x_n) \cap (F-S)$ and $\occ_p(y) \cap (F-S) = \occ_p(y_n) \cap (F-S)$ for every $n \geq N_2$.
		
		Let $N = \max\{N_1,N_2\}$ and let $n \geq N$. As $n \geq N_1$, we have that $(x_n,y_n)$ is an indistinguishable asymptotic pair whose difference set is contained in $F$, it follows that $\# \{ \occ_p(x_n) \cap (F-S)\} = \#\{ \occ_p(y_n) \cap (F-S)\}.$ As $n \geq N_2$, we obtain $\# \{ \occ_p(x) \cap (F-S)\} = \#\{ \occ_p(y) \cap (F-S)\}.$ As this argument holds for every pattern $p$, we conclude that $(x,y)$ is indistinguishable.
	\end{proof}
	
	\begin{definition}
		Let $x \in \Sigma^{\ZZd}$ be a configuration. 
		\begin{enumerate}
			\item $x$ is \define{recurrent} if for every $p \in \Lcal(x)$ the set $\occ_p(x)$ is infinite.   
			\item $x$ is \define{uniformly recurrent} if every $p \in \Lcal(x)$ appears with bounded gaps, that is, for every $p \in \Lcal(x)$ there exists a finite $K \subset \ZZd$ such that for every $u \in \ZZd$ there is $k \in K$ such that $\sigma^{u+k}(x) \in [p]$.
		\end{enumerate}
	\end{definition}

	Clearly both recurrence and uniform recurrence are properties that are satisfied either by both configurations in an indistinguishable asymptotic pair simultaneously, or by none of them. Furthermore, it can be easily verified that both of these properties are preserved under the action of $\operatorname{Aff}(\ZZd)$, just as in~\Cref{prop:shifted_SI}.
	
	\begin{proposition}\label{prop:recurrence}
		Let $x,y \in \Sigma^{\ZZd}$ be an indistinguishable asymptotic pair. If $x$ is not recurrent, then $x,y$ lie in the same orbit.
	\end{proposition}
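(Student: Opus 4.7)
The plan is to exploit non-recurrence of $x$ to produce a finite pattern with exactly one occurrence in $x$, then use indistinguishability to locate the corresponding unique occurrence in $y$, and finally let the pattern grow to pin down $y$ as a shift of $x$.

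First, since $x$ is not recurrent, there is a pattern $p\in\Lcal(x)$ with $\occ_p(x)$ finite and nonempty; I will write $\occ_p(x)=\{u_1,\dots,u_k\}$ with $k\geq 1$. A key preliminary observation is that the stabilizer $\{v\in\ZZd\colon \sigma^v(x)=x\}$ must be trivial, because any nonzero period $w$ would force $u_1+nw$ to be an occurrence of $p$ in $x$ for every $n\in\ZZ$, contradicting finiteness of $\occ_p(x)$. I would then enlarge the support: letting $B_n\isdef \llbracket -n,n\rrbracket^d$ and $q_n\isdef \sigma^{u_1}(x)|_{B_n}$, for $n$ large enough that $B_n$ contains the support of $p$ one has $\occ_{q_n}(x)\subseteq\{u_1,\dots,u_k\}$, and this decreasing sequence of finite sets stabilizes to $\{u_i\colon \sigma^{u_i-u_1}(x)=x\}=\{u_1\}$ by the stabilizer remark. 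Thus for some $n$ we have $\occ_{q_n}(x)=\{u_1\}$; set $q\isdef q_n$ and $u_0\isdef u_1$.

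Next I would transfer the uniqueness of the occurrence from $x$ to $y$. Since $\occ_q(x)$ and $\occ_q(y)$ differ only inside the finite set $F-B_n$ (where $F$ is the difference set), they are both finite. The indistinguishability identity $\#(\occ_q(x)\setminus \occ_q(y))=\#(\occ_q(y)\setminus\occ_q(x))$ together with $\#\occ_q(x) = \#(\occ_q(x)\cap\occ_q(y)) + \#(\occ_q(x)\setminus\occ_q(y))$ (and similarly for $y$) then forces $\#\occ_q(y)=\#\occ_q(x)=1$; call the unique occurrence $v_0$.

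Finally, to conclude that $y=\sigma^{u_0-v_0}(x)$, I would extend $q$ arbitrarily. For each $m\geq n$ I set $q'_m\isdef \sigma^{u_0}(x)|_{B_m}$, so that $q'_m|_{B_n}=q$. Then $\occ_{q'_m}(x)\subseteq\occ_q(x)=\{u_0\}$ and $u_0\in\occ_{q'_m}(x)$ trivially, giving equality. Applying indistinguishability once more yields $\#\occ_{q'_m}(y)=1$, while $\occ_{q'_m}(y)\subseteq\occ_q(y)=\{v_0\}$ forces $\occ_{q'_m}(y)=\{v_0\}$. Therefore $\sigma^{v_0}(y)|_{B_m}=\sigma^{u_0}(x)|_{B_m}$ for every $m\geq n$, and letting $m\to\infty$ gives $\sigma^{v_0}(y)=\sigma^{u_0}(x)$, i.e.\ $y=\sigma^{u_0-v_0}(x)$, showing that $x$ and $y$ lie in the same orbit. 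The main subtlety is the stabilizer argument: a priori, enlarging the support might fail to shrink the occurrence set to a singleton if $x$ had a nontrivial translation period, but non-recurrence rules this out immediately.
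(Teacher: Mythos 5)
Your proof is correct. It shares the paper's overall skeleton---take a pattern $p$ with $\occ_p(x)$ finite and nonempty, then grow patterns around a fixed occurrence until the whole configuration is pinned down---but the middle of the argument is organized differently. The paper's proof never needs unique occurrences or aperiodicity: it only notes that $\occ_p(y)$ is finite, that each grown pattern $q_n=x|_{S_n}$ must occur in $y$ at some $u_n\in\occ_p(y)$, and then extracts by pigeonhole a constant subsequence $u_{n(k)}=v$, so that $\sigma^{v}(y)\in\bigcap_k[q_{n(k)}]=\{x\}$. You instead first show that non-recurrence forces the stabilizer of $x$ to be trivial, use this (via the stabilization of the decreasing finite sets $\occ_{q_n}(x)$) to refine $p$ to a pattern $q$ with a \emph{unique} occurrence in $x$, and then use the counting form of indistinguishability on finite occurrence sets to get $\#\occ_q(y)=\#\occ_q(x)=1$; growing $q$ keeps both unique occurrences fixed, so the translation $u_0-v_0$ is identified directly with no subsequence extraction. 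All the steps check out: the cardinality identity $\#\occ_q(y)=\#\occ_q(x)$ is legitimate precisely because both sets are finite, and the intersection of the decreasing sets $\occ_{q_n}(x)$ is the stabilizer coset $\{u_i\colon\sigma^{u_i-u_1}(x)=x\}=\{u_1\}$. So your route is a correct, slightly longer variant; what it buys is a deterministic identification of the shift and the reusable intermediate fact that, for an indistinguishable asymptotic pair, a pattern occurring exactly once in $x$ occurs exactly once in $y$, while the paper's pigeonhole argument is shorter because it dispenses with the aperiodicity and uniqueness steps.
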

	
	\begin{proof}
		If $x$ is not recurrent, there is a finite $S \subset \ZZd$ and $p \in \Lcal_S(x)$ such that $\occ_p(x)$ is finite. As $\Delta_p(x,y)=0$, it follows that $\occ_p(y)$ is also finite.
		
		Let $(S_n)_{n \in \NN}$ be an increasing sequence of finite subsets of $\ZZd$ such that $S_0 = S$ and $\bigcup_{n \in \NN}S_n = \ZZd$ and let $q_n = x|_{S_n}$. As $x \in [q_n]$ and $\Delta_{q_n}(x,y)=0$, there exists $u_n \in \ZZd$ so that $\sigma^{u_n}(y) \in [q_n]$. Furthermore, as $q_n|_S = p$, it follows that $\sigma^{u_n}(y) \in [p]$ and thus $u_n \in \occ_p(y)$. As $\occ_p(y)$ is finite, there exists $v \in \occ_p(y)$ and a subsequence such that $u_{n(k)} = v$ and thus $\sigma^{v}(y) \in [q_{n(k)}]$ for every $k \in \NN$. As  $\bigcap_{n \in \NN}[q_n] = \bigcap_{k \in \NN}[q_{n(k)}] = \{x\}$ we deduce that $\sigma^{v}(y) = x$.
	\end{proof}

\begin{remark}
	All of the definitions and results stated so far in this section are valid in the more general context where $\ZZd$ is replaced by a countable group $\Gamma$. In~\Cref{sec:appendix} we provide definitions and proofs in this more general setting with the hope that it might be useful for further research.
\end{remark}
	
\subsection{Known results on dimension $1$}\label{subsec:dim1}

When considering $d=1$, two phenomena, stated in the lemmas below, simplify the study of indistinguishable asymptotic pairs: every word in the language can be read from the difference set, and recurrent configurations which are part of an indistinguishable asymptotic pair are in fact uniformly recurrent.

	\begin{lemma}\label{lem:words_appear_in_diff_set} [Lemma 2.8 of~\cite{BarLabSta2021}]
		Let $x,y \in \Sigma^{\ZZ}$ be a non-trivial indistinguishable asymptotic pair with difference set $F$. For every finite $S \subset \ZZ$ and $w \in \Lcal_S(x)$ there is $u \in F-S$ such that $\sigma^u(x) \in [w]$.
	\end{lemma}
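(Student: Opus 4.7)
I would argue by contradiction. Suppose that $w \in \Lcal_S(x)$ admits no occurrence in $x$ intersecting $F$, that is, $\occ_w(x) \cap (F-S) = \varnothing$. Since $x$ and $y$ coincide outside of the difference set $F$, any $u \notin F-S$ satisfies $(u + S) \cap F = \varnothing$, hence $\sigma^u(x)|_S = \sigma^u(y)|_S$, from which $u \in \occ_w(x)$ iff $u \in \occ_w(y)$. This observation gives $\occ_w(x) \setminus \occ_w(y) \subseteq \occ_w(x) \cap (F-S) = \varnothing$, and the indistinguishability identity $\#(\occ_w(x) \setminus \occ_w(y)) = \#(\occ_w(y) \setminus \occ_w(x))$ then forces $\occ_w(x) = \occ_w(y)$.

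To derive a contradiction, I would pick any $u \in \occ_w(x)$ and build the extended pattern $\tilde w := \sigma^u(x)|_T$ with support $T := S \sqcup (F-u)$, where the union is disjoint by our hypothesis on $u$. By construction $u \in \occ_{\tilde w}(x) \cap (F-T)$, but $u \notin \occ_{\tilde w}(y)$: indeed $\sigma^u(y)|_{F-u} = y|_F$, while $\tilde w|_{F-u} = x|_F$, and these disagree since $F \neq \varnothing$ is precisely the set of coordinates where $x$ and $y$ differ. Indistinguishability applied to $\tilde w$ then produces a position $v \in \occ_{\tilde w}(y) \setminus \occ_{\tilde w}(x)$, necessarily in $F-T$ and distinct from $u$. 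Since $\tilde w|_S = w$, this $v$ belongs to $\occ_w(y) = \occ_w(x)$. Comparing the restrictions $\sigma^v(y)|_{F-u} = x|_F$ with $\sigma^v(x)|_{F-u} \neq x|_F$ shows that there exists $f^* \in F$ with $x_{\delta + f^*} \neq x_{f^*}$ where $\delta := v - u$, which together with $y_{\delta+f^*} = x_{f^*}$ forces $\delta + f^* \in F$; hence $\delta \in (F-F)\setminus\{0\}$.

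The argument can then be iterated: replacing $u$ by $v$ and repeating, one produces a sequence of occurrences $u_0 = u, u_1 = v, u_2, \ldots$ of $w$ in $x$, all lying outside $F-S$, with successive differences in the finite set $(F-F) \setminus \{0\}$. The main obstacle is closing this cascade into a contradiction: since the $u_i$ need not be pairwise distinct a priori, one must exploit the one-dimensional linear order on $\ZZ$, together with the finiteness of $F$, to extract an extremal occurrence (leftmost or rightmost in a suitable half-line determined by the cascade) at which no further compensating occurrence in $y$ can exist, thereby breaking the indistinguishability balance. This extremum argument, which is specific to dimension one and will not generalize, is the technical heart of the proof and is what ensures the cascade terminates rather than evading a contradiction indefinitely.
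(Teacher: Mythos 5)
Your preliminary reductions are correct: if no occurrence of $w$ meets $F-S$, then indeed $\occ_w(x)=\occ_w(y)$; the extended pattern $\tilde w=\sigma^u(x)|_{S\sqcup(F-u)}$ does satisfy $u\in\occ_{\tilde w}(x)\setminus\occ_{\tilde w}(y)$; and the compensating position $v$ supplied by indistinguishability does lie in $\occ_w(x)$ with $v-u\in(F-F)\setminus\{0\}$. But the proof stops exactly where the lemma actually lives. The step you defer --- ``closing the cascade'' --- is not a finishing touch that can be waved at; as set up, it does not close. Each iteration invokes indistinguishability for a \emph{different} pattern $\tilde w$, so no count accumulates, and the only constraint on the new occurrence is $u_{i+1}-u_i\in(F-F)\setminus\{0\}$: nothing prevents the sequence from oscillating among finitely many occurrences forever (e.g.\ $u_2=u_0$). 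Your proposed fix, an extremal (leftmost/rightmost) occurrence, fails on two counts. First, such an occurrence need not exist: in the relevant case $x$ is recurrent (otherwise one is in the orbit case of \Cref{prop:recurrence}, which also still requires an argument), indeed uniformly recurrent by \Cref{lem:rec_implies_urec}, so $\occ_w(x)$ is syndetic and has no extremal element, and the ``suitable half-line determined by the cascade'' is never defined. Second, even where a one-sided extremal occurrence exists, the construction gives no directional control: $v$ is only known to lie within $\operatorname{diam}(F-F)$ of $u$, on either side, so extremality of $u$ is not contradicted, nor is any distance to $F-S$ forced to decrease. What is missing is precisely a well-founded quantity that strictly improves along the cascade.

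For context, the paper does not reprove this one-dimensional statement; it quotes it from \cite{BarLabSta2021}. Its higher-dimensional analogue, \Cref{lem:exists-occ-intersecting-F}, does close a cascade of exactly this type, but it does so by a descent on a word metric $d_{\mathcal{G}}(\cdot,F-S)$ whose strict decrease is extracted from the \emph{flip condition}, which pins down where the compensating occurrence sits relative to the old one. In the general one-dimensional setting of the present lemma you have no flip condition, so that mechanism is unavailable to you as stated; completing your plan requires a genuinely new idea (a monotone invariant for the iteration, or a different global argument), not merely bookkeeping.
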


	\begin{lemma}\label{lem:rec_implies_urec} [Lemma 2.12 of~\cite{BarLabSta2021}]
		Let $x,y\in \Sigma^{\ZZ}$ be a non-trivial indistinguishable asymptotic pair. If $x$ is recurrent, then $x$ is uniformly recurrent.
	\end{lemma}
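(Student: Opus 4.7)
The plan is to argue by contradiction: assume $x$ is recurrent but not uniformly recurrent, and derive a contradiction using \Cref{lem:words_appear_in_diff_set}.

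Non-uniform recurrence yields a pattern $w \in \Lcal(x)$ with unbounded gaps in $x$. By prodiscrete compactness, centering longer and longer $w$-free windows of $x$ at the origin gives a configuration $z \in \overline{O(x)}$ with $w \notin \Lcal(z)$. Each factor $q_M = z|_{[-M,M]}$ is $w$-free and lies in $\Lcal(x)$, so by \Cref{lem:words_appear_in_diff_set} there exists $v^x_M \in F - [-M,M]$ with $\sigma^{v^x_M}(x)|_{[-M,M]} = q_M$; consequently $x$ has a $w$-free window $[v^x_M - M, v^x_M + M]$ of length $2M+1$. After pigeonhole on $F$, write $v^x_M = f + k_M$ with $k_M \in [-M, M]$ and, along a subsequence, $k_M/M \to t \in [-1,1]$. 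The case $|t|<1$ is excluded because the windows would exhaust $\ZZ$, forcing $\occ_w(x) = \emptyset$ and contradicting $w \in \Lcal(x)$. Hence $t = \pm 1$; without loss of generality $t = 1$, so $\occ_w(x) \subseteq (-\infty, f^\ast_x)$ for some $f^\ast_x \leq \max F$. Recurrence then implies $\occ_w(x)$ is infinite with occurrences accumulating at $-\infty$, indexed as $\ldots < r_{-1} < r_0 = \max\occ_w(x)$, with gaps $g_i = r_{i+1} - r_i$.

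Two cases finish the proof. If the gaps $(g_i)$ are bounded by some $L$, I use a singleton-pattern contradiction: the pattern $p = x|_{[r_0, r_0 + |w| + L - 1]}$ can only occur at $r_0$ in $x$, since any other occurrence $u$ would be a $w$-occurrence with no further $w$-occurrence in $\{u+1, \ldots, u+L\}$, which is impossible (either $u > r_0$ contradicts maximality of $r_0$, or $u < r_0$ contradicts the bounded-gap property). Hence $\#\occ_p(x) = 1$, contradicting recurrence. If instead the gaps are unbounded, I apply a symmetric argument: centering longer and longer left-side $w$-free windows of $x$ at the origin gives a configuration $z'$ with $w \notin \Lcal(z')$, and \Cref{lem:words_appear_in_diff_set} provides $v^{\prime x}_M \in F - [-M,M]$ with matching windows in $x$; requiring these windows to fit inside left-side gaps forces $v^{\prime x}_M + M \leq r_0 \leq \max F$, hence $k_M/M \to -1$, and a symmetric pigeonhole analysis yields $\occ_w(x) \subseteq (g^\ast, \infty)$ with $g^\ast \geq \min F$. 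Combined with the upper bound, this makes $\occ_w(x)$ finite and again contradicts recurrence.

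The main obstacle of the plan is the unbounded-gap case: one must verify that \Cref{lem:words_appear_in_diff_set} can indeed be applied to select $v^{\prime x}_M$ realizing $k_M/M \to -1$, which requires that the left-side $w$-free windows of $x$ are long enough and positioned to intersect $F - [-M,M]$. Handling the edge case where all $w$-occurrences lie below $\min F$ (so that left-side gaps do not meet $F - [-M,M]$) may need a separate argument, for instance using indistinguishability directly to transfer the one-sided structure to $y$ and then distinguishing $x$ from $y$ via a pattern sensitive to the boundary between the $w$-dense and $w$-free regions.
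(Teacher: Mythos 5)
The paper itself gives no proof of this lemma (it is quoted from \cite{BarLabSta2021}), so your argument has to stand on its own. Its first half is essentially sound: the compactness step, the use of \Cref{lem:words_appear_in_diff_set} to place arbitrarily long $w$-free windows over the difference set, and the conclusion that, up to reflection, $\occ_w(x)$ is infinite and bounded above. Two small repairs are needed there: the dichotomy should be run on whether $k_M-M$ (resp.\ $k_M+M$) stays bounded along a subsequence, not on the ratio $k_M/M$ --- when $k_M/M\to 1$ the left endpoints $f+k_M-M$ may still tend to $-\infty$, so ``$t=1$'' alone does not give $\occ_w(x)\subseteq(-\infty,f^\ast_x)$; one must extract a further subsequence with $k_M-M$ bounded or fall back on the $\occ_w(x)=\varnothing$ contradiction. (Also note that \Cref{lem:words_appear_in_diff_set} applied to $w$ itself gives $r_0\geq \min F-|w|+1$, which disposes of the ``all occurrences below $\min F$'' edge case you worry about.) The bounded-gap case is then handled correctly and cleanly: $x|_{[r_0,r_0+|w|+L-1]}$ occurs exactly once, contradicting recurrence.

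The genuine gap is the unbounded-gap case, and it is not a symmetric repetition of the first step. When you apply \Cref{lem:words_appear_in_diff_set} to the central factors of $z'$, the occurrence near $F$ it produces has no reason whatsoever to ``fit inside a left-side gap'': those factors are just $w$-free words, and the right half-line of $x$ (already known to be $w$-free and overlapping $F$) supplies occurrences satisfying the lemma, so nothing forces $k_M/M\to -1$, and the conclusion $\occ_w(x)\subseteq(g^\ast,\infty)$ does not follow. This is not merely a presentational issue: for a single sequence, ``recurrent, with $\occ_w(x)$ infinite, bounded above, and with unbounded gaps'' is a perfectly consistent configuration (one can build such an $x$ by repeatedly copying ever larger central windows far to the left, separated by long $w$-free runs), so ruling it out must use the exact counting in the definition of indistinguishability, or at least the fact that $x$ and $y$ actually differ on $F$ --- which your sketch never does beyond the existence statement of \Cref{lem:words_appear_in_diff_set}. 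Your fallback remark (``distinguish $x$ from $y$ via a pattern sensitive to the boundary'') points in a workable direction, but that is where all the work lies: for instance, applying the counting identity to the central words $x|_{[\min F-N,\max F+N]}$ forces an occurrence of this word in $y$ at a nonzero shift $\delta$, and one must then analyse where that shift can land relative to $r_0$ and the unbounded gaps (the bounded-$|\delta|$ cases die on the gap structure, the large negative ones require a further argument). None of this is in the proposal, so as written the unbounded-gap case --- the heart of the lemma --- remains unproved.
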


	Gathering~\Cref{prop:recurrence} and~\Cref{lem:rec_implies_urec} we obtain the following beautiful dichotomy.
	
	\begin{corollary} [Corollary 2.13 of~\cite{BarLabSta2021}] 
		Let $x,y\in \Sigma^{\ZZ}$ be a non-trivial asymptotic indistinguishable pair. Then exactly one of the following statements holds 
        \begin{enumerate}
			\item $x = \sigma^n(y)$ for some nonzero $n \in \ZZ$,
			\item $x$ and $y$ are uniformly recurrent.
		\end{enumerate}
	\end{corollary}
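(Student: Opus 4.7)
The plan is a straightforward case analysis on whether $x$ is recurrent, combining the two preceding results. The only genuine content beyond invoking them is verifying mutual exclusivity of the two alternatives.

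I would first dispose of the existence of some alternative. If $x$ is not recurrent, \Cref{prop:recurrence} gives $x = \sigma^n(y)$ for some $n \in \ZZ$; since the pair is non-trivial, $x \neq y$ forces $n \neq 0$, yielding (1). Otherwise $x$ is recurrent, and \Cref{lem:rec_implies_urec} upgrades this to uniform recurrence of $x$. Since \Cref{def:indistinguishable} is symmetric in $x$ and $y$, the pair $(y,x)$ is also a non-trivial indistinguishable asymptotic pair, and we can also deduce from $x$ being recurrent that $y$ is recurrent (any pattern in $y$ appears in $x$ infinitely often outside the difference set, hence infinitely often in $y$); applying \Cref{lem:rec_implies_urec} again gives uniform recurrence of $y$, yielding (2).

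The hard part will be the mutual exclusivity, which is the only step requiring a genuine combinatorial argument. Suppose for contradiction that both (1) and (2) held, so $x = \sigma^n(y)$ with $n \neq 0$ and $y$ is uniformly recurrent. The difference set is then $F = \{u \in \ZZ : y_{u+n} \neq y_u\}$, which is nonempty by non-triviality. Pick $v \in F$ and define the pattern $p \in \Sigma^{\{0,n\}}$ by $p_0 = y_v$ and $p_n = y_{v+n}$, so that $p_0 \neq p_n$. The identity $\sigma^v(y)|_{\{0,n\}} = p$ shows $p \in \Lcal(y)$, so uniform recurrence of $y$ forces $\occ_p(y)$ to be infinite. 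But any $u \in \occ_p(y)$ satisfies $y_u = p_0 \neq p_n = y_{u+n}$, which is exactly the defining condition for $u \in F$. Hence $\occ_p(y) \subseteq F$ is finite, a contradiction, so (1) and (2) cannot hold simultaneously.
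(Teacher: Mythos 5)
Your proof is correct and follows essentially the same route as the paper, which simply gathers \Cref{prop:recurrence} and \Cref{lem:rec_implies_urec}: your split on whether $x$ is recurrent and the transfer of (uniform) recurrence from $x$ to $y$ via the symmetry of indistinguishability match the paper's setup. Your explicit occurrence-counting argument for mutual exclusivity (any occurrence of the two-site pattern recording a disagreement must lie in the finite difference set) is a sound filling-in of a detail the paper leaves to the cited reference.
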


	This dichotomy was the starting point which lead to our characterization of Sturmian configurations through indistinguishable asymptotic pairs in $\ZZ$.
	
	\begin{theorem}\label{thm:sturmian_characterization}
		[Theorem A of~\cite{BarLabSta2021}]
		Let $x,y\in\{\symb{0},\symb{1}\}^\ZZ$ and assume that $x$ is recurrent.
		The pair $(x,y)$ is an indistinguishable asymptotic pair
		with difference set $F=\{-1,0\}$ such that $x_{-1}x_{0}=\symb{10}$
		and $y_{-1}y_{0}=\symb{01}$ if and only if
		there exists $\alpha\in[0,1]\setminus\QQ$ such that
		$x={c}_{\alpha}$ and $y={c}'_{\alpha}$ are the lower and upper characteristic
		Sturmian sequences of slope $\alpha$.
	\end{theorem}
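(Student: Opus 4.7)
The plan is to prove both directions by reducing to the classical Morse--Hedlund characterization of Sturmian sequences as the non-eventually-periodic one-dimensional configurations of minimal factor complexity $n+1$.

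For the ``if'' direction, I would first verify directly from the formula~\eqref{eq:characteristic-sturmian-formula-in-intro} specialized to $d=1$ that the pair $(c_\alpha,c'_\alpha)$ has difference set exactly $\{-1,0\}$ with $c_\alpha(-1)c_\alpha(0)=\symb{10}$ and $c'_\alpha(-1)c'_\alpha(0)=\symb{01}$. This is a short calculation using $\lfloor t\rfloor=\lceil t\rceil$ outside $\ZZ$ and the fact that for irrational $\alpha$ the only integers $n$ with $n\alpha\in\ZZ$ or $(n+1)\alpha\in\ZZ$ are $n=0$ and $n=-1$ respectively. To show the pair is indistinguishable, I would invoke the classical fact that $c_\alpha$ has factor complexity $n+1$: for every word $p$ of support $S=\llbracket 0,n-1\rrbracket$, the set $F-S=\llbracket -n,0\rrbracket$ has cardinality $n+1$ equal to $\#\Lcal_S(c_\alpha)$, forcing each factor to occur exactly once on $F-S$ in $c_\alpha$ and, by the same cardinality argument, in $c'_\alpha$. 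Extending to arbitrary finite $S\subset\ZZ$ via~\Cref{prop:trivialite} then yields $\Delta_p(c_\alpha,c'_\alpha)=0$ for every pattern.

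For the ``only if'' direction, I would first use~\Cref{lem:rec_implies_urec} to promote recurrence of $x$ to uniform recurrence, and then use~\Cref{lem:words_appear_in_diff_set} to obtain the complexity bound $\#\Lcal_{\llbracket 0,n-1\rrbracket}(x)\leq \#(F-\llbracket 0,n-1\rrbracket)=n+1$. Aperiodicity of $x$ must then be extracted from the flip condition: if $x$ were $p$-periodic then the shift-invariance of $\occ_w$ counts would be incompatible with the asymmetric counting forced by the swap $\symb{10}\leftrightarrow\symb{01}$ on $F$, an inconsistency already visible at length $2$. With aperiodicity and $\#\Lcal_n(x)\leq n+1$ in hand, Morse--Hedlund gives $\#\Lcal_n(x)=n+1$ and places $x$ inside a Sturmian subshift $X_\alpha$ for some irrational $\alpha\in(0,1)$. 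Since $x$ and $y$ agree outside $\{-1,0\}$, $y$ is uniquely determined by $x$, so it remains to identify $x=c_\alpha$ (which will automatically give $y=c'_\alpha$ by the already-proven ``if'' direction and the uniqueness of the flipped partner).

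The main obstacle, and the subtlest step, is precisely this identification of $x$ as the \emph{characteristic} Sturmian of slope $\alpha$, not merely some element of $X_\alpha$: several shifts of $c_\alpha$ can satisfy $x_{-1}x_0=\symb{10}$, so the flip condition alone is insufficient. My plan here is to read off further values of $x$ inductively from indistinguishability. Applying the counting equality $\Delta_p(x,y)=0$ to the words $\symb{01}$ and $\symb{10}$ on the support $\{0,1\}$ already forces $x_{-2}=\symb{0}$ and $x_1=\symb{0}$; iterating this scheme on supports $\{0,1,\dots,k\}$ (and its leftward translates) progressively determines $x_{-k-1}$ and $x_k$ from the earlier values. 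The resulting sequence of constraints pin down $x$ uniquely, and comparison with the formula for $c_\alpha$ identifies them. An equivalent route, perhaps cleaner, is to characterize $c_\alpha$ as the unique element of $X_\alpha$ whose right-infinite tail $(x_n)_{n\geq 0}$ and whose left-infinite tail $(x_{-n-1})_{n\geq 0}$ are both the standard right-characteristic Sturmian word of slope $\alpha$, and then show that the flip condition plus indistinguishability force this two-sided characteristic property.
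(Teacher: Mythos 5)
Note first that the paper does not actually prove this statement: it is quoted as Theorem~A of~\cite{BarLabSta2021} and is used as the base case $d=1$ of the induction in \Cref{prop:totally_irrational_case}. So your sketch is an attempted independent proof, and while its overall architecture (complexity bounds plus Morse--Hedlund, then pinning down the intercept) is reasonable and close in spirit to the paper's $d$-dimensional machinery, several key steps are wrong or missing as written.

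Concretely: (i) in the ``if'' direction, the cardinality equality $\#\Lcal_n(c_\alpha)=n+1=\#(F-\llbracket 0,n-1\rrbracket)$ does \emph{not} by itself force every factor to have an occurrence starting in $\llbracket -n,0\rrbracket$; you need the rotation/interval structure (each factor corresponds to an interval of the circle partition with endpoints $-j\alpha \bmod 1$, $0\leq j\leq n$, whose left endpoint gives the occurrence in $c_\alpha$ and right endpoint in $c'_\alpha$) --- this is exactly the content of \Cref{lem:ddim-1-occ-if-interval}, and it must be argued, not inferred from counting. (ii) In the converse, aperiodicity is not established: your claim that periodicity produces ``an inconsistency already visible at length $2$'' is false. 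For the period-$3$ configuration $x$ with $x_n=\symb{1}$ iff $n\equiv -1 \pmod 3$ and its flipped partner $y$, the length-$2$ words on $\llbracket -2,0\rrbracket$ give the equal multisets $\{\symb{01},\symb{10},\symb{00}\}$ and $\{\symb{00},\symb{01},\symb{10}\}$; the mismatch only appears at length $3$. What is actually needed is the lower bound $\#\Lcal_n(x)\geq n+1$ (the $d=1$ case of \Cref{lem:ddim-complexity-is-at-least-FminusS}, which uses only the flip condition), after which Morse--Hedlund gives both aperiodicity and exact complexity; your sketch only has the upper bound. (iii) Most importantly, the identification of the intercept --- that $x$ is the \emph{characteristic} word $c_\alpha$ and not another point of $X_\alpha$ with $x_{-1}x_0=\symb{10}$, which you rightly single out as the heart of the theorem --- is not proved, and your claimed first step is incorrect: the counting equality at length $2$ forces only $x_{-2}=x_1$, not $x_{-2}=x_1=\symb{0}$ (indeed $c_\alpha(-2)=c_\alpha(1)=\symb{1}$ whenever $\alpha>1/2$). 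The proposed induction can at best produce relations of this symmetric type, and converting them (or the alternative ``both tails characteristic'' description) into $x=c_\alpha$ is a genuine argument that the sketch does not supply. Until (ii) and (iii) are filled in, the converse direction remains unproved.
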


	When $d\geq 2$ there exist non-trivial indistinguishable asymptotic pairs where both of the above lemmas fail.
	
	\begin{example}\label{ex:rec_not_urec}
		Let $u,v \in \{\symb{0},\symb{1}\}^{\ZZ}$ be any indistinguishable asymptotic pair. Consider the configurations $x,y \in \{\symb{0},\symb{1},\symb{2}\}^{\ZZ^2}$ given by \[
		x(i,j) = \begin{cases}
			u(i) & \mbox{ if } j = 0\\
			\symb{2} & \mbox{ if } j \neq 0\\
		\end{cases} \ \mbox{ and } \ 
		y(i,j) = \begin{cases}
			v(i) & \mbox{ if } j = 0\\
			\symb{2} & \mbox{ if } j \neq 0\\
		\end{cases} \mbox{ for every } i,j \in \ZZ.	\]
		The words $x,y$ form an indistinguishable asymptotic pair which does not satisfy~\Cref{lem:words_appear_in_diff_set} (the symbol $2$ does not occur in the difference set) nor~\Cref{lem:rec_implies_urec} (it is recurrent but not uniformly recurrent). See~\Cref{fig_configuraciones_rec_no_urec}.
	\end{example}
	
	\begin{figure}[ht!]
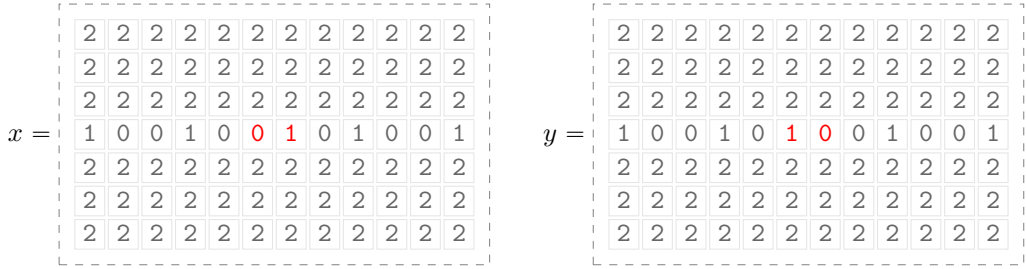

		\begin{align*}
			x = \xConfig{
				{\color{black!60}\symb{2}} \& {\color{black!60}\symb{2}} \& {\color{black!60}\symb{2}} \& {\color{black!60}\symb{2}} \& {\color{black!60}\symb{2}} \& {\color{black!60}\symb{2}} \& {\color{black!60}\symb{2}} \& {\color{black!60}\symb{2}} \& {\color{black!60}\symb{2}} \& {\color{black!60}\symb{2}} \& {\color{black!60}\symb{2}} \& {\color{black!60}\symb{2}} \\
				{\color{black!60}\symb{2}} \& {\color{black!60}\symb{2}} \& {\color{black!60}\symb{2}} \& {\color{black!60}\symb{2}} \& {\color{black!60}\symb{2}} \& {\color{black!60}\symb{2}} \& {\color{black!60}\symb{2}} \& {\color{black!60}\symb{2}} \& {\color{black!60}\symb{2}} \& {\color{black!60}\symb{2}} \& {\color{black!60}\symb{2}} \& {\color{black!60}\symb{2}} \\
				{\color{black!60}\symb{2}} \& {\color{black!60}\symb{2}} \& {\color{black!60}\symb{2}} \& {\color{black!60}\symb{2}} \& {\color{black!60}\symb{2}} \& {\color{black!60}\symb{2}} \& {\color{black!60}\symb{2}} \& {\color{black!60}\symb{2}} \& {\color{black!60}\symb{2}} \& {\color{black!60}\symb{2}} \& {\color{black!60}\symb{2}} \& {\color{black!60}\symb{2}} \\
				{\color{black!60}\symb{1}}\& {\color{black!60}\symb{0}}\& {\color{black!60}\symb{0}}\& {\color{black!60}\symb{1}}\& {\color{black!60}\symb{0}}\& \color{red}{\symb{0}} \& 
				\color{red}{\symb{1}} \& {\color{black!60}\symb{0}}\& {\color{black!60}\symb{1}}\& {\color{black!60}\symb{0}}\& {\color{black!60}\symb{0}}\& {\color{black!60}\symb{1}}\\
				{\color{black!60}\symb{2}} \& {\color{black!60}\symb{2}} \& {\color{black!60}\symb{2}} \& {\color{black!60}\symb{2}} \& {\color{black!60}\symb{2}} \& {\color{black!60}\symb{2}} \& {\color{black!60}\symb{2}} \& {\color{black!60}\symb{2}} \& {\color{black!60}\symb{2}} \& {\color{black!60}\symb{2}} \& {\color{black!60}\symb{2}} \& {\color{black!60}\symb{2}} \\
				{\color{black!60}\symb{2}} \& {\color{black!60}\symb{2}} \& {\color{black!60}\symb{2}} \& {\color{black!60}\symb{2}} \& {\color{black!60}\symb{2}} \& {\color{black!60}\symb{2}} \& {\color{black!60}\symb{2}} \& {\color{black!60}\symb{2}} \& {\color{black!60}\symb{2}} \& {\color{black!60}\symb{2}} \& {\color{black!60}\symb{2}} \& {\color{black!60}\symb{2}} \\
				{\color{black!60}\symb{2}} \& {\color{black!60}\symb{2}} \& {\color{black!60}\symb{2}} \& {\color{black!60}\symb{2}} \& {\color{black!60}\symb{2}} \& {\color{black!60}\symb{2}} \& {\color{black!60}\symb{2}} \& {\color{black!60}\symb{2}} \& {\color{black!60}\symb{2}} \& {\color{black!60}\symb{2}} \& {\color{black!60}\symb{2}} \& {\color{black!60}\symb{2}} \\
			}
			\qquad \qquad
			y = \xConfig{
				{\color{black!60}\symb{2}} \& {\color{black!60}\symb{2}} \& {\color{black!60}\symb{2}} \& {\color{black!60}\symb{2}} \& {\color{black!60}\symb{2}} \& {\color{black!60}\symb{2}} \& {\color{black!60}\symb{2}} \& {\color{black!60}\symb{2}} \& {\color{black!60}\symb{2}} \& {\color{black!60}\symb{2}} \& {\color{black!60}\symb{2}} \& {\color{black!60}\symb{2}} \\
				{\color{black!60}\symb{2}} \& {\color{black!60}\symb{2}} \& {\color{black!60}\symb{2}} \& {\color{black!60}\symb{2}} \& {\color{black!60}\symb{2}} \& {\color{black!60}\symb{2}} \& {\color{black!60}\symb{2}} \& {\color{black!60}\symb{2}} \& {\color{black!60}\symb{2}} \& {\color{black!60}\symb{2}} \& {\color{black!60}\symb{2}} \& {\color{black!60}\symb{2}} \\
				{\color{black!60}\symb{2}} \& {\color{black!60}\symb{2}} \& {\color{black!60}\symb{2}} \& {\color{black!60}\symb{2}} \& {\color{black!60}\symb{2}} \& {\color{black!60}\symb{2}} \& {\color{black!60}\symb{2}} \& {\color{black!60}\symb{2}} \& {\color{black!60}\symb{2}} \& {\color{black!60}\symb{2}} \& {\color{black!60}\symb{2}} \& {\color{black!60}\symb{2}} \\
				{\color{black!60}\symb{1}}\& {\color{black!60}\symb{0}}\& {\color{black!60}\symb{0}}\& {\color{black!60}\symb{1}}\& {\color{black!60}\symb{0}}\& \color{red}{\symb{1}} \& 
				\color{red}{\symb{0}} \&{\color{black!60}\symb{0}}\& {\color{black!60}\symb{1}}\& {\color{black!60}\symb{0}}\& {\color{black!60}\symb{0}}\& {\color{black!60}\symb{1}}\\
				{\color{black!60}\symb{2}} \& {\color{black!60}\symb{2}} \& {\color{black!60}\symb{2}} \& {\color{black!60}\symb{2}} \& {\color{black!60}\symb{2}} \& {\color{black!60}\symb{2}} \& {\color{black!60}\symb{2}} \& {\color{black!60}\symb{2}} \& {\color{black!60}\symb{2}} \& {\color{black!60}\symb{2}} \& {\color{black!60}\symb{2}} \& {\color{black!60}\symb{2}} \\
				{\color{black!60}\symb{2}} \& {\color{black!60}\symb{2}} \& {\color{black!60}\symb{2}} \& {\color{black!60}\symb{2}} \& {\color{black!60}\symb{2}} \& {\color{black!60}\symb{2}} \& {\color{black!60}\symb{2}} \& {\color{black!60}\symb{2}} \& {\color{black!60}\symb{2}} \& {\color{black!60}\symb{2}} \& {\color{black!60}\symb{2}} \& {\color{black!60}\symb{2}} \\
				{\color{black!60}\symb{2}} \& {\color{black!60}\symb{2}} \& {\color{black!60}\symb{2}} \& {\color{black!60}\symb{2}} \& {\color{black!60}\symb{2}} \& {\color{black!60}\symb{2}} \& {\color{black!60}\symb{2}} \& {\color{black!60}\symb{2}} \& {\color{black!60}\symb{2}} \& {\color{black!60}\symb{2}} \& {\color{black!60}\symb{2}} \& {\color{black!60}\symb{2}} \\
			} \;
		\end{align*}
        \caption{An indistinguishable, recurrent but not uniformly recurrent asymptotic pair $(x,y)$ given by the two characteristic Fibonacci words ($\alpha = \frac{\sqrt{5}-1}{2}$) in the central row.}
		\label{fig_configuraciones_rec_no_urec}
	\end{figure}

	In particular, a convenient consequence of~\Cref{lem:words_appear_in_diff_set} in $d=1$ is that the complexity of any pair of indistinguishable configurations is linear and the bound is given by the size of the difference set. More precisely, if $x,y \in \Sigma^{\ZZ}$ is a non-trivial indistinguishable asymptotic pair whose difference set $F$ is contained in an interval $I$, then for every $n \geq 1$ \[n+1 \leq \#\Lcal_{\llbracket 1,n\rrbracket}(x) \leq n+\#(I)-1.\]
	See~\cite[Proposition 3.4]{BarLabSta2021}. This consequence also fails in the multidimensional setting as shown in the following example.

	\begin{example}
		Fix $\symb{k} \geq 1$. Let $u,v$ as in~\Cref{ex:rec_not_urec} and let $x,y \in \{\symb{0},\dots, \symb{k}-1\}^{\ZZ^2}$ be given by
		\[
		x(i,j) = \begin{cases}
			u(i) & \mbox{ if } j = 0\\
			\symb{j} \mod \symb{k} & \mbox{ if } j \neq 0\\
		\end{cases} \ \mbox{ and } \ 
		y(i,j) = \begin{cases}
			v(i) & \mbox{ if } j = 0\\
			\symb{j} \mod \symb{k} & \mbox{ if } j \neq 0\\
		\end{cases}.	\]
		We obtain an indistinguishable asymptotic pair whose difference set has size $2$, but such that the alphabet can be as large as required by taking larger values of $\symb{k}$. 
	\end{example}
	
	This shows that a naive analogue of the complexity upper-bound given by~\Cref{lem:words_appear_in_diff_set} also fails in the multidimensional setting. However, under special conditions which we introduce in~\Cref{subsec:multidim_indistinguishable}, we show that an analogue of~\Cref{lem:words_appear_in_diff_set} holds, which gives us a way to extend~\Cref{thm:sturmian_characterization} to $\ZZd$.

\subsection{The flip condition}\label{subsec:multidim_indistinguishable}

As shown in the examples of~\Cref{subsec:dim1}, indistinguishable asymptotic pairs in $\ZZd$ in general are not related to Sturmian configurations as strongly as in dimension $1$. Despite these discouraging examples, we show that if we consider indistinguishable asymptotic pairs satisfying an
additional hypothesis, then many of the good properties from dimension $1$ are still valid and we will be able to obtain a characterization of multidimensional Sturmian configurations in terms of indistinguishable asymptotic pairs.

\begin{definition}
	Let $x,y\in\alfad^{\ZZd}$ be an asymptotic pair. We say it satisfies the \define{flip condition} if:
	\begin{enumerate}
		\item the difference set of $x$ and $y$ is $F=\{\bzero, -\be_1,\dots,-\be_d\}$,
        \item the restriction $x|_F$ is a bijection $F \to \alfad$,
        \item the map defined by $x_\bn\mapsto y_\bn$ for every $\bn\in F$
            is a cyclic permutation on the alphabet $\alfad$.
	\end{enumerate}
    Without lost of generality, we assume that 
    $x_{\bzero} = \symb{0}$
    and
    $y_\bn = x_\bn - \symb{1} \bmod (d+\symb{1})$ for every $\bn\in F$.
\end{definition}

The flip condition may be interpreted as a symbolic coding of the act of
geometrically flipping the faces of a hypercube at the origin of a discrete
hyperplane as in~\Cref{fig:intro-sturmian-config-pair-discrete-plane}. 

\section{Multidimensional indistinguishable asymptotic pairs and their complexity}\label{sec:proof_of_thm_B}

The goal of this section is to prove~\Cref{maintheorem:ddim-complexity-is-FminusS}
which characterizes indistinguishable asymptotic pairs which satisfy the flip condition through their complexity.

\subsection{Special factors in higher dimensions}

In one dimension, the factor complexity is related to the valence of left and
right special factors~\cite{MR2759107}. Similarly, in higher dimensions, 
the pattern complexity is related with the valence of special patterns with
connected support. In this section we shall generalize the notion of special
factors to higher dimensions, which will be the fundamental tool in the proof of~\Cref{maintheorem:ddim-complexity-is-FminusS}.

Since we will often consider all patterns which appear in configurations $x,y \in\alfad^{\ZZd}$,
it is practical to introduce the notations $\Lcal(x,y) \isdef \Lcal(x) \cup \Lcal(y)$ and $\Lcal_S(x,y) \isdef \Lcal_S(x)\cup\Lcal_S(y)$
for every finite support $S\subset\ZZd$.
For a pattern $w\in\Lcal_S(x,y)$,
and a position $\ell\in\ZZd\setminus S$,
let the \define{extensions at position $\ell\in\ZZd$} of the pattern $w$ within the
language $\Lcal(x,y)$ be
\[
E^\ell(w) \isdef \{u_{\ell} \colon u\in\Lcal_{S\cup\{\ell\}}(x,y)\text{ and } u|_{S}=w \}.
\]
Observe that the extensions
$E^\ell(w)$ always depend on the language $\Lcal(x,y)$
but we do not write $E_{x,y}^\ell(w)$ to lighten the notations.
Following the terminology for $d=1$,
we say that a pattern $w\in\Lcal_{S}(x,y)$ such that
$\#E^{\ell}(w)\geq 2$
is \define{special at position $\ell\in\ZZd$}.
Notice that we have the equality
\begin{equation*}
	\#\Lcal_{S\cup\{\ell\}}(x,y)
	= \sum_{w\in\Lcal_{S}(x,y)} \#E^{\ell}(w).
\end{equation*}
Let $\ell,r \in\ZZd\setminus S$ be positions such that $\ell \neq r$.
We say that a pattern $w\in\Lcal_{S}(x,y)$ 
is \define{bispecial at positions $\ell,r$}
if 
$\#E^{\ell}(w)\geq 2$
and
$\#E^r(w)\geq 2$.
Moreover, for a pattern $w\in\Lcal_S(x,y)$
let the \define{bilateral extensions at positions $\ell,r \in\ZZd\setminus S$} 
of the pattern $w$ within the
language $\Lcal(x,y)$ be
\[
E^{\ell,r}(w)=\{(u_{\ell},u_r) \colon u\in\Lcal_{S\cup\{\ell,r\}}(x,y)\text{ and } u|_{S}=w \}.
\]
The \define{bilateral multiplicity} $m^{\ell,r}(w)$ of the pattern $w$
at the positions $\ell,r\in\ZZd\setminus S$
within the language $\Lcal(x,y)$ is given by the expression
\begin{equation*}
	m^{\ell,r}(w) = \#E^{\ell,r}(w)
	-\#E^{\ell}(w)
	-\#E^r(w)
	+1.
\end{equation*}
We use the same terminology as when $d=1$ \cite{MR2759107}
to describe bispecial factors: we say that a pattern $w\in\Lcal_{S}(x,y)$ is \define{strong} (resp. \define{weak}, \define{neutral})
at the positions $\ell,r\in\ZZd\setminus S$
if $m^{\ell,r}(w)>0$ (resp. $m^{\ell,r}(w)<0$, $m^{\ell,r}(w)=0$).

Notice that we may interpret $E^{\ell,r}(w)$ as an undirected bipartite graph
called \define{extension graph}, see \cite{MR3845381}. The vertices are given by the disjoint union $V = E^{\ell}(w) \sqcup E^{r}(w)$ and we have an edge $(a,b)\in E^{\ell}(w) \times E^{r}(w)$ if there is $u \in \Lcal_{S \cup \{\ell,r\}}(x,y)$ such that $u_{\ell} = a$, $u_{r} = b$ and $u|_S = w$. In this manner $\# E^{\ell,r}(w)$ corresponds to the number of edges of the graph and $\# E^{\ell}(w)+\# E^{r}(w)$ corresponds to the number of vertices.

In the next lemma, we show that combinatorial properties of the extension graph
$E^{\ell,r}(w)$ impose lower bounds on the bilateral multiplicity of the
pattern $w$.

\begin{lemma}\label{lem:lema_evidente_trivial}
	Let $w\in\Lcal_S(x,y)$ be a pattern and $c$ be the number 
            of connected components of $E^{\ell,r}(w)$.
	\begin{enumerate}
		\item $m^{\ell,r}(w)\geq 1-c$.
		\item The extension graph $E^{\ell,r}(w)$ is acyclic if and only if 
            $m^{\ell,r}(w)=1-c$.
		\item If $E^{\ell,r}(w)$ is connected, then $m^{\ell,r}(w)\geq 0$.
		\item If $E^{\ell,r}(w)$ is connected and contains a cycle, then $m^{\ell,r}(w)>0$.
	\end{enumerate}
\end{lemma}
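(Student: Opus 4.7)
The plan is to prove all four claims simultaneously by reducing them to the standard relation between the number of edges, vertices, and connected components of a finite graph, i.e.\ the cyclomatic number inequality $|E|\geq |V|-c$ with equality exactly for forests.

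First I would rewrite the bilateral multiplicity in graph-theoretic terms. By the definition of the extension graph $E^{\ell,r}(w)$, the vertex set has cardinality $|V| = \#E^{\ell}(w) + \#E^{r}(w)$ (disjoint union) and the edge set has cardinality $|E| = \#E^{\ell,r}(w)$. Therefore
\begin{equation*}
    m^{\ell,r}(w) = \#E^{\ell,r}(w) - \#E^{\ell}(w) - \#E^{r}(w) + 1 = |E| - |V| + 1.
\end{equation*}
Let $c$ denote the number of connected components of $E^{\ell,r}(w)$.

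Next I would invoke the standard fact that for any finite undirected graph with $|V|$ vertices, $|E|$ edges and $c$ components, one has $|E|\geq |V|-c$, with equality if and only if the graph is a forest (this is proved by induction on $|E|$, removing edges one at a time: removing an edge from a cycle decreases $|E|$ by one without changing $c$, while removing a bridge increases $c$ by one). Substituting into the formula above yields
\begin{equation*}
    m^{\ell,r}(w) = |E| - |V| + 1 \geq (|V|-c) - |V| + 1 = 1-c,
\end{equation*}
proving (1), and equality holds precisely when the graph is a forest, which is exactly the condition that $E^{\ell,r}(w)$ is acyclic, proving (2).

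For (3), if $E^{\ell,r}(w)$ is connected then $c=1$ and (1) immediately gives $m^{\ell,r}(w)\geq 0$. For (4), if the graph is additionally not a tree (because it contains a cycle) then the inequality in (1) is strict by (2), so $m^{\ell,r}(w)\geq 2-c = 1 > 0$. This completes the proof of all four items, and I do not anticipate any real obstacle: the lemma is a direct translation of elementary graph theory into the language of bilateral extensions.
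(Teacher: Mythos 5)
Your proof is correct and follows essentially the same route as the paper: both rewrite $m^{\ell,r}(w)$ as $|E|-|V|+1$ for the extension graph and then apply the edge--vertex--component inequality (the paper argues it componentwise, you cite the standard cyclomatic-number fact), with parts (3) and (4) as immediate consequences.
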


\begin{proof} (1) Notice that
	\begin{align*}
		m^{\ell,r}(w) &= \#E^{\ell,r}(w)
		-\#E^\ell(w)
		-\#E^r(w)
		+1\\
		&= \#\textrm{edges}
		-\#\textrm{vertices}
		+1.
	\end{align*}
	In each connected component we have that the number of edges is at least the number of vertices minus $1$. 
    (2)
If $m^{\ell,r}(w)=1-c$, it implies that
		$\#\textrm{edges} -\#\textrm{vertices}=-1$
        in each connected component. Therefore each connected component is a tree and we deduce that
		the extension graph $E^{\ell,r}(w)$ is acyclic.
If $m^{\ell,r}(w)>1-c$, it implies there is a connected component in which
		$\#\textrm{edges} -\#\textrm{vertices}>-1$. That connected component must contain a cycle.
        Thus, the extension graph $E^{\ell,r}(w)$ is not acyclic.
Part (3) is an immediate consequence of (1). 
Part (4) is an immediate consequence of (2). 
\end{proof}

\subsection{Complexity of indistinguishable asymptotic pairs with the flip condition}

Here we shall show that the flip condition along with indistinguishability impose that every pattern in the language must occur in a position which intersects the difference set. 
This property implies an upper bound for the pattern complexity.

\begin{lemma}\label{lem:exists-occ-intersecting-F}
	Let $x,y \in\alfad^{\ZZd}$ be an indistinguishable asymptotic pair satisfying the flip condition.
	For every finite nonempty subset $S \subset \ZZd$, we have
	$\Lcal_S(x,y)\subset\{\sigma^{\bn}(x)|_{S}\colon \bn\in F-S\}$.
    In particular, $\#\Lcal_S(x,y)\leq \#(F-S)$.
\end{lemma}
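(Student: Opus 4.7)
The plan is to prove $\occ_p(x) \cap (F-S) \neq \varnothing$ for every $p \in \Lcal_S(x,y)$: any witness $\bn$ then gives $p = \sigma^{\bn}(x)|_S$ with $\bn \in F-S$, from which the cardinality bound $\#\Lcal_S(x,y) \leq \#(F-S)$ follows immediately. I argue by contradiction, assuming some $p \in \Lcal_S(x,y)$ satisfies $\occ_p(x) \cap (F-S) = \varnothing$. Because $x$ and $y$ coincide on every translate of $S$ that avoids $F$, indistinguishability forces $\occ_p(y) \cap (F-S) = \varnothing$ and $\occ_p(x) = \occ_p(y)$; in particular this common set is nonempty, so I fix $\bn_0 \in \occ_p(x) \setminus (F-S)$.

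The heart of the argument will be a \emph{shift lemma}: for every $\bn \in \occ_p(x)\setminus (F-S)$ and every $f \in F$, the translate $\bn + \psi(f) - f$ again lies in $\occ_p(x)$, where $\psi \colon F \to F$ is the bijection determined by $x_{\psi(f)} = y_f$ for $f \in F$ (a single $(d+\symb{1})$-cycle, by the flip condition). To prove it, since $\bn \notin F-S$ we have $f - \bn \notin S$, so $S'' \isdef S \cup \{f-\bn\}$ properly extends $S$; the pattern $v \isdef \sigma^{\bn}(y)|_{S''}$ restricts to $p$ on $S$ and takes value $y_f$ at $f-\bn$, and it appears in $y$ at position $\bn \in F - S''$ (since $\bn + (f-\bn) = f \in F$). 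By indistinguishability, $v$ occurs in $x$ at some $\ell \in F - S''$, so $\ell + s'' \in F$ for some $s'' \in S''$. If $s'' \in S$, then $\sigma^\ell(x)|_S = v|_S = p$ would place $\ell$ in $\occ_p(x) \cap (F-S)$, contradicting the standing hypothesis; hence $s'' = f - \bn$, giving $\ell + f - \bn \in F$. The identity $x_{\ell + f - \bn} = v_{f-\bn} = y_f$ combined with the fact that $x|_F$ is a bijection onto $\alfad$ pins down $\ell + f - \bn = \psi(f)$, so $\ell = \bn + \psi(f) - f \in \occ_p(x)$.

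Iterating the shift lemma from $\bn_0$ shows $\bn_0 + M \subseteq \occ_p(x)$, where $M$ is the additive submonoid of $\ZZd$ generated by $\{\psi(f) - f \colon f \in F\}$; it then remains to verify $M = \ZZd$. Enumerating the $\psi$-orbit of $\bzero$ as $\bzero = g_0, g_1, \dots, g_d, g_{d+1} = \bzero$, the telescoping sum $\sum_{i=0}^{k-1}(g_{i+1}-g_i) = g_k$ realizes every element of $F$ in $M$, while $\sum_{i=k}^{d}(g_{i+1} - g_i) = -g_k$ realizes every element of $-F$, placing $\{\pm\be_1,\dots,\pm\be_d\}$ inside $M$, and nonnegative combinations of these vectors fill $\ZZd$. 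Hence for any $\bm \in F-S$ we may choose $f_1,\dots,f_k \in F$ so that $\bn_0 + \sum_{i=1}^k (\psi(f_i) - f_i) = \bm$, and iterating the shift lemma along this sequence forces $\bm \in \occ_p(x) \cap (F-S)$, the desired contradiction. The hardest part is the shift lemma itself: the flip condition is what makes the witness $\ell$ unique and equal to $\bn + \psi(f) - f$, and once this is in hand the monoid computation is purely combinatorial.
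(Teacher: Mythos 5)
Your proof is correct, and it runs on the same engine as the paper's argument while packaging it differently. The shared mechanism is the crucial step: extend the support of $p$ by a single cell that is forced to land in $F$, apply indistinguishability to the extended pattern, and use the bijectivity of $x|_F$ together with the cyclic permutation to pin down the unique new occurrence, which is a translate of the old one by a vector $\psi(f)-f$ (in the paper's notation, $\bg_{\symb{i}}=i_x-i_y$ with $\symb{i}=y_f$). The surrounding architecture differs: the paper fixes an occurrence of $p$ minimizing the word-metric distance to $F-S$ with respect to the symmetric generating set $\pm\{\bg_{\symb{i}}\}$ and shows a positive distance can always be decreased, which needs a geodesic decomposition and a case split on the sign of the first generator (one case reads the extended pattern off $x$ and locates it in $y$, the other is symmetric). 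You instead assume no occurrence of $p$ meets $F-S$, prove a one-sided shift lemma (translations only by $\psi(f)-f$, with the extended pattern read off $y$ and located in $x$), and observe that these vectors alone generate $\ZZd$ as a monoid because the partial sums around the $(d+\symb{1})$-cycle telescope to every element of $F$ and of $-F$, hence to $\pm\be_1,\dots,\pm\be_d$. This buys you a cleaner conclusion — no word metric, no extremal choice, no symmetric case — at the mild cost of carrying the standing contradiction hypothesis inside the proof of the shift lemma, where it is exactly what rules out the witness position meeting $F$ through a cell of $S$. Both arguments are of comparable length and depth, and your reduction and final monoid computation are complete and correct as written.
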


\begin{proof} 
	For $\symb{i} \in \alfad$, let $\bg_{\symb{i}} = i_x - i_y$ where $i_x,i_y$ are the unique positions in $F$ so that $x_{i_x} = \symb{i}$ and $y_{i_y} = \symb{i}$. Let $\mathcal{G}^{x-y} = \{\bg_\symb{0},\dots,\bg_d\}$, $\mathcal{G}^{y-x} = - \mathcal{G}^{x-y}$ and $\mathcal{G} = \mathcal{G}^{x-y} \cup \mathcal{G}^{y-x}$. 
	
	We claim that the collection $\mathcal{G} = \mathcal{G}^{x-y} \cup \mathcal{G}^{y-x}$ generates $\ZZ^d$ as a monoid. Indeed, the flip condition ensures that every position in $F$ occurs exactly once as an $i_x$ (and exactly once as an $i_y$). Moreover, for every $\symb{i} \in \alfad$, $\bg_{\symb{i}} = i_x - i_y \neq 0$. As $\symb{0}_x = 0$, using the previous properties we can suitably add elements from $\mathcal{G}^{x-y}$ to produce all canonical vectors $\{e_1,\dots,e_d\}$. Similarly, adding elements from $\mathcal{G}^{y-x}$ we can produce $\{-e_1,\dots,-e_d\}$. This provides a set which generates $\ZZ^d$ as a monoid.
	
	 For $\bm \in \ZZ^d$, let $\norm{\bm}_{\mathcal{G}}$ be the word metric generated by $\mathcal{G}$, that is, the least number $\ell$ so that $\bm$ can be written as a sum of $\ell$ elements of $\mathcal{G}$ ($\bzero$ can be written as a sum of zero elements). Denote by $d_{\mathcal{G}}(\bm,\bm') = \norm{\bm-\bm'}_{\mathcal{G}}$ and for a set $K \subset \ZZ^d$ let $d_{\mathcal{G}}(\bm,K) = \min_{\bk \in K} d_{\mathcal{G}}(\bm,\bk)$.
	
	We just show that $\Lcal_S(x)\subset \{\sigma^{\bn}(x)|_{S}\colon \bn\in F-S\}$, as the other case is analogous. Let $p \in \Lcal_S(x)$. There exists $\bn\in\ZZ^d$ such that $\sigma^{\bn}(x)\in[p]$. Choose $\bn$ as above such that it minimizes $d_{\mathcal{G}}(\bn,F-S)$. We claim that $d_{\mathcal{G}}(\bn,F-S) = 0$. If this were not the case, there is ${f}\in F$ and ${s} \in S$ so that $d_{\mathcal{G}}(\bn,F-S) = d_{\mathcal{G}}(\bn,{f}-{s}) = \norm{\bn - ({f}-{s})}_{\mathcal{G}} \geq 1$.
	
	By definition, we can write $\bn - ({f}-{s}) = \sum_{j = 1}^{d_{\mathcal{G}}(\bn,F-S)} {h}_j$ with each ${h}_j \in \mathcal{G}$. Consider ${h}_1$. There are two cases:
	\begin{enumerate}
		\item If ${h}_1 \in \mathcal{G}^{x-y}$ then ${h}_1 = \bg_i$ for some $0 \leq i \leq d$. Consider the support $S' = \{ i_x \} \cup (\bn + S)$ and let $q = x|_{S'}$. By definition $x \in [q]$ and as $x,y$ are indistinguishable, there must exist $\bk \in F-S'$ so that $\sigma^{\bk}(y) \in [q]$ and thus $\sigma^{\bk+\bn}(y) \in [p]$. There are again two cases. 
		\begin{enumerate}
			\item If $\bk + \bn \in F-S$, then as $x,y$ are indistinguishable there must exist $\bn' \in F-S$ so that $\sigma^{\bn'}(x) \in [p]$. This contradicts the choice of $\bn$.
			\item If $\bk + \bn \notin F-S$, then necessarily $\bk \in F-\{i_x\}$. We obtain that there is ${f}^* \in F$ so that $\bk = {f}^*-i_x$. As $\sigma^{\bk}(y)_{i_x} = y_{{f}^*-i_x + i_x} = \symb{i}$, it follows by the flip condition that ${f}^* = i_y$ and so $\bk = i_y - i_x = -\bg_{i} = -{h}_1$. We deduce that $\sigma^{ \bn - {h}_1 }(y) \in [p]$. As $\bk + \bn = \bn - {h}_1 \notin F-S$ and $x,y$ are asymptotic, we have that $\sigma^{ \bn - {h}_1 }(x) \in [p]$ and that \[   
			{d_{\mathcal{G}}(\bn-{h}_1,F-S)} \leq  \norm{ \bn - {h}_1 - {f}-{s} }_{\mathcal{G}} = \norm{ \bn - {f}-{s} }_{\mathcal{G}}-1 = {d_{\mathcal{G}}(\bn,F-S)}-1.\]
			Letting $\bn' = \bn-{h}_1$, we have $\sigma^{\bn'}(x)\in [p]$ and $d_{\mathcal{G}}(\bn',F-S) \leq d_{\mathcal{G}}(\bn,F-S)-1$, contradicting the choice of $\bn$.
		\end{enumerate}
		\item If ${h}_1 \in \mathcal{G}^{y-x}$, then ${h}_1 = -\bg_i$ for some $0 \leq i \leq d$. The argument is analogous except that now we consider $S' = \{ i_y \} \cup (\bn + S)$ and $q = y|_{S'}$.
	\end{enumerate}
	We conclude that $d_{\mathcal{G}}(\bn,F-S) = 0$ and thus $\bn \in F-S$.\end{proof}

\Cref{lem:exists-occ-intersecting-F} generalizes~\Cref{lem:words_appear_in_diff_set} which is valid in $\ZZ$ without resorting to the flip condition. We say that a permutation is cyclic if it consists of a single cycle and has no fixed points. In order to obtain a lower bound and thus the equality, we will use the following technical result. 

\begin{lemma}\label{lem:cyclic-permutation-map}
	Let $\pi\colon U\to U$ be a cyclic permutation on a finite set $U$.
	Let $A\subset U$ and $f\colon A\to B$ be a surjective map
	for some finite set $B$.
	If $A\neq U$, then
	\[
	\#\{(a,f(a))\mid a\in A\}
	\cup
	\{(\pi(a),f(a))\mid a\in A\}
	\geq
	\# A + \#B.
	\]
\end{lemma}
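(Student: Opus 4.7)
The plan is to apply inclusion--exclusion to the two sets $S_1=\{(a,f(a))\mid a\in A\}$ and $S_2=\{(\pi(a),f(a))\mid a\in A\}$ whose union is the object we must bound from below. Since the first coordinate in $S_1$ determines $a$, and $\pi$ is a bijection so the first coordinate in $S_2$ also determines $a$, both sets have cardinality exactly $\#A$. Unwinding the definitions, a pair $(u,v)$ lies in $S_1\cap S_2$ precisely when $u\in A$, $v=f(u)$, $\pi^{-1}(u)\in A$, and $f(\pi^{-1}(u))=f(u)$.

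Next, I would partition $A$ into fibers $A_b\isdef f^{-1}(b)$ for $b\in B$, noting that each $A_b$ is nonempty by surjectivity of $f$. Re-indexing the intersection count through $a=\pi^{-1}(u)$ and grouping by fiber gives
\[
\#(S_1\cap S_2)=\sum_{b\in B}\#\{a\in A_b\mid \pi(a)\in A_b\}
=\#A-\sum_{b\in B}\#\{a\in A_b\mid \pi(a)\notin A_b\}.
\]
Combining this with $\#(S_1\cup S_2)=2\#A-\#(S_1\cap S_2)$ yields
\[
\#(S_1\cup S_2)=\#A+\sum_{b\in B}\#\{a\in A_b\mid \pi(a)\notin A_b\}.
\]

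The heart of the argument is then to show that each summand on the right is at least $1$. Since $A\subsetneq U$, every fiber $A_b\subseteq A$ is a proper subset of $U$; since $f$ is surjective, every $A_b$ is nonempty. The cyclicity of $\pi$ enters here essentially: viewing $U$ as a single $\pi$-cycle $u_0,\pi(u_0),\pi^2(u_0),\ldots$, any nonempty proper subset $A_b\subset U$ must contain a ``last element of a run'', i.e., some $a\in A_b$ with $\pi(a)\notin A_b$. Summing over $b$ then gives $\sum_{b\in B}\#\{a\in A_b\mid \pi(a)\notin A_b\}\geq \#B$, which completes the proof.

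The main obstacle is to recognize which hypotheses do the real work. The injectivity portion and the inclusion--exclusion reduction are routine bookkeeping; the subtle point is that cyclicity of $\pi$ and the condition $A\neq U$ are both indispensable. Without cyclicity one could have a fiber $A_b$ that is a union of full $\pi$-orbits and hence contributes $0$ exits; without $A\neq U$ the fiber $A_b$ could coincide with $U$ itself and again have no exits. Once the counting problem is reduced to ``each fiber has at least one exit'', the cyclic structure of $\pi$ supplies this immediately.
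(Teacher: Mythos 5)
Your proof is correct and takes essentially the same route as the paper's: both reduce the count to showing that, because $\pi$ is a single cycle and $A\neq U$, every nonempty fiber $f^{-1}(b)$ contains some $a$ with $\pi(a)\notin f^{-1}(b)$, which yields $\#B$ extra elements beyond the $\#A$ coming from the first set. Your inclusion--exclusion bookkeeping is just a repackaging of the paper's direct exhibition, for each $b\in B$, of an element $(\pi(\bar q),b)$ lying in the second set but not the first.
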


\begin{proof}
	Let $P_1 = \{(a,f(a))\mid a\in A\}$ and $P_2 = \{(\pi(a),f(a))\mid a\in A\}$. It is clear that $P_1$ and $A$ have the same number of elements, it suffices thus to show that for every $b \in B$, there is $a \in A$ such that $(\pi(a),f(a))\in P_2\setminus P_1$ and $f(a)=b$.
	
	Indeed, fix $b \in B$ and let $Q = \{ a \in A : f(a) = b\}$. Clearly $Q \neq \varnothing$ as $f$ is surjective. Consider the directed graph $G = (Q,E)$ where $(q,r)\in E$ if and only if $\pi(q) =r$. Notice that $Q$ does not contain a cycle due to $\pi$ being cyclic on $U$ and $A \neq U$, therefore there is $\bar{q} \in Q$ such that $\pi(\bar{q})\notin Q$. Then we have $(\pi(\bar{q}),f(\bar{q}))\in P_2 \setminus P_1$ and $f(\bar{q})=b$.
\end{proof}

We will now use~\Cref{lem:cyclic-permutation-map} to prove a lower bound for the pattern complexity of
asymptotic pairs satisfying the flip condition. Notice that we do not use indistinguishability in what follows.

\begin{lemma}\label{lem:ddim-complexity-is-at-least-FminusS}
	Let $x,y \in\alfad^{\ZZd}$ be an asymptotic pair
	satisfying the flip condition.
	Then for every finite nonempty connected subset $S\subset\ZZd$,
	we have
	\[
	\#\Lcal_S(x,y)\geq \#(F-S).
	\]
\end{lemma}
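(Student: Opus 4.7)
The plan is to induct on $\#S$, growing the connected set $S$ one adjacent cell at a time via a filtration $\{s_1\} = S_1 \subset S_2 \subset \cdots \subset S_{\#S} = S$ with $S_{i+1} = S_i \cup \{\ell_i\}$ and $\ell_i$ adjacent to $S_i$. The base case $\#S = 1$ is immediate from the flip condition: the restriction $x|_F$ is a bijection $F \to \alfad$, so every symbol of $\alfad$ appears in $x$ at some cell of $F$, giving $\#\Lcal_{S_1}(x,y) = d+1 = \#(F - S_1)$.

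For the inductive step, set $S' = S \cup \{\ell\}$ with $\ell$ adjacent to $S$, and $k := \#\bigl((F - \ell) \setminus (F - S)\bigr)$, so that $\#(F - S') - \#(F - S) = k$. Using $\#\Lcal_{S'}(x,y) = \sum_{w \in \Lcal_S(x,y)} \#E^\ell(w)$, the step reduces to showing $\sum_{w \in \Lcal_S(x,y)} \bigl(\#E^\ell(w) - 1\bigr) \geq k$. For each $n \in (F - \ell) \setminus (F - S)$, the position $f := n + \ell$ is the only element where the translate $n + S'$ meets $F$, so $\sigma^n(x)|_S = \sigma^n(y)|_S =: w^f$, while at $\ell$ the two configurations display the distinct symbols $x_f$ and $y_f = x_f - 1 \bmod (d+1)$ guaranteed by the flip condition. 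Setting $F_\ell := \{n + \ell : n \in (F - \ell) \setminus (F - S)\}$ (of cardinality $k$) and $W := \{w^f : f \in F_\ell\}$, I would apply Lemma~\ref{lem:cyclic-permutation-map} with $U = \alfad$, cyclic permutation $\pi(a) = a - 1 \bmod (d+1)$, set $A = \{x_f : f \in F_\ell\}$ (of cardinality $k$, since $x|_F$ is injective), and surjection $h \colon A \to W$ defined by $h(x_f) = w^f$. Noting that $\{x_f, y_f\} \subseteq E^\ell(w^f)$ for each $f \in F_\ell$, the lemma then gives $\sum_{w \in W} \#E^\ell(w) \geq k + \#W$, i.e., $\sum_{w \in W}(\#E^\ell(w) - 1) \geq k$. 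Since every $w \in \Lcal_S(x,y)$ satisfies $\#E^\ell(w) \geq 1$, this inequality extends to the full sum over $\Lcal_S(x,y)$, closing the induction.

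The delicate point is verifying the hypothesis $A \neq U$ of Lemma~\ref{lem:cyclic-permutation-map}, and this is exactly where the connectedness of $S$ plays its role. Because $\ell$ is adjacent to some $s_0 \in S$, the vector $\ell - s_0$ belongs to $\{\pm \be_1, \ldots, \pm \be_d\} \subseteq F - F$, so we may write $\ell - s_0 = f - f'$ for some $f, f' \in F$, whence $f - \ell = f' - s_0 \in F - S$ and $f \notin F_\ell$. This forces $F_\ell \subsetneq F$ and hence $A \subsetneq U$. Without connectedness, $F_\ell$ could conceivably equal $F$ and the cyclic-permutation argument would break; this is the only potential obstacle, the rest being routine bookkeeping on extensions and special patterns.
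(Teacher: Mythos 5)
Your proof is correct and follows essentially the same route as the paper: induction on $\#S$ adding one adjacent cell, the identity $\#\Lcal_{S\cup\{\ell\}}(x,y)=\sum_{w}\#E^{\ell}(w)$, and an application of Lemma~\ref{lem:cyclic-permutation-map} to the symbols $x_f$ read at the newly reachable positions of $F$ with the surjection onto the induced $S$-patterns (your $A$, $h$, $F_\ell$ are the paper's $g(G)$, $fg^{-1}$, $G+a$). Your explicit verification that connectedness forces $F_\ell\subsetneq F$, hence $A\subsetneq\alfad$, is exactly the point the paper asserts more tersely.
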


\begin{proof}
	We do the proof of the inequality by induction on the cardinality of $S$.
	If $S=\{a\}$ is a singleton, the inequality holds since $\Lcal_{\{a\}}(x) = \Lcal_{\{a\}}(y) = \alfa{d}$ and thus
	\[
	\#(\Lcal_{\{a\}}(x) \cup \Lcal_{\{a\}}(y))
	= d+1
	= \#F
	= \#(F-\{a\}).
	\]
	Proceeding by induction, we assume that 
	$\#\Lcal_S(x,y)\geq \#(F-S)$
	holds for
	some finite connected subset
	$S\subset \ZZd$
	and we want to show it for $S\cup\{a\}$ for some $a\in\ZZd\setminus S$
	such that $S\cup\{a\}$ is connected.
	
	Let 
	\[
	G = (F-(S\cup\{a\})) \setminus (F-S)
	= (F-a) \setminus (F-S)
	\]
	be the set of vectors $m \in \ZZd$ such that $m+(S\cup\{a\})$ intersects $F$
	without $m+S$ intersecting $F$.
	Since $S\cup\{a\}$ is connected, $G$ is a strict subset of $F-a$.
	
	Let $f\colon G\to \Lcal_{S}(x)$ be the map defined by $f(\bm)
	=\sigma^{\bm}(x)|_{S}$, and $g\colon G\to \alfad$ be the map defined by
	$g(\bm) = (\sigma^{\bm}(x))_{a} = x_{\bm +a}$ for every $\bm \in G$. 
	Notice that if $\bm\in G$, then $f(m)=\sigma^{\bm}(y)|_{S}$
	and $f(G)=\{\sigma^\bm(x)|_S\colon\bm\in G\}
	=\{\sigma^\bm(y)|_S\colon\bm\in G\}$.
	
	Putting together the flip condition and that $G+a$ is a strict subset of $F$, it follows that $g$ is injective and its image is a strict subset of the alphabet $\alfad$.
	Also notice that the flip condition implies that $y_{m+a}=(g(m)-1)\bmod(d+1)$.
	
	Since the asymptotic pair $(x,y)$ satisfies the flip condition, we 
	have that $f(G)$ is a subset of patterns that are special at position $a$.
	This provides a lower bound for the pattern complexity. More precisely,
	because of the flip condition,
	for every $\bm\in G$, we have that
	$\sigma^{\bm}(x)|_{S\cup\{a\}}$
	and
	$\sigma^{\bm}(y)|_{S\cup\{a\}}$
	are two distinct extensions to the support $S\cup\{a\}$ of the pattern
	$\sigma^{\bm}(x)|_{S}=\sigma^{\bm}(y)|_{S}$.
	Therefore, we have the inclusion
	\[
	\bigcup_{w\in f(G)}
	\{u\in\Lcal_{S\cup\{a\}}(x,y)\colon u|_{S}=w\}
	\supseteq
	\left\{\sigma^{\bm}(x)|_{S\cup\{a\}}\colon\bm\in G\right\}
	\cup
	\left\{\sigma^{\bm}(y)|_{S\cup\{a\}}\colon\bm\in G\right\}.
	\]
	The union on the left is disjoint, therefore, taking the cardinality of
	both sides, we obtain
	\begin{align*}
		\sum_{w\in f(G)} \#E^a(w)
		&\geq 
		\# \left(\left\{\sigma^{\bm}(x)|_{S\cup\{a\}}\colon\bm\in G\right\}
		\cup
		\left\{\sigma^{\bm}(y)|_{S\cup\{a\}}\colon\bm\in G\right\}\right)\\
		&=
		\#\left(\left\{(g(\bm),f(\bm))\colon\bm\in G\right\}
		\cup
		\left\{(g(\bm)-1 \bmod(d+1),f(\bm))\colon\bm\in G\right\}\right)\\
		&=
		\#\left(\left\{(s,fg^{-1}(s))\colon s \in g(G)\right\}
		\cup
		\left\{(s-1 \bmod(d+1),fg^{-1}(s))\colon s\in g(G)\right\}\right)\\
		&\geq  \# g(G) + \# f(G) = \#G + \#f(G).
	\end{align*}
	In the penultimate line, we use that $g$ is injective and thus $g^{-1} \colon g(G)\to G$ is a bijection. In particular, this implies that $fg^{-1}\colon g(G) \to f(G)$ is surjective. As $g(G)$ is a strict subset of the alphabet $\alfad$ we obtain the last line using Lemma~\ref{lem:cyclic-permutation-map}.

	Since every pattern in $\Lcal_{S}(x,y)$ can be extended in at least one way
	to position $a$, we have $\#E^a(w)\geq 1$ for every $w\in\Lcal_{S}(x,y)$.
	Also since $f(G) \subset \Lcal_{S}(x,y)$, we have
	\begin{align*}
		\#\Lcal_{S\cup\{a\}}(x,y)
		-\#\Lcal_{S}(x,y)
		&= \sum_{w\in\Lcal_{S}(x,y)} (\#E^a(w)-1)
		\geq \sum_{w\in f(G)} (\#E^a(w)-1)\\
		&= \sum_{w\in f(G)} \#E^a(w) - \#f(G)\\
		&\geq (\#G + \#f(G)) - \#f(G)
		= \#G.
	\end{align*}
	Therefore,
	\[
	\#\Lcal_{S\cup\{a\}}(x,y) 
	\geq \#\Lcal_{S}(x,y) + \# G
	\geq \#(F-S) + \#G
	= \#(F-(S\cup\{a\})).\qedhere
	\]
\end{proof}

\subsection{Properties of asymptotic pairs with the flip condition and complexity $\#(F-S)$}

In this subsection, we fix an asymptotic pair $(x,y)$ which satisfies the flip condition and study the properties we can obtain from the assumption that $\Lcal_S(x,y)= \#(F-S)$ for every nonempty connected finite $S\subset \ZZd$. For the remainder of the subsection, we fix a (possibly empty) connected set $S\subset \ZZd$ and $\ell,r \in \ZZd\setminus S$ such that $S\cup \{\ell\}, S\cup \{r\}$ and $S\cup \{\ell,r\}$ are connected. We also convene that $\Lcal_{\varnothing}(x,y) = \{\varepsilon\}$, where $\varepsilon$ is the empty pattern. As our proof will be by induction, we shall often make use of the following condition which will correspond to the inductive hypothesis.
 
\begin{definition}
	We say that $(x,y)$ satisfies condition (\textbf{IND}) if for every $S' \in \{S, S\cup \{\ell\}, S\cup \{r\}\}$ any pattern $p' \in \Lcal_{S'}(x,y)$ occurs intersecting $F$ in $x$, that is, for every $p' \in \Lcal_{S'}(x,y)$ there is $t'\in F-S'$ such that we have $\sigma^{t'}(x)\in [p']$.
\end{definition}

It is clear that condition (\textbf{IND}) implies that $\# \Lcal_{S'}(x,y)\leq \#(F-S')$. By~\Cref{lem:ddim-complexity-is-at-least-FminusS}, we have the other inequality and thus condition (\textbf{IND}) in fact states two things: that $\# \Lcal_{S'}(x,y)= \#(F-S')$ and that the position $t'\in F-S'$ such that $\sigma^{t'}(x)\in [p']$ is unique.

Our general strategy will be similar to the proof of~\Cref{lem:ddim-complexity-is-at-least-FminusS}, that is, we will look at the positions in $F-(S\cup \{\ell,r\})$ for which only one of $\{\ell,r\}$ intersects the difference set and nothing else does, this will provide us with the means to describe $E^{\ell,r}(w)$ for words $w \in \alfa{d}^S$ and ultimately to prove~\Cref{maintheorem:ddim-complexity-is-FminusS}.

Let $w \in \Lcal_S(x,y)$. We are going to define three special subsets of $E^{\ell,r}(w)$

\[ \Gamma_{\ell}(w) = \{ (x_{t+\ell},x_{t+r})\in E^{\ell,r}(w) : \mbox{ there is } t \in \ZZd \mbox{ such that } \sigma^t(x) \in [w], t+\ell \in F, (t+(S \cup \{r\})) \cap F = \varnothing \}. \]
\[ \Gamma_{r}(w) = \{ (x_{t+\ell},x_{t+r})\in E^{\ell,r}(w) : \mbox{ there is } t \in \ZZd \mbox{ such that } \sigma^t(x) \in [w], t+r \in F, (t+(S \cup \{\ell\})) \cap F = \varnothing \}. \]
\begin{align*}
	\Gamma_{\star}(w) = \{ (x_{t+\ell},x_{t+r})\in E^{\ell,r}(w) : & \mbox{ there is } t \in \ZZd \mbox{ with } \sigma^t(x) \in [w] \mbox{ such that either }  (t+S) \cap F \neq \varnothing \\
	& \mbox{or } t+\ell,t+r \in F \mbox{ and } (t+S) \cap F = \varnothing
	\}. 
\end{align*} 

The set $\Gamma_{\ell}(w)$ consists of all edges in $E^{\ell,r}(w)$ which can be obtained by a pattern (with support $S \cup \{\ell,r\}$ and whose restriction to $S$ is $w$) which intersects $F$ solely on position $\ell$. Similarly, $\Gamma_{r}(w)$ consists of all edges in $E^{\ell,r}(w)$ which can be obtained by a pattern which intersects $F$ solely on position $r$. Finally, $\Gamma_{\star}(w)$ represents the edges in $E^{\ell,r}(w)$ which occur in some pattern which intersects $F$, but does so either having $S$ intersect $F$, or having both $\ell$ and $r$ do so at the same time. Notice that these three sets cover all possible ways that $S\cup \{\ell,r\}$ can intersect the difference set $F$.

In particular, if we want to show that no pattern appears twice on $x$ intersecting the difference set, we would need to show that $\Gamma_{\ell}(w) \cap \Gamma_{r}(w) = \varnothing$. This will be the main goal of this section.

We shall first show that under condition \textbf{(IND)} we can use the set $\Gamma_{\star}(w)$ to bound the number of connected components of $E^{\ell,r}(w)$.

\begin{lemma}\label{lem:lema_fundamental_grafitos}
    For a symbol $\kappa \in \alfa{d}$, let us denote by $\kappa^* = (\kappa -1) \bmod{(d+1)}$. Assume condition \textbf{(IND)} and consider the bipartite graph $E^{\ell,r}(w)$. 
	\begin{enumerate}
        \item If $(a,b) \in \Gamma_{\ell}(w)$, then $(a^*,b)\in E^{\ell,r}(w)$ and there is $b'$ such that $(a^*,b') \in \Gamma_{\ell}(w)\cup \Gamma_{\star}(w)$.
        \item If $(a,b) \in \Gamma_{r}(w)$, then $(a,b^*)\in E^{\ell,r}(w)$ and there is $a'$ such that $(a',b^*) \in \Gamma_{r}(w)\cup \Gamma_{\star}(w)$.
		\item The number of connected components of $E^{\ell,r}(w)$ is bounded above by $\#\Gamma_{\star}(w)$.
	\end{enumerate} 
\end{lemma}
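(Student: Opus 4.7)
My plan is to prove (1) and (2) by a direct construction using the flip condition, then deduce (3) from them via a closure argument under the cyclic permutation $v \mapsto v^*$. For (1), take $(a,b) \in \Gamma_\ell(w)$ with witness position $t$, so that $\sigma^t(x)\in [w]$, $t+\ell \in F$ and $(t+(S\cup\{r\}))\cap F = \varnothing$. The flip condition at $t+\ell$, combined with the fact that $x$ and $y$ agree off $F$, forces $\sigma^t(y)|_{S\cup\{\ell,r\}}$ to be the pattern with $w$ on $S$, $a^*$ at $\ell$ and $b$ at $r$, which already yields $(a^*,b)\in E^{\ell,r}(w)$. To produce the required $(a^*,b') \in \Gamma_\ell(w) \cup \Gamma_\star(w)$, I will apply (IND) to the subpattern $\sigma^t(y)|_{S\cup\{\ell\}} \in \Lcal_{S\cup\{\ell\}}(x,y)$ to obtain a position $t'\in F-(S\cup\{\ell\})$ at which it occurs in $x$, and set $b' = x_{t'+r}$. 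A short case analysis on whether $(t'+S)\cap F$ is empty and on whether $t'+r\in F$ then places $(a^*,b')$ in $\Gamma_\ell(w)$ or in $\Gamma_\star(w)$. Part (2) is the mirror argument swapping the roles of $\ell$ and $r$.

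For (3), my plan is to show that every connected component $C$ of $E^{\ell,r}(w)$ contains an edge of $\Gamma_\star(w)$; choosing one such edge per component then defines an injection yielding the stated bound. Suppose for contradiction that $C \cap \Gamma_\star(w) = \varnothing$. For every vertex $a$ of $C$ lying in $E^\ell(w)$, applying (IND) to the pattern with $w$ on $S$ and $a$ at $\ell$ produces a position $t_a\in F-(S\cup\{\ell\})$; setting $b^{(a)}=x_{t_a+r}$, the same case analysis as in (1) places the edge $(a,b^{(a)})\in E^{\ell,r}(w)$ in $\Gamma_\ell(w) \cup \Gamma_\star(w)$. Since this edge shares the vertex $a$ with edges of $C$, it belongs to $C$, and the contradiction hypothesis forces $(a,b^{(a)}) \in \Gamma_\ell(w)$. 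Part (1) then yields $(a^*,b^{(a)}) \in E^{\ell,r}(w)$, which lies in $C$ via the shared vertex $b^{(a)}$, so $a^*$ is a vertex of $C$ in $E^\ell(w)$. Since $v \mapsto v^*$ is a cyclic permutation of $\alfad$, this closure forces the set of vertices of $C$ in $E^\ell(w)$ to be all of $\alfad$; the symmetric argument based on (2) forces the set of vertices of $C$ in $E^r(w)$ to be all of $\alfad$ as well. Hence $C$ contains every vertex of $E^{\ell,r}(w)$ and is the unique component, so $\Gamma_\star(w) \subseteq C$.

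To derive the contradiction, it remains to exhibit at least one edge in $\Gamma_\star(w)$. Applying (IND) to $w\in\Lcal_S(x,y)$ on support $S$ produces a position $t_w \in F-S$, which satisfies $(t_w+S)\cap F \neq \varnothing$; the pattern $\sigma^{t_w}(x)|_{S\cup\{\ell,r\}}$ then gives an edge $(x_{t_w+\ell},x_{t_w+r}) \in E^{\ell,r}(w)$ which lies in $\Gamma_\star(w)$ by definition. Together with $\Gamma_\star(w) \subseteq C$, this contradicts $C \cap \Gamma_\star(w) = \varnothing$ and completes (3). The main technical obstacle is the per-vertex construction of $(a,b^{(a)}) \in \Gamma_\ell(w) \cup \Gamma_\star(w)$: once this identification is in place, the orbit argument under $v \mapsto v^*$ collapses everything into a single component and (IND) applied to $w$ itself delivers the required $\Gamma_\star$ witness.
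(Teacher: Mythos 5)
Your parts (1) and (2) are essentially the paper's own argument: the flip condition at $t+\ell$, together with $x$ and $y$ agreeing off $F$, produces the edge $(a^*,b)$, and condition \textbf{(IND)} applied to the extension of $w$ by $a^*$ at position $\ell$ gives, after the same short case analysis, an edge $(a^*,b')\in\Gamma_{\ell}(w)\cup\Gamma_{\star}(w)$. (You also correctly read $\kappa^*$ as $\kappa-\symb{1}\bmod (d+\symb{1})$; the modulus $d-1$ in the statement is a typo.) For (3) you take a genuinely different route. The paper argues directly: from any vertex it follows the chain $a, a^*, (a^*)^*, \dots$ along edges of $\Gamma_{\ell}(w)$, and termination at a $\Gamma_{\star}(w)$-edge is guaranteed by a separate geometric bound $\#\Gamma_{\ell}(w)\leq d$ (among the $d+1$ translates placing $\ell$ on $F$, at least one also places a neighbour of $\ell$ inside $S\cup\{r\}$ on $F$). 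You instead argue by contradiction: a component disjoint from $\Gamma_{\star}(w)$ is closed under the cyclic map on both sides, hence exhausts both vertex sets (every vertex lies on some edge, so each component has a vertex on each side), hence is the whole graph, and then any single edge of $\Gamma_{\star}(w)$ yields the contradiction. This trades the counting bound $\#\Gamma_{\ell}(w)\leq d$ for the nonemptiness of $\Gamma_{\star}(w)$, which is immediate from \textbf{(IND)} applied to $w$ itself when $S\neq\varnothing$.

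The gap is precisely in that last step when $S=\varnothing$, a case the lemma must cover: the standing setup allows $S=\varnothing$ with $w=\varepsilon$, and part (3) is invoked with $S=\varnothing$ in the proof of \Cref{lem:suma_odiosa}. In that case $F-S=\varnothing$, so ``applying (IND) to $w$ on support $S$'' produces no position $t_w$, and the first clause in the definition of $\Gamma_{\star}$ (which requires $(t+S)\cap F\neq\varnothing$) can never hold, so your witness for $\Gamma_{\star}(w)\neq\varnothing$ does not exist by this route. The fix is short: when $S=\varnothing$, connectedness of $S\cup\{\ell,r\}=\{\ell,r\}$ forces $r-\ell=\pm\be_i$ for some $i$, hence there is a (unique) $t\in\ZZd$ with $\{t+\ell,t+r\}\subset F$, and $(x_{t+\ell},x_{t+r})$ is an edge of $\Gamma_{\star}(\varepsilon)$ by its second clause. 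Your per-vertex closure argument itself is fine for $S=\varnothing$ (only the clauses $t_a+\ell\in F$ and possibly $t_a+r\in F$ occur in the case analysis), so with this one observation added your proof of (3) goes through in all cases needed downstream.
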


\begin{proof}
	Let us show (1). Fix $(a,b) \in \Gamma_{\ell}(w)$ and let $w'$ be the pattern with support $S\cup \{\ell,r\}$ such that $w'|_{S}=w$, $w'_{\ell}=a$ and $w'_{r}=b$. As $(a,b) \in \Gamma_{\ell}(w)$, there is $t \in \ZZd$ such that $t+\ell \in F$, $(t+(S\cup \{r\})) \cap F =\varnothing$ and $\sigma^{t}(x)\in [w']$. On the one hand, as $x,y$ are asymptotic with difference set $F$, we have $x|_{t+(S\cup \{r\})} = y|_{t+(S\cup \{r\})}$ and thus $y_{t+r} = x_{t+r}=b$. On the other hand, by the flip condition $y_{t+\ell} = x_{t+\ell}-1 \bmod{ d+1} = a^*$, which means we have both $(a,b)$ and $(a^*,b)$ in $E^{\ell,r}(w)$. Furthermore, if we let $w''$ be the pattern with support $S\cup \{\ell\}$ such that $w''|_{S}=w$ and $w''_{\ell}=a^*$, condition \textbf{(IND)} implies that $w''$ must occur in $x$ intersecting the difference set. It follows that there is $b'$ such that $(a^*,b') \in \Gamma_{\ell}(w)\cup \Gamma_{\star}(w)$. The second claim is analogous to the first one.
	
	Next we shall provide a bound on the number of edges of $\Gamma_{\ell}(w)$ and $\Gamma_{r}(w)$. Indeed, notice that by condition \textbf{(IND)} we have that \[\#\Gamma_{\ell}(w) \leq \#\{t\in \ZZd : t + \ell \in F \mbox{ and } (t + (S \cup \{r\})) \cap F = \varnothing)\}.\] As $S\cup \{\ell,r\}$ is connected, there is $u\in \ZZd$ with $\norm{u}_1 \leq 1$ such that $\ell +u \in S \cup \{r\}$. In particular, there is at least one $t \in \ZZd$ is such that $t+\ell \in F$ and $t+\ell+u \in F$. As $\# F = d+1$, we deduce that $\# \Gamma_{\ell}(w)\leq d$. Analogously, we have $\# \Gamma_{r}(w)\leq d$.
	
	Let us finally prove (3). Let $a \in E^{\ell}(w)$ and consider again the pattern $w'$ with support $S\cup \{\ell\}$ such that $w'|_{S}=w$ and $w'_{\ell}=a$. By condition \textbf{(IND)}, it must occur intersecting $F$ and thus we have that $a$ must occur in some edge in $\Gamma_{\ell}(w)\cup \Gamma_{\star}(w)$. If it occurs in an edge of $\Gamma_{\star}(w)$ we are done, otherwise by (1), we know it is connected to $a^*= a-1 \bmod{d+1}$ and that $a^*$ occurs in some edge in $\Gamma_{\ell}(w)\cup \Gamma_{\star}(w)$. If said edge is in $\Gamma_{\star}(w)$ we are done, otherwise we iterate the process, as $\# \Gamma_{\ell}(w) \leq d$ it follows that we eventually end up in a vertex which belongs to an edge in $\Gamma_{\star}(w)$. After an analogous argument for $b \in E^r(w)$ we obtain that every connected component of $E^{\ell,r}(w)$ must contain an edge of $\Gamma_{\star}(w)$, and thus the number of connected components is bounded by $\#\Gamma_{\star}(w)$. \end{proof}

Next we will have to estimate the size of $\Gamma_{\star}(w)$ in order to have a lower bound on the multiplicities $m^{\ell,r}(w)$. It turns out that
one particular case is harder to deal with and thus we shall give it a special name to simplify the upcoming statements.

\begin{definition}\label{example:simultaneous-extension-to-F}
	Let $S\subset\ZZd$ be a connected nonempty finite support
	and $\ell,r\in\ZZd\setminus S$ with $\ell\neq r$.
	We say that $(S,\ell,r)$ is \define{evil} 
	if there exists $t\in\ZZd$ such that
	$\{t+\ell,t+r\}\subset F$
	and $(t+S)\cap F=\varnothing$.
	
	We also say that $w \in \Lcal_S(x)$ is an \define{evil pattern} if for $t\in\ZZd$ such that
	$\{t+\ell,t+r\}\subset F$
	and $(t+S)\cap F=\varnothing$ we have $x_{t+s} = w_s$ for every $s \in S$.
\end{definition}

We remark that by definition the empty pattern $\varepsilon$ with support $S = \varnothing$ is not evil. \Cref{example:simultaneous-extension-to-F}
is illustrated in
\Cref{fig:simultaneous-extension-to-F} when $d=2$.

\begin{figure}[ht]
	\begin{tikzpicture}[scale=.5]
		\begin{scope}[yshift=3cm]
			\draw[fill=black!20] (0,0) -- ++ (-2,0) -- ++ (0,-1) -- ++ (1,0) 
			-- ++ (0,-1) -- ++ (1,0) -- ++ (0,2);
			\node at (-.5,-.5) {$F$};
		\end{scope}
		\begin{scope}[xshift=2cm]
			\draw (0,0) rectangle node {$S_1$} (5,3);
			\draw (4,3) rectangle node {$\ell_1$} (5,4);
			\draw (5,2) rectangle node {$r_1$} (6,3);
		\end{scope}
		\begin{scope}[xshift=10cm]
			\draw (0,0) rectangle node {$S_2$} (5,3);
			\draw (3,3) rectangle node {$\ell_2$} (4,4);
			\draw (5,2) rectangle node {$r_2$} (6,3);
		\end{scope}
		\begin{scope}[xshift=18cm]
			\draw (0,0) rectangle node {$S_3$} (5,3);
			\draw (-1,0) rectangle node {$\ell_3$} (0,1);
			\draw (0,-1) rectangle node {$r_3$} (1,0);
		\end{scope}
	\end{tikzpicture}
	\caption{
		$(S_1,\ell_1,r_1)$ is evil, as both $\ell_1$ and $r_1$ can simultaneously overlap $F$ without $S_1$ doing so. Notice that $(S_2,\ell_2,r_2)$ is not evil
		since $\ell_2-r_2\notin F-F$. $(S_3,\ell_3,r_3)$ is also not evil
		since the unique $t\in\ZZ^2$ with
		$t+\{\ell_3,r_3\}\subset F$
		is such that
		$(t+S_3)\cap F\neq\varnothing$.}
	\label{fig:simultaneous-extension-to-F}
\end{figure}
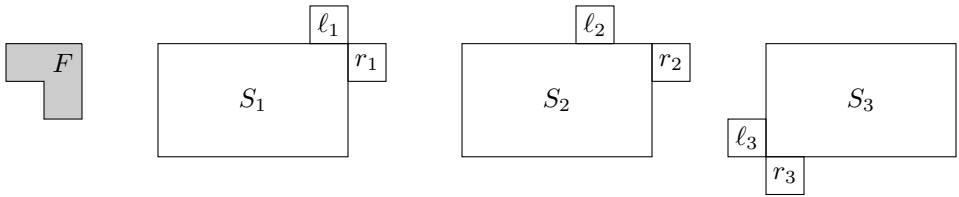

\begin{lemma}\label{lem:size_gamma_star}
	Let $w \in \Lcal_{S}(x,y)$ and assume conditon \textbf{(IND)}. If $w$ is an evil pattern, then $m^{\ell,r}(w) \geq-1$. If $w$ is non-evil, then $m^{\ell,r}(w) \geq 0$
\end{lemma}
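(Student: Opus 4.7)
The plan is to combine \Cref{lem:lema_evidente_trivial}(1), which yields $m^{\ell,r}(w) \geq 1 - c$ where $c$ is the number of connected components of the extension graph $E^{\ell,r}(w)$, with \Cref{lem:lema_fundamental_grafitos}(3), which bounds $c \leq \#\Gamma_\star(w)$ under condition (\textbf{IND}). Together they give $m^{\ell,r}(w) \geq 1 - \#\Gamma_\star(w)$, so it suffices to prove $\#\Gamma_\star(w) \leq 1$ when $w$ is non-evil and $\#\Gamma_\star(w) \leq 2$ when $w$ is evil.

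First I would split $\Gamma_\star(w)$ according to the two alternatives in its definition. Any $t$ contributing an edge via the first alternative satisfies $(t+S)\cap F \neq \varnothing$ and $\sigma^t(x)\in[w]$; such $t$ necessarily lies in $F-S$, and condition (\textbf{IND}) applied to $w \in \Lcal_S(x,y)$ forces it to be unique, so this alternative contributes at most one edge. For the second alternative, any $t$ with $\{t+\ell,t+r\}\subset F$, $(t+S)\cap F=\varnothing$ and $\sigma^t(x)\in[w]$ is an evil position at which $x|_{t+S}=w$, so $w$ must then be the evil pattern. In particular, when $w$ is non-evil the second alternative yields no edge and $\#\Gamma_\star(w)\leq 1$, settling the non-evil case.

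It remains to bound the number of edges contributed by the second alternative when $w$ is evil, and the key claim is that there is at most one evil position $t$. Each evil $t$ corresponds bijectively to a pair $(f_1,f_2)=(t+\ell,t+r)\in F\times F$ with $f_1-f_2=\ell-r$. A direct case analysis on $F=\{\bzero,-\be_1,\dots,-\be_d\}$ shows that for any nonzero $\ell-r$, the equation $f_1-f_2=\ell-r$ admits at most one solution in $F\times F$: the solutions are $(\bzero,-\be_i)$ if $\ell-r=\be_i$, $(-\be_i,\bzero)$ if $\ell-r=-\be_i$, $(-\be_j,-\be_i)$ if $\ell-r=\be_i-\be_j$ with $i\neq j$, and no solution exists in any other case. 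Therefore the second alternative contributes at most one edge when $w$ is evil, giving $\#\Gamma_\star(w)\leq 2$.

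Substituting these bounds into $m^{\ell,r}(w)\geq 1-\#\Gamma_\star(w)$ yields $m^{\ell,r}(w)\geq 0$ in the non-evil case and $m^{\ell,r}(w)\geq -1$ in the evil case, as required. The only step beyond a direct invocation of the previous lemmas and condition (\textbf{IND}) is the combinatorial uniqueness of the evil position for the specific difference set $F$ prescribed by the flip condition; this is the principal, if routine, subtlety.
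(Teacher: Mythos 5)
Your proposal follows essentially the same route as the paper's proof: bound the number of connected components of $E^{\ell,r}(w)$ by $\#\Gamma_{\star}(w)$ via \Cref{lem:lema_fundamental_grafitos}(3), bound $\#\Gamma_{\star}(w)$ by $1$ (non-evil) or $2$ (evil) by splitting its definition into the two alternatives, and conclude with \Cref{lem:lema_evidente_trivial}(1). Your explicit check that the evil position is unique (at most one pair in $F\times F$ with difference $\ell-r$) is the same combinatorial fact the paper uses, made explicit later in the proof of \Cref{lem:suma_odiosa}; also note that the uniqueness of the occurrence in $F-S$ comes from (\textbf{IND}) together with \Cref{lem:ddim-complexity-is-at-least-FminusS}, as the paper points out right after defining (\textbf{IND}).

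The one place where your argument, read literally, fails is the degenerate case $S=\varnothing$: the empty pattern $\varepsilon$ is non-evil by convention, yet any $t$ with $\{t+\ell,t+r\}\subset F$ satisfies the second alternative vacuously since $(t+\varnothing)\cap F=\varnothing$, so your claim that the second alternative contributes no edge for non-evil $w$ is false for $\varepsilon$. The numerical bound $\#\Gamma_{\star}(\varepsilon)\leq 1$ still holds, because for $S=\varnothing$ the first alternative is vacuous and your uniqueness argument bounds the second alternative by one; this is precisely why the paper treats $S=\varnothing$ as a separate one-line case, and your write-up should do the same.
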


\begin{proof}
	In the case when $S = \varnothing$, as $\ell \neq r$ there is at most a unique $t \in \ZZd$ such that $t+\ell, t+r \in F$, and thus $\# \Gamma_{\star}(w) \leq 1$. If $S \neq \varnothing$, condition \textbf{(IND)} implies that there is a unique $t \in \ZZd$ such that $(t+S)\cap F \neq \varnothing$ and $\sigma^{t}(x)\in [w]$. The second possibility, namely, that there is $t'$ such that $(t'+S)\cap F = \varnothing$ and $t'+\ell,t'+r \in F$ can only occur if $w$ is evil, therefore we obtain that $\# \Gamma_{\star}(w) \leq 1$ if $w$ is non-evil and $\# \Gamma_{\star}(w) \leq 2$ if $w$ is evil.
	
	By~\Cref{lem:lema_fundamental_grafitos} we obtain that the number of connected components of $E^{\ell,r}(w)$ is bounded by $1$ if $w$ is non-evil, and by $2$ if it is evil. Using~\Cref{lem:lema_evidente_trivial}, we obtain that 
	$m^{\ell,r}(w) \geq 0$ whenever $w$ is non-evil, and $m^{\ell,r}(w)\geq -1$ if $w$ is evil.
\end{proof}

Next we shall show that the bound in~\Cref{lem:size_gamma_star} is tight. In order to do so we will produce a formula for the sum of a bilateral multiplicities.

\begin{lemma}\label{lem:suma_odiosa}
	Suppose that $\#\Lcal_{S \cup \{\ell,r\}}(x,y) = \#(F-(S \cup \{\ell,r\}))$ and that condition (\textbf{IND}) holds. For every $w \in \Lcal_{S}(x,y)$, let $c$ be the number of connected components of $E^{\ell,r}(w)$. We have
	\[
	m^{\ell,r}(w)
    =
    1-c
	=
	\begin{cases}
		-1 & \text{ if } w \text{ is evil,}\\
		0  & \text{ otherwise.} 
	\end{cases}
	\]
    In particular, 
    the extension graph $E^{\ell,r}(w)$ contains no cycle.
\end{lemma}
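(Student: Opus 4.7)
The plan is to compute the total sum $\sum_{w\in\Lcal_S(x,y)} m^{\ell,r}(w)$ via a global inclusion-exclusion identity, then combine with the per-pattern lower bounds from \Cref{lem:size_gamma_star} to squeeze each multiplicity and pin down $c$.

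Starting from the definition $m^{\ell,r}(w)=\#E^{\ell,r}(w)-\#E^\ell(w)-\#E^r(w)+1$ and telescoping over $w\in\Lcal_S(x,y)$, one obtains
\[
\sum_{w\in\Lcal_S(x,y)} m^{\ell,r}(w) \;=\; \#\Lcal_{S\cup\{\ell,r\}}(x,y) - \#\Lcal_{S\cup\{\ell\}}(x,y) - \#\Lcal_{S\cup\{r\}}(x,y) + \#\Lcal_S(x,y).
\]
By the hypothesis and condition \textbf{(IND)} (together with \Cref{lem:ddim-complexity-is-at-least-FminusS} supplying the matching lower bound), each complexity equals the corresponding $\#(F-\cdot)$. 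Writing $A=F-S$, $B=F-\ell$, $C=F-r$ and applying standard inclusion-exclusion, the right-hand side collapses to $-\#((B\cap C)\setminus A)$, that is, $-E$, where $E$ is the number of positions $t\in\ZZd$ such that $\{t+\ell,t+r\}\subset F$ and $(t+S)\cap F=\varnothing$. The flip condition $F=\{\bzero,-\be_1,\ldots,-\be_d\}$ combined with $\ell\neq r$ makes every nonzero element of $F-F$ uniquely expressible as a difference, so $E\in\{0,1\}$.

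Next I would combine this global identity with \Cref{lem:size_gamma_star}: $m^{\ell,r}(w)\geq 0$ for non-evil $w$, and $m^{\ell,r}(w)\geq -1$ for evil $w$. Since every evil pattern arises as $\sigma^t(x)|_S$ for some position $t$ satisfying the evil conditions above, the number of evil patterns in $\Lcal_S(x,y)$ is at most $E$. Hence
\[
\sum_{w\in\Lcal_S(x,y)} m^{\ell,r}(w) \;\geq\; -E,
\]
and matching this with the equality $\sum_w m^{\ell,r}(w)=-E$ forces saturation everywhere: every non-evil $w$ has $m^{\ell,r}(w)=0$, and when $(S,\ell,r)$ is evil the (unique) evil pattern satisfies $m^{\ell,r}(w)=-1$.

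Finally, to identify $c$ and obtain acyclicity, I would use \Cref{lem:lema_fundamental_grafitos}(3) together with the estimate of $\#\Gamma_\star(w)$ from the proof of \Cref{lem:size_gamma_star}: this gives $c\leq 1$ for non-evil $w$ and $c\leq 2$ for evil $w$. The complementary bound $c\geq 1-m^{\ell,r}(w)$ from \Cref{lem:lema_evidente_trivial}(1) matches in both cases, so $c=1-m^{\ell,r}(w)$, and \Cref{lem:lema_evidente_trivial}(2) then yields that $E^{\ell,r}(w)$ is acyclic. The main obstacle will be the careful bookkeeping of the inclusion-exclusion over the $F-\cdot$ sets and the argument that the flip condition constrains $E$ to be at most one; once those are in place, the rest is a saturation-of-bounds argument.
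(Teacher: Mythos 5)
For nonempty $S$ your argument is essentially the paper's proof of this lemma: the same telescoping of the definition of $m^{\ell,r}$ over $\Lcal_S(x,y)$, the same use of (\textbf{IND}) plus the hypothesis to replace each complexity by $\#(F-\cdot)$, the same inclusion-exclusion collapsing the sum to minus the number of evil positions, the same uniqueness of that position coming from $\ell\neq r$ and the unique-difference property of $F$, and the same saturation against the lower bounds of \Cref{lem:size_gamma_star}, followed by identifying $c$ through the bound $c\leq\#\Gamma_\star(w)$ and \Cref{lem:lema_evidente_trivial}.

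There is, however, a gap: you never treat the case $S=\varnothing$, which is within the scope of the lemma (the subsection fixes a possibly empty connected $S$, with the convention $\Lcal_\varnothing(x,y)=\{\varepsilon\}$ and $\varepsilon$ declared non-evil) and which is genuinely needed later, in the proof of \Cref{prop:complexity-implies-one-occurrence}, when the connected support $U$ has cardinality $2$ and one takes $S=U\setminus\{\ell,r\}=\varnothing$. Your computation does not apply there: $\#\Lcal_\varnothing(x,y)=1$ while $\#(F-\varnothing)=0$, condition (\textbf{IND}) is not meaningful for $S'=\varnothing$, and \Cref{lem:ddim-complexity-is-at-least-FminusS} is only stated for nonempty supports. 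Indeed, if one runs your inclusion-exclusion with $A=F-S=\varnothing$, the sum collapses to $-\#\bigl((F-\ell)\cap(F-r)\bigr)=-1$ (connectivity of $\{\ell,r\}$ forces $r=\ell\pm\be_i$, so the intersection is a singleton), which would give $m^{\ell,r}(\varepsilon)=-1$ and contradict the bound $m^{\ell,r}(\varepsilon)\geq 0$ that \Cref{lem:size_gamma_star} gives for the non-evil empty pattern. This degenerate case must be handled directly, as the paper does: $\#E^{\ell}(\varepsilon)=\#E^{r}(\varepsilon)=d+1$, $\#E^{\ell,r}(\varepsilon)=\#(F-\{\ell,r\})=2d+1$, hence $m^{\ell,r}(\varepsilon)=0$, and $c\leq\#\Gamma_\star(\varepsilon)\leq 1$ gives $c=1$ and acyclicity. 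With that case added, your proof is complete and coincides with the paper's.
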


\begin{proof}
	Let us first deal with the case $S\neq \varnothing$. Summing each term in the definition of multiplicity we obtain
	\begin{align*}
		\sum_{w\in\Lcal_S(x,y)} m^{\ell,r}(w) 
		&= \sum_{w\in\Lcal_S(x,y)} 
		\#E^{\ell,r}(w)
		-\sum_{w\in\Lcal_S(x,y)} 
		\#E^{\ell}(w)
		-\sum_{w\in\Lcal_S(x,y)} 
		\#E^{r}(w)
		+\sum_{w\in\Lcal_S(x,y)} 1\\
		&= \#\Lcal_{S\cup\{\ell,r\}}(x,y)
		-\#\Lcal_{S\cup\{\ell\}}(x,y)
		-\#\Lcal_{S\cup\{r\}}(x,y)
		+\#\Lcal_S(x,y).
	\end{align*}On the one hand, we have the hypothesis that $\#\Lcal_{S \cup \{\ell,r\}}(x,y) = \#(F-(S \cup \{\ell,r\}))$. On the other hand, condition (\textbf{IND}) implies that 
$\# \Lcal_{S'}(x,y)= \#(F-S')$ for every $S' \in \{S, S\cup \{\ell\}, S\cup \{r\}\}$. We obtain
\begin{align*}
	\sum_{w\in\Lcal_S(x,y)} m^{\ell,r}(w) 
	&= \#(F-(S\cup\{\ell,r\}))
	-\#(F-(S\cup\{r\}))
	-\#(F-(S\cup\{\ell\}))
	+\#(F-S)\\
	&= \# ( {(F-(S\cup \{\ell,r\}) ) \setminus (F - (S \cup \{r\}))) }-  \# {( (F-(S\cup \{\ell\}) ) \setminus (F - S))}\\
	&= \# {((F-\{\ell\} ) \setminus (F - (S \cup \{r\}))) }-  \# {(( F-\{\ell\} ) \setminus (F - S))}\\
	&= -\# (( (F-\{r\})\setminus (F-S)) \cap (F-\{\ell\})).
\end{align*} Clearly if $( (F-\{r\})\setminus (F-S)) \cap (F-\{\ell\}) = \varnothing$ the value of the sum is $0$. Otherwise, there is $t \in \ZZd$ such that $t+r \in F$, $t+\ell \in F$ but $t+s \notin F$ for every $s \in S$, which is precisely the evil case. Notice that as $\ell \neq r$, if such a $t$ exists then it is unique (because any non-trivial intersection $F \cap (t+F)$ has size at most $1$), and thus in this case the sum has value $-1$. We obtain thus that for $S \neq \varnothing$ we have
\[
\sum_{w \in \Lcal_{S}(x,y)}m^{\ell,r}(w)
=
\begin{cases}
	-1 & \text{ if } (S,\ell,r) \text{ is evil,}\\
	0  & \text{ otherwise.} 
\end{cases}
\]
Using~\Cref{lem:size_gamma_star} and the fact that there is exactly one evil pattern for an evil $(S,\ell,r)$, we obtain that $m^{\ell,r}(w) = 1-c$.
By~\Cref{lem:lema_evidente_trivial}, this implies that
    the extension graph $E^{\ell,r}(w)$ is acyclic.
	
	Let us now deal with the case when $S = \varnothing$. By assumption, $S\cup \{\ell,r\}$ is connected and thus without loss of generality we may write $r = \ell + \be_i$ for some $i \in \{1,\dots,d\}$. By definition $m^{\ell,r}(\varepsilon) = \# E^{\ell,r}(\varepsilon) -  \# E^{\ell}(\varepsilon)  -  \# E^{r}(\varepsilon) + 1$. Clearly $\# E^{\ell}(\varepsilon)  = \# E^{r}(\varepsilon) = \Lcal_{\{0\}}(x,y) = d+1$ and one can easily verify that for $U=\{\ell,\ell+\be_i\}$ we have $\# E^{\ell,r}(\varepsilon) = \#(F-U) = 2d+1$. It follows that $m^{\ell,r}(\varepsilon)=0$. As the number of connected components of $E^{\ell,r}(\varepsilon)$ is bounded by $\#\Gamma_{\star}(\varepsilon)\leq 1$, we conclude that $c = 1$ and thus $m^{\ell,r}(\varepsilon)=1-c$. By~\Cref{lem:lema_evidente_trivial}, this yields that
    the extension graph $E^{\ell,r}(w)$ is acyclic.
\end{proof}

\begin{lemma}\label{lem:lemadelfinal}
	Suppose that $\#\Lcal_{S \cup \{\ell,r\}}(x,y) = \#(F-(S \cup \{\ell,r\}))$ and that condition (\textbf{IND}) holds. For every non-evil $w \in \Lcal_{S}(x,y)$, if $\Gamma_{\ell}(w) \cap \Gamma_{r}(w)\neq\varnothing$, then the extension graph $E^{\ell,r}(w)$ contains a cycle. 
\end{lemma}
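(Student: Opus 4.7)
The plan is to construct two distinct walks in $E^{\ell,r}(w)$ between a common pair of endpoints; two such distinct walks necessarily yield a cycle in the graph, completing the proof.

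Starting from $(a^{(0)}, b_0) \isdef (a, b) \in \Gamma_\ell(w)$, I iteratively apply Lemma~\ref{lem:lema_fundamental_grafitos}(1). Each iteration step takes an active edge $(a^{(k)}, b_k) \in \Gamma_\ell(w)$ and produces both the edge $(a^{(k+1)}, b_k) \in E^{\ell,r}(w)$, with $a^{(k+1)} = a^{(k)} - 1 \bmod (d+1)$, and a uniquely-determined next active edge $(a^{(k+1)}, b_{k+1}) \in \Gamma_\ell(w) \cup \Gamma_\star(w)$---uniqueness coming from condition~\textbf{(IND)} applied to $S \cup \{\ell\}$. The first coordinates $a^{(0)}, a^{(1)}, \ldots$ are successive iterates of the cyclic permutation and hence take $d+1$ distinct values before returning to $a$. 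Since $|\Gamma_\ell(w)| \leq d$ (from the proof of Lemma~\ref{lem:lema_fundamental_grafitos}), the iteration cannot remain in $\Gamma_\ell(w)$ for $d+1$ steps, so it enters $\Gamma_\star(w)$ at some step $K_\ell \leq d$, yielding a walk
\[
P_\ell: b = b_0 \to a^{(1)} \to b_1 \to a^{(2)} \to \cdots \to a^{(K_\ell)} \to b_{K_\ell}
\]
of length $2 K_\ell$ in $E^{\ell,r}(w)$.

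An analogous iteration of Lemma~\ref{lem:lema_fundamental_grafitos}(2) starting from $(a,b) \in \Gamma_r(w)$ produces a walk
\[
P_r: b = b^{(0)} \to a'_0 = a \to b^{(1)} \to a'_1 \to \cdots \to a'_{K_r - 1} \to b^{(K_r)}
\]
of length $2 K_r$, also ending at a $\Gamma_\star(w)$-edge. Since $w$ is non-evil, we have $|\Gamma_\star(w)| \leq 1$ (seen in the proof of Lemma~\ref{lem:size_gamma_star}), so the two terminal $\Gamma_\star$-edges coincide and in particular $b_{K_\ell} = b^{(K_r)} \isdef v$. Thus $P_\ell$ and $P_r$ are two walks in $E^{\ell,r}(w)$ from $b$ to $v$, distinct because their first edges differ: $P_\ell$ starts with $(a^{(1)}, b) = (a^*, b)$ while $P_r$ starts with $(a, b)$, and $a^* \neq a$ since the cyclic permutation has no fixed points.

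If either walk fails to be a simple path, then it revisits a right-hand vertex (the left-hand vertices $a^{(1)}, \ldots, a^{(K_\ell)}$ and $a'_0, \ldots, a'_{K_r-1}$ being pairwise distinct by cyclicity), and the subwalk between the two visits is a cycle in $E^{\ell,r}(w)$. Otherwise $P_\ell$ and $P_r$ are two distinct simple paths from $b$ to $v$ in the simple bipartite graph $E^{\ell,r}(w)$, and the classical graph-theoretic fact that two distinct simple paths with the same endpoints produce a cycle in their union finishes the proof. The principal technical obstacle is the termination of each iteration at a $\Gamma_\star(w)$-edge, which requires the combined use of the cyclicity of the alphabet permutation, the bounds $|\Gamma_\ell(w)|, |\Gamma_r(w)| \leq d$ from Lemma~\ref{lem:lema_fundamental_grafitos}, the uniqueness from~\textbf{(IND)}, and the non-evilness ensuring $|\Gamma_\star(w)| \leq 1$.
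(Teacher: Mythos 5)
Your construction of the two walks is essentially the paper's own strategy (iterate \Cref{lem:lema_fundamental_grafitos}(1) and (2) from the common edge $(a,b)$, use $\#\Gamma_{\ell}(w),\#\Gamma_{r}(w)\leq d$ together with the cyclic shift of the extended coordinate to force arrival at the unique $\Gamma_{\star}(w)$-edge $(\hat a,\hat b)$, non-evilness giving uniqueness), and that part is fine. The gap is in the concluding cycle-extraction. First, a bookkeeping error: in the $r$-iteration it is the right coordinates $b^{(k)}$ that are decremented cyclically, while the left vertices $a'_k$ are only produced existentially by \Cref{lem:lema_fundamental_grafitos}(2); so your parenthetical claim that $a'_0,\dots,a'_{K_r-1}$ are pairwise distinct ``by cyclicity'' is unjustified, and $P_r$ can fail to be simple precisely by repeating a \emph{left} vertex. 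Second, and more seriously, the inference ``a revisited vertex yields a cycle, namely the subwalk between the two visits'' is false: nothing in the construction forbids an immediate backtrack, e.g.\ $b_{j+1}=b_j$ in $P_\ell$ (the $b'$ returned by \Cref{lem:lema_fundamental_grafitos}(1) may equal $b_j$) or $a'_{k+1}=a'_k$ in $P_r$; in that case the closed subwalk traverses a single edge twice and contains no cycle. Thus in the non-simple case your argument produces nothing, and the reduction to ``two distinct simple paths'' is not established either, since you have not argued that simple paths extracted from these walks remain distinct.

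This is exactly where the paper does additional work: using the uniqueness part of (\textbf{IND}) it first proves $\hat a\neq a$ and $\hat b\neq b$, and then case-analyzes where the edge $(\hat a,\hat b)$ can occur (only as a last edge) in the two constructed paths, which is what rules out the degenerate collapses. Your construction in fact gives $\hat a=a-K_\ell$ and $\hat b=b-K_r$ modulo $d+1$ with $1\leq K_\ell,K_r\leq d$, so those inequalities come for free, but you neither observe nor use this, and even granted them the backtrack issue above remains. A correct finish along your lines exists; for instance: every edge of $P_\ell$ has left endpoint $a^{(k)}$ with $1\leq k\leq K_\ell\leq d$, hence avoids the vertex $a$, while $P_r$ traverses the edge $\{a,b\}$ exactly once because the $b^{(k)}$ with $k\geq1$ all differ from $b$; if $E^{\ell,r}(w)$ were acyclic, every walk from $b$ to $\hat b$ would traverse each edge of the unique simple path between them an odd number of times and all other edges an even number of times, forcing $\{a,b\}$ onto that path and hence into $P_\ell$, a contradiction. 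As written, however, the final step of your proof does not go through.
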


\begin{proof}
	Let $w\in \Lcal_{S}(x,y)$ be a non-evil pattern. It follows that $\#\Gamma_{\star}(w) =1$. Let $(\hat{a},\hat{b})$ be the sole element of $\Gamma_{\star}(w)$. 
	
	Suppose that $\Gamma_{\ell}(w) \cap \Gamma_{r}(w)\neq\varnothing$. Let $(a,b)\in \Gamma_{\ell}(w)\cap \Gamma_{r}(w)$ and $p$ be the pattern with support $S\cup \{\ell,r\}$ with $p|_{S}=w$, $p_{\ell}=a$, $p_{r}=b$. It follows that there exists $t,t' \in \ZZd$ such that $\sigma^{t}(x),\sigma^{t'}(x)\in [p]$ with $t+\ell \in F$, $(t+(S\cup \{r\}))\cap F = \varnothing$, and $t'+r \in F$, $(t'+(S\cup \{\ell\}))\cap F = \varnothing$. It follows that the subpatterns $q_{\ell}$ and $q_{r}$ of $p$, with supports $S\cup \{\ell\}$ and $S\cup \{r\}$ respectively, already intersect the support $F$ in $x$ (with vectors $t$ and $t'$ respectively), and thus if $t'' \in \ZZd$ is such that $\sigma^{t''}(x)\in [w]$ and $(t'' +S) \cap F \neq \varnothing$, then both $\hat{a} = x_{t''+\ell}\neq a$ and $\hat{b} = x_{t''+r}\neq b$ (otherwise the intersections of $q_{\ell}$ and $q_r$ with $F$ on $x$ would not be unique).

	Iterating~\Cref{lem:lema_fundamental_grafitos} part (1), we can construct a path $\pi_a$ in $E^{\ell,r}(w)$ from $a$ to $\hat{a}$ which begins by the edge $(a,b)$. Similarly, using part (2) we can build a path $\pi_2$ from $b$ to $\hat{b}$ which begins with the edge $(b,a)$. Notice that in each path either the edge $(\hat{a},\hat{b})$ does not appear, or it is the last edge on the path.
	
	If $(\hat{a},\hat{b})$ does not appear in either $\pi_a$ nor $\pi_b$, we can put them together to construct a path from $\hat{a}$ to $\hat{b}$ which does not use the edge $(\hat{a},\hat{b})$. Similarly, if $(\hat{a},\hat{b})$ appears in both of the paths, we can remove it from both paths and again we have a path from $\hat{a}$ to $\hat{b}$ which does not use the edge $(\hat{a},\hat{b})$. In both cases we obtain a cycle in $E^{\ell,r}(w)$.
	
	Finally, let us suppose that the (undirected) edge $(\hat{a},\hat{b})$ appears at the end of just one path. As both cases are analogous, let us assume that it appears in $\pi_a$. If we remove the first and last edge from $\pi_a$ we obtain a path from $b$ to $\hat{b}$ which does not use the edge $(a,b)$ and in which $a$ does not appear. If we remove the first edge from $\pi_b$ we obtain a path from $a$ to $\hat{b}$ which does not use the edge $(b,a)$ and where $b$ does not appear. Thus $a$ and $b$ are connected through a path that does not use the edge $(a,b)$ and thus we obtain a cycle in $E^{\ell,r}(w)$.
	 \end{proof}

\begin{remark}
	Using a variation of the previous argument, it is also possible to show for evil patterns that if $\Gamma_{\ell}(w) \cap \Gamma_{r}(w)\neq \varnothing$, then $E^{\ell,r}(w)$ contains a cycle. However, we shall not need that statement for the proof of~\Cref{maintheorem:ddim-complexity-is-FminusS}.
\end{remark}

\begin{proposition}\label{prop:complexity-implies-one-occurrence}
    Let $d\geq1$ and $x,y \in\alfad^{\ZZd}$ be an asymptotic pair
    satisfying the flip condition with difference set 
    $F = \{ \bzero, -\be_1,\dots,-\be_d\}$. 
    Assume that for every nonempty finite connected subset $S\subset\ZZd$, the
    pattern complexity of $x$ and $y$ is $\#\Lcal_S(x)=\#\Lcal_S(y) = \#(F-S)$.
    Then for every nonempty finite connected subset $S\subset\ZZd$ and 
    $p \in \Lcal_S(x)\cup\Lcal_S(y)$, we have
    \begin{equation}\label{eq:indistinguishable-for-p}
        \#\left(\occ_p(x)\setminus \occ_p(y)\right) 
        = 1
        = \#\left(\occ_p(y)\setminus \occ_p(x)\right).
    \end{equation}
\end{proposition}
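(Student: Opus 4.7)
The plan is to prove by strong induction on $|S|$ the following stronger claim: for every nonempty finite connected $S\subset\ZZd$, we have $\Lcal_S(x) = \Lcal_S(y) = \Lcal_S(x,y)$, and every pattern $p\in\Lcal_S(x,y)$ admits a unique occurrence $\bn_x\in F-S$ in $x$ and a unique occurrence $\bn_y\in F-S$ in $y$. The proposition then follows quickly: one checks that $\bn_x\neq \bn_y$ because at $\bn_x$ some coordinate of $\bn_x+S$ lies in $F$, where $x$ and $y$ disagree by the flip condition, preventing $\sigma^{\bn_x}(y)$ from equalling $p$. Since occurrences outside $F-S$ coincide between $x$ and $y$, we conclude $\occ_p(x)\setminus\occ_p(y) = \{\bn_x\}$ and $\occ_p(y)\setminus\occ_p(x) = \{\bn_y\}$, both of cardinality one.

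The base case $|S|=1$ is immediate from the flip condition, since $x|_F$ and $y|_F$ are bijections onto $\alfad$. For the inductive step from $n$ to $n+1$, I would pick $a\in S$ such that $S_0\isdef S\setminus\{a\}$ remains connected (always possible by removing a leaf from a spanning tree of the induced subgraph on $S$). The induction hypothesis at $S_0$ yields $\Lcal_{S_0}(x)=\Lcal_{S_0}(y)=\Lcal_{S_0}(x,y)$ with complexity $\#(F-S_0)$ and uniqueness of occurrences intersecting $F$. From here I would use the telescoping argument of the proof of \Cref{lem:ddim-complexity-is-at-least-FminusS} to obtain $\#\Lcal_S(x,y)\geq \#(F-S)$; combined with $\Lcal_S(x)\cup\Lcal_S(y)=\Lcal_S(x,y)$ and the hypothesis $\#\Lcal_S(x)=\#\Lcal_S(y)=\#(F-S)$, a careful tracking of where the inequalities in that proof are tight should force $\Lcal_S(x)=\Lcal_S(y)=\Lcal_S(x,y)$.

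To deduce uniqueness at size $n+1$, I would then apply \Cref{lem:suma_odiosa} and \Cref{lem:lemadelfinal} to triples $(S_{00},\ell,r)$ with $S_{00}\subset S$ connected of size $n-1$ and $\ell,r\in S\setminus S_{00}$ chosen so that $S_{00}\cup\{\ell\}$, $S_{00}\cup\{r\}$, and $S_{00}\cup\{\ell,r\}\subseteq S$ are connected. Condition \textbf{(IND)} for such a triple is secured by the inductive hypothesis (each of $S_{00}$, $S_{00}\cup\{\ell\}$, $S_{00}\cup\{r\}$ has size $\leq n$), and the complexity equality $\#\Lcal_{S_{00}\cup\{\ell,r\}}(x,y)=\#(F-(S_{00}\cup\{\ell,r\}))$ has just been established in the previous paragraph. \Cref{lem:suma_odiosa} then tells us the extension graph $E^{\ell,r}(w)$ is acyclic for every $w\in\Lcal_{S_{00}}(x,y)$, and \Cref{lem:lemadelfinal} yields $\Gamma_\ell(w)\cap\Gamma_r(w)=\varnothing$ for every non-evil $w$. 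The emptiness of $\Gamma_\ell(w)\cap\Gamma_r(w)$ directly rules out two distinct occurrences of the same pattern intersecting $F$ at $\ell$ versus at $r$; iterating over the valid $(\ell,r)$ decompositions of $S$ then yields the desired uniqueness for supports of size $n+1$.

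The main obstacle I expect is precisely the circularity between establishing complexity on the \emph{joint} language and applying the bispecial machinery: \Cref{lem:suma_odiosa} and \Cref{lem:lemadelfinal} need $\#\Lcal_S(x,y)=\#(F-S)$, but the hypothesis delivers only the individual bounds on $\Lcal_S(x)$ and $\Lcal_S(y)$. Bootstrapping $\Lcal_S(x)=\Lcal_S(y)$ through the induction will require carefully tracking the equality cases in the proof of \Cref{lem:ddim-complexity-is-at-least-FminusS}: each telescoping inequality must be tight, forcing both the cyclic-permutation bound of \Cref{lem:cyclic-permutation-map} to be attained and each excess $\#E^a(w)-1$ to vanish for $w$ outside the image of $G$. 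These rigid constraints should in turn preclude any pattern from lying in $\Lcal_S(y)\setminus\Lcal_S(x)$, completing the inductive step.
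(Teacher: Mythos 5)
Your skeleton---induction on $\#S$, the base case from the flip condition, and a contradiction obtained from the bispecial machinery of \Cref{lem:suma_odiosa} and \Cref{lem:lemadelfinal}---is the same as the paper's, and your deduction of the proposition from the stronger inductive claim is fine; but the two steps that carry all the weight of the inductive step are not carried out, and the mechanisms you propose for them would fail. First, it is not true that $\Gamma_{\ell}(w)\cap\Gamma_{r}(w)=\varnothing$ ``directly rules out two distinct occurrences of the same pattern intersecting $F$'': membership in $\Gamma_{\ell}(w)$ requires an occurrence $t$ with $t+\ell\in F$ \emph{and} $(t+(S_{00}\cup\{r\}))\cap F=\varnothing$, so disjointness only excludes the very special configuration in which one occurrence meets $F$ exactly at $\ell$ and the other exactly at $r$, with the rest of the support avoiding $F$ in both cases. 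A putative second occurrence of $p$ could instead meet $F$ inside $S_{00}$, at several elements of the support, or at the same element as the first occurrence; none of these is excluded by ``iterating over the valid $(\ell,r)$ decompositions''. This is precisely where the paper works: given $\sigma^{t}(x),\sigma^{t'}(x)\in[p]$ with distinct $t,t'\in F-U$, it uses the flip condition to see that the elements $\ell,r\in U$ with $t+\ell\in F$ and $t'+r\in F$ are distinct, and then applies the induction hypothesis to sub-paths of $U$ joining $\ell$ to $r$ to force $U$ to be a path with endpoints $\ell,r$ and each of $t,t'$ to meet $F$ at a \emph{unique} element of $U$. Only after this reduction does $(p_{\ell},p_{r})\in\Gamma_{\ell}(w)\cap\Gamma_{r}(w)$ hold, is $w$ seen to be non-evil (which \Cref{lem:lemadelfinal} requires), and are the connectedness requirements on $S_{00}$, $S_{00}\cup\{\ell\}$, $S_{00}\cup\{r\}$ guaranteed. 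That reduction is entirely absent from your proposal.

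Second, your route to the hypothesis $\#\Lcal_{S}(x,y)=\#(F-S)$ at the new support---``tracking tightness'' in \Cref{lem:ddim-complexity-is-at-least-FminusS}---cannot work: that lemma bounds the \emph{joint} complexity from below, while the proposition's hypothesis bounds only the \emph{individual} complexities $\#\Lcal_{S}(x)$ and $\#\Lcal_{S}(y)$ from above, so a pattern in $\Lcal_{S}(y)\setminus\Lcal_{S}(x)$ would simply make $\#\Lcal_{S}(x,y)$ strictly larger than $\#(F-S)$ without rendering any inequality in that proof non-tight; there is no squeeze. You correctly single this out as the main obstacle, but ``should force'' and ``should preclude'' leave the central step unproved, and the induction hypothesis gives $\Lcal_{S'}(x)=\Lcal_{S'}(y)=\Lcal_{S'}(x,y)$ only for $\#S'\leq n$, which by cardinality counting alone does not propagate to size $n+1$. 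The paper does not attempt this bootstrap: its inductive step argues by contradiction directly from the doubly occurring pattern after the path reduction, with condition (\textbf{IND}) supplied by the induction hypothesis, rather than first upgrading the individual complexity equalities into a joint-language statement at the new level. As written, your proposal reproduces the paper's toolbox but omits the arguments that make it applicable.
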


\begin{proof}
	The proof is done by induction on the cardinality of $S$.
    If $\# S=1$, then it follows from the flip condition
    that \Cref{eq:indistinguishable-for-p} holds
    for all patterns $p\colon S\to\alfad$ with support $S$ of cardinality 1.
    Let us assume (by the induction hypothesis) that \Cref{eq:indistinguishable-for-p}
    holds for all supports $S\subset\ZZd$ with cardinality $\#S\leq k$
    for some integer $k\geq 1$.
    For the sake of contradiction, let us suppose that
    there exists a 
    finite connected subset $U\subset\ZZd$ of cardinality $\#U=k+1$
    such that
    \Cref{eq:indistinguishable-for-p} does not hold
    for some pattern $p \in \Lcal_U(x,y)$.
    As $\# \Lcal_U(x) = \#\Lcal_U(y) = \#(F-U)$, we may assume without loss of
    generality that 
    there exists a pattern $p \in \Lcal_U(x,y)$ such that
        $\#\left(\occ_p(x)\setminus \occ_p(y)\right)\geq 2$.
                In other words,
    there are two distinct vectors $t,t' \in F-U$ such that
    both $\sigma^t(x),\sigma^{t'}(x)\in [p]$.
	We claim that there exist $\ell,r \in U$ which satisfy the following properties:
	\begin{enumerate}[(A)]
		\item $\ell \neq r$.
		\item $U$ is a path in $\ZZd$ whose extreme elements are $\ell$ and $r$. 
		\item $\ell$ is the unique element of $U$ such that $t+\ell \in F$.
		\item $r$ is the unique element of $U$ such that $t'+r \in F$.
	\end{enumerate}
	Indeed, as $t,t' \in F-U$, there are $\ell,r \in U$ such that $t+\ell,t'+r \in F$. If $\ell = r$, then as $t \neq t'$ it follows that $t+\ell \neq t'+\ell$ are two distinct positions in $F$, however, as $\sigma^t(x),\sigma^{t'}(x)\in [p]$, it follows that $x_{t+\ell} = x_{t'+\ell} = p_{\ell}$ which contradicts the flip condition. Therefore we must have that $\ell \neq r$. As $U$ is connected, we may extract a path $U' \subseteq U$ in $\ZZd$ which connects $\ell$ and $r$. It follows that $p' = p|_{U'}$ also breaks \Cref{eq:indistinguishable-for-p} because $\sigma^t(x),\sigma^{t'}(x)\in [p']$ and $t,t' \in F-U'$, and thus from the induction hypothesis we must have $U' = U$ and thus $U$ is a path in $\ZZd$ whose extreme elements are $\ell,r$. Using the same idea, suppose there is $\ell' \in U$ such that $t+\ell' \in F$, then we could take the sub-path $U''\subset U$ which begins in $\ell'$ and ends in $r$ and again $p'' = p|_{U''}$ would violate the induction hypothesis. Thus $\ell$ is unique. Similarly, $r$ is unique.
	
	Let $S = U \setminus \{\ell,r\}$, $w = p|_S$, $a = p_{\ell}$ and $b = p_{r}$. Notice that the induction hypothesis implies that condition (\textbf{IND}) holds in $(x,y)$ for $(S,\ell,r)$.
	
	As $t \neq t'$, it follows that $w$ is not an evil pattern. Furthermore, conditions (C) and (D) give that $(a,b)\in \Gamma_{\ell}(w)$ and $(a,b)\in \Gamma_{r}(w)$ respectively. Therefore we have $\Gamma_{\ell}(w) \cap \Gamma_{r}(w) \neq \varnothing$ and thus~\Cref{lem:lemadelfinal},
	yields that the extension graph $E^{\ell,r}(w)$ contains a cycle. 
    This is a contradiction with~\Cref{lem:suma_odiosa} that states that $E^{\ell,r}(w)$ is acyclic.
    
    We conclude that~\Cref{eq:indistinguishable-for-p}
    holds for all patterns $p \in \Lcal_U(x,y)$
    for all finite connected subset $U\subset\ZZd$ of cardinality $\#U=k+1$.
\end{proof}

\subsection{Proof of~\Cref{maintheorem:ddim-complexity-is-FminusS}}

We shall now prove our characterization of indistinguishable asymptotic pairs with the flip condition through complexity. For the convenience of the reader, we recall the statement.

\begin{THEC}
    \MainTheoremComplexityStatement
\end{THEC}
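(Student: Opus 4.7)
The plan is to establish the equivalences via the cycle (i) $\Rightarrow$ (ii) $\Rightarrow$ (iii) $\Rightarrow$ (i), leveraging the results already developed in the section. Each implication reduces to a short assembly of prior lemmas.

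For (i) $\Rightarrow$ (ii), I would reformulate (i) in terms of the functional $\Delta_p$. For any pattern $p$ whose support $S$ is a nonempty finite connected subset of $\ZZd$, equation (i) gives $\Delta_p(x,y)=0$ whenever $p\in\Lcal_S(x)\cup\Lcal_S(y)$, and this vanishing is trivial for $p\notin\Lcal_S(x,y)$. Because every finite subset of $\ZZd$ is contained in some finite connected subset, \Cref{prop:trivialite} promotes this vanishing to every finite support, yielding indistinguishability.

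For (ii) $\Rightarrow$ (iii), the upper bound $\#\Lcal_S(x,y)\leq\#(F-S)$ follows from \Cref{lem:exists-occ-intersecting-F} (which uses both indistinguishability and the flip condition), while the matching lower bound $\#\Lcal_S(x,y)\geq\#(F-S)$ is given by \Cref{lem:ddim-complexity-is-at-least-FminusS} (which uses only the flip condition). Combined, these give $\#\Lcal_S(x,y)=\#(F-S)$. To upgrade this to $\#\Lcal_S(x)=\#\Lcal_S(y)=\#(F-S)$, I would show that $\Lcal_S(x)=\Lcal_S(y)$. Suppose, for contradiction, that $p\in\Lcal_S(x)\setminus\Lcal_S(y)$. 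Since any $u\in\ZZd\setminus(F-S)$ satisfies $\sigma^u(x)|_S=\sigma^u(y)|_S$, every occurrence of $p$ in $x$ must lie in $F-S$; in particular $\occ_p(x)\subseteq F-S$ and $\occ_p(x)\setminus\occ_p(y)=\occ_p(x)$ is nonempty and finite. By indistinguishability, $\occ_p(y)\setminus\occ_p(x)$ would also be nonempty, contradicting $p\notin\Lcal_S(y)$.

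Finally, (iii) $\Rightarrow$ (i) is precisely the content of \Cref{prop:complexity-implies-one-occurrence}, so no further argument is required. The genuinely new step in this proof is the short deduction in (ii) $\Rightarrow$ (iii) that indistinguishability forces $\Lcal_S(x)=\Lcal_S(y)$; everything else is bookkeeping. The conceptual heavy lifting — controlling bilateral multiplicities of bispecial patterns via the extension graph and the flip condition — has already been absorbed into the preparatory lemmas, so the main obstacle (namely showing that the rigid complexity $\#(F-S)$ forces every extension graph to be acyclic) is encapsulated in \Cref{lem:suma_odiosa} and \Cref{lem:lemadelfinal}, and needs only to be invoked here.
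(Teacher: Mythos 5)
Your proposal is correct and follows essentially the same route as the paper: (i)$\Rightarrow$(ii) via \Cref{prop:trivialite}, (ii)$\Rightarrow$(iii) by combining \Cref{lem:exists-occ-intersecting-F} with \Cref{lem:ddim-complexity-is-at-least-FminusS}, and (iii)$\Rightarrow$(i) by invoking \Cref{prop:complexity-implies-one-occurrence}. The only difference is that you spell out details the paper leaves implicit, namely the promotion of $\Delta_p=0$ from connected supports to all supports and the short argument that indistinguishability forces $\Lcal_S(x)=\Lcal_S(y)$.
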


\begin{proof}[Proof of \Cref{maintheorem:ddim-complexity-is-FminusS}]
	Let $x,y \in\alfad^{\ZZd}$ be an asymptotic pair
	satisfying the flip condition with difference set 
	$F = \{ \bzero, -\be_1,\dots,-\be_d\}$. 
    By~\Cref{prop:trivialite} it follows that (i) implies (ii).
	
	Assume (ii) holds and let $S\subset\ZZd$ be a finite nonempty connected subset. As $(x,y)$ is indistinguishable, we have $\Lcal_S(x)=\Lcal_S(y)=\Lcal_S(x,y)$. Furthermore, from \Cref{lem:ddim-complexity-is-at-least-FminusS}, we have
	$\#\Lcal_S(x,y) \geq \#(F-S)$.
	From~\Cref{lem:exists-occ-intersecting-F}, 
    we have $\#\Lcal_S(x,y) \leq \#(F-S)$.
	We conclude that
	$\#\Lcal_S(x)=\#\Lcal_S(y) = \#(F-S)$ and thus (iii) holds.

    In \Cref{prop:complexity-implies-one-occurrence}, we proved that (iii)
    implies (i).
\end{proof}

\Cref{maintheorem:ddim-complexity-is-FminusS} gives us two descriptions of the language of $x$ and $y$ for any connected support.

\begin{corollary}\label{cor:language-is-the-one-intersecting-F}
	Let $x,y \in\alfad^{\ZZd}$ be an indistinguishable asymptotic pair satisfying the flip condition.
	For every finite nonempty connected subset $S \subset \ZZd$, 
	we have that
	the maps 
	given by $n\mapsto\sigma^{\bn}(x)|_{S}$ 
	and $n\mapsto\sigma^{\bn}(y)|_{S}$ are two distinct bijections from
	$F-S$ to $\Lcal_S(x)$.
\end{corollary}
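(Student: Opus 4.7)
The plan is to extract the corollary directly from the equivalences already established, using Lemma~\ref{lem:exists-occ-intersecting-F} for surjectivity and the normalization of the flip condition for distinctness; no step should present a genuine obstacle.

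First, I would check that both maps have the claimed codomain and that the two languages agree. Indistinguishability gives $\#(\occ_p(x)\setminus\occ_p(y)) = \#(\occ_p(y)\setminus\occ_p(x))$ for every finite pattern $p$; a short case analysis (either $\occ_p(x)\cap\occ_p(y)\neq\varnothing$, so $p$ appears in both, or the two occurrence sets are disjoint and the equality forces $\occ_p(y)\neq\varnothing$ whenever $\occ_p(x)\neq\varnothing$) yields $\Lcal_S(x)=\Lcal_S(y)=\Lcal_S(x,y)$. From Theorem~\ref{maintheorem:ddim-complexity-is-FminusS}, condition (iii) gives $\#\Lcal_S(x)=\#\Lcal_S(y)=\#(F-S)$.

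Next, I would apply Lemma~\ref{lem:exists-occ-intersecting-F}, which asserts $\Lcal_S(x,y)\subseteq\{\sigma^{\bn}(x)|_S\colon\bn\in F-S\}$ and, by the symmetric argument, also $\Lcal_S(x,y)\subseteq\{\sigma^{\bn}(y)|_S\colon\bn\in F-S\}$. These inclusions say precisely that the maps $\bn\mapsto\sigma^{\bn}(x)|_S$ and $\bn\mapsto\sigma^{\bn}(y)|_S$, viewed as maps $F-S\to\Lcal_S(x)$, are surjective. Since $\#(F-S)=\#\Lcal_S(x)$ is finite, surjectivity upgrades to bijectivity for both maps.

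Finally, for distinctness I would exhibit a single $\bn\in F-S$ at which the two bijections disagree. Since $S$ is nonempty, pick any $s\in S$ and set $\bn\isdef -s$; because $\bzero\in F$, we have $\bn\in F-S$. Evaluating at $s$ gives $\sigma^{\bn}(x)_s = x_{\bzero}=\symb{0}$ and $\sigma^{\bn}(y)_s = y_{\bzero} = \symb{0}-\symb{1}\bmod(d+\symb{1}) = \symb{d}$, which differ as $d\geq 1$. Hence $\sigma^{\bn}(x)|_S\neq\sigma^{\bn}(y)|_S$ and the two bijections are distinct.
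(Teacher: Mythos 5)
Your proof is correct and takes essentially the same route as the paper: \Cref{lem:exists-occ-intersecting-F} (applied to $(x,y)$ and, by symmetry of the flip condition, to $(y,x)$) gives surjectivity onto the common language, and \Cref{maintheorem:ddim-complexity-is-FminusS} gives $\#\Lcal_S(x)=\#(F-S)$, so both maps are bijections. The only difference is that you verify distinctness explicitly at $\bn=-s$ using $x_\bzero\neq y_\bzero$, a point the paper leaves implicit.
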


\begin{proof}
	We proved 
	$\Lcal_S(x)\subset\{\sigma^{\bn}(x)|_{S}\colon \bn\in F-S\}$
	in \Cref{lem:exists-occ-intersecting-F}.
	The equality 
	\begin{equation*}
		\Lcal_S(x)=\{\sigma^{\bn}(x)|_{S}\colon \bn\in F-S\}
	\end{equation*}
	follows from~\Cref{maintheorem:ddim-complexity-is-FminusS}. From this equality we deduce that the map $n\mapsto\sigma^{\bn}(x)|_{S}$ is a bijection
	from $F-S$ to $\Lcal_S(x)$. As $\Lcal_S(x)=\Lcal_S(y)$ we deduce that \[ \Lcal_S(x)= \Lcal_S(y) = \{\sigma^{\bn}(y)|_{S}\colon \bn\in F-S\}. \]
	Thus we conclude that the map $n\mapsto\sigma^{\bn}(y)|_{S}$ is another bijection from $F-S$ to $\Lcal_S(x)$.
\end{proof}

\subsection{Rectangular pattern complexity}

We have so far shown that for any indistinguishable pair $(x,y)\in \alfad^{\ZZd}$ which satisfies the flip condition and any finite nonempty connected subset $S \subset \ZZd$ we have $\# \Lcal_S(x) = \# \Lcal_S(y) = \#(F-S).$ This equation takes a beautiful form when $S$ is a $d$-dimensional box.

 For a positive integer vector $\bm = (m_1,\dots,m_d)\in \NN^d$, let $S(\bm) \subset\ZZd$ denote the support \[ S(m) \isdef \prod_{i =1}^d \llbracket 0,m_i-1\rrbracket = \{ (n_i)_{1 \leq i \leq d} \in \ZZd : 0 \leq n_i < m_i \mbox{ for every } 1 \leq i \leq d\}, \]
which represents the $d$-dimensional box whose sides have lengths $m_1,\dots,m_d$. Also, for $x \in \Sigma^{\ZZd}$ we write $\Lcal_{\bm}(x) = \Lcal_{S(\bm)}(x)$ to denote the set of patterns with support $S(\bm)$ occurring in $x$. We refer to the function which maps $\bm$ to $\# \Lcal_{\bm}(x)$ as the \define{rectangular pattern complexity} of $x$.

\begin{proof}[Proof of \Cref{maincorollary:ddim-complexity}]
	Let $m=(m_1,\dots,m_d) \in \NN^d$ be a positive integer vector. From~\Cref{maintheorem:ddim-complexity-is-FminusS}, we have that $\Lcal_{\bm}(x) = \Lcal_{\bm}(y) = \#(F-S(m))$.
	
	By a simple counting argument, we have that $\#(F-S(m))$ is equal to the volume of $S(m)$ plus the volume of each of its $d-1$ dimensional faces. We conclude that
	\[
	\#\Lcal_\bm(x) = \#\Lcal_{\bm}(y) = m_1\cdots m_d \left(1+\frac{1}{m_1}+\dots+\frac{1}{m_d}\right).\qedhere
	\]
\end{proof}

The geometrical interpretation of the rectangular complexity of an indistinguishable pair $(x,y)\in \alfad^{\ZZd}$ which satisfies the flip condition provides meaning to a curious relation that one can find perhaps by boredom or accident. Let us express the rectangular complexity as a real map $f  \colon  \RR^{d} \to \RR$ given by \[f(x_1,\dots,x_d) = x_1\cdots x_d \left( 1+ \frac{1}{x_1} + \dots + \frac{1}{x_d} \right). \]
If we consider the derivative of $f$ with respect to some $x_i$ we obtain \begin{align*}
	\frac{\partial }{\partial x_i}f(x_1\cdots x_d) & = \frac{x_1\cdots x_d}{x_i}  \left( 1+ \frac{1}{x_1} + \dots + \frac{1}{x_d} \right) + x_1\cdots x_d \left( \frac{-1}{x_i^2}\right)\\
	& =  \frac{x_1\cdots x_d}{x_i}\left( 1+ \frac{1}{x_1} + \dots + \frac{1}{x_d} - \frac{1}{x_i} \right).
\end{align*}

In other words, the derivative of the complexity function of a $d$-dimensional indistinguishable pair $(x,y)\in \alfad^{\ZZd}$ which satisfies the flip condition with respect to any variable yields the rectangular complexity function of a $(d-1)$-dimensional indistinguishable pair $(x',y')\in \alfa{d-1}^{\ZZd}$ which satisfies the flip condition. The geometrical interpretation is that as this complexity corresponds to the volume of a $d$-dimensional box of size $(m_1\cdots m_d)$ plus the sum of the volume of the $(d-1)$-dimensional faces, then taking the derivative with respect to a canonical direction $e_i$ yields from the box the volume $\frac{m_1\cdots m_d}{m_i}$ of the corresponding $(d-1)$-dimensional face orthogonal to $e_i$, and for each of the $(d-1)$-dimensional faces we either obtain the $(d-2)$-dimensional face orthogonal to $e_i$, or $0$ if the $(d-1)$-dimensional face is orthogonal to $e_i$.

\section{Characteristic Sturmian configurations in $\ZZd$}~\label{sec:multidim_sturmian}

In this section, we introduce characteristic multidimensional Sturmian
configurations from codimension-one cut and project schemes.
We show that they are examples of indistinguishable asymptotic pairs satisfying
the flip condition.

\subsection{Codimension-one cut and project schemes for symbolic configurations}\label{sec:alternate-def-cut-and-project}

Cut and project schemes of codimension-one (dimension of the internal space)
can be defined in several ways (for a different definition
see~\cite{MR3480345}). In what follows we follow the formalism of~\cite[\S
7]{baake_aperiodic_2013}, but note that we need to adapt it in order
to describe symbolic configurations over a lattice $\ZZd$.
Let $d\geq 1$ be an integer and
\[
\begin{array}{rccc}
    \pi:&\RR^{d+1} & \to & \RR^d\\
        &(x_0,x_1,\dots,x_d) & \mapsto & (x_1,\dots,x_d)
\end{array}
\]
be the projection of $\RR^{d+1}$ in the \define{physical space} $\RR^d$.
Let
$\alpha_0=1$, $\alpha_{d+1}=0$ and
$\balpha=(\alpha_1,\dots,\alpha_d)\in[0,1)^d$ be a
\define{totally irrational} vector,
that is such that $\{1,\alpha_1, \dots, \alpha_d\}$ is linearly independent
over $\QQ$. 
Let
\[
\begin{array}{rccc}
    \pi_{\rm int}:&\RR^{d+1} & \to & \RR/\ZZ \\
        &(x_0,x_1,\dots,x_d) & \mapsto & 
        \sum_{i=0}^d x_i \alpha_i
\end{array}
\]
be the projection of $\RR^{d+1}$ in the \define{internal space} $\RR/\ZZ$.
Consider the lattice $\Lcal=\ZZ^{d+1}\subset\RR^{d+1}$
whose image is $\pi(\Lcal)=\ZZ^d$.
This is the setting of a codimension-one \define{cut and project scheme} 
summarized in the following diagram adapted from 
\cite[\S 7.2]{baake_aperiodic_2013}:
\[
\begin{tikzcd}[ampersand replacement=\&]
    W
    \arrow[draw=none]{r}[sloped,auto=false]{\subset}
    \&[-25pt] \RR/\ZZ 
    \& \RR^{d+1} \arrow[swap]{l}{\pi_{\rm int}} \arrow{r}{\pi} \& \RR^d\\
    \&\pi_{\rm int}(\Lcal) 
    \arrow[draw=none]{u}[sloped,auto=false]{\subset}
    \arrow[draw=none]{u}[swap]{\text{ dense}}
    \& \Lcal 
    \arrow[draw=none]{u}[sloped,auto=false]{\subset}
    \arrow{l}
    \arrow{r}%
    \& \pi(\Lcal) 
    \arrow[draw=none]{u}[sloped,auto=false]{\subset}
    \arrow[bend left=20,"\star" description]{ll}{}
    \&[-15pt] \curlywedge(W)
    \arrow[draw=none]{l}[sloped,auto=false]{\supset}
\end{tikzcd}
\]

\begin{remark}
The usual condition imposed in cut and project schemes is that
$\pi|_\Lcal$ is injective, see \cite[\S 7.2]{baake_aperiodic_2013},
which does not hold in our case.
    Here, it is more convenient to relax this condition to
\begin{equation}\label{eq:relaxed-CPS-condition}
    \Ker\pi\cap\Lcal \subseteq \Ker\pi_{\rm int}.
\end{equation}
Of course if $\pi|_\Lcal$ is injective, then \eqref{eq:relaxed-CPS-condition}
is satisfied since $\Ker\pi\cap\Lcal = \{0\} \subset \Ker\pi_{\rm int}$.
Also, we may observe that if \eqref{eq:relaxed-CPS-condition} holds,
then the \define{star map} $\pi(\Lcal)\to \pi_{\rm int}(\Lcal)$
is still well defined:
\[
x\mapsto x^\star\isdef \pi_{\rm int}\left(\Lcal\cap\pi^{-1}(x)\right).
\]
\end{remark}

With the definition of $\pi$ and $\pi_{\rm int}$ above, we have
that \eqref{eq:relaxed-CPS-condition} holds since
    $\Ker\pi\cap\Lcal=\ZZ\times\{0\}^d \subseteq \Ker\pi_{\rm int}$.
Moreover,
\begin{equation}\label{eq:star-map-is-nalpha}
    n^\star=\alpha\cdot n \bmod 1
\end{equation}
for every $n\in\pi(\Lcal)=\ZZd$.
For a given \define{window} %
$W\subset\RR/\ZZ$ in the internal space,
\[
    \curlywedge(W)\isdef \{x\in L\mid x^\star\in W\}
\]
is the projection set within the cut and project scheme, where $L=\pi(\Lcal)$.
If $W\subset \RR/\ZZ$ is a relatively compact set
with non-empty interior, 
    any translate
$t+\curlywedge(W)$ 
    of the projection set,
    $t\in\RR^d$, is called a \define{model set}.

If $W=[0,1)$, then $\curlywedge(W)=\ZZd$.
Thus, if $W\subset [0,1)$, then $\curlywedge(W)\subset\ZZd$.
Moreover, if
$\{W_i\}_{i\in\{0,\dots,d\}}$
is a partition of $[0,1)$, then
$\{\curlywedge(W_i)\}_{i\in\{0,\dots,d\}}$
is a partition of $\ZZd$.
Using this idea,
we now build configurations $\ZZ^d\to\alfad$
according to a partition of $\RR/\ZZ$,
or equivalently of the interval $[0,1)$,
into consecutive intervals.

\begin{definition}
    Let
    $\balpha=(\alpha_1,\dots,\alpha_d)\in[0,1)^d$ be a totally irrational vector
    and
    $\tau$ be the permutation 
    of $\{\symb{1},\dots,d\}\cup \{\symb{0},d+1\}$ which fixes $\{\symb{0},d+1\}$ and
    such that $0=\alpha_{\tau(d+1)}<\alpha_{\tau(d)}<\dots<\alpha_{\tau(\symb{1})}<\alpha_{\tau(\symb{0})}=1$. 
For every $i\in\{0,1,\dots,d\}$, let
    \begin{align*}
        W_i &= [1-\alpha_{\tau(i)},1-\alpha_{\tau(i+1)}),
        &W'_i &= (1-\alpha_{\tau(i)},1-\alpha_{\tau(i+1)}]
    \end{align*} 
be such that 
$\{W_i\}_{i\in\{0,\dots,d\}}$
and
$\{W'_i\}_{i\in\{0,\dots,d\}}$
are two partitions of the interval $[0,1)$.
The configurations
	\[
	\begin{array}{rccl}
	c_{\balpha}:&\ZZd & \to & \alfad\\
	&\bn & \mapsto &  
        i \text{ if } n^\star\in W_i
	\end{array}
	\quad
	\text{ and }
	\quad
	\begin{array}{rccl}
	c'_{\balpha}:&\ZZd & \to & \alfad\\
	&\bn & \mapsto & 
        i \text{ if } n^\star\in W'_i
	\end{array}
	\]
    are respectively the \define{lower} and \define{upper characteristic
    $d$-dimensional Sturmian configurations} with slope $\alpha\in[0,1)^d$.
    Moreover, if $\rho\in\RR/\ZZ$, the configurations
	\[
	\begin{array}{rccl}
	s_{\balpha,\rho}:&\ZZd & \to & \alfad\\
	&\bn & \mapsto &  
         i \text{ if } n^\star+\rho\in W_i
	\end{array}
	\quad
	\text{ and }
	\quad
	\begin{array}{rccl}
	s'_{\balpha,\rho}:&\ZZd & \to & \alfad\\
	&\bn & \mapsto & 
        i \text{ if } n^\star+\rho\in W'_i
	\end{array}
	\]
    are respectively 
    the \define{lower} and \define{upper $d$-dimensional
    Sturmian configurations} with \define{slope} $\alpha\in[0,1)^d$ and \define{intercept} $\rho\in\RR/\ZZ$.
\end{definition}

It turns out that configurations $s_{\balpha,\rho}$ and $s'_{\balpha,\rho}$ can
be expressed by a formula involving a sum of differences of floor functions
thus extending the definition of Sturmian sequences by mechanical sequences
\cite{MR0000745}.  It also reminds of recent progresses on Nivat's conjecture
where configurations with low pattern complexity are proved to be sums of
periodic configurations \cite{MR4073398,MR3775541}, although here it involves a
sum of non-periodic configurations.

\begin{lemma}\label{lem:sturmian-formual-from-cut-and-project}
Let
$\balpha=(\alpha_1,\dots,\alpha_d)\in[0,1)^d$ be a totally irrational vector
and $\rho\in\RR/\ZZ$.
    The lower and upper $d$-dimensional
    Sturmian configurations with slope $\balpha$ and intercept $\rho$ are given
    by the following rules:
	\[
	\begin{array}{rccl}
	s_{\balpha,\rho}:&\ZZd & \to & \alfad\\
	&\bn & \mapsto &  \sum\limits_{i=1}^d \left(\lfloor\alpha_i+\bn\cdot\balpha+\rho\rfloor
	 -\lfloor\bn\cdot\balpha+\rho\rfloor\right),
	\end{array}
\]
and
    \[
	\begin{array}{rccl}
	s'_{\balpha,\rho}:&\ZZd & \to & \alfad\\
	&\bn & \mapsto & \sum\limits_{i=1}^d \left(\lceil\alpha_i+\bn\cdot\balpha+\rho\rceil
	-\lceil\bn\cdot\balpha+\rho\rceil\right).
	\end{array}
	\]
\end{lemma}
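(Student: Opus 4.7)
The plan is to unfold both the formula and the definition in terms of the fractional part $\{t\}$ of $t := \bn\cdot\balpha + \rho$ and verify that they agree window by window. Observing that $n^\star + \rho \equiv t \pmod 1$, the window $W_i$ (respectively $W'_i$) in which $n^\star+\rho$ lives is determined by $\{t\}$, so it suffices to compare $\{t\}$ with the breakpoints $1 - \alpha_{\tau(j)}$ for $j = 1,\dots,d$.

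For the lower configuration, I would first observe the elementary identity
\[
\lfloor \alpha_i + t\rfloor - \lfloor t\rfloor \;=\; \indicator{[1-\alpha_i,\,1)}(\{t\}),
\]
valid because $\alpha_i\in[0,1)$: indeed $\lfloor \alpha_i + t\rfloor = \lfloor t\rfloor + 1$ precisely when $\{t\} + \alpha_i \ge 1$. Summing this identity over $i = 1,\dots,d$ shows that
\[
\sum_{i=1}^d\bigl(\lfloor \alpha_i + t\rfloor - \lfloor t\rfloor\bigr) \;=\; \#\{\,i\in\{1,\dots,d\} : \{t\} \ge 1-\alpha_i\,\}.
\]
Using the permutation $\tau$, which orders the $\alpha_i$'s so that $1 = \alpha_{\tau(0)} > \alpha_{\tau(1)} > \cdots > \alpha_{\tau(d)} > \alpha_{\tau(d+1)} = 0$, the breakpoints $1-\alpha_{\tau(j)}$ are strictly increasing in $j$. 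If $k\in\{0,\dots,d\}$ is the unique index with $\{t\}\in W_k = [1-\alpha_{\tau(k)},\,1-\alpha_{\tau(k+1)})$, then $\{t\} \ge 1-\alpha_{\tau(j)}$ holds for exactly $j = 1,\dots,k$. Hence the right-hand count equals $k$, which by definition is $s_{\balpha,\rho}(\bn)$.

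For the upper configuration, I would run the same argument with the dual identity
\[
\lceil \alpha_i + t\rceil - \lceil t\rceil \;=\; \indicator{(1-\alpha_i,\,1]}\bigl(\{t\}\bigr)
\]
when $t\notin\ZZ$, together with the boundary case: when $t\in\ZZ$, both sides are equal to $1$ for every $i$ (since $\alpha_i\in(0,1)$ by total irrationality and $\alpha_i\neq 0$), so the sum is $d$, which is consistent with $\{t\}=0 \equiv 1 \pmod 1$ lying in $W'_d = (1-\alpha_{\tau(d)},1]$. The matching now uses the windows $W'_k = (1-\alpha_{\tau(k)},\,1-\alpha_{\tau(k+1)}]$ and the count $\#\{\,i : \{t\} > 1-\alpha_i\,\}$, reproducing $s'_{\balpha,\rho}(\bn)$.

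There is no substantial obstacle in the argument: the only subtle point is to keep track of the half-open orientation of the windows (left-closed for $W_i$, right-closed for $W'_i$), which is dictated by whether one uses floor or ceiling and which dictates how the boundary values $\{t\}\in\{1-\alpha_{\tau(j)}\}$ are handled. Total irrationality of $\balpha$ ensures $\{t\}$ can coincide with such a breakpoint only in degenerate boundary cases (notably $t\in\ZZ$), which are already correctly handled by the calculation above.
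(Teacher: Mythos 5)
Your argument is correct and is essentially the paper's proof read in the opposite direction: the paper starts from the window index $j$ with $n^\star+\rho\in W_j$ and rewrites it as the count $\#\{\,i : \{t\}\geq 1-\alpha_i\,\}=\sum_{i=1}^d\left(\lfloor\alpha_i+t\rfloor-\lfloor t\rfloor\right)$, whereas you start from the sum, convert each term into the indicator of $[1-\alpha_i,1)$, and match the resulting count against the ordered windows, with the same floor/ceiling (weak/strict inequality) duality for $s'_{\balpha,\rho}$. One inessential quibble: your closing remark that $\{t\}$ can hit a breakpoint only when $t\in\ZZ$ is not accurate (the intercept $\rho$ is arbitrary, so $\{t\}$ may equal $1-\alpha_{\tau(j)}$ with $t\notin\ZZ$), but your computation never relies on this, since the closed/open endpoints of $W_i$ and $W'_i$ are matched exactly by the $\geq$ and $>$ in your two identities.
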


\begin{proof}
    Let $n\in\ZZd$ and
    $j\in\{0,1,\dots,d\}$
    be such that $n^\star+\rho\in W_j$.
    Therefore $s_{\balpha,\rho}(n)=j$.
    From \Cref{eq:star-map-is-nalpha}, recall that we have $n^\star=n\cdot\alpha\bmod 1$.
    Since the intervals $W_0$, $W_1$, $\dots$, $W_d$ are ordered from left to
    right on the interval $[0,1)$, we must have
    \begin{align*}
        s_{\balpha,\rho}(n) = j 
          &= \#\{ i\in\{1,\dots,d\} : 1-\alpha_i \leq n\cdot\alpha+\rho-\lfloor n\cdot\alpha+\rho\rfloor \}\\
          &= \#\{ i\in\{1,\dots,d\} : 1 \leq \alpha_i + n\cdot\alpha+\rho-\lfloor n\cdot\alpha+\rho\rfloor \}\\
          &= \#\{ i\in\{1,\dots,d\} : \lfloor\alpha_i+\bn\cdot\balpha+\rho\rfloor
	 -\lfloor\bn\cdot\balpha+\rho\rfloor = 1 \}\\
          &= \sum\limits_{i=1}^d \left(\lfloor\alpha_i+\bn\cdot\balpha+\rho\rfloor
	 -\lfloor\bn\cdot\balpha+\rho\rfloor\right).
    \end{align*}
    The proof for $s'_{\alpha,\rho}$ follows the same argument after replacing
    inequalities ($\leq$) by strict inequalities ($<$)
    and floor functions ($\lfloor\cdot\rfloor$) by ceil functions
    ($\lceil\cdot\rceil$).
\end{proof}

When $d=1$, $s_{\balpha,\rho}$ and $s'_{\balpha,\rho}$ 
correspond to lower and upper mechanical words defined in~\cite{MR0000745},
see also \cite{MR1905123,MR1970391,MR1997038}.
When $d=2$, they are in direct correspondence to discrete planes as defined in~\cite{MR1782038,MR1906478,MR2074952}.
See also Jolivet's Ph.D. thesis~\cite{jolivet_phd_2013}. 
In general, we say that a configuration in $\alfad^{\ZZd}$ is
\define{Sturmian}, if it coincides either with $s_{\balpha,\rho}$ or
$s'_{\balpha,\rho}$ for some 
$\rho \in \RR$ and
totally irrational $\balpha \in[0,1)^d$.

When $\rho=0$, we have
$s_{\alpha,0}=c_{\alpha}$
and
$s'_{\alpha,0}=c'_{\alpha}$.
Thus, 
\Cref{eq:characteristic-sturmian-formula-in-intro}
in the Introduction
follows from
\Cref{lem:sturmian-formual-from-cut-and-project}.

The fact that the configurations $c_\alpha$ and $c'_\alpha$ are encodings
of codimension-one cut and project schemes is illustrated
with $\balpha=(\alpha_1,\alpha_2)=(\sqrt{2}/2,\sqrt{19}-4)$
in \Cref{fig:intro-sturmian-config-pair-discrete-plane}
in which we see a discrete plane in dimension 3 of normal vector
$(1-\alpha_1,\alpha_1-\alpha_2,\alpha_2) \approx(0.293, 0.348, 0.359)$.
Below, we exhibit another example and compute its language for
small rectangular supports.

\begin{example}
    \def\thevector{{(\sqrt{3}-1,\sqrt{2}-1)}}
	Let $\balpha=\thevector$. 
	The $2$-dimensional characteristic Sturmian configurations $c_{\balpha}$ and
	$c'_{\balpha}$ are shown on 
	Figure~\ref{fig:2d-sturmian-config-pair}. In order to motivate the main ideas of the proofs in the next section, we explicitly compute the language of these configurations for some rectangular supports of small size.
	
	\begin{figure}[ht]
		\begin{center}
			\begin{tabular}{cc}
                $c_\thevector$  & $c'_\thevector$\\[3mm]
				\begin{tikzpicture}
[baseline=-\the\dimexpr\fontdimen22\textfont2\relax,ampersand replacement=\&]
  \matrix[matrix of math nodes,nodes={
       minimum size=1.2ex,text width=1.2ex,
       text height=1.2ex,inner sep=3pt,draw={gray!20},align=center,
       anchor=base
     }, row sep=1pt,column sep=1pt
  ] (config) {
{\color{black!60}\symb{2}}\&{\color{black!60}\symb{1}}\&{\color{black!60}\symb{0}}\&{\color{black!60}\symb{2}}\&{\color{black!60}\symb{2}}\&{\color{black!60}\symb{1}}\&{\color{black!60}\symb{0}}\&{\color{black!60}\symb{2}}\&{\color{black!60}\symb{2}}\&{\color{black!60}\symb{1}}\&{\color{black!60}\symb{0}}\&{\color{black!60}\symb{2}}\&{\color{black!60}\symb{1}}\&{\color{black!60}\symb{1}}\&{\color{black!60}\symb{0}}\\
{\color{black!60}\symb{1}}\&{\color{black!60}\symb{0}}\&{\color{black!60}\symb{2}}\&{\color{black!60}\symb{1}}\&{\color{black!60}\symb{1}}\&{\color{black!60}\symb{0}}\&{\color{black!60}\symb{2}}\&{\color{black!60}\symb{1}}\&{\color{black!60}\symb{0}}\&{\color{black!60}\symb{2}}\&{\color{black!60}\symb{2}}\&{\color{black!60}\symb{1}}\&{\color{black!60}\symb{0}}\&{\color{black!60}\symb{2}}\&{\color{black!60}\symb{2}}\\
{\color{black!60}\symb{2}}\&{\color{black!60}\symb{2}}\&{\color{black!60}\symb{1}}\&{\color{black!60}\symb{0}}\&{\color{black!60}\symb{2}}\&{\color{black!60}\symb{2}}\&{\color{black!60}\symb{1}}\&{\color{black!60}\symb{0}}\&{\color{black!60}\symb{2}}\&{\color{black!60}\symb{1}}\&{\color{black!60}\symb{0}}\&{\color{black!60}\symb{2}}\&{\color{black!60}\symb{2}}\&{\color{black!60}\symb{1}}\&{\color{black!60}\symb{0}}\\
{\color{black!60}\symb{1}}\&{\color{black!60}\symb{0}}\&{\color{black!60}\symb{2}}\&{\color{black!60}\symb{2}}\&{\color{black!60}\symb{1}}\&{\color{black!60}\symb{0}}\&{\color{black!60}\symb{2}}\&{\color{black!60}\symb{2}}\&{\color{black!60}\symb{1}}\&{\color{black!60}\symb{0}}\&{\color{black!60}\symb{2}}\&{\color{black!60}\symb{1}}\&{\color{black!60}\symb{1}}\&{\color{black!60}\symb{0}}\&{\color{black!60}\symb{2}}\\
{\color{black!60}\symb{0}}\&{\color{black!60}\symb{2}}\&{\color{black!60}\symb{1}}\&{\color{black!60}\symb{1}}\&{\color{black!60}\symb{0}}\&{\color{black!60}\symb{2}}\&{\color{black!60}\symb{1}}\&{\color{black!60}\symb{0}}\&{\color{black!60}\symb{2}}\&{\color{black!60}\symb{2}}\&{\color{black!60}\symb{1}}\&{\color{black!60}\symb{0}}\&{\color{black!60}\symb{2}}\&{\color{black!60}\symb{2}}\&{\color{black!60}\symb{1}}\\
{\color{black!60}\symb{2}}\&{\color{black!60}\symb{1}}\&{\color{black!60}\symb{0}}\&{\color{black!60}\symb{2}}\&{\color{black!60}\symb{2}}\&{\color{black!60}\symb{1}}\&{\color{black!60}\symb{0}}\&{\color{black!60}\symb{2}}\&{\color{black!60}\symb{1}}\&{\color{black!60}\symb{1}}\&{\color{black!60}\symb{0}}\&{\color{black!60}\symb{2}}\&{\color{black!60}\symb{1}}\&{\color{black!60}\symb{0}}\&{\color{black!60}\symb{2}}\\
{\color{black!60}\symb{1}}\&{\color{black!60}\symb{0}}\&{\color{black!60}\symb{2}}\&{\color{black!60}\symb{1}}\&{\color{black!60}\symb{0}}\&{\color{black!60}\symb{2}}\&{\color{black!60}\symb{2}}\&{\color{black!60}\symb{1}}\&{\color{black!60}\symb{0}}\&{\color{black!60}\symb{2}}\&{\color{black!60}\symb{2}}\&{\color{black!60}\symb{1}}\&{\color{black!60}\symb{0}}\&{\color{black!60}\symb{2}}\&{\color{black!60}\symb{1}}\\
{\color{black!60}\symb{2}}\&{\color{black!60}\symb{2}}\&{\color{black!60}\symb{1}}\&{\color{black!60}\symb{0}}\&{\color{black!60}\symb{2}}\&{\color{black!60}\symb{1}}\&{\color{red}\symb{1}}\&{\color{red}\symb{0}}\&{\color{black!60}\symb{2}}\&{\color{black!60}\symb{1}}\&{\color{black!60}\symb{0}}\&{\color{black!60}\symb{2}}\&{\color{black!60}\symb{2}}\&{\color{black!60}\symb{1}}\&{\color{black!60}\symb{0}}\\
{\color{black!60}\symb{1}}\&{\color{black!60}\symb{0}}\&{\color{black!60}\symb{2}}\&{\color{black!60}\symb{2}}\&{\color{black!60}\symb{1}}\&{\color{black!60}\symb{0}}\&{\color{black!60}\symb{2}}\&{\color{red}\symb{2}}\&{\color{black!60}\symb{1}}\&{\color{black!60}\symb{0}}\&{\color{black!60}\symb{2}}\&{\color{black!60}\symb{1}}\&{\color{black!60}\symb{0}}\&{\color{black!60}\symb{2}}\&{\color{black!60}\symb{2}}\\
{\color{black!60}\symb{0}}\&{\color{black!60}\symb{2}}\&{\color{black!60}\symb{1}}\&{\color{black!60}\symb{0}}\&{\color{black!60}\symb{2}}\&{\color{black!60}\symb{2}}\&{\color{black!60}\symb{1}}\&{\color{black!60}\symb{0}}\&{\color{black!60}\symb{2}}\&{\color{black!60}\symb{2}}\&{\color{black!60}\symb{1}}\&{\color{black!60}\symb{0}}\&{\color{black!60}\symb{2}}\&{\color{black!60}\symb{1}}\&{\color{black!60}\symb{1}}\\
{\color{black!60}\symb{2}}\&{\color{black!60}\symb{1}}\&{\color{black!60}\symb{0}}\&{\color{black!60}\symb{2}}\&{\color{black!60}\symb{1}}\&{\color{black!60}\symb{1}}\&{\color{black!60}\symb{0}}\&{\color{black!60}\symb{2}}\&{\color{black!60}\symb{1}}\&{\color{black!60}\symb{0}}\&{\color{black!60}\symb{2}}\&{\color{black!60}\symb{2}}\&{\color{black!60}\symb{1}}\&{\color{black!60}\symb{0}}\&{\color{black!60}\symb{2}}\\
{\color{black!60}\symb{0}}\&{\color{black!60}\symb{2}}\&{\color{black!60}\symb{2}}\&{\color{black!60}\symb{1}}\&{\color{black!60}\symb{0}}\&{\color{black!60}\symb{2}}\&{\color{black!60}\symb{2}}\&{\color{black!60}\symb{1}}\&{\color{black!60}\symb{0}}\&{\color{black!60}\symb{2}}\&{\color{black!60}\symb{1}}\&{\color{black!60}\symb{1}}\&{\color{black!60}\symb{0}}\&{\color{black!60}\symb{2}}\&{\color{black!60}\symb{1}}\\
{\color{black!60}\symb{2}}\&{\color{black!60}\symb{1}}\&{\color{black!60}\symb{1}}\&{\color{black!60}\symb{0}}\&{\color{black!60}\symb{2}}\&{\color{black!60}\symb{1}}\&{\color{black!60}\symb{0}}\&{\color{black!60}\symb{2}}\&{\color{black!60}\symb{2}}\&{\color{black!60}\symb{1}}\&{\color{black!60}\symb{0}}\&{\color{black!60}\symb{2}}\&{\color{black!60}\symb{2}}\&{\color{black!60}\symb{1}}\&{\color{black!60}\symb{0}}\\
{\color{black!60}\symb{1}}\&{\color{black!60}\symb{0}}\&{\color{black!60}\symb{2}}\&{\color{black!60}\symb{2}}\&{\color{black!60}\symb{1}}\&{\color{black!60}\symb{0}}\&{\color{black!60}\symb{2}}\&{\color{black!60}\symb{1}}\&{\color{black!60}\symb{0}}\&{\color{black!60}\symb{2}}\&{\color{black!60}\symb{2}}\&{\color{black!60}\symb{1}}\&{\color{black!60}\symb{0}}\&{\color{black!60}\symb{2}}\&{\color{black!60}\symb{2}}\\
{\color{black!60}\symb{2}}\&{\color{black!60}\symb{2}}\&{\color{black!60}\symb{1}}\&{\color{black!60}\symb{0}}\&{\color{black!60}\symb{2}}\&{\color{black!60}\symb{2}}\&{\color{black!60}\symb{1}}\&{\color{black!60}\symb{0}}\&{\color{black!60}\symb{2}}\&{\color{black!60}\symb{1}}\&{\color{black!60}\symb{1}}\&{\color{black!60}\symb{0}}\&{\color{black!60}\symb{2}}\&{\color{black!60}\symb{1}}\&{\color{black!60}\symb{0}}\\
};
\node[draw,rectangle,dashed,help lines,fit=(config), inner sep=0.5ex] {};
\end{tikzpicture} &
                \begin{tikzpicture}
[baseline=-\the\dimexpr\fontdimen22\textfont2\relax,ampersand replacement=\&]
  \matrix[matrix of math nodes,nodes={
       minimum size=1.2ex,text width=1.2ex,
       text height=1.2ex,inner sep=3pt,draw={gray!20},align=center,
       anchor=base
     }, row sep=1pt,column sep=1pt
  ] (config) {
{\color{black!60}\symb{2}}\&{\color{black!60}\symb{1}}\&{\color{black!60}\symb{0}}\&{\color{black!60}\symb{2}}\&{\color{black!60}\symb{2}}\&{\color{black!60}\symb{1}}\&{\color{black!60}\symb{0}}\&{\color{black!60}\symb{2}}\&{\color{black!60}\symb{2}}\&{\color{black!60}\symb{1}}\&{\color{black!60}\symb{0}}\&{\color{black!60}\symb{2}}\&{\color{black!60}\symb{1}}\&{\color{black!60}\symb{1}}\&{\color{black!60}\symb{0}}\\
{\color{black!60}\symb{1}}\&{\color{black!60}\symb{0}}\&{\color{black!60}\symb{2}}\&{\color{black!60}\symb{1}}\&{\color{black!60}\symb{1}}\&{\color{black!60}\symb{0}}\&{\color{black!60}\symb{2}}\&{\color{black!60}\symb{1}}\&{\color{black!60}\symb{0}}\&{\color{black!60}\symb{2}}\&{\color{black!60}\symb{2}}\&{\color{black!60}\symb{1}}\&{\color{black!60}\symb{0}}\&{\color{black!60}\symb{2}}\&{\color{black!60}\symb{2}}\\
{\color{black!60}\symb{2}}\&{\color{black!60}\symb{2}}\&{\color{black!60}\symb{1}}\&{\color{black!60}\symb{0}}\&{\color{black!60}\symb{2}}\&{\color{black!60}\symb{2}}\&{\color{black!60}\symb{1}}\&{\color{black!60}\symb{0}}\&{\color{black!60}\symb{2}}\&{\color{black!60}\symb{1}}\&{\color{black!60}\symb{0}}\&{\color{black!60}\symb{2}}\&{\color{black!60}\symb{2}}\&{\color{black!60}\symb{1}}\&{\color{black!60}\symb{0}}\\
{\color{black!60}\symb{1}}\&{\color{black!60}\symb{0}}\&{\color{black!60}\symb{2}}\&{\color{black!60}\symb{2}}\&{\color{black!60}\symb{1}}\&{\color{black!60}\symb{0}}\&{\color{black!60}\symb{2}}\&{\color{black!60}\symb{2}}\&{\color{black!60}\symb{1}}\&{\color{black!60}\symb{0}}\&{\color{black!60}\symb{2}}\&{\color{black!60}\symb{1}}\&{\color{black!60}\symb{1}}\&{\color{black!60}\symb{0}}\&{\color{black!60}\symb{2}}\\
{\color{black!60}\symb{0}}\&{\color{black!60}\symb{2}}\&{\color{black!60}\symb{1}}\&{\color{black!60}\symb{1}}\&{\color{black!60}\symb{0}}\&{\color{black!60}\symb{2}}\&{\color{black!60}\symb{1}}\&{\color{black!60}\symb{0}}\&{\color{black!60}\symb{2}}\&{\color{black!60}\symb{2}}\&{\color{black!60}\symb{1}}\&{\color{black!60}\symb{0}}\&{\color{black!60}\symb{2}}\&{\color{black!60}\symb{2}}\&{\color{black!60}\symb{1}}\\
{\color{black!60}\symb{2}}\&{\color{black!60}\symb{1}}\&{\color{black!60}\symb{0}}\&{\color{black!60}\symb{2}}\&{\color{black!60}\symb{2}}\&{\color{black!60}\symb{1}}\&{\color{black!60}\symb{0}}\&{\color{black!60}\symb{2}}\&{\color{black!60}\symb{1}}\&{\color{black!60}\symb{1}}\&{\color{black!60}\symb{0}}\&{\color{black!60}\symb{2}}\&{\color{black!60}\symb{1}}\&{\color{black!60}\symb{0}}\&{\color{black!60}\symb{2}}\\
{\color{black!60}\symb{1}}\&{\color{black!60}\symb{0}}\&{\color{black!60}\symb{2}}\&{\color{black!60}\symb{1}}\&{\color{black!60}\symb{0}}\&{\color{black!60}\symb{2}}\&{\color{black!60}\symb{2}}\&{\color{black!60}\symb{1}}\&{\color{black!60}\symb{0}}\&{\color{black!60}\symb{2}}\&{\color{black!60}\symb{2}}\&{\color{black!60}\symb{1}}\&{\color{black!60}\symb{0}}\&{\color{black!60}\symb{2}}\&{\color{black!60}\symb{1}}\\
{\color{black!60}\symb{2}}\&{\color{black!60}\symb{2}}\&{\color{black!60}\symb{1}}\&{\color{black!60}\symb{0}}\&{\color{black!60}\symb{2}}\&{\color{black!60}\symb{1}}\&{\color{red}\symb{0}}\&{\color{red}\symb{2}}\&{\color{black!60}\symb{2}}\&{\color{black!60}\symb{1}}\&{\color{black!60}\symb{0}}\&{\color{black!60}\symb{2}}\&{\color{black!60}\symb{2}}\&{\color{black!60}\symb{1}}\&{\color{black!60}\symb{0}}\\
{\color{black!60}\symb{1}}\&{\color{black!60}\symb{0}}\&{\color{black!60}\symb{2}}\&{\color{black!60}\symb{2}}\&{\color{black!60}\symb{1}}\&{\color{black!60}\symb{0}}\&{\color{black!60}\symb{2}}\&{\color{red}\symb{1}}\&{\color{black!60}\symb{1}}\&{\color{black!60}\symb{0}}\&{\color{black!60}\symb{2}}\&{\color{black!60}\symb{1}}\&{\color{black!60}\symb{0}}\&{\color{black!60}\symb{2}}\&{\color{black!60}\symb{2}}\\
{\color{black!60}\symb{0}}\&{\color{black!60}\symb{2}}\&{\color{black!60}\symb{1}}\&{\color{black!60}\symb{0}}\&{\color{black!60}\symb{2}}\&{\color{black!60}\symb{2}}\&{\color{black!60}\symb{1}}\&{\color{black!60}\symb{0}}\&{\color{black!60}\symb{2}}\&{\color{black!60}\symb{2}}\&{\color{black!60}\symb{1}}\&{\color{black!60}\symb{0}}\&{\color{black!60}\symb{2}}\&{\color{black!60}\symb{1}}\&{\color{black!60}\symb{1}}\\
{\color{black!60}\symb{2}}\&{\color{black!60}\symb{1}}\&{\color{black!60}\symb{0}}\&{\color{black!60}\symb{2}}\&{\color{black!60}\symb{1}}\&{\color{black!60}\symb{1}}\&{\color{black!60}\symb{0}}\&{\color{black!60}\symb{2}}\&{\color{black!60}\symb{1}}\&{\color{black!60}\symb{0}}\&{\color{black!60}\symb{2}}\&{\color{black!60}\symb{2}}\&{\color{black!60}\symb{1}}\&{\color{black!60}\symb{0}}\&{\color{black!60}\symb{2}}\\
{\color{black!60}\symb{0}}\&{\color{black!60}\symb{2}}\&{\color{black!60}\symb{2}}\&{\color{black!60}\symb{1}}\&{\color{black!60}\symb{0}}\&{\color{black!60}\symb{2}}\&{\color{black!60}\symb{2}}\&{\color{black!60}\symb{1}}\&{\color{black!60}\symb{0}}\&{\color{black!60}\symb{2}}\&{\color{black!60}\symb{1}}\&{\color{black!60}\symb{1}}\&{\color{black!60}\symb{0}}\&{\color{black!60}\symb{2}}\&{\color{black!60}\symb{1}}\\
{\color{black!60}\symb{2}}\&{\color{black!60}\symb{1}}\&{\color{black!60}\symb{1}}\&{\color{black!60}\symb{0}}\&{\color{black!60}\symb{2}}\&{\color{black!60}\symb{1}}\&{\color{black!60}\symb{0}}\&{\color{black!60}\symb{2}}\&{\color{black!60}\symb{2}}\&{\color{black!60}\symb{1}}\&{\color{black!60}\symb{0}}\&{\color{black!60}\symb{2}}\&{\color{black!60}\symb{2}}\&{\color{black!60}\symb{1}}\&{\color{black!60}\symb{0}}\\
{\color{black!60}\symb{1}}\&{\color{black!60}\symb{0}}\&{\color{black!60}\symb{2}}\&{\color{black!60}\symb{2}}\&{\color{black!60}\symb{1}}\&{\color{black!60}\symb{0}}\&{\color{black!60}\symb{2}}\&{\color{black!60}\symb{1}}\&{\color{black!60}\symb{0}}\&{\color{black!60}\symb{2}}\&{\color{black!60}\symb{2}}\&{\color{black!60}\symb{1}}\&{\color{black!60}\symb{0}}\&{\color{black!60}\symb{2}}\&{\color{black!60}\symb{2}}\\
{\color{black!60}\symb{2}}\&{\color{black!60}\symb{2}}\&{\color{black!60}\symb{1}}\&{\color{black!60}\symb{0}}\&{\color{black!60}\symb{2}}\&{\color{black!60}\symb{2}}\&{\color{black!60}\symb{1}}\&{\color{black!60}\symb{0}}\&{\color{black!60}\symb{2}}\&{\color{black!60}\symb{1}}\&{\color{black!60}\symb{1}}\&{\color{black!60}\symb{0}}\&{\color{black!60}\symb{2}}\&{\color{black!60}\symb{1}}\&{\color{black!60}\symb{0}}\\
};
\node[draw,rectangle,dashed,help lines,fit=(config), inner sep=0.5ex] {};
\end{tikzpicture} %
			\end{tabular}
		\end{center}
		\caption{The $2$-dimensional configurations $c_{\balpha}$ and
			$c'_{\balpha}$ when $\balpha=\thevector$ are shown 
			on the support $\llbracket -7,7\rrbracket \times \llbracket -7,7\rrbracket$. The two configurations are 
            equal except on the difference set $F=\{(0,0),(-1,0),(0,-1)\}$ shown in red.}
		\label{fig:2d-sturmian-config-pair}
	\end{figure}
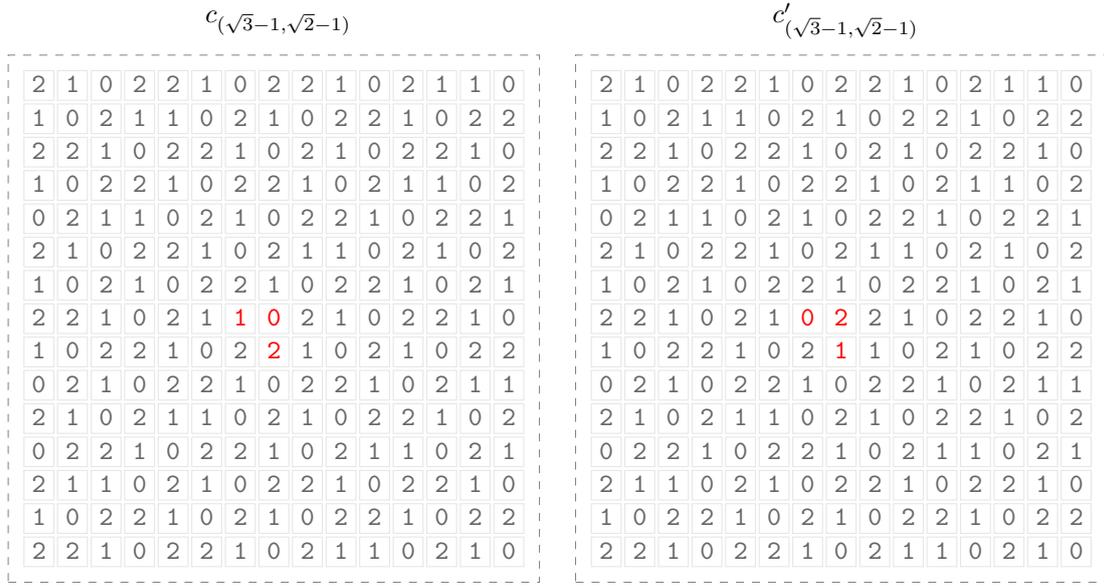
	
	The patterns of shape $(1,3)$ that we see in $c_\thevector$ and
	$c'_\thevector$ are
	\[
	\arraycolsep=1.5pt
	\left\{
\begin{array}
{ccccccc}
\begin{tikzpicture}
[baseline=-\the\dimexpr\fontdimen22\textfont2\relax,ampersand replacement=\&]
  \matrix[matrix of math nodes,nodes={
       minimum size=1.2ex,text width=1.2ex,
       text height=1.2ex,inner sep=3pt,draw={gray!20},align=center,
       anchor=base
     }, row sep=1pt,column sep=1pt
  ] (config) {
{\color{black!60}\symb{0}}\\
{\color{black!60}\symb{2}}\\
{\color{black!60}\symb{0}}\\
};
\end{tikzpicture},&\begin{tikzpicture}
[baseline=-\the\dimexpr\fontdimen22\textfont2\relax,ampersand replacement=\&]
  \matrix[matrix of math nodes,nodes={
       minimum size=1.2ex,text width=1.2ex,
       text height=1.2ex,inner sep=3pt,draw={gray!20},align=center,
       anchor=base
     }, row sep=1pt,column sep=1pt
  ] (config) {
{\color{black!60}\symb{0}}\\
{\color{black!60}\symb{2}}\\
{\color{black!60}\symb{1}}\\
};
\end{tikzpicture},&\begin{tikzpicture}
[baseline=-\the\dimexpr\fontdimen22\textfont2\relax,ampersand replacement=\&]
  \matrix[matrix of math nodes,nodes={
       minimum size=1.2ex,text width=1.2ex,
       text height=1.2ex,inner sep=3pt,draw={gray!20},align=center,
       anchor=base
     }, row sep=1pt,column sep=1pt
  ] (config) {
{\color{black!60}\symb{1}}\\
{\color{black!60}\symb{0}}\\
{\color{black!60}\symb{2}}\\
};
\end{tikzpicture},&\begin{tikzpicture}
[baseline=-\the\dimexpr\fontdimen22\textfont2\relax,ampersand replacement=\&]
  \matrix[matrix of math nodes,nodes={
       minimum size=1.2ex,text width=1.2ex,
       text height=1.2ex,inner sep=3pt,draw={gray!20},align=center,
       anchor=base
     }, row sep=1pt,column sep=1pt
  ] (config) {
{\color{black!60}\symb{1}}\\
{\color{black!60}\symb{2}}\\
{\color{black!60}\symb{1}}\\
};
\end{tikzpicture},&\begin{tikzpicture}
[baseline=-\the\dimexpr\fontdimen22\textfont2\relax,ampersand replacement=\&]
  \matrix[matrix of math nodes,nodes={
       minimum size=1.2ex,text width=1.2ex,
       text height=1.2ex,inner sep=3pt,draw={gray!20},align=center,
       anchor=base
     }, row sep=1pt,column sep=1pt
  ] (config) {
{\color{black!60}\symb{2}}\\
{\color{black!60}\symb{0}}\\
{\color{black!60}\symb{2}}\\
};
\end{tikzpicture},&\begin{tikzpicture}
[baseline=-\the\dimexpr\fontdimen22\textfont2\relax,ampersand replacement=\&]
  \matrix[matrix of math nodes,nodes={
       minimum size=1.2ex,text width=1.2ex,
       text height=1.2ex,inner sep=3pt,draw={gray!20},align=center,
       anchor=base
     }, row sep=1pt,column sep=1pt
  ] (config) {
{\color{black!60}\symb{2}}\\
{\color{black!60}\symb{1}}\\
{\color{black!60}\symb{0}}\\
};
\end{tikzpicture},&\begin{tikzpicture}
[baseline=-\the\dimexpr\fontdimen22\textfont2\relax,ampersand replacement=\&]
  \matrix[matrix of math nodes,nodes={
       minimum size=1.2ex,text width=1.2ex,
       text height=1.2ex,inner sep=3pt,draw={gray!20},align=center,
       anchor=base
     }, row sep=1pt,column sep=1pt
  ] (config) {
{\color{black!60}\symb{2}}\\
{\color{black!60}\symb{1}}\\
{\color{black!60}\symb{2}}\\
};
\end{tikzpicture}\\
\end{array}
\right\}
	\]
	The patterns of shape $(3,1)$ that we see in $c_\thevector$ and
	$c'_\thevector$ are
	\[
	\arraycolsep=1.5pt
	\left\{
\begin{array}
{ccccccc}
\begin{tikzpicture}
[baseline=-\the\dimexpr\fontdimen22\textfont2\relax,ampersand replacement=\&]
  \matrix[matrix of math nodes,nodes={
       minimum size=1.2ex,text width=1.2ex,
       text height=1.2ex,inner sep=3pt,draw={gray!20},align=center,
       anchor=base
     }, row sep=1pt,column sep=1pt
  ] (config) {
{\color{black!60}\symb{0}}\&{\color{black!60}\symb{2}}\&{\color{black!60}\symb{1}}\\
};
\end{tikzpicture},&\begin{tikzpicture}
[baseline=-\the\dimexpr\fontdimen22\textfont2\relax,ampersand replacement=\&]
  \matrix[matrix of math nodes,nodes={
       minimum size=1.2ex,text width=1.2ex,
       text height=1.2ex,inner sep=3pt,draw={gray!20},align=center,
       anchor=base
     }, row sep=1pt,column sep=1pt
  ] (config) {
{\color{black!60}\symb{0}}\&{\color{black!60}\symb{2}}\&{\color{black!60}\symb{2}}\\
};
\end{tikzpicture},&\begin{tikzpicture}
[baseline=-\the\dimexpr\fontdimen22\textfont2\relax,ampersand replacement=\&]
  \matrix[matrix of math nodes,nodes={
       minimum size=1.2ex,text width=1.2ex,
       text height=1.2ex,inner sep=3pt,draw={gray!20},align=center,
       anchor=base
     }, row sep=1pt,column sep=1pt
  ] (config) {
{\color{black!60}\symb{1}}\&{\color{black!60}\symb{0}}\&{\color{black!60}\symb{2}}\\
};
\end{tikzpicture},&\begin{tikzpicture}
[baseline=-\the\dimexpr\fontdimen22\textfont2\relax,ampersand replacement=\&]
  \matrix[matrix of math nodes,nodes={
       minimum size=1.2ex,text width=1.2ex,
       text height=1.2ex,inner sep=3pt,draw={gray!20},align=center,
       anchor=base
     }, row sep=1pt,column sep=1pt
  ] (config) {
{\color{black!60}\symb{1}}\&{\color{black!60}\symb{1}}\&{\color{black!60}\symb{0}}\\
};
\end{tikzpicture},&\begin{tikzpicture}
[baseline=-\the\dimexpr\fontdimen22\textfont2\relax,ampersand replacement=\&]
  \matrix[matrix of math nodes,nodes={
       minimum size=1.2ex,text width=1.2ex,
       text height=1.2ex,inner sep=3pt,draw={gray!20},align=center,
       anchor=base
     }, row sep=1pt,column sep=1pt
  ] (config) {
{\color{black!60}\symb{2}}\&{\color{black!60}\symb{1}}\&{\color{black!60}\symb{0}}\\
};
\end{tikzpicture},&\begin{tikzpicture}
[baseline=-\the\dimexpr\fontdimen22\textfont2\relax,ampersand replacement=\&]
  \matrix[matrix of math nodes,nodes={
       minimum size=1.2ex,text width=1.2ex,
       text height=1.2ex,inner sep=3pt,draw={gray!20},align=center,
       anchor=base
     }, row sep=1pt,column sep=1pt
  ] (config) {
{\color{black!60}\symb{2}}\&{\color{black!60}\symb{1}}\&{\color{black!60}\symb{1}}\\
};
\end{tikzpicture},&\begin{tikzpicture}
[baseline=-\the\dimexpr\fontdimen22\textfont2\relax,ampersand replacement=\&]
  \matrix[matrix of math nodes,nodes={
       minimum size=1.2ex,text width=1.2ex,
       text height=1.2ex,inner sep=3pt,draw={gray!20},align=center,
       anchor=base
     }, row sep=1pt,column sep=1pt
  ] (config) {
{\color{black!60}\symb{2}}\&{\color{black!60}\symb{2}}\&{\color{black!60}\symb{1}}\\
};
\end{tikzpicture}\\
\end{array}
\right\}
	\]
	The patterns of shape $(2,2)$ that we see in $c_\thevector$ and
	$c'_\thevector$ are
	\[
	\arraycolsep=1.5pt
	\left\{
\begin{array}
{cccccccc}
\begin{tikzpicture}
[baseline=-\the\dimexpr\fontdimen22\textfont2\relax,ampersand replacement=\&]
  \matrix[matrix of math nodes,nodes={
       minimum size=1.2ex,text width=1.2ex,
       text height=1.2ex,inner sep=3pt,draw={gray!20},align=center,
       anchor=base
     }, row sep=1pt,column sep=1pt
  ] (config) {
{\color{black!60}\symb{0}}\&{\color{black!60}\symb{2}}\\
{\color{black!60}\symb{2}}\&{\color{black!60}\symb{1}}\\
};
\end{tikzpicture},&\begin{tikzpicture}
[baseline=-\the\dimexpr\fontdimen22\textfont2\relax,ampersand replacement=\&]
  \matrix[matrix of math nodes,nodes={
       minimum size=1.2ex,text width=1.2ex,
       text height=1.2ex,inner sep=3pt,draw={gray!20},align=center,
       anchor=base
     }, row sep=1pt,column sep=1pt
  ] (config) {
{\color{black!60}\symb{1}}\&{\color{black!60}\symb{0}}\\
{\color{black!60}\symb{0}}\&{\color{black!60}\symb{2}}\\
};
\end{tikzpicture},&\begin{tikzpicture}
[baseline=-\the\dimexpr\fontdimen22\textfont2\relax,ampersand replacement=\&]
  \matrix[matrix of math nodes,nodes={
       minimum size=1.2ex,text width=1.2ex,
       text height=1.2ex,inner sep=3pt,draw={gray!20},align=center,
       anchor=base
     }, row sep=1pt,column sep=1pt
  ] (config) {
{\color{black!60}\symb{1}}\&{\color{black!60}\symb{0}}\\
{\color{black!60}\symb{2}}\&{\color{black!60}\symb{2}}\\
};
\end{tikzpicture},&\begin{tikzpicture}
[baseline=-\the\dimexpr\fontdimen22\textfont2\relax,ampersand replacement=\&]
  \matrix[matrix of math nodes,nodes={
       minimum size=1.2ex,text width=1.2ex,
       text height=1.2ex,inner sep=3pt,draw={gray!20},align=center,
       anchor=base
     }, row sep=1pt,column sep=1pt
  ] (config) {
{\color{black!60}\symb{1}}\&{\color{black!60}\symb{1}}\\
{\color{black!60}\symb{0}}\&{\color{black!60}\symb{2}}\\
};
\end{tikzpicture},&\begin{tikzpicture}
[baseline=-\the\dimexpr\fontdimen22\textfont2\relax,ampersand replacement=\&]
  \matrix[matrix of math nodes,nodes={
       minimum size=1.2ex,text width=1.2ex,
       text height=1.2ex,inner sep=3pt,draw={gray!20},align=center,
       anchor=base
     }, row sep=1pt,column sep=1pt
  ] (config) {
{\color{black!60}\symb{2}}\&{\color{black!60}\symb{1}}\\
{\color{black!60}\symb{0}}\&{\color{black!60}\symb{2}}\\
};
\end{tikzpicture},&\begin{tikzpicture}
[baseline=-\the\dimexpr\fontdimen22\textfont2\relax,ampersand replacement=\&]
  \matrix[matrix of math nodes,nodes={
       minimum size=1.2ex,text width=1.2ex,
       text height=1.2ex,inner sep=3pt,draw={gray!20},align=center,
       anchor=base
     }, row sep=1pt,column sep=1pt
  ] (config) {
{\color{black!60}\symb{2}}\&{\color{black!60}\symb{1}}\\
{\color{black!60}\symb{1}}\&{\color{black!60}\symb{0}}\\
};
\end{tikzpicture},&\begin{tikzpicture}
[baseline=-\the\dimexpr\fontdimen22\textfont2\relax,ampersand replacement=\&]
  \matrix[matrix of math nodes,nodes={
       minimum size=1.2ex,text width=1.2ex,
       text height=1.2ex,inner sep=3pt,draw={gray!20},align=center,
       anchor=base
     }, row sep=1pt,column sep=1pt
  ] (config) {
{\color{black!60}\symb{2}}\&{\color{black!60}\symb{2}}\\
{\color{black!60}\symb{1}}\&{\color{black!60}\symb{0}}\\
};
\end{tikzpicture},&\begin{tikzpicture}
[baseline=-\the\dimexpr\fontdimen22\textfont2\relax,ampersand replacement=\&]
  \matrix[matrix of math nodes,nodes={
       minimum size=1.2ex,text width=1.2ex,
       text height=1.2ex,inner sep=3pt,draw={gray!20},align=center,
       anchor=base
     }, row sep=1pt,column sep=1pt
  ] (config) {
{\color{black!60}\symb{2}}\&{\color{black!60}\symb{2}}\\
{\color{black!60}\symb{1}}\&{\color{black!60}\symb{1}}\\
};
\end{tikzpicture}\\
\end{array}
\right\}
	\]
	The patterns of shape $(2,3)$ that we see in $c_\thevector$ and
	$c'_\thevector$ are
	\[
	\arraycolsep=1.5pt
	\left\{
\begin{array}
{ccccccccccc}
\begin{tikzpicture}
[baseline=-\the\dimexpr\fontdimen22\textfont2\relax,ampersand replacement=\&]
  \matrix[matrix of math nodes,nodes={
       minimum size=1.2ex,text width=1.2ex,
       text height=1.2ex,inner sep=3pt,draw={gray!20},align=center,
       anchor=base
     }, row sep=1pt,column sep=1pt
  ] (config) {
{\color{black!60}\symb{0}}\&{\color{black!60}\symb{2}}\\
{\color{black!60}\symb{2}}\&{\color{black!60}\symb{1}}\\
{\color{black!60}\symb{0}}\&{\color{black!60}\symb{2}}\\
};
\end{tikzpicture},&\begin{tikzpicture}
[baseline=-\the\dimexpr\fontdimen22\textfont2\relax,ampersand replacement=\&]
  \matrix[matrix of math nodes,nodes={
       minimum size=1.2ex,text width=1.2ex,
       text height=1.2ex,inner sep=3pt,draw={gray!20},align=center,
       anchor=base
     }, row sep=1pt,column sep=1pt
  ] (config) {
{\color{black!60}\symb{0}}\&{\color{black!60}\symb{2}}\\
{\color{black!60}\symb{2}}\&{\color{black!60}\symb{1}}\\
{\color{black!60}\symb{1}}\&{\color{black!60}\symb{0}}\\
};
\end{tikzpicture},&\begin{tikzpicture}
[baseline=-\the\dimexpr\fontdimen22\textfont2\relax,ampersand replacement=\&]
  \matrix[matrix of math nodes,nodes={
       minimum size=1.2ex,text width=1.2ex,
       text height=1.2ex,inner sep=3pt,draw={gray!20},align=center,
       anchor=base
     }, row sep=1pt,column sep=1pt
  ] (config) {
{\color{black!60}\symb{1}}\&{\color{black!60}\symb{0}}\\
{\color{black!60}\symb{0}}\&{\color{black!60}\symb{2}}\\
{\color{black!60}\symb{2}}\&{\color{black!60}\symb{1}}\\
};
\end{tikzpicture},&\begin{tikzpicture}
[baseline=-\the\dimexpr\fontdimen22\textfont2\relax,ampersand replacement=\&]
  \matrix[matrix of math nodes,nodes={
       minimum size=1.2ex,text width=1.2ex,
       text height=1.2ex,inner sep=3pt,draw={gray!20},align=center,
       anchor=base
     }, row sep=1pt,column sep=1pt
  ] (config) {
{\color{black!60}\symb{1}}\&{\color{black!60}\symb{0}}\\
{\color{black!60}\symb{2}}\&{\color{black!60}\symb{2}}\\
{\color{black!60}\symb{1}}\&{\color{black!60}\symb{0}}\\
};
\end{tikzpicture},&\begin{tikzpicture}
[baseline=-\the\dimexpr\fontdimen22\textfont2\relax,ampersand replacement=\&]
  \matrix[matrix of math nodes,nodes={
       minimum size=1.2ex,text width=1.2ex,
       text height=1.2ex,inner sep=3pt,draw={gray!20},align=center,
       anchor=base
     }, row sep=1pt,column sep=1pt
  ] (config) {
{\color{black!60}\symb{1}}\&{\color{black!60}\symb{0}}\\
{\color{black!60}\symb{2}}\&{\color{black!60}\symb{2}}\\
{\color{black!60}\symb{1}}\&{\color{black!60}\symb{1}}\\
};
\end{tikzpicture},&\begin{tikzpicture}
[baseline=-\the\dimexpr\fontdimen22\textfont2\relax,ampersand replacement=\&]
  \matrix[matrix of math nodes,nodes={
       minimum size=1.2ex,text width=1.2ex,
       text height=1.2ex,inner sep=3pt,draw={gray!20},align=center,
       anchor=base
     }, row sep=1pt,column sep=1pt
  ] (config) {
{\color{black!60}\symb{1}}\&{\color{black!60}\symb{1}}\\
{\color{black!60}\symb{0}}\&{\color{black!60}\symb{2}}\\
{\color{black!60}\symb{2}}\&{\color{black!60}\symb{1}}\\
};
\end{tikzpicture},&\begin{tikzpicture}
[baseline=-\the\dimexpr\fontdimen22\textfont2\relax,ampersand replacement=\&]
  \matrix[matrix of math nodes,nodes={
       minimum size=1.2ex,text width=1.2ex,
       text height=1.2ex,inner sep=3pt,draw={gray!20},align=center,
       anchor=base
     }, row sep=1pt,column sep=1pt
  ] (config) {
{\color{black!60}\symb{2}}\&{\color{black!60}\symb{1}}\\
{\color{black!60}\symb{0}}\&{\color{black!60}\symb{2}}\\
{\color{black!60}\symb{2}}\&{\color{black!60}\symb{1}}\\
};
\end{tikzpicture},&\begin{tikzpicture}
[baseline=-\the\dimexpr\fontdimen22\textfont2\relax,ampersand replacement=\&]
  \matrix[matrix of math nodes,nodes={
       minimum size=1.2ex,text width=1.2ex,
       text height=1.2ex,inner sep=3pt,draw={gray!20},align=center,
       anchor=base
     }, row sep=1pt,column sep=1pt
  ] (config) {
{\color{black!60}\symb{2}}\&{\color{black!60}\symb{1}}\\
{\color{black!60}\symb{1}}\&{\color{black!60}\symb{0}}\\
{\color{black!60}\symb{0}}\&{\color{black!60}\symb{2}}\\
};
\end{tikzpicture},&\begin{tikzpicture}
[baseline=-\the\dimexpr\fontdimen22\textfont2\relax,ampersand replacement=\&]
  \matrix[matrix of math nodes,nodes={
       minimum size=1.2ex,text width=1.2ex,
       text height=1.2ex,inner sep=3pt,draw={gray!20},align=center,
       anchor=base
     }, row sep=1pt,column sep=1pt
  ] (config) {
{\color{black!60}\symb{2}}\&{\color{black!60}\symb{1}}\\
{\color{black!60}\symb{1}}\&{\color{black!60}\symb{0}}\\
{\color{black!60}\symb{2}}\&{\color{black!60}\symb{2}}\\
};
\end{tikzpicture},&\begin{tikzpicture}
[baseline=-\the\dimexpr\fontdimen22\textfont2\relax,ampersand replacement=\&]
  \matrix[matrix of math nodes,nodes={
       minimum size=1.2ex,text width=1.2ex,
       text height=1.2ex,inner sep=3pt,draw={gray!20},align=center,
       anchor=base
     }, row sep=1pt,column sep=1pt
  ] (config) {
{\color{black!60}\symb{2}}\&{\color{black!60}\symb{2}}\\
{\color{black!60}\symb{1}}\&{\color{black!60}\symb{0}}\\
{\color{black!60}\symb{0}}\&{\color{black!60}\symb{2}}\\
};
\end{tikzpicture},&\begin{tikzpicture}
[baseline=-\the\dimexpr\fontdimen22\textfont2\relax,ampersand replacement=\&]
  \matrix[matrix of math nodes,nodes={
       minimum size=1.2ex,text width=1.2ex,
       text height=1.2ex,inner sep=3pt,draw={gray!20},align=center,
       anchor=base
     }, row sep=1pt,column sep=1pt
  ] (config) {
{\color{black!60}\symb{2}}\&{\color{black!60}\symb{2}}\\
{\color{black!60}\symb{1}}\&{\color{black!60}\symb{1}}\\
{\color{black!60}\symb{0}}\&{\color{black!60}\symb{2}}\\
};
\end{tikzpicture}\\
\end{array}
\right\}
	\]
	The patterns of shape $(3,2)$ that we see in $c_\thevector$ and
	$c'_\thevector$ are
	\[
	\arraycolsep=1.5pt
	\left\{
\begin{array}
{cccccc}
\begin{tikzpicture}
[baseline=-\the\dimexpr\fontdimen22\textfont2\relax,ampersand replacement=\&]
  \matrix[matrix of math nodes,nodes={
       minimum size=1.2ex,text width=1.2ex,
       text height=1.2ex,inner sep=3pt,draw={gray!20},align=center,
       anchor=base
     }, row sep=1pt,column sep=1pt
  ] (config) {
{\color{black!60}\symb{0}}\&{\color{black!60}\symb{2}}\&{\color{black!60}\symb{1}}\\
{\color{black!60}\symb{2}}\&{\color{black!60}\symb{1}}\&{\color{black!60}\symb{0}}\\
};
\end{tikzpicture},&\begin{tikzpicture}
[baseline=-\the\dimexpr\fontdimen22\textfont2\relax,ampersand replacement=\&]
  \matrix[matrix of math nodes,nodes={
       minimum size=1.2ex,text width=1.2ex,
       text height=1.2ex,inner sep=3pt,draw={gray!20},align=center,
       anchor=base
     }, row sep=1pt,column sep=1pt
  ] (config) {
{\color{black!60}\symb{0}}\&{\color{black!60}\symb{2}}\&{\color{black!60}\symb{2}}\\
{\color{black!60}\symb{2}}\&{\color{black!60}\symb{1}}\&{\color{black!60}\symb{0}}\\
};
\end{tikzpicture},&\begin{tikzpicture}
[baseline=-\the\dimexpr\fontdimen22\textfont2\relax,ampersand replacement=\&]
  \matrix[matrix of math nodes,nodes={
       minimum size=1.2ex,text width=1.2ex,
       text height=1.2ex,inner sep=3pt,draw={gray!20},align=center,
       anchor=base
     }, row sep=1pt,column sep=1pt
  ] (config) {
{\color{black!60}\symb{0}}\&{\color{black!60}\symb{2}}\&{\color{black!60}\symb{2}}\\
{\color{black!60}\symb{2}}\&{\color{black!60}\symb{1}}\&{\color{black!60}\symb{1}}\\
};
\end{tikzpicture},&\begin{tikzpicture}
[baseline=-\the\dimexpr\fontdimen22\textfont2\relax,ampersand replacement=\&]
  \matrix[matrix of math nodes,nodes={
       minimum size=1.2ex,text width=1.2ex,
       text height=1.2ex,inner sep=3pt,draw={gray!20},align=center,
       anchor=base
     }, row sep=1pt,column sep=1pt
  ] (config) {
{\color{black!60}\symb{1}}\&{\color{black!60}\symb{0}}\&{\color{black!60}\symb{2}}\\
{\color{black!60}\symb{0}}\&{\color{black!60}\symb{2}}\&{\color{black!60}\symb{1}}\\
};
\end{tikzpicture},&\begin{tikzpicture}
[baseline=-\the\dimexpr\fontdimen22\textfont2\relax,ampersand replacement=\&]
  \matrix[matrix of math nodes,nodes={
       minimum size=1.2ex,text width=1.2ex,
       text height=1.2ex,inner sep=3pt,draw={gray!20},align=center,
       anchor=base
     }, row sep=1pt,column sep=1pt
  ] (config) {
{\color{black!60}\symb{1}}\&{\color{black!60}\symb{0}}\&{\color{black!60}\symb{2}}\\
{\color{black!60}\symb{2}}\&{\color{black!60}\symb{2}}\&{\color{black!60}\symb{1}}\\
};
\end{tikzpicture},&\begin{tikzpicture}
[baseline=-\the\dimexpr\fontdimen22\textfont2\relax,ampersand replacement=\&]
  \matrix[matrix of math nodes,nodes={
       minimum size=1.2ex,text width=1.2ex,
       text height=1.2ex,inner sep=3pt,draw={gray!20},align=center,
       anchor=base
     }, row sep=1pt,column sep=1pt
  ] (config) {
{\color{black!60}\symb{1}}\&{\color{black!60}\symb{1}}\&{\color{black!60}\symb{0}}\\
{\color{black!60}\symb{0}}\&{\color{black!60}\symb{2}}\&{\color{black!60}\symb{2}}\\
};
\end{tikzpicture}\\
\begin{tikzpicture}
[baseline=-\the\dimexpr\fontdimen22\textfont2\relax,ampersand replacement=\&]
  \matrix[matrix of math nodes,nodes={
       minimum size=1.2ex,text width=1.2ex,
       text height=1.2ex,inner sep=3pt,draw={gray!20},align=center,
       anchor=base
     }, row sep=1pt,column sep=1pt
  ] (config) {
{\color{black!60}\symb{2}}\&{\color{black!60}\symb{1}}\&{\color{black!60}\symb{0}}\\
{\color{black!60}\symb{0}}\&{\color{black!60}\symb{2}}\&{\color{black!60}\symb{2}}\\
};
\end{tikzpicture},&\begin{tikzpicture}
[baseline=-\the\dimexpr\fontdimen22\textfont2\relax,ampersand replacement=\&]
  \matrix[matrix of math nodes,nodes={
       minimum size=1.2ex,text width=1.2ex,
       text height=1.2ex,inner sep=3pt,draw={gray!20},align=center,
       anchor=base
     }, row sep=1pt,column sep=1pt
  ] (config) {
{\color{black!60}\symb{2}}\&{\color{black!60}\symb{1}}\&{\color{black!60}\symb{0}}\\
{\color{black!60}\symb{1}}\&{\color{black!60}\symb{0}}\&{\color{black!60}\symb{2}}\\
};
\end{tikzpicture},&\begin{tikzpicture}
[baseline=-\the\dimexpr\fontdimen22\textfont2\relax,ampersand replacement=\&]
  \matrix[matrix of math nodes,nodes={
       minimum size=1.2ex,text width=1.2ex,
       text height=1.2ex,inner sep=3pt,draw={gray!20},align=center,
       anchor=base
     }, row sep=1pt,column sep=1pt
  ] (config) {
{\color{black!60}\symb{2}}\&{\color{black!60}\symb{1}}\&{\color{black!60}\symb{1}}\\
{\color{black!60}\symb{1}}\&{\color{black!60}\symb{0}}\&{\color{black!60}\symb{2}}\\
};
\end{tikzpicture},&\begin{tikzpicture}
[baseline=-\the\dimexpr\fontdimen22\textfont2\relax,ampersand replacement=\&]
  \matrix[matrix of math nodes,nodes={
       minimum size=1.2ex,text width=1.2ex,
       text height=1.2ex,inner sep=3pt,draw={gray!20},align=center,
       anchor=base
     }, row sep=1pt,column sep=1pt
  ] (config) {
{\color{black!60}\symb{2}}\&{\color{black!60}\symb{2}}\&{\color{black!60}\symb{1}}\\
{\color{black!60}\symb{1}}\&{\color{black!60}\symb{0}}\&{\color{black!60}\symb{2}}\\
};
\end{tikzpicture},&\begin{tikzpicture}
[baseline=-\the\dimexpr\fontdimen22\textfont2\relax,ampersand replacement=\&]
  \matrix[matrix of math nodes,nodes={
       minimum size=1.2ex,text width=1.2ex,
       text height=1.2ex,inner sep=3pt,draw={gray!20},align=center,
       anchor=base
     }, row sep=1pt,column sep=1pt
  ] (config) {
{\color{black!60}\symb{2}}\&{\color{black!60}\symb{2}}\&{\color{black!60}\symb{1}}\\
{\color{black!60}\symb{1}}\&{\color{black!60}\symb{1}}\&{\color{black!60}\symb{0}}\\
};
\end{tikzpicture}\\
\end{array}
\right\}
	\]
	We may check on Figure~\ref{fig:2d-sturmian-config-pair} that
	each of these patterns has exactly one occurrence intersecting the difference set. This is the main tool that allows
    us to show that $d$-dimensional characteristic Sturmian configurations are
    indistinguishable.
\end{example}

\subsection{Characteristic $d$-dimensional Sturmian configurations and the flip condition}

\begin{lemma}\label{lem:sturmian_is_unirec}
	For any $\balpha \in[0,1)^d$ and $\rho \in \RR$, the configurations $s_{\balpha,\rho}$ and $s'_{\balpha,\rho}$ are uniformly recurrent.
\end{lemma}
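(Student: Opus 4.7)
The plan is to prove uniform recurrence by exhibiting, for each pattern $p\in\Lcal_S(s_{\alpha,\rho})$, a finite set $K\subset\ZZd$ that witnesses the syndeticity of $\occ_p(s_{\alpha,\rho})$. First I would translate occurrences of $p$ into a return-time problem for the $\ZZd$-rotation on the internal space. Directly from the definition of $s_{\alpha,\rho}$, an integer vector $u\in\ZZd$ lies in $\occ_p(s_{\alpha,\rho})$ if and only if $(u+s)\cdot\alpha+\rho\in W_{p(s)}$ in $\RR/\ZZ$ for every $s\in S$, which is equivalent to $u\cdot\alpha+\rho\in I_p$, where
\[
I_p \isdef \bigcap_{s\in S}\bigl(W_{p(s)}-s\cdot\alpha\bigr) \subset \RR/\ZZ.
\]
Since each $W_i$ is a half-open arc of positive length, $I_p$ is a finite intersection of half-open arcs of $\RR/\ZZ$.

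Next I would invoke the elementary but crucial observation that a finite intersection of half-open arcs of $\RR/\ZZ$ is itself a finite disjoint union of half-open arcs, each being either empty or of positive length (since for $a\leq b$ and $c\leq d$, one has $[a,b)\cap[c,d)=[\max(a,c),\min(b,d))$, which is either empty or of positive length). Hence $I_p$ is either empty or has nonempty interior, and since $p$ actually appears in $s_{\alpha,\rho}$ there exists $u_0\in\ZZd$ with $u_0\cdot\alpha+\rho\in I_p$, so the interior of $I_p$ is nonempty.

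Finally I would invoke Kronecker's theorem: for totally irrational $\alpha\in[0,1)^d$, the $\ZZd$-action on $\RR/\ZZ$ by $\theta\mapsto\theta+n\cdot\alpha$ is minimal, and by compactness of $\RR/\ZZ$ this is equivalent to the following statement: for every nonempty open subset $U\subset\RR/\ZZ$, there exists a finite set $K\subset\ZZd$ such that for every $\theta\in\RR/\ZZ$ some $k\in K$ satisfies $\theta+k\cdot\alpha\in U$. Applying this with $U=\mathrm{int}(I_p)$ and with $\theta=v\cdot\alpha+\rho$ for arbitrary $v\in\ZZd$, I would conclude that $v+k\in\occ_p(s_{\alpha,\rho})$ for some $k\in K$ depending on $v$, which is exactly the definition of uniform recurrence. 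The argument for $s'_{\alpha,\rho}$ is identical with $W'_i$ replacing $W_i$ throughout.

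The only nontrivial ingredient is the structural observation about intersections of half-open arcs on $\RR/\ZZ$, which ensures that the interior of $I_p$ is already nonempty as soon as $I_p$ is; the rest is a direct application of the Weyl--Kronecker theory of equidistribution on the torus.
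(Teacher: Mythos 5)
Your reduction to the internal space is exactly the paper's: occurrences of $p$ correspond to visits of the orbit $n\cdot\alpha+\rho$ to the set $I_p$, which, being a finite intersection of half-open arcs, has nonempty interior as soon as it is nonempty, and syndeticity of the visit times then follows from a recurrence property of the rotation. The genuine gap is that the lemma is stated for \emph{every} $\alpha\in[0,1)^d$, whereas your final step invokes minimality of the $\ZZd$-rotation only for \emph{totally irrational} $\alpha$, so two cases are left uncovered. If all coordinates $\alpha_1,\dots,\alpha_d$ are rational, the rotation is not minimal at all (every orbit is finite) and the Kronecker/compactness step is unavailable; the paper disposes of this case in one line by noting that $s_{\alpha,\rho}$ and $s'_{\alpha,\rho}$ then have finite shift-orbits, hence are uniformly recurrent for trivial reasons, and some such separate argument is needed. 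If $\alpha$ is neither totally irrational nor entirely rational, your argument does go through, but only after replacing ``totally irrational'' by the correct hypothesis: the $\ZZd$-action $\theta\mapsto\theta+n\cdot\alpha$ on $\RR/\ZZ$ is minimal as soon as \emph{one} coordinate $\alpha_i$ is irrational (orbit closures are closed subgroups of $\RR/\ZZ$, hence finite or everything). The paper's proof uses precisely this weaker input, and more economically: it fixes a single irrational coordinate $\alpha_i$ and uses the classical fact that the one-dimensional rotation by $\alpha_i$ enters the open interval $U\subseteq I_p$ within a uniformly bounded number $M$ of steps, so that $K=\{0,\be_i,2\be_i,\dots,M\be_i\}$ already witnesses uniform recurrence, with no appeal to minimality of the full $\ZZd$-action.

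A secondary quibble: for general $\alpha$ it is not true that ``each $W_i$ is a half-open arc of positive length''; when coordinates coincide or vanish, some windows $W_i$ are empty (the paper is careful to say ``either empty or has nonempty interior''). This does not damage your interior argument, since only the windows of symbols actually occurring in $p$ enter the intersection $I_p$, but it is another indication that the non-totally-irrational cases must be addressed explicitly rather than folded into the Kronecker step.
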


\begin{proof}
	If all coordinates in $\balpha = (\alpha_1,\dots,\alpha_d)$ are rational, it is clear that $s_{\balpha,\rho}$ and $s'_{\balpha,\rho}$ have finite orbits under the shift action and are thus uniformly recurrent. 
	
    Suppose there is $1 \leq i \leq d$ such that $\alpha_i$ is irrational and let $S \subset \ZZd$ be finite and $p \in \Lcal_S(s_{\balpha,\rho})$. From the definition we have that $\sigma^n(s_{\balpha,\rho}) \in [p]$ if and only if \[ n\cdot \alpha+\rho \in \bigcap_{k \in S} (W_{p_k} - \alpha \cdot k + \ZZ).\]
    From the definition, it is easy to see that for each $\symb{j}\in \alfad$, the set $W_\symb{j}$ is either empty or has nonempty interior. As $p \in \Lcal_S(s_{\balpha,\rho})$, it follows that $\bigcap_{k \in S} (\operatorname{Int} (W_{p_k} )- \alpha \cdot k + \ZZ)$ is nonempty and thus contains an open interval $U \subseteq \RR/\ZZ$.
	
	As $\alpha_i$ is irrational, it follows that there is $M \in \NN$ such that for any $b \in \RR/\ZZ$ there is $0 \leq m \leq M$ such that $b+m\alpha_i \in U$. It follows that for any $n \in \ZZd$, there is $0 \leq m \leq M$ such that $(n + me_i)\cdot \alpha + \rho \in U$, and therefore $\sigma^{n+me_i}(s_{\alpha,\rho})\in [p]$. This shows that $s_{\alpha,\rho}$ is uniformly recurrent. The argument for $s'_{\alpha,\rho}$ is analogous. \end{proof}

\begin{lemma}\label{lem:sturmian-is-asymptotic}
    If $\balpha$ is totally irrational, then
    $\big(c_{\balpha},c'_{\balpha}\big)$ is an asymptotic pair
    whose difference set is
    $F = \{\bzero, -e_1, \dots, -e_d  \}$.
\end{lemma}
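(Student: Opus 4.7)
The plan is to observe that $c_\balpha(n)$ and $c'_\balpha(n)$ can differ only at lattice points whose image under the star map lands on the (finite) boundary of the partition of $\RR/\ZZ$ into the intervals $W_i$ (equivalently, $W'_i$), and then to use total irrationality to list exactly which $n\in\ZZ^d$ have this property.

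First I would identify the boundary. Since $W_i=[1-\alpha_{\tau(i)},1-\alpha_{\tau(i+1)})$ and $W'_i=(1-\alpha_{\tau(i)},1-\alpha_{\tau(i+1)}]$, the two partitions of $[0,1)$ share the same breakpoints
\[
B=\{1-\alpha_{\tau(i)}\bmod 1 : i\in\{0,1,\dots,d\}\}=\{0,\,-\alpha_1,\,\dots,\,-\alpha_d\}\bmod 1,
\]
using $\alpha_0=1$ and $\alpha_{d+1}=0$. For any $n\in\ZZ^d$ with $n^\star\notin B$, the point $n^\star$ lies in the common interior of some $W_i=W'_i$, so $c_\balpha(n)=c'_\balpha(n)$.

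Next I would solve $n^\star\in B$. By \Cref{eq:star-map-is-nalpha}, $n^\star=n\cdot\balpha\bmod 1$. The condition $n\cdot\balpha\equiv 0\pmod 1$ means $n_1\alpha_1+\cdots+n_d\alpha_d\in\ZZ$, and total irrationality of $\{1,\alpha_1,\dots,\alpha_d\}$ forces $n=\bzero$. Similarly, for a fixed $i\in\{1,\dots,d\}$, the condition $n\cdot\balpha\equiv -\alpha_i\pmod 1$ rewrites as $(n_i+1)\alpha_i+\sum_{j\neq i}n_j\alpha_j\in\ZZ$, and again total irrationality forces $n_i=-1$, $n_j=0$ for $j\neq i$, hence $n=-\be_i$. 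This proves the set of $n$ with $n^\star\in B$ is exactly $\{\bzero,-\be_1,\dots,-\be_d\}$.

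Finally I would verify that $c_\balpha$ and $c'_\balpha$ really do differ at each such $n$ (so the difference set is all of $F$, not a proper subset). At $n=\bzero$, $0^\star=0=1-\alpha_{\tau(0)}$ is the left endpoint of $W_0$ but is attained (as $1=1-\alpha_{\tau(d+1)}$) only by $W'_d$, giving $c_\balpha(\bzero)=0\neq d=c'_\balpha(\bzero)$. At $n=-\be_i$, $n^\star=1-\alpha_i$ is the left endpoint of $W_{\tau^{-1}(i)}$ and the right endpoint of $W'_{\tau^{-1}(i)-1}$, so $c_\balpha(-\be_i)=\tau^{-1}(i)$ while $c'_\balpha(-\be_i)=\tau^{-1}(i)-1$. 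In particular the values differ at every point of $F$, so the difference set equals $F$ and the pair is asymptotic. No step is really delicate here; the only point to be careful about is the wrap-around at $0\equiv 1$ in $\RR/\ZZ$, which is what makes $c'_\balpha(\bzero)=d$ rather than $0$.
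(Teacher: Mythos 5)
Your proof is correct, but it is packaged differently from the paper's. The paper derives \Cref{lem:sturmian-is-asymptotic} from the floor/ceiling formula of \Cref{lem:sturmian-formual-from-cut-and-project}: it observes that $\lfloor\alpha_i+\bn\cdot\balpha\rfloor-\lfloor\bn\cdot\balpha\rfloor$ and $\lceil\alpha_i+\bn\cdot\balpha\rceil-\lceil\bn\cdot\balpha\rceil$ can disagree only when $\alpha_i+\bn\cdot\balpha$ or $\bn\cdot\balpha$ is an integer, and total irrationality pins those cases to $\bn=-\be_i$ and $\bn=\bzero$, giving $c_{\balpha}=c'_{\balpha}$ off $F$. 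You instead argue directly from the window definition: the two partitions $\{W_i\}$ and $\{W'_i\}$ share the same breakpoints $\{0,1-\alpha_1,\dots,1-\alpha_d\}$, so disagreement forces $\bn^\star$ onto this boundary, and the same total-irrationality computation identifies exactly the lattice points $\{\bzero,-\be_1,\dots,-\be_d\}$ -- the underlying arithmetic is identical, just read through the cut-and-project picture rather than the formula. One genuine plus of your write-up: you also check that the two configurations really do differ at every point of $F$ (the wrap-around at $0\equiv 1$ giving $c'_{\balpha}(\bzero)=d$, and $c_{\balpha}(-\be_i)=\tau^{-1}(i)$ versus $c'_{\balpha}(-\be_i)=\tau^{-1}(i)-1$), so your argument establishes that the difference set is exactly $F$ rather than merely contained in it; the paper's proof of this lemma only shows the containment and the actual disagreement on $F$ is recorded later, in \Cref{prop:sturmian_is_flip} via \Cref{eq:c-alpha-ei} and \Cref{eq:c'-alpha-ei}, with which your endpoint computation agrees.
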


\begin{proof}
    Since $\balpha$ is totally irrational, we have that
$\alpha_i+\bn\cdot\balpha$
    is an integer if and only if $\bn=-\be_i$
    and
$\bn\cdot\balpha$
    is an integer if and only if $\bn=\bzero$.
    Therefore, we have that
    \[
    \lfloor\alpha_i+\bn\cdot\balpha\rfloor-\lfloor\bn\cdot\balpha\rfloor
        =
    \lceil\alpha_i+\bn\cdot\balpha\rceil-\lceil\bn\cdot\balpha\rceil
    \]
    for every $\bn\in\ZZ^d\setminus\{\bzero,-\be_i\}$
    and $i\in\{1,\dots,d\}$.
    Therefore, 
    \[
        c_{\balpha}(\bn)= c'_{\balpha}(\bn)
    \]
    for every $\bn\in\ZZ^d\setminus\{\bzero,-\be_0,\dots,-\be_d\}$.
    This shows that
        $(c_{\balpha},c'_{\balpha})$ is an asymptotic pair
        whose difference set is $F=\{\bzero,-\be_0,\dots,-\be_d\}$.
\end{proof}

\begin{proposition}\label{prop:sturmian_is_flip}
	Let $\alpha \in [0,1)^d$ be totally irrational. The characteristic
	$d$-dimensional Sturmian configurations $c_{\balpha}$ and $c'_{\balpha}$
	satisfy the flip condition. 
\end{proposition}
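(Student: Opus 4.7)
The plan is to check the three requirements of the flip condition directly from the formulas of \Cref{lem:sturmian-formual-from-cut-and-project}, exploiting total irrationality to keep all the floors and ceilings unambiguous. The difference set is already known to be $F=\{\bzero,-\be_1,\dots,-\be_d\}$ by \Cref{lem:sturmian-is-asymptotic}, so what remains is the computation of $c_{\balpha}|_F$ and $c'_{\balpha}|_F$ and the verification that the induced map $c_{\balpha}(\bn)\mapsto c'_{\balpha}(\bn)$ is the cyclic shift $j\mapsto j-\symb{1}\bmod(d+\symb{1})$.

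First I would evaluate at $\bn=\bzero$. Since each $\alpha_i\in(0,1)$ (total irrationality forbids $\alpha_i=0$), we get $\lfloor\alpha_i\rfloor=0$ and $\lceil\alpha_i\rceil=1$, so $c_{\balpha}(\bzero)=\symb{0}$ and $c'_{\balpha}(\bzero)=d$. Next, for $\bn=-\be_j$, we have $\bn\cdot\balpha=-\alpha_j\in(-1,0)$, giving $\lfloor-\alpha_j\rfloor=-\symb{1}$ and $\lceil-\alpha_j\rceil=\symb{0}$. For the shifted terms $\alpha_i-\alpha_j$, total irrationality ensures all $\alpha_i$ are pairwise distinct and lie in $(-1,1)$, so for $i\neq j$ one has $\lfloor\alpha_i-\alpha_j\rfloor=0$ iff $\alpha_i>\alpha_j$ (and $-1$ otherwise), while $\lceil\alpha_i-\alpha_j\rceil=1$ iff $\alpha_i>\alpha_j$ (and $0$ otherwise); the term $i=j$ contributes $0$ in both sums. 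Letting $p_j=\#\{i\neq j:\alpha_i>\alpha_j\}$, a short count yields
\[
c_{\balpha}(-\be_j)= p_j+\symb{1}, \qquad c'_{\balpha}(-\be_j)= p_j.
\]

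From these formulas the second and third requirements follow cleanly. Indeed, ordering the coordinates $\alpha_{\tau(\symb{1})}>\alpha_{\tau(\symb{2})}>\dots>\alpha_{\tau(d)}$ via the permutation $\tau$ already introduced in the definition of the windows $W_i$, we get $p_{\tau(k)}=k-\symb{1}$, hence $c_{\balpha}(-\be_{\tau(k)})=k$ and $c'_{\balpha}(-\be_{\tau(k)})=k-\symb{1}$ for $k=\symb{1},\dots,d$. Together with the values at the origin this shows that $c_{\balpha}|_F$ and $c'_{\balpha}|_F$ are both bijections onto $\alfad$. Finally, comparing the two tables position by position yields
\[
c'_{\balpha}(\bn)\equiv c_{\balpha}(\bn)-\symb{1}\pmod{d+\symb{1}}\qquad\text{for every }\bn\in F,
\]
which is exactly the cyclic permutation required by the flip condition in its normalized form ($x_{\bzero}=\symb{0}$ being guaranteed by $c_{\balpha}(\bzero)=\symb{0}$).

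There is no real obstacle here; the argument is essentially a bookkeeping exercise in floors and ceilings. The only subtlety is making sure total irrationality is invoked at every step where a floor or ceiling could otherwise be ambiguous: to rule out $\alpha_i=0$, to rule out $\alpha_i=\alpha_j$ for $i\neq j$, and to guarantee that $\alpha_i-\alpha_j$ is never an integer. A version of the same calculation will also be useful in \Cref{sec:proof_of_thm_A} when proving that uniformly recurrent indistinguishable asymptotic pairs with the flip condition are characteristic Sturmian, so it is worth stating the intermediate formulas for $c_{\balpha}|_F$ and $c'_{\balpha}|_F$ explicitly.
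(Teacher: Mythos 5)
Your proposal is correct and follows essentially the same route as the paper: invoke \Cref{lem:sturmian-is-asymptotic} for the difference set, then evaluate the floor/ceiling formulas of \Cref{lem:sturmian-formual-from-cut-and-project} on $F$, obtaining $c_{\balpha}(\bzero)=\symb{0}$, $c'_{\balpha}(\bzero)=d$ and $c_{\balpha}(-\be_j)=\#\{i:\alpha_i\geq\alpha_j\}$, $c'_{\balpha}(-\be_j)=\#\{i:\alpha_i>\alpha_j\}$, from which bijectivity of the restrictions and the relation $c'_{\balpha}|_F=c_{\balpha}|_F-\symb{1}\bmod(d+\symb{1})$ follow exactly as in the paper's argument. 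Your explicit use of the ordering permutation $\tau$ is just a slightly more detailed presentation of the same counting.
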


\begin{proof}
	From~\Cref{lem:sturmian-is-asymptotic},
	if $\balpha=(\alpha_1,\dots,\alpha_d)\in[0,1)^d$ 
	is totally irrational,
	then $(c_{\balpha},c'_{\balpha})$
	is an asymptotic pair
	whose difference set is $F = \{\bzero, -e_1, \dots, -e_d  \}$. 
	
From \Cref{lem:sturmian-formual-from-cut-and-project},
	we have that for $\bn \in \ZZ^d$,
	\[ (c_{\alpha})_\bn = \sum_{\symb{i}=1}^d \left(\lfloor\alpha_\symb{i}+ \bn \cdot \balpha  \rfloor -\lfloor \bn \cdot \balpha  \rfloor\right)  \mbox{ and } (c'_{\alpha})_\bn =  \sum_{\symb{i}=1}^d \left(\lceil\alpha_\symb{i}+ \bn \cdot \balpha  \rceil -\lceil \bn \cdot \balpha  \rceil\right). \]
	From here we obtain directly that $(c_{\alpha})_\bzero = \symb{0}$ and $(c'_{\alpha})_\bzero = d$. For $\bn = -\be_{\symb{i}}$ we get
	\begin{align}
		\label{eq:c-alpha-ei}
		(c_{\alpha})_{-\be_{\symb{i}}} &= \sum_{\symb{j}=1}^d \left(\lfloor\alpha_\symb{j} -\alpha_{\symb{i}}  \rfloor -\lfloor -\alpha_{\symb{i}}  \rfloor\right) = d - \#\{ \symb{j} : \alpha_{\symb{j}} < \alpha_{\symb{i}}   \} = \#\{ \symb{j} : \alpha_{\symb{j}} \geq \alpha_{\symb{i}}   \}, \\
		\label{eq:c'-alpha-ei}
		(c'_{\alpha})_{-\be_{\symb{i}}} &= \sum_{\symb{j}=1}^d \left(\lceil\alpha_\symb{j} -\alpha_{\symb{i}}  \rceil -\lceil -\alpha_{\symb{i}}  \rceil\right) = \#\{ \symb{j} : \alpha_{\symb{j}} > \alpha_{\symb{i}}   \}.
	\end{align}
	
	As $\alpha$ is totally irrational, all $\alpha_{\symb{i}}$ are distinct non-zero values. From the above formula we obtain that $(c_{\alpha})|_F$ and $(c'_{\alpha})|_F$ are bijections onto $\alfad$ and $(c_{\alpha})_{-\be_{\symb{i}}}-(c'_{\alpha})_{-\be_{\symb{i}}} = \symb{1}$, from where the second condition follows.
\end{proof}

\subsection{Characteristic $d$-dimensional Sturmian configurations are indistinguishable}

For every $\symb{i}$ with $\symb{0}\leq \symb{i}\leq d$,
the set $W_\symb{i}$ is a left-closed right-open interval
sharing the same end-points as $W'_\symb{i}$
which is a right-closed left-open interval.
We have that
$\mathcal{P}=\{W_{\symb{i}}\}_{\symb{0}\leq\symb{i}\leq d}$ 
and
$\mathcal{P}'=\{W'_{\symb{i}}\}_{\symb{0}\leq\symb{i}\leq d}$ 
are two partitions of the circle $\RR/\ZZ$ 
illustrated in Figure~\ref{fig:d-dim-coding-unit-interval}.
\begin{figure}[ht]
\begin{center}
    \begin{tikzpicture}[xscale=10,yscale=6]
\tikzstyle{vide}=[circle,draw=blue, fill=white, inner sep=0pt, minimum size=5pt]
\tikzstyle{plein} =[circle,draw=blue, fill=blue, inner sep=0pt, minimum size=5pt]
    \def\t{.01}
    \def\aI{.9}
    \def\aII{.7}
    \def\aIII{.55}
    \def\ad{.25}
    \def\theframe{
    \draw[->] (-\t,0) -- (1+3*\t,0);
    \draw (1-\aI,-\t) node[below] {$1-\alpha_{\tau(\symb{1})}$} -- (1-\aI,+\t);
    \draw (1-\aII,-\t) node[below] {$1-\alpha_{\tau(\symb{2})}$} -- (1-\aII,+\t);
    \draw (1-\aIII,-\t) node[below] {$1-\alpha_{\tau(\symb{3})}$} -- (1-\aIII,+\t);
    \draw (1-\ad,-\t) node[below] {$1-\alpha_{\tau( d)}$} -- (1-\ad,+\t);
    \draw (0,-\t) node[below] {$0$} -- (0,+\t);
    \draw (1,-\t) node[below] {$1$} -- (1,+\t);
    \node[below] at (.60,-\t) {$\cdots$};
    } %
    \begin{scope}[yshift=4mm]
        \theframe
        \draw[very thick,blue] (0     ,.1) node[plein]{} -- node[above]{$W_0$} (1-\aI  ,.1) node[vide]{};
        \draw[very thick,blue] (1-\aI ,.2) node[plein]{} -- node[above]{$W_1$} (1-\aII ,.2) node[vide]{};
        \draw[very thick,blue] (1-\aII,.1) node[plein]{} -- node[above]{$W_2$} (1-\aIII,.1) node[vide]{};
        \draw[very thick,blue] (1-\ad ,.2) node[plein]{} -- node[above]{$W_d$} (1      ,.2) node[vide]{};
    \end{scope}
    \begin{scope}
        \draw[very thick,blue] (0     ,.1) node[vide]{} -- node[above]{$W'_0$} (1-\aI  ,.1) node[plein]{};
        \draw[very thick,blue] (1-\aI ,.2) node[vide]{} -- node[above]{$W'_1$} (1-\aII ,.2) node[plein]{};
        \draw[very thick,blue] (1-\aII,.1) node[vide]{} -- node[above]{$W'_2$} (1-\aIII,.1) node[plein]{};
        \draw[very thick,blue] (1-\ad ,.2) node[vide]{} -- node[above]{$W'_d$} (1      ,.2) node[plein]{};
    \end{scope}
\end{tikzpicture}
\end{center}
    \caption{Define $\alpha_{\symb{0}} = 1$ and $\alpha_{d+\symb{1}}=0$ and let $\tau$ be the permutation 
    of $\{\symb{1},\dots,d\}\cup \{\symb{0},d+1\}$ which fixes $\{\symb{0},d+1\}$ and
    such that $0<\alpha_{\tau(d)}<\dots<\alpha_{\tau(\symb{1})}<1$. 
    The intervals $W_i=[1-\alpha_{\tau(\symb{i})}, 1-\alpha_{\tau(\symb{i+1})})$
    form a partition of the circle $\RR/\ZZ$
    and similarly for
    the intervals $W'_i=(1-\alpha_{\tau(\symb{i})}, 1-\alpha_{\tau(\symb{i+1})}]$.
    }
    \label{fig:d-dim-coding-unit-interval}
\end{figure}
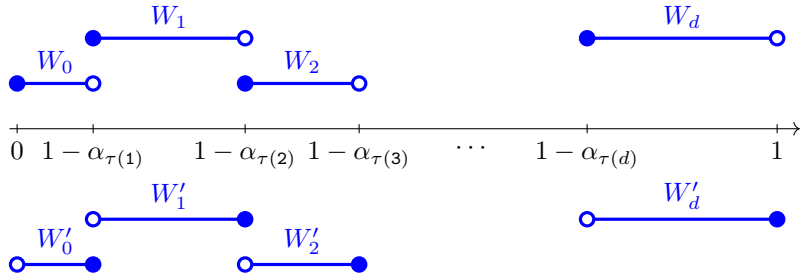

Let $S\subset \ZZd$ be finite and $p\colon S\to\alfad$ be a pattern with support $S$.
Let
    \begin{equation}\label{eq:Ip-I'p}
        I_p =  \bigcap_{\bn\in S} \left(W_{p(n)} - \balpha\cdot\bn\right)
        \qquad
        \text{ and }
        \qquad
        I'_p = \bigcap_{\bn\in S} \left(W'_{p(n)} - \balpha\cdot\bn\right).
    \end{equation}
    
And let $\mathcal{P}^S = \{ I_p\}_{p \in \alfad^S}$ and $(\mathcal{P}')^S = \{ I'_p\}_{p \in \alfad^S}$ be the partitions of $\RR/\ZZ$ determined by the support $S$. Notice that the sets $I_p$ and $I'_p$ have the same interior (which is nonempty if and only if these sets are nonempty) and thus differ only on their boundary points.

The pattern $p$ appears in $c_{\balpha}$
if and only if $\operatorname{Int}(I_p)\neq\varnothing$. 
Similarly, the pattern $p$ appears in $c'_{\balpha}$
if and only if $\operatorname{Int}(I'_p)\neq\varnothing$. 
As $\operatorname{Int}(I_p) = \operatorname{Int}(I'_{p})$, we obtain that $p$ appears in $c_{\balpha}$
if and only if $p$ appears in $c'_{\balpha}$.
Therefore, the configurations  $c_{\balpha}$ and $c'_{\balpha}$ share the same
language, that is: $\Lcal_S(c_{\balpha}) = \Lcal_S(c'_{\balpha})$ for every finite $S \subset \ZZd$.
Also, $\#\Lcal_S(c_{\balpha})$ is
equal to the number of non-empty sets $I_p$ for $p \in \alfad^S$. 

\begin{lemma}\label{lem:partition_into_intervals}
	For every nonempty connected finite set $S\subset \ZZd$ and pattern $p \in \alfad^{S}$, the sets $I_{p},I'_{p}$ are either empty or intervals in $\RR/\ZZ$. 
\end{lemma}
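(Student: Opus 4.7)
The plan is to prove the lemma by induction on $|S|$. In the base case $|S|=1$, both $I_p = W_{p(\bn)}-\balpha\cdot\bn$ and $I'_p = W'_{p(\bn)}-\balpha\cdot\bn$ are single arcs of $\RR/\ZZ$ by definition.

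For the inductive step with $|S|\geq 2$, pick $\bn^*\in S$ such that $S':=S\setminus\{\bn^*\}$ is still connected; such a vertex exists as a leaf of any spanning tree of the induced subgraph on $S$. Then $\bn^*$ has a neighbor $\bn\in S'$ with $\bn^*-\bn=\epsilon\be_k$ for some sign $\epsilon\in\{\pm 1\}$ and some $k\in\{1,\dots,d\}$. Writing $p':=p|_{S'}$, the induction hypothesis gives that $I_{p'}$ is either empty (in which case $I_p=\varnothing$) or a single arc contained in $W_{p(\bn)}-\balpha\cdot\bn$. Using this containment, one can rewrite
\[
I_p = I_{p'}\cap\bigl[(W_{p(\bn)}-\balpha\cdot\bn)\cap(W_{p(\bn^*)}-\balpha\cdot\bn^*)\bigr],
\]
and the bracketed set is a translate of $W_{p(\bn)}\cap(W_{p(\bn^*)}-\epsilon\alpha_k)$.

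The heart of the argument is a sub-claim stating that, for all $i,j\in\alfad$, all $k\in\{1,\dots,d\}$ and every sign $\epsilon$, the set $W_i\cap(W_j-\epsilon\alpha_k)$ is empty or a single arc of $\RR/\ZZ$. To prove it, consider the common refinement of the partitions $\mathcal{P}=\{W_0,\dots,W_d\}$ and $\mathcal{P}-\epsilon\alpha_k$. A direct check using the total irrationality of $\balpha$ shows that the $2(d+1)$ endpoints of the cells of the two partitions are all distinct except for a single coincidence; any further coincidence would yield a forbidden nontrivial $\QQ$-linear relation among $1,\alpha_1,\dots,\alpha_d$. Hence the refinement consists of exactly $2d+1$ arcs. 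Traversing the circle arc by arc, the pair-label $(i,j)\in\alfad^2$ evolves by incrementing $i$ by $1\bmod(d+1)$ at every $\mathcal{P}$-boundary, incrementing $j$ by $1$ at every $(\mathcal{P}-\epsilon\alpha_k)$-boundary, and incrementing both simultaneously at the coincidence. The cycle therefore contains $d$ single $i$-moves, $d$ single $j$-moves and one double move. If two arcs shared the same label, the moves along the sub-cycle connecting them would have coordinate sums congruent to $(0,0)$ modulo $d+1$. Writing these sub-counts as $(a,b,c)$ with $0\leq a,b\leq d$ and $c\in\{0,1\}$, the simultaneous congruences $a+c\equiv 0$ and $b+c\equiv 0\pmod{d+1}$ together with the size bounds force $(a,b,c)=(d,d,1)$, so the sub-cycle is the entire cycle. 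Thus all $2d+1$ labels are distinct, and any nonempty $W_i\cap(W_j-\epsilon\alpha_k)$ is a single arc.

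Returning to the induction, $I_p$ is now expressed as the intersection of two arcs both contained in the proper arc $W_{p(\bn)}-\balpha\cdot\bn$. Parametrizing this ambient arc as an interval of $\RR$ turns each of the two arcs into a sub-interval of $\RR$, whose intersection is again an interval (possibly empty); hence $I_p$ is empty or an arc of $\RR/\ZZ$. The analogous argument for $I'_p$ is obtained by replacing $W_i$ with $W'_i$ throughout, since the two families share the same endpoints and therefore produce identical boundary coincidences. The main obstacle is the sub-claim, which rests on the combinatorial fact that the common refinement of $\mathcal{P}$ and its shift by $\pm\alpha_k$ contains exactly $2d+1$ arcs (one fewer than the naive $2(d+1)$) and that the resulting cyclic label sequence in $\alfad^2$ is self-avoiding.
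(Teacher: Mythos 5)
Your proof is correct, and while it shares the paper's outer skeleton (induction on $\#S$, removing a leaf $\bn^*$ of a spanning tree so that $S'=S\setminus\{\bn^*\}$ stays connected, and exploiting the containment $I_{p'}\subseteq W_{p(\bn)}-\balpha\cdot\bn$ for the neighbour $\bn$ of the leaf), the way you resolve the core difficulty is genuinely different. The paper handles the possible wrap-around of the circle by a length argument: the intersection of two half-open arcs can only disconnect if their lengths sum to more than $1$, which forces $p(\bn)=p(\bn^*)$, and then a two-case comparison of $\alpha_k$ with the endpoints of $W_{p(\bn)}$ lifts both arcs into a common fundamental interval $[0,1)$ or $[1,2)$ of $\RR$. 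You instead intersect $I_{p'}$ with the smaller set $(W_{p(\bn)}-\balpha\cdot\bn)\cap(W_{p(\bn^*)}-\balpha\cdot\bn^*)$, prove the standalone sub-claim that $W_i\cap(W_j-\epsilon\alpha_k)$ is always empty or a single arc (via the count of boundary coincidences in the common refinement of $\mathcal{P}$ and $\mathcal{P}-\epsilon\alpha_k$ and the self-avoiding label cycle), and then observe that everything lives inside the proper arc $W_{p(\bn)}-\balpha\cdot\bn$, where intersections behave as in $\RR$. Your coincidence count and the congruence argument are correct (total irrationality gives exactly one shared endpoint, hence $2d+1$ cells), though you should say explicitly that the sub-path joining two \emph{distinct} cells crosses at least one boundary, so $(a,b,c)=(0,0,0)$ is excluded alongside $(d,d,1)$; this is implicit but worth a sentence. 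What each approach buys: the paper's case analysis is shorter and needs only the ordering of the $\alpha_i$, while your sub-claim is more conceptual, avoids the length bookkeeping, and in passing recovers the fact that supports of size two meeting $F$ carry exactly $2d+1=\#(F-S)$ patterns, which ties in nicely with the complexity formula of the main theorem.
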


\begin{proof}
	We only prove this for $I_{p}$, the argument for $I'_{p}$ follows from the considerations stated above. Let us notice that the intersection of two left-closed, right-open intervals on the circle is either empty, a left-closed and right-open interval, or a disconnected union of them. This third case can only occur when the sum of the lengths of both intervals exceeds $1$.
	
	Let us now prove the lemma by induction. If $S =\{n\}$ is a singleton, then the result is direct: if $p(n)=\symb{i}$, we have that $I_{p} = I_{\symb{i}}-\alpha \cdot n$, which is clearly a non-empty interval on the circle.
	
	Now let $S$ be a nonempty connected finite set, $p \in \alfad^{S}$ and suppose the result holds for every strict nonempty connected subset of $S$. As $S$ is finite, we can find $n \in S$ such that $S' = S \setminus \{n\}$ is also connected by removing a leaf in some spanning tree of $S$. Let $p'$ be the restriction of $p$ to $S'$, then we have \[ I_p = I_{p'} \cap (I_{p(n)}-\alpha \cdot n).  \]
	By the inductive hypothesis $I_{p'}$ is an interval. We also have that $(I_{p(n)}-\alpha \cdot n)$ is an interval. Therefore the only case where $I_p$ might not be an interval is when the sum of the lengths of $I_{p'}$ and $I_{p(n)}$ is strictly larger than $1$.
	
	As $S$ is connected, there is $1 \leq j \leq d$ such that $n - e_j \in S'$ or $n+e_j \in S'$. Let us proceed in the case where $n - e_j \in S'$, the other case is analogous. We have that $I_{p'} \subset I_{p(n-e_j)} - \alpha \cdot (n-e_j)$. Notice that if $p(n-e_j)\neq p(n)$, then the sum of the lengths of $I_{p'}$ and $I_{p(n)}$ is at most $1$, hence the only issue can arise when $p(n-e_j)= p(n)$ (and $I_{p(n)}$ has length larger than $\frac{1}{2}$).

	Suppose it is the case and let $\symb{i} = p(n-e_j)= p(n)$. Let $\pi$ be the permutation of $\{\symb{1},\dots,d\} \cup \{\symb{0},d+1\}$ which fixes $\symb{0}$ and $d+1$ and such that \[0= \alpha_{\pi(d+\symb{1})}< \alpha_{\pi(d)} < \dots < \alpha_{\pi(\symb{1})} < \alpha_{\pi(\symb{0})}=1.\] With this $I_{\symb{i}} = [1-\alpha_{\pi(\symb{i})}, 1-\alpha_{\pi(\symb{i+1})})$, and hence we have \[ I_{p'} \subset I_{\symb{i}} - \alpha \cdot (n-e_j) = [1-\alpha_{\pi(\symb{i})}+\alpha_{\symb{j}}, 1-\alpha_{\pi(\symb{i+1})}+\alpha_{\symb{j}}) - \alpha \cdot n\]
	\[ I_{p(n)} - \alpha \cdot n = [1-\alpha_{\pi(\symb{i})}, 1-\alpha_{\pi(\symb{i+1})}) - \alpha \cdot n.   \]

	There are two cases to consider:
	 \begin{enumerate}
	 	\item If $\alpha_j \leq \alpha_{\pi(\symb{i+1})}$, we have that $I_{p'} + \alpha\cdot n \subset  (I_{\symb{i}}+\alpha_j) \subset [0,1)$. It follows that $I_p + \alpha \cdot n = (I_{p'} + \alpha\cdot n) \cap I_{p(n)}$ is either empty or an interval in $\RR/\ZZ$ and therefore so is $I_{p}$.
	 	\item If $\alpha_j \geq \alpha_{\pi(\symb{i})}$, we have $I_{p'} + \alpha\cdot n \subset (I_{p}+\alpha_j) \subset [1,2)$. It follows that $I_p + \alpha \cdot n = (I_{p'} + \alpha\cdot n) \cap I_{p(n)}$ is either empty or an interval in $\RR/\ZZ$ and therefore so is $I_{p}$.
	 \end{enumerate}
 We conclude that in both of the problematic cases, $I_{p}$ is either empty or an interval in $\RR/\ZZ$.\end{proof}

\begin{lemma}\label{lem:ddim-1-occ-if-interval}
	Let $\balpha=(\alpha_1,\dots,\alpha_d)\in[0,1)^d$ 
    be totally irrational and $S$ be a nonempty finite connected subset of
    $\ZZd$. For every $p \in \Lcal_S(c_{\balpha})=\Lcal_S(c'_{\balpha})$, the sets
    $\occ_p(c_{\balpha})\cap (F-S)$ and
    $\occ_p(c'_{\balpha})\cap (F-S)$ are singletons.
\end{lemma}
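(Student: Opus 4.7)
The plan is to exploit the equivalence $n \in \occ_p(c_\alpha)$ if and only if $n^\star \in I_p$ (and $n \in \occ_p(c'_\alpha)$ if and only if $n^\star \in I'_p$), where $n^\star = \alpha \cdot n \bmod 1$ is the star map, and then show that the image of $F-S$ under the star map consists precisely of the endpoints of the partition $\{I_p : p \in \alfad^S\}$ of $\RR/\ZZ$, in such a way that each nonempty $I_p$ contains exactly one such $n^\star$.

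First I would enumerate the endpoints of this partition. Each window $W_i$ has left endpoint $1 - \alpha_{\tau(i)} \equiv -\alpha_{\tau(i)} \bmod 1$, so the translated family $\{W_i - \alpha \cdot n\}_i$ for fixed $n \in S$ contributes the endpoints $-\alpha_k - \alpha \cdot n \bmod 1$ for $k \in \{0, 1, \ldots, d\}$, under the convention $\alpha_0 = 0$. Running over all $n \in S$, the full endpoint set rewrites as $\{-\alpha \cdot (n + e_k) \bmod 1 : n \in S,\, 0 \leq k \leq d\} = \{\alpha \cdot m \bmod 1 : m \in F - S\} = \{m^\star : m \in F-S\}$, using $F = \{\bzero, -\be_1, \ldots, -\be_d\}$ and $e_0 = 0$. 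Total irrationality of $\alpha$ makes the star map injective on $\ZZd$, so this set consists of exactly $\#(F-S)$ distinct points on $\RR/\ZZ$.

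These endpoints cut $\RR/\ZZ$ into $\#(F-S)$ arcs. Combined with Lemma \ref{lem:partition_into_intervals}, which guarantees that every nonempty $I_p$ is a genuine interval on the circle rather than a disconnected union of two arcs, this identifies each arc with exactly one nonempty $I_p$, whence $\#\Lcal_S(c_\alpha) = \#(F-S)$. In the left-closed right-open convention, each such arc $I_p = [a_p, b_p)$ contains precisely one endpoint --- its left endpoint $a_p$ --- and this equals $n^\star$ for a unique $n \in F-S$. This $n$ is then the unique element of $\occ_p(c_\alpha) \cap (F-S)$. The argument for $c'_\alpha$ is identical in structure, using the right-closed left-open intervals $I'_p = (a_p, b_p]$, each of which contains only its right endpoint.

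The only nontrivial point has already been absorbed into Lemma \ref{lem:partition_into_intervals}: once we know each $I_p$ is a single interval on the circle, the rest is a direct bijection and counting argument from the endpoint enumeration. I expect no further obstacle.
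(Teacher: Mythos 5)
Your proposal is correct and follows essentially the same route as the paper: occurrences of $p$ at $n$ correspond to $n\cdot\balpha\bmod 1$ lying in $I_p$ (resp.\ $I'_p$), the refined partition $\mathcal{P}^S$ consists of intervals (via \Cref{lem:partition_into_intervals}) whose endpoints are exactly $(F-S)\cdot\balpha$, and injectivity of $\bn\mapsto\bn\cdot\balpha+\ZZ$ plus the left-closed/right-closed convention gives the unique occurrence in $F-S$ for $c_\balpha$ and $c'_\balpha$ respectively. The one point you pass over quickly---that no point of $(F-S)\cdot\balpha$ can lie in the interior of an atom, so that atoms coincide with the arcs---is treated with the same brevity in the paper's own proof, so there is no real discrepancy.
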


\begin{proof}
	The partition $\mathcal{P}=\{I_{\symb{i}}\}_{\symb{0}\leq \symb{i} \leq d}$ of $\RR/\ZZ$ is a partition into $d+1$ intervals corresponding to the $d+1$ symbols in the alphabet. The boundary points of the intervals $I_{\symb{i}} \in \mathcal{P}$ are
	\[
	F\cdot\balpha = \{0, 1-\alpha_1, \dots, 1-\alpha_d\}.
	\]
	 Notice that $\mathcal{P}^S = \{I_p : p\in\Lcal_S(c_{\alpha})\} = \{I_p : p\in\alfad^S \text{ and } I_p\neq\varnothing\}$.  Using~\Cref{lem:partition_into_intervals}, we obtain that $\mathcal{P}^S$ is a partition of $\RR/\ZZ$ into nonempty (left-closed, right-open) intervals. It is therefore clear from the definition of the intervals $I_p$ that their unique boundary points are described by the set $F\cdot\balpha-S\cdot\balpha = (F-S)\cdot\balpha$. 
	
	For each $p$, there exists 
	a unique boundary point $\xi \in(F-S)\cdot\balpha$
	which belongs to $I_p$ (the left-end point of $I_p$).
	Since $\balpha$ is totally irrational, 
	the map $\bn\mapsto\bn\cdot\balpha+\ZZ$ is injective,
	thus
	there is a unique vector $\bn\in F-S$
	such that $\bn\cdot\balpha=\xi$. We have that $\sigma^{\bn}(c_{\balpha}) = s_{\alpha,\xi} \in [p]$ and so $\bn \in \occ_p(c_{\balpha})$.
	
	The argument for $c'_{\balpha}$ is identical, the only difference being that the unique boundary point is now the right-end point of $I'_p$.
\end{proof}

\begin{theorem}\label{thm:ddim-sturmian-is-indistinguishable}
    If $\balpha=(\alpha_1,\dots,\alpha_d)\in[0,1)^d$ 
is totally irrational,
then $(c_{\balpha},c'_{\balpha})$
is a non-trivial indistinguishable asymptotic pair which satisfies the flip condition.
\end{theorem}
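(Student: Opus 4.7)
The plan is to assemble the statement from the preparatory results that have just been established and then invoke \Cref{maintheorem:ddim-complexity-is-FminusS}. First, \Cref{lem:sturmian-is-asymptotic} shows that $(c_{\balpha},c'_{\balpha})$ is asymptotic with difference set $F=\{\bzero,-\be_1,\dots,-\be_d\}$, and \Cref{prop:sturmian_is_flip} shows that the flip condition is satisfied. In particular, since $F\neq\varnothing$, the pair is non-trivial. It only remains to verify indistinguishability.

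For this I would verify condition (i) of \Cref{maintheorem:ddim-complexity-is-FminusS} directly: for every nonempty finite connected $S\subset\ZZd$ and every $p\in\Lcal_S(c_{\balpha})\cup\Lcal_S(c'_{\balpha})$,
\[
\#\bigl(\occ_p(c_{\balpha})\setminus\occ_p(c'_{\balpha})\bigr)=1=\#\bigl(\occ_p(c'_{\balpha})\setminus\occ_p(c_{\balpha})\bigr).
\]
The key observation (already recorded in the paragraph preceding \Cref{lem:partition_into_intervals}) is that the open intervals $\operatorname{Int}(I_p)$ and $\operatorname{Int}(I'_p)$ from \eqref{eq:Ip-I'p} coincide, so the two configurations share the same language: $\Lcal_S(c_{\balpha})=\Lcal_S(c'_{\balpha})$. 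Hence both union members above are the same set, and for each pattern $p$ in this common language, \Cref{lem:ddim-1-occ-if-interval} provides a unique $u\in F-S$ with $\sigma^u(c_{\balpha})\in[p]$ and a unique $u'\in F-S$ with $\sigma^{u'}(c'_{\balpha})\in[p]$.

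Next I would argue that $u\neq u'$. If we had $u=u'$, then $c_{\balpha}$ and $c'_{\balpha}$ would coincide on $u+S$, but by definition of $F-S$ there exists $s\in S$ with $u+s\in F$, where the flip condition forces $c_{\balpha}$ and $c'_{\balpha}$ to differ, a contradiction. Recalling from the preliminaries that $\occ_p(c_{\balpha})\setminus\occ_p(c'_{\balpha})=\occ_p(c_{\balpha})\cap(F-S)\setminus\occ_p(c'_{\balpha})$, the uniqueness combined with $u\neq u'$ immediately gives $\occ_p(c_{\balpha})\setminus\occ_p(c'_{\balpha})=\{u\}$ and symmetrically $\occ_p(c'_{\balpha})\setminus\occ_p(c_{\balpha})=\{u'\}$, establishing (i). Applying \Cref{maintheorem:ddim-complexity-is-FminusS} (the implication (i)$\Rightarrow$(ii)) then yields indistinguishability.

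There is no real obstacle: the technical heavy lifting was done in \Cref{lem:partition_into_intervals} and \Cref{lem:ddim-1-occ-if-interval}, and the only subtlety is the clean separation between the unique representative in $x$ and the unique representative in $y$, which the flip condition settles in one line.
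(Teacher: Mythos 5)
Your proposal is correct and follows essentially the same route as the paper: both rest on \Cref{lem:sturmian-is-asymptotic}, \Cref{prop:sturmian_is_flip} and the key \Cref{lem:ddim-1-occ-if-interval}, and your detour through condition (i) of \Cref{maintheorem:ddim-complexity-is-FminusS} amounts to the paper's direct appeal to \Cref{prop:trivialite} (since the identity $\occ_p(x)\setminus\occ_p(y)=\occ_p(x)\cap(F-S)$ from the preliminaries already makes your $u\neq u'$ check automatic). No gaps.
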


\begin{proof}
By~\Cref{lem:sturmian-is-asymptotic}, we have that $(c_{\balpha},c'_{\balpha})$ is a non-trivial asymptotic pair whose differences set is $F=\{\bzero,-\be_1,\dots,-\be_d\}$. Furthermore, by~\Cref{prop:sturmian_is_flip}, it satisfies the flip condition. Let $S$ be a nonempty connected finite subset of $\ZZd$ and $p \in \alfad^S$. From Lemma~\ref{lem:ddim-1-occ-if-interval},
we obtain that the set of occurrences of $p$ intersects $F-S$ exactly once for both $c_{\balpha}$ and $c'_{\balpha}$, that is
\[ \#( \occ_p(c_{\balpha}) \cap (F-S) ) = 1 = \#( \occ_p(c'_{\balpha}) \cap (F-S) ).  \]
By~\Cref{prop:trivialite} it suffices to check the above condition for patterns
whose support 
is a nonempty finite connected subset of $\ZZd$.
We conclude that $(c_{\balpha},c'_{\balpha})$ is indistinguishable.
\end{proof}

\begin{remark}
	If we take a sequence of totally irrational vectors $(\alpha_n)_{n \in \NN}$ it follows that each associated pair $(c_{\alpha_n},c'_{\alpha_n})$ satisfies the flip condition. It follows that if both $(c_{\alpha_n})_{n \in \NN}$ and $(c'_{\alpha_n})_{n \in \NN}$ converge to $c$ and $c'$ in the prodiscrete topology, then $(c_{\alpha_n},c'_{\alpha_n})$ converges in the asymptotic relation to the \'etale limit $(c,c')$ which thus also satisfies the flip condition. By~\Cref{prop:limit-of-indist-is-indist} we get that $(c,c')$ is therefore an indistinguishable asymptotic pair. This can be used to provide examples of indistinguishable asymptotic pairs which satisfy the flip condition but that are not uniformly recurrent. See~\Cref{fig:etale}.
\end{remark}

\section{Uniformly recurrent indistinguishable asymptotic pairs are Sturmian}\label{sec:proof_of_thm_A}

The goal of this section is to prove \Cref{thm:multidim_sturmian_characterization}.
We already proved in
    \Cref{thm:ddim-sturmian-is-indistinguishable}
    that if $\balpha=(\alpha_1,\dots,\alpha_d)\in[0,1)^d$ 
is totally irrational,
then $(c_{\balpha},c'_{\balpha})$
is a non-trivial indistinguishable asymptotic pair which satisfies the flip condition.
Thus, it remains to show the existence of a totally irrational vector
$\balpha=(\alpha_1,\dots,\alpha_d)\in[0,1)^d$ describing an
indistinguishable asymptotic pair which satisfies the flip condition whenever the configurations are uniformly recurrent. The proof relies on an induction argument on the dimension of $\ZZd$ and on the existence of a factor map
between the symbolic dynamical system generated by a multidimensional Sturmian configuration
and rotations on the circle $\RR/\ZZ$.

\subsection{Symbolic representations}

Consider $\dynsys{\RR/\ZZ}{\ZZd}{R}$ a continuous $\ZZd$-action on $\RR/\ZZ$ where
$R\colon \ZZd\times\RR/\ZZ\to\RR/\ZZ$.
For some finite set $\A$,
a \define{topological partition} of $\RR/\ZZ$ (in the sense of Definition 6.5.3 of~\cite{MR1369092}) is a
collection $\{P_a\}_{a\in\A}$ of disjoint open sets $P_a\subset\RR/\ZZ$
such that $\RR/\ZZ = \bigcup_{a\in\A} \overline{P_a}$. If $S\subset\ZZd$ is a finite set,
we say that a pattern $w\in\A^S$
is \define{allowed} for $\Pcal,R$ if
\begin{equation}\label{eq:allowed-if-nonempty}
	\bigcap_{\bk\in S} R^{-\bk}(P_{w_\bk}) \neq \varnothing.
\end{equation}
The intersection in \Cref{eq:allowed-if-nonempty}
is related to the definition of $I'_w$ and $I_w$ done in \Cref{eq:Ip-I'p}
except here the sets $P_{w_\bk}$ are open.

Let us recall that a $\ZZd$-\define{subshift} is a set of the form $X \subset \A^{\ZZd}$ which is closed in the prodiscrete topology and invariant under the shift action; and its language is the union of $\Lcal(x)$ for every $x \in X$. Let $\Lcal_{\Pcal,R}$ be the collection of all allowed patterns for $\Pcal,R$.
The set $\Lcal_{\Pcal,R}$ is the language of a subshift 
$\Xcal_{\Pcal,R}\subseteq\A^{\ZZd}$ defined as follows,
see \cite[Prop.~9.2.4]{MR3525488},
\[
\Xcal_{\Pcal,R} = 
\{x\in\A^{\ZZd} \mid \sigma^\bn(x)|_S \in\Lcal_{\Pcal,R}
\text{ for every } \bn\in\ZZd \text{ and finite subset } S\subset\ZZd\}.
\]
We call $\Xcal_{\Pcal,R}$ the \define{symbolic extension} of $\dynsys{\RR/\ZZ}{\ZZd}{R}$ determined by $\Pcal$.

For each $x\in\Xcal_{\Pcal,R}$ and $m\geq 0$ there is a corresponding nonempty open set
\[
D_m(x) = \bigcap_{\Vert\bk\Vert_{\infty} \leq m} R^{-\bk}(P_{x_\bk}) \subset \RR/\ZZ.
\]
The sequence of compact closures $(\overline{D}_m(x))_{m \in \NN}$ of these sets is nested and thus it follows that their intersection is nonempty. Notice that there is no reason why $\operatorname{diam}(\overline{D}_m(x))$ should converge to zero, and thus the intersection could contain more than one point. In order for $\Xcal_{\Pcal,R}$ to capture the dynamics of $\dynsys{\RR/\ZZ}{\ZZd}{R}$, this intersection should contain only one point.
This leads to the following definition.

\begin{definition}\label{def:toppartition}
	A topological partition $\Pcal$ of $\RR/\ZZ$ gives a \define{symbolic representation} $\Xcal_{\Pcal,R}$ of $\dynsys{\RR/\ZZ}{\ZZd}{R}$
	if for every $x\in\Xcal_{\Pcal,R}$ the intersection
	$\bigcap_{m=0}^{\infty}\overline{D}_m(x)$ consists of exactly one
	point $\rho \in \RR/\ZZ$.
	We call $x$ a \define{symbolic representation of $\rho$}. 
\end{definition}

If $\Pcal$ gives a symbolic representation of the 
dynamical system 
$\dynsys{\RR/\ZZ}{\ZZd}{R}$,
then there is a well-defined map
$f\colon \Xcal_{\Pcal,R} \to \RR/\ZZ$ which maps a configuration
$x\in\Xcal_{\Pcal,R}\subset\A^{\ZZd}$ to the unique point
$f(x)\in\RR/\ZZ$ in the intersection
$\cap_{n=0}^{\infty}\overline{D}_n(w)$. It is not hard to prove that $f$ is in fact a factor map, that is, such that $f$ is continuous, surjective and $\ZZd$-equivariant ($f(\sigma^k(x)) = R^k(f(x))$ for every $k \in \ZZd$). A proof of this fact for the case $d=1$ can be found in \cite[Prop.~6.5.8]{MR1369092}.
A proof for $\ZZ^2$-actions can be found in \cite[Prop.~5.1]{labbe_markov_2021} and a proof for general group actions follows the same arguments.

Now let us turn back to circle rotations. Let $\balpha\in[0,1)^d$ and consider
the dynamical system $\dynsys{\RR/\ZZ}{\ZZd}{R}$ where
$R\colon \ZZd\times\RR/\ZZ\to\RR/\ZZ$ 
is the continuous $\ZZd$-action on $\RR/\ZZ$
defined by
\[
R^\bn(x)\isdef R(\bn,x)=x + \bn\cdot\balpha
\]
for every $\bn\in\ZZd$.

Recall that an action is minimal if every orbit is dense. The following lemma is well known, we write it down for future reference and we give a quick proof sketch.

\begin{lemma}\label{lem:ddim-sturmian-factor-map}
	Let $\balpha\in[0,1)^d$ be totally irrational
	and consider the topological partition of the circle 
    \[
        \Pcal=\{\operatorname{Int}(W_{\symb{i}})\}_{\symb{i}\in\alfad}.
    \]
	\begin{enumerate}
		\item The partition $\Pcal$ gives a symbolic
		representation of the dynamical system $\dynsys{\RR/\ZZ}{\ZZd}{R}$.
		\item The symbolic dynamical system $\Xcal_{\Pcal,R}$ is minimal and
		satisfies $\Xcal_{\Pcal,R}= \overline{\{\sigma^\bk
			c_\balpha\colon\bk\in\ZZd\}}$.
		\item $f\colon \Xcal_{\Pcal,R}\to\RR/\ZZ$ where
		$f(x) \in \bigcap_{n=0}^{\infty}\overline{D}_n(w)$ is a factor map.
	\end{enumerate}
\end{lemma}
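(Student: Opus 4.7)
\textbf{Plan for Lemma \ref{lem:ddim-sturmian-factor-map}.}
My plan is to prove the three parts in order, with each step feeding into the next.

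For part (1), I first observe that each $D_m(x)$ is a nonempty open subset of $\RR/\ZZ$ whose boundary lies in the finite set
\[
B_m \isdef \{(\bj - \bk)\cdot\balpha \bmod 1 : \bk \in \llbracket -m, m \rrbracket^d,\ \bj \in F\},
\]
where $F = \{\bzero, -\be_1, \dots, -\be_d\}$ encodes the endpoints $\{0,\, 1-\alpha_1,\, \dots,\, 1-\alpha_d\}$ of the intervals $W_i$. The critical step is uniqueness. Suppose $\rho, \rho' \in \bigcap_m \overline{D}_m(x)$ with $\delta \isdef \rho' - \rho \neq 0$; then $R^\bk(\rho)$ and $R^\bk(\rho) + \delta$ would both lie in $\overline{W_{x_\bk}}$ for every $\bk \in \ZZd$. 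Total irrationality of $\balpha$ implies that $R$ is minimal, so the orbit $\{R^\bk(\rho) : \bk \in \ZZd\}$ is dense in $\RR/\ZZ$. Choosing $\bk$ so that $R^\bk(\rho)$ lies within $\delta/2$ of the right endpoint of $\overline{W_{x_\bk}}$ forces $R^\bk(\rho)+\delta$ outside its closure, a contradiction.

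For part (2), I first verify $c_\balpha \in \Xcal_{\Pcal,R}$: for any finite $S \subset \ZZd$ and any $\bk \in S$, the point $\bk \cdot \balpha \bmod 1$ lies in $W_{c_\balpha(\bk)}$, and total irrationality ensures it meets the left-closed boundary only for $\bk \in F$; in every case $\operatorname{Int}(W_{c_\balpha(\bk)}) - \bk \cdot \balpha$ accumulates at $0$ from the right, so a common sufficiently small $\epsilon > 0$ lies in $\bigcap_{\bk \in S}(\operatorname{Int}(W_{c_\balpha(\bk)}) - \bk \cdot \balpha)$, showing $c_\balpha|_S$ is allowed. Hence $\overline{\mathrm{orb}(c_\balpha)} \subseteq \Xcal_{\Pcal,R}$, and this orbit closure is minimal by uniform recurrence of $c_\balpha$ (\Cref{lem:sturmian_is_unirec}). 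For the reverse inclusion, given $x \in \Xcal_{\Pcal,R}$ with $\rho \isdef f(x)$, a case analysis on whether the orbit points $R^\bk(\rho)$ hit the partition boundary shows $x \in \{s_{\balpha,\rho},\, s'_{\balpha,\rho}\}$; density of $\{\bk \cdot \balpha : \bk \in \ZZd\}$ then realizes both as limits of shifts $\sigma^{\bk_n}c_\balpha$ as $\bk_n \cdot \balpha$ approaches $\rho$ from the appropriate side.

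Part (3) is then bookkeeping: $f$ is well-defined by (1); continuity follows because $x_n \to x$ in the prodiscrete topology yields $D_m(x_n) = D_m(x)$ for large $n$ combined with $\operatorname{diam}(\overline{D}_m(x)) \to 0$; the $\ZZd$-equivariance identity $f(\sigma^\bk x) = f(x) + \bk \cdot \balpha = R^\bk(f(x))$ is a routine index shift on the defining intersections; and surjectivity follows from part (2). The main obstacle is the uniqueness argument in part (1), where care is needed to combine minimality of the rotation with the geometry of the partition to exclude multiple accumulation points of the nested intersection.
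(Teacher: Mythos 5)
Your route is sound and considerably more self-contained than the paper's, which disposes of the lemma by invoking minimality of the rotation together with an external criterion (each $\operatorname{Int}(W_{\symb{i}})$ is invariant only under the trivial rotation, citing the Markov-partition paper) and the general discussion below \Cref{def:toppartition} for the factor map. Parts (2) and (3) of your plan are workable as sketched: the verification that $c_\balpha\in\Xcal_{\Pcal,R}$, continuity via $\operatorname{diam}(\overline{D}_m(x))\to 0$, equivariance and surjectivity are all standard. The one place in part (2) that deserves to be spelled out is the fiber analysis showing $x\in\{s_{\balpha,\rho},s'_{\balpha,\rho}\}$: the right argument is that at a boundary position $\bk$ (where $\rho+\bk\cdot\balpha$ is an endpoint) a ``lower'' choice of $x_\bk$ forces the witness point of any allowed pattern containing position $\bk$ to lie in a right-neighbourhood of $\rho$, while an ``upper'' choice forces it into a left-neighbourhood, so mixed choices produce non-allowed patterns; only the two coherent configurations survive.

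The genuine gap is in the uniqueness step of part (1). From $R^\bk(\rho)$ lying within $\delta/2$ of the right endpoint of $\overline{W_{x_\bk}}$ you cannot conclude that $R^\bk(\rho)+\delta$ leaves $\overline{W_{x_\bk}}$: on the circle the translate can wrap around and re-enter the same closed atom from its left end, which happens whenever that atom has length greater than $1-\delta$. For instance (hypothetically, inside your proof by contradiction) with $d=1$, $\alpha_1=1/10$, $\delta=1/2$, the atom $W_{\symb{0}}=[0,9/10)$ is long; a point of $(0.65,0.9)$ is within $\delta/2$ of the right endpoint $0.9$, yet adding $\delta$ lands it in $(0.15,0.4)\subset\overline{W_{\symb{0}}}$, and no contradiction is reached. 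The conclusion you want is true, but the argument must be completed, e.g.\ by one of: normalize $\delta\leq 1/2$ (exchanging $\rho$ and $\rho'$) and aim the dense orbit at the right endpoint of a \emph{short} atom, one of length strictly less than $1-\delta$, which exists since the $d+1\geq 2$ lengths are positive, sum to $1$ and are irrational; or observe that the closed set $\bigcup_{\symb{i}}\bigl(\overline{W_{\symb{i}}}\cap(\overline{W_{\symb{i}}}-\delta)\bigr)$, which would have to contain the whole orbit of $\rho$, has Lebesgue measure strictly less than $1$ and hence misses a nonempty open set that the dense orbit must visit; or prove directly the criterion the paper cites, namely that no $\operatorname{Int}(W_{\symb{i}})$, being a proper arc, is invariant under a nontrivial rotation. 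Any of these closes the gap; as written the contradiction is not forced.
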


\begin{proof}
	As $\balpha$ is totally irrational, then every component $\alpha_i$ is irrational and hence it follows that the action $\dynsys{\RR/\ZZ}{\ZZd}{R}$ is minimal. From here it follows by standard arguments that $\Pcal$ gives a symbolic representation $\Xcal_{\Pcal,R}$ of the $\dynsys{\RR/\ZZ}{\ZZd}{R}$, as every $\operatorname{Int}(W_{\symb{i}})$ is invariant only under the trivial rotation (e.g., see~\cite[Lemma
	3.4]{labbe_markov_2021}). The second statement follows easily from the definitions of $\Xcal_{\Pcal,R}$ and $c_{\alpha}$, and the third statement follows from the discussion below~\Cref{def:toppartition}.\end{proof}

\subsection{Ordered flip condition}

In order to simplify the proofs in this section, we consider a particular case of the flip condition in which the values of $x|_F$ and $y|_F$ are fixed.

\begin{definition}
    Let $d\geq1$ be an integer.
	An indistinguishable asymptotic pair $x,y\in\alfad^{\ZZd}$ 
    satisfies the \define{ordered flip condition} if:
	\begin{enumerate}
        \item the difference set of $x$ and $y$ is $F=\{\bzero,
            -\be_1,\dots,-\be_d\}$,
		\item $x_0 = \symb{0}$ and $x_{-\be_i} = \symb{i}$ for all $\symb{1} \leq \symb{i} \leq d$,
        \item $y_0 = d$ and $y_{-\be_{i}} = \symb{i}-\symb{1}$ for all $\symb{1} \leq \symb{i} \leq d$.
	\end{enumerate}
\end{definition}

Observe that if
two configurations satisfy the 
ordered flip condition, they also satisfy the flip condition.
Moreover, notice that the ordered flip condition corresponds to the permutation of $F$ given by \[ \bzero \mapsto -\be_{\symb{1}} \mapsto -\be_{\symb{2}} \mapsto \dots \mapsto -\be_{d} \mapsto \bzero. \]

\begin{lemma}\label{lem:flip-reduces-to-ordered-flip}
    Let $d\geq1$ be an integer.
    Let $x,y \in\alfad^{\ZZd}$ form an indistinguishable asymptotic pair
    satisfying the flip condition. 
    Then there exists a matrix $A\in\operatorname{GL}_d(\ZZ)$ which permutes the canonical base $\{\be_1,\dots,\be_d\}$ such that $(x\circ A,y\circ A)$ is an indistinguishable asymptotic pair satisfying the \define{ordered} flip condition. 
\end{lemma}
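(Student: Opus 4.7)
The plan is to extract from the flip condition a permutation of the basis indices and to use it to define the required matrix $A$. Concretely, by the flip condition (and the normalization $x_{\bzero}=\symb{0}$), the restriction $x|_F$ is a bijection $F\to\alfad$ that sends $\bzero$ to $\symb{0}$ and the set $\{-\be_1,\dots,-\be_d\}$ bijectively onto $\{\symb{1},\dots,d\}$. Hence there exists a unique permutation $\pi$ of $\{1,\dots,d\}$ characterized by $x_{-\be_{\pi(i)}}=i$ for every $i\in\{1,\dots,d\}$.

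Next, I would define $A\in\operatorname{GL}_d(\ZZ)$ to be the permutation matrix acting on the canonical basis by $A(\be_i)=\be_{\pi(i)}$. Since $A$ permutes $\{\be_1,\dots,\be_d\}$, it also permutes $\{-\be_1,\dots,-\be_d\}$ and fixes $\bzero$, so $A^{-1}(F)=F$. By \Cref{prop:shifted_SI}, the pair $(x\circ A,y\circ A)$ is then an indistinguishable asymptotic pair, and a direct computation shows its difference set equals $A^{-1}(F)=F$.

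It remains to verify the three items of the ordered flip condition for $(x\circ A, y\circ A)$. For the second item, we compute $(x\circ A)(\bzero)=x(\bzero)=\symb{0}$, and for each $i\in\{1,\dots,d\}$,
\[
(x\circ A)(-\be_i)=x(-A(\be_i))=x(-\be_{\pi(i)})=i,
\]
by the definition of $\pi$. The third item follows at once from the flip condition normalization $y_{\bn}=x_{\bn}-\symb{1} \bmod(d+\symb{1})$ for $\bn\in F$, which after composition with $A$ yields $(y\circ A)(\bzero)=d$ and $(y\circ A)(-\be_i)=i-\symb{1}$. Since all three conditions hold, $(x\circ A,y\circ A)$ satisfies the ordered flip condition.

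The main step is essentially bookkeeping: there is no serious obstacle, only the need to pick the correct permutation matrix. The slightly delicate point is getting the direction of composition right, namely that to force $(x\circ A)(-\be_i)=i$ one must take $A(\be_i)=\be_{\pi(i)}$ where $\pi$ is determined by $x_{-\be_{\pi(i)}}=i$; once this is set up correctly, invariance of indistinguishability under $\operatorname{GL}_d(\ZZ)$ (\Cref{prop:shifted_SI}) delivers the result immediately.
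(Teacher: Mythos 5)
Your proof is correct and follows essentially the same route as the paper: the matrix you define via $A(\be_i)=\be_{\pi(i)}$ with $x_{-\be_{\pi(i)}}=\symb{i}$ is exactly the paper's permutation matrix sending $-\be_i$ to $x|_F^{-1}(\symb{i})$, and both arguments then invoke \Cref{prop:shifted_SI} and verify the ordered flip values on $F$ by direct computation. No gaps.
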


\begin{proof}
    As $x,y$ satisfy the flip condition, then the restrictions of $x$ and $y$ to $F$ are bijections $F\to\alfad$, $x_0 = 0$ and 
    $y_\bn = x_\bn - \symb{1} \bmod (d+\symb{1})$ for every $\bn\in F$.
    Let $A \in \operatorname{GL}_d(\ZZ)$ be the permutation matrix 
    which sends $-\be_i$ to $x|_F^{-1}(\symb{i})$
    for all $i$ with $1\leq i\leq d$.
    Thus it satisfies $x(-A\be_i)=\symb{i}$.
    
    By~\Cref{prop:shifted_SI}, $x\circ A,y\circ A$ is an indistinguishable
    asymptotic pair. It is clear by definition of $A$ that their difference set
    is $F$, that $(x \circ A)_0 = x_0 = \symb{0}$ and 
    $(x \circ A)_{-\be_i} = x(-A\be_i) = \symb{i}$ 
    for all $\symb{1} \leq \symb{i} \leq d$.
    
    Finally, $(y\circ A)_0 = y_0 = x_0-\symb{1} = 0-\symb{1} = d\bmod
    (d+\symb{1})$, 
    and for $\symb{1} \leq \symb{i} \leq d$, we have 
    $(y\circ A)_{-\be_i}= y(-A\be_i) = x(-A\be_i)-\symb{1} = \symb{i-1}\bmod (d+\symb{1})$.
    Thus $(x\circ A, y\circ A)$ satisfy the ordered flip condition.
\end{proof}

It follows that if we show that every pair $x,y\in\alfad^{\ZZd}$ which satisfies the {ordered flip condition} is equal to $c_{\balpha},c'_{\balpha}$ for some totally irrational $\balpha$, we immediately obtain that every non-trivial indistinguishable asymptotic pair which satisfies the flip condition also coincides with $c_{\balpha'},c'_{\balpha'}$ for some totally irrational slope $\alpha'$ where $\alpha'$ is a permutation of $\alpha$.

\begin{proposition}\label{prop:sturmian_is_ordered_flip}
    Let $d\geq1$ be an integer.
	Let $\alpha \in [0,1)^d$ be totally irrational
such that $1>\alpha_{\symb{1}}> \alpha_{\symb{2}} > \dots >
    \alpha_{d}>0$.
    The characteristic $d$-dimensional Sturmian configurations $c_{\balpha}$
    and $c'_{\balpha}$ satisfy the ordered flip condition. 
\end{proposition}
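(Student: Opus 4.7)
The plan is to combine the results already established in Lemma~\ref{lem:sturmian-is-asymptotic} and Proposition~\ref{prop:sturmian_is_flip} with the explicit ordering hypothesis on the coordinates of $\alpha$. Since $\alpha$ is totally irrational, Lemma~\ref{lem:sturmian-is-asymptotic} immediately gives that $(c_\alpha, c'_\alpha)$ is an asymptotic pair with difference set exactly $F = \{\bzero, -\be_1, \dots, -\be_d\}$, which verifies the first condition of the ordered flip condition.

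Next, I would recall from Proposition~\ref{prop:sturmian_is_flip} that the formulas derived from Lemma~\ref{lem:sturmian-formual-from-cut-and-project} yield directly $(c_\alpha)_\bzero = \symb{0}$ and $(c'_\alpha)_\bzero = d$, handling the value at the origin for both the second and third conditions. Moreover, equations~\eqref{eq:c-alpha-ei} and~\eqref{eq:c'-alpha-ei} from that proof gave
\[
(c_\alpha)_{-\be_\symb{i}} = \#\{\symb{j} : \alpha_\symb{j} \geq \alpha_\symb{i}\}
\quad\text{and}\quad
(c'_\alpha)_{-\be_\symb{i}} = \#\{\symb{j} : \alpha_\symb{j} > \alpha_\symb{i}\}.
\]

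The hypothesis $1 > \alpha_{\symb{1}} > \alpha_{\symb{2}} > \dots > \alpha_{d} > 0$ allows me to evaluate these cardinalities explicitly. Under this ordering, $\alpha_\symb{j} \geq \alpha_\symb{i}$ holds precisely for $\symb{j} \in \{\symb{1}, \dots, \symb{i}\}$, giving $(c_\alpha)_{-\be_\symb{i}} = \symb{i}$. Similarly, $\alpha_\symb{j} > \alpha_\symb{i}$ holds precisely for $\symb{j} \in \{\symb{1}, \dots, \symb{i}-\symb{1}\}$, giving $(c'_\alpha)_{-\be_\symb{i}} = \symb{i}-\symb{1}$. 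These are exactly the values required by conditions~(2) and~(3) of the ordered flip condition. There is no real obstacle here; the proposition is essentially a bookkeeping consequence of the formulas already established once the coordinates of $\alpha$ are arranged in decreasing order.
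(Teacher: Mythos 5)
Your proposal is correct and follows essentially the same route as the paper: it invokes \Cref{prop:sturmian_is_flip} (together with \Cref{lem:sturmian-is-asymptotic} and the values $(c_\alpha)_\bzero=\symb{0}$, $(c'_\alpha)_\bzero=d$ established there) and then evaluates \Cref{eq:c-alpha-ei} and \Cref{eq:c'-alpha-ei} under the ordering $1>\alpha_{\symb{1}}>\dots>\alpha_d>0$ to get $(c_\alpha)_{-\be_\symb{i}}=\symb{i}$ and $(c'_\alpha)_{-\be_\symb{i}}=\symb{i}-\symb{1}$. This matches the paper's proof, which is likewise a direct bookkeeping verification.
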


\begin{proof}
    From \Cref{prop:sturmian_is_flip}, 
    $(c_{\balpha},c'_{\balpha})$ satisfy the flip condition. 
    Following \Cref{eq:c-alpha-ei} and \Cref{eq:c'-alpha-ei}, we get
    \begin{align*}
        (c_{\alpha})_{-\be_{\symb{i}}} 
        &= \#\{ \symb{j} : \alpha_{\symb{j}} \geq \alpha_{\symb{i}} \}
         = \symb{i}, \\
        (c'_{\alpha})_{-\be_{\symb{i}}} 
        &= \#\{ \symb{j} : \alpha_{\symb{j}} > \alpha_{\symb{i}} \}
         = \symb{i}-1.
    \end{align*}
    Thus $(c_{\balpha},c'_{\balpha})$ satisfy the ordered flip condition. 
\end{proof}
	
\subsection{Indistinguishable asymptotic pairs restricted to a $(d-1)$-dimensional submodule}

In what follows we show that indistinguishable asymptotic pairs which satisfy the ordered flip condition are Sturmian. Our strategy is to reduce the dimension of the underlying group by restricting the values of the configurations to the $(d-1)$-dimensional submodule orthogonal to $\be_1$, and then to apply a suitable projection which fuses two symbols into a single one. We show that the resulting configurations in $\ZZ^{d-1}$ also satisfy the ordered flip condition, and thus it gives us the means to prove our result inductively.

In order to develop this strategy, we introduce the following notation. Let $B=\{b_1,\dots,b_k\}\subset\ZZd$.
For each starting point $\bv\in\ZZd$, let 
\[
\begin{array}{rccl}
	\ell_{\bv,B}\colon&\ZZ^k & \to & \ZZd\\
	&n & \mapsto & \bv+ n_1b_1 +\dots+n_k b_k.
\end{array}
\]

If $x\in\Sigma^{\ZZd}$ is a configuration,
then
$x\circ\ell_{\bv,B}\in\Sigma^{\ZZ^k}$
is the $k$-dimensional configuration which occurs in $x$
starting at position $\bv\in\ZZd$
and following the directions $b_i\in B$.
Below, we use the shorter notation
$\be_1^\perp\isdef \{\be_2,\dots,\be_d\}$ to denote the canonical basis without the vector $\be_1$.

Let us consider the projection

\[
\begin{array}{rccl}
\pi:&\alfad & \to & \{\symb{0},\dots,d-1\}\\
&\symb{j} & \mapsto & 
\begin{cases}
\symb{0}  & \text{ if } \symb{j} = \symb{0},\\
\symb{j}-\symb{1}  & \text{ if } \symb{j}\neq\symb{0}.
\end{cases}
\end{array}
\]
which extends to configurations $x\in\alfad^{\ZZd}$ by letting
\begin{align*}
\pi(x)&=\left(\pi(x_{\bn})\right)_{\bn\in\ZZd}\in\{\symb{0},\dots,d-1\}^{\ZZd}.
\end{align*}

\begin{proposition}\label{prop:d-1dim-flip-condition}
    Let $d\geq2$ be an integer.
	Let $x,y \in\alfad^{\ZZd}$ be an indistinguishable asymptotic pair satisfying the ordered flip condition.
    Then $\pi\circ x\circ\ell_{0,\be_1^\perp}$ and $\pi\circ y\circ\ell_{0,\be_1^\perp}$ are indistinguishable asymptotic configurations in $\alfa{d-1}^{\ZZ^{d-1}}$ which satisfy the ordered flip condition in dimension $d-1$.
\end{proposition}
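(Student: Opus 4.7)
The plan is to split the proof into two moves: first projecting the alphabet through $\pi$, then restricting to the hyperplane $H\isdef \ell_{0,\be_1^\perp}(\ZZ^{d-1})\subset\ZZd$. The projection $\pi$ is a $1$-block sliding block code, so by \Cref{prop:invariance_sliding_block_code} the pair $(\pi(x),\pi(y))$ is automatically an indistinguishable asymptotic pair. The key input from the ordered flip condition is that after applying $\pi$ the difference set lands entirely inside $H$, which is exactly what will make the restriction step painless.

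To carry out the first move I would evaluate $(\pi(x),\pi(y))$ at every point of the original difference set $F=\{\bzero,-\be_1,\dots,-\be_d\}$. The crucial observation is that the ordered flip condition forces $x_{-\be_1}=\symb{1}$ and $y_{-\be_1}=\symb{0}$, while $\pi$ merges exactly these two symbols into $\symb{0}$; so $\pi(x)$ and $\pi(y)$ coincide at $-\be_1$. At $\bzero$ the values become $\symb{0}$ and $d-\symb{1}$, and at each $-\be_i$ with $2\leq i\leq d$ they become $\symb{i-1}$ and $\symb{i-2}$, all of which are distinct as soon as $d\geq 2$. Consequently the difference set of $(\pi(x),\pi(y))$ is $\tilde F=\{\bzero,-\be_2,\dots,-\be_d\}\subset H$.

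For the second move, fix a finite support $S'\subset\ZZ^{d-1}$ and a pattern $q\colon S'\to\alfa{d-1}$, and lift it through $\ell_{0,\be_1^\perp}$ to the pattern $\tilde q$ supported on $\tilde S'\isdef \ell_{0,\be_1^\perp}(S')\subset H$. Since both $\tilde F\subset H$ and $\tilde S'\subset H$, the set $\tilde F-\tilde S'$ is contained in $H$, and a direct check shows that $\occ_{\tilde q}(\pi(x))\cap(\tilde F-\tilde S')$ is in bijection, via $\ell_{0,\be_1^\perp}^{-1}$, with $\occ_q(X)\cap(F''-S')$, where $F''\isdef \ell_{0,\be_1^\perp}^{-1}(\tilde F)$ is the difference set of $(X,Y)$; likewise for $Y$. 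Applying indistinguishability of $(\pi(x),\pi(y))$ to $\tilde q$ transports directly into the equality of cardinalities needed for $(X,Y)$ at $q$. The ordered flip condition for $(X,Y)$ is then a short computation at the $d$ points of $F''$ using the definition of $\pi$ and the values of $(x,y)$ on $F$. I do not expect any real obstacle in this plan; the one conceptual point is recognising why restriction from $\ZZd$ to $\ZZ^{d-1}$ preserves indistinguishability here, and this is forced precisely by the containment $\tilde F\subset H$, which in turn is the very reason the projection $\pi$ is designed to collapse $\{\symb{0},\symb{1}\}$.
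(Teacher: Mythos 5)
Your proposal is correct and follows essentially the same route as the paper: apply \Cref{prop:invariance_sliding_block_code} to the $1$-block code $\pi$, observe that the ordered flip condition collapses the disagreement at $-\be_1$ so the difference set of $(\pi(x),\pi(y))$ is $F\setminus\{-\be_1\}\subset\{0\}\times\ZZ^{d-1}$, and then transfer indistinguishability to the restricted configurations before checking the flip values. You merely spell out more explicitly (via the bijection of occurrence sets inside the hyperplane) the step the paper summarizes with the vanishing of the discrepancy sum over $(F\setminus\{-\be_1\})-S$.
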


\begin{proof}
	By~\Cref{prop:invariance_sliding_block_code} we have that 
	$(\pi(x), \pi(y))$ is an indistinguishable asymptotic pair.
	Under the ordered flip condition, the difference set of $(\pi(x), \pi(y))$ is 
	$F\setminus\{-\be_1\}= \{ \bzero, -\be_{\symb{2}}, -\be_{\symb{3}}, \dots, -\be_{d}\}$ and thus it follows that for any pattern $p$ with support $S \subset \langle \be_1^{\perp}\rangle$ we have
	
	\[  \sum_{u \in (F\setminus\{-\be_1\})-S } \indicator{[p]}(\sigma^u(\pi(y)))-\indicator{[p]}(\sigma^u(\pi(x))) = 0.    \]
	It follows that the pair $(\pi\circ x\circ\ell_{0,\be_1^\perp},
	\pi\circ y\circ\ell_{0,\be_1^\perp})$ is also indistinguishable. It can be checked directly that it also satisfies the ordered flip condition.
\end{proof}

If we were also able to show that $\pi\circ x\circ\ell_{0,\be_1^\perp}$ is uniformly recurrent, then~\Cref{prop:d-1dim-flip-condition} provides a way to prove~\Cref{thm:multidim_sturmian_characterization}
by induction on the dimension. Namely, if we were to proceed by induction we would obtain that
$\pi\circ x\circ\ell_{0,\be_1^\perp}$ and 
$\pi\circ y\circ\ell_{0,\be_1^\perp}$
are $(d-1)$-dimensional characteristic Sturmian configurations
associated to a totally irrational slope
$(\alpha^{(2)},\dots,\alpha^{(d)})\in[0,1)^{d-1}$, that is,
\[
\pi\circ x\circ\ell_{0,\be_1^\perp}=c_{(\alpha^{(2)},\dots,\alpha^{(d)})} 
\quad \mbox{ and }\quad 
\pi\circ y\circ\ell_{0,\be_1^\perp}=c'_{(\alpha^{(2)},\dots,\alpha^{(d)})}.
\]
And we could proceed from there to obtain our desired result.

The next two lemmas show that,
for all $\bv\in\ZZd$,
the
parallel $(d-1)$-dimensional configurations
$\pi\circ x\circ\ell_{\bv,\be_1^\perp}$
and
$\pi\circ y\circ\ell_{\bv,\be_1^\perp}$
belong to 
$\overline{\Orb(\pi\circ x\circ\ell_{0,\be_1^\perp})}
=\overline{\Orb(\pi\circ y\circ\ell_{0,\be_1^\perp})}$,
that is the
$(d-1)$-dimensional subshift whose language is 
$\Lcal\left(c_{(\alpha^{(2)},\dots,\alpha^{(d)})}\right)
=\Lcal\left(c'_{(\alpha^{(2)},\dots,\alpha^{(d)})}\right)$.

\begin{lemma}\label{lem:ddim_intersection_non_empty}
    Let $d\geq2$ be an integer.
	Let $x,y \in\alfad^{\ZZd}$ be an indistinguishable asymptotic pair satisfying the ordered flip condition.
    For each finite nonempty connected subset $S \subset \{0\}\times\ZZ^{d-1}$, let
    \begin{align*}
        A_S &=\Lcal_S(x\circ\ell_{\bzero,\be_1^\perp})\cup 
              \Lcal_S(y\circ\ell_{\bzero,\be_1^\perp}),\\
        B_S &=\Lcal_S(x\circ\ell_{-\be_1,\be_1^\perp})\cup 
              \Lcal_S(y\circ\ell_{-\be_1,\be_1^\perp}).
    \end{align*}
    We have $A_S\cap B_S\neq\varnothing$.
\end{lemma}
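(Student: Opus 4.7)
The goal is to produce a single pattern on $S$ appearing simultaneously in the height-$0$ slice (of $x$ or $y$) and the height-$(-1)$ slice (of $x$ or $y$). The basic reduction is the identity $x\circ\ell_{-\be_1,\be_1^\perp}=\sigma^{-\be_1}(x)\circ\ell_{\bzero,\be_1^\perp}$, which turns the question into comparing the height-$0$ slice languages of $x$ and of its shift $\sigma^{-\be_1}(x)$. Since this lemma will be invoked inside the proof of \Cref{thm:multidim_sturmian_characterization}, $x$ is uniformly recurrent, so the $\ZZd$-orbit closure $X=\overline{\Orb(x)}$ is a minimal $\ZZd$-subshift and every element of $X$ shares the language of $x$.

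My plan is to exploit this minimality together with the rigid complexity from \Cref{maintheorem:ddim-complexity-is-FminusS}. Let $\Pi\colon X\to\alfad^{\ZZ^{d-1}}$ be the height-$0$ slicing map $\Pi(z)=z\circ\ell_{\bzero,\be_1^\perp}$, which is continuous and equivariant under the sub-action of $\{0\}\times\ZZ^{d-1}$. Then $\tilde x\isdef\Pi(x)$ and $\tilde x'\isdef\Pi(\sigma^{-\be_1}(x))$ both lie in the $\ZZ^{d-1}$-subshift $X''=\Pi(X)$. Since $\sigma^{-\be_1}(x)\in X$, minimality of $X$ yields a sequence $(w_n)\subset\ZZd$ with $\sigma^{w_n}(x)\to\sigma^{-\be_1}(x)$ in the prodiscrete topology. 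Applying $\Pi$ gives $\Pi(\sigma^{w_n}(x))\to \tilde x'$ in $\alfad^{\ZZ^{d-1}}$, so for any finite $S\subset\{0\}\times\ZZ^{d-1}$ and $n$ large, each pattern of $\tilde x'$ with support $S$ coincides with a finite-support pattern of $\Pi(\sigma^{w_n}(x))$, which is a $\ZZ^{d-1}$-shifted slice of $x$ at height $w_n^1$.

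The main obstacle is that the first coordinates $w_n^1$ need not equal $0$, so the preceding convergence transfers patterns only between the height-$(-1)$ slice and miscellaneous height-$w_n^1$ slices, rather than between height $-1$ and height $0$. To overcome this I would argue that the set of heights $\{w_n^1 : n\}$ realizing a given finite pattern of $\tilde x'$ is constrained: by \Cref{cor:language-is-the-one-intersecting-F} applied to the two-layer connected support $T=S\cup(S-\be_1)$, the $d$-dimensional language $\Lcal_T(x)$ has exactly $\#(F-T)$ elements, and each two-layer pattern $q\in\Lcal_T(x)$ splits into a top layer $q|_S\in A_S^x$ and a bottom layer $q|_{S-\be_1}$ (reindexed) in $B_S^x$. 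Iterating with multi-layer supports $\bigcup_{k=0}^{K}(S-k\be_1)$ and combining the resulting bijections with a pigeonhole over the finitely many patterns in $\Lcal_S(x)=\Lcal_S(y)$, one forces the existence of a height $k$ and $k'$ with $k\neq k'$ whose slices produce the same pattern on $S$; invariance of the $\ZZd$-orbit closure under $\sigma^{\be_1}$ then propagates this coincidence into the particular heights $0$ and $-1$, yielding a pattern in $A_S\cap B_S$.
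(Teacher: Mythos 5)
There is a genuine gap, on two counts. First, your plan imports hypotheses the lemma does not have: you assume $x$ is uniformly recurrent so that $\overline{\Orb(x)}$ is minimal and every point shares the language of $x$. But \Cref{lem:ddim_intersection_non_empty} is stated (and used, inside the proof of \Cref{lem:pi_perp_ei_perp_is_sturmian}) for an arbitrary indistinguishable asymptotic pair satisfying the ordered flip condition, with no recurrence assumption; indeed \Cref{prop:recurrence} shows non-recurrent indistinguishable pairs exist (they lie in the same orbit), so minimality cannot be invoked. Second, and more fundamentally, the decisive step of your plan does not go through: after the pigeonhole you obtain two heights $k\neq k'$ whose slices share a pattern on $S$, and you claim that ``invariance of the $\ZZd$-orbit closure under $\sigma^{\be_1}$'' propagates this coincidence to the specific heights $0$ and $-1$. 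The slice languages $\Lcal_S(x\circ\ell_{k\be_1,\be_1^\perp})$ are attached to the fixed configuration $x$ and are not invariant under shifting by $\be_1$; a coincidence between two far-away heights (where $x$ and $y$ even agree) says nothing about the pair of heights $0$ and $-1$, which straddle the difference set and are exactly where the difficulty lives. Relating slice languages at different heights is precisely what \Cref{lem:pi_perp_ei_perp_is_sturmian} later establishes, using indistinguishability and the location of the difference set, not an orbit-closure argument, so your plan is circular at this point.

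For contrast, the paper's proof is a short counting argument needing neither recurrence nor limits: since $(\pi\circ x\circ\ell_{\bzero,\be_1^\perp},\pi\circ y\circ\ell_{\bzero,\be_1^\perp})$ satisfies the $(d-1)$-dimensional ordered flip condition, \Cref{maintheorem:ddim-complexity-is-FminusS} gives $\#A_S\geq\#\pi(A_S)=\#(F-S)-\#S$; \Cref{cor:language-is-the-one-intersecting-F} gives $\#B_S\geq\#S$, and a maximality argument on the number of occurrences of the symbol $\symb{0}$ (using $x_{-\be_1}=\symb{1}$, $y_{-\be_1}=\symb{0}$) rules out equality, so $\#B_S\geq\#S+1$; finally $\#(A_S\cup B_S)\leq\#\Lcal_S(x)=\#(F-S)$, and inclusion--exclusion yields $\#(A_S\cap B_S)\geq 1$. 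If you want to salvage your approach, you would need to replace the minimality and propagation steps by quantitative information of this kind; as written, the plan does not prove the lemma.
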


\begin{proof}
    Let $S \subset \{0\}\times\ZZ^{d-1}$ be a finite nonempty connected subset.
    Since $(\pi\circ x\circ\ell_{\bzero,\be_1^\perp},
            \pi\circ y\circ\ell_{\bzero,\be_1^\perp})$ 
    satisfies the $(d-1)$-dimensional ordered flip condition with difference
    set $F\setminus\{-\be_1\}$, from
    \Cref{maintheorem:ddim-complexity-is-FminusS}, we have
    \[
        \#A_S \geq \#\pi(A_S) = \Lcal_S(\pi \circ x\circ\ell_{\bzero,\be_1^\perp})\cup 
        \Lcal_S(\pi \circ y\circ\ell_{\bzero,\be_1^\perp}) 
        = \# (F\setminus\{-\be_1\}-S)
        = \# (F-S) - \#S.
    \]

    By contradiction, assume that $A_S\cap B_S=\varnothing$.
    From \Cref{cor:language-is-the-one-intersecting-F},
    we have that
        $\#\Lcal_S(x\circ\ell_{-\be_1,\be_1^\perp})\geq\#S$
        and
        $\#\Lcal_S(y\circ\ell_{-\be_1,\be_1^\perp})\geq\#S$
        so that $\#B_S\geq\#S$.
        The case $\#B_S=\#S$ is impossible.
        Indeed, $\#B_S=\#S$ implies that
        $B_S=\Lcal_S(x\circ\ell_{-\be_1,\be_1^\perp})=
             \Lcal_S(y\circ\ell_{-\be_1,\be_1^\perp})$.
        Observe that $x(-\be_1)=\symb{1}$ and $y(-\be_1)=\symb{0}$.
        Let $w\in B_S$ be a pattern with the most occurrences of the symbol $\symb{0}$. 
        Since $x,y$ is an indistinguishable asymptotic pair satisfying the flip condition,
        \Cref{cor:language-is-the-one-intersecting-F} implies
        that the pattern $w$ must appear in $x$ intersecting the difference set $F$.
        Since $A_S \cap B_S = \varnothing$, then necessarily the pattern $w$
        appears in $x$ intersecting the position $-\be_1$. 
        Over the same support, there is a pattern in $y$ with one more occurrence of the
        symbol $\symb{0}$. 
        This pattern also belongs to $B_S$, thus it contradicts the 
        maximality of the number of occurrences of the symbol $\symb{0}$ in $w$
        among all patterns in $B_S$.
        Therefore, $\#B_S\geq\#S+1$.

    From \Cref{maintheorem:ddim-complexity-is-FminusS}, we have
    $\#\left(A_S\cup B_S\right)\leq
        \#\Lcal_S(x)=\#(F-S)$.
	Thus
    \begin{align*}
        \#\left(A_S\cap B_S\right)
        &= \#A_S + \#B_S - \#\left(A_S\cup B_S\right)\\
        &\geq  \left(\#(F-S) - \#S\right)
              +\left(\#S+1\right)
              -\#\left(F-S\right)
        = 1.
    \end{align*}
    This contradicts the assumption $A_S\cap B_S=\varnothing$.
    Thus $A_S\cap B_S\neq\varnothing$.
\end{proof}

\begin{lemma}\label{lem:pi_perp_ei_perp_is_sturmian}
    Let $d\geq2$ be an integer.
	Let $x,y \in\alfad^{\ZZd}$ be an indistinguishable asymptotic pair satisfying the ordered flip condition. 
    For every $\bv\in\ZZd$, we have 
    \[
        \Lcal(\pi\circ x\circ\ell_{\bv,\be_1^\perp}) \subset 
        \Lcal(\pi\circ x\circ\ell_{\bzero,\be_1^\perp})
            \quad\text{ and }\quad
        \Lcal(\pi\circ y\circ\ell_{\bv,\be_1^\perp}) \subset 
        \Lcal(\pi\circ y\circ\ell_{\bzero,\be_1^\perp}).
    \]
\end{lemma}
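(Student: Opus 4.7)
My plan is to reduce the desired language inclusion to a combinatorial question about $\Lcal_S(x)$ and then resolve it by an iterative modification argument. Since $\Lcal(\pi\circ x\circ\ell_{\bv,\be_1^\perp})$ depends only on the $\be_1$-coordinate of $\bv$ (changing $\bv$ by a vector in $\langle\be_1^\perp\rangle$ only translates the resulting $\ZZ^{d-1}$-configuration), I may assume $\bv=v_1\be_1$, and the case $v_1=0$ is trivial. After extending any disconnected support to a connected superset and restricting back, it suffices to fix a finite nonempty connected $S\subset\langle\be_1^\perp\rangle$ and a pattern $p\in\Lcal_S(\pi\circ x\circ\ell_{v_1\be_1,\be_1^\perp})$, then prove $p\in\Lcal_S(\pi\circ x\circ\ell_{\bzero,\be_1^\perp})$. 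I lift $p$ by picking any occurrence in $\pi\circ x$ at a position $\bw\in v_1\be_1+\langle\be_1^\perp\rangle$ and setting $q^{(0)}_s\isdef x_{\bw+s}$, so that $\pi(q^{(0)})=p$ and $q^{(0)}\in\Lcal_S(x)$.

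By \Cref{cor:language-is-the-one-intersecting-F}, $q^{(0)}$ has a unique occurrence in $x$ at some $\bn_0\in F-S$. Decomposing $F=F_0\sqcup\{-\be_1\}$ with $F_0=\{\bzero,-\be_2,\dots,-\be_d\}\subset\langle\be_1^\perp\rangle$, if $\bn_0\in F_0-S$ then $q^{(0)}$ already appears in the central slice of $x$ and we are done. Otherwise $\bn_0=-\be_1-s_0$ for some $s_0\in S$, and by the ordered flip condition $q^{(0)}_{s_0}=x_{-\be_1}=\symb{1}$. I then iteratively modify the pattern: set $q^{(k+1)}\isdef\sigma^{\bn_k}(y)|_S$, which agrees with $q^{(k)}$ everywhere except at $s_k$ (where $y_{-\be_1}=\symb{0}$ replaces $x_{-\be_1}=\symb{1}$). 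Because $\pi(\symb{0})=\pi(\symb{1})=\symb{0}$, every iterate still projects to $p$; by indistinguishability $\Lcal_S(y)=\Lcal_S(x)$, so each $q^{(k+1)}\in\Lcal_S(x)$ has its own unique $F-S$ occurrence $\bn_{k+1}$.

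If $\bn_k\in F_0-S$ at some step the argument closes as in the base case; otherwise $\bn_k=-\be_1-s_k$, and the identity $\sigma^{\bn_k}(x)|_S=q^{(k)}$ evaluated at $s_k$ forces $q^{(k)}_{s_k}=\symb{1}$. Since $q^{(k)}$ takes value $\symb{0}$ at each previously-chosen $s_i$ for $i<k$ by construction, this both excludes $s_k\in\{s_0,\dots,s_{k-1}\}$ and yields $q^{(0)}_{s_k}=\symb{1}$. Hence the sequence $s_0,s_1,\dots$ is pairwise distinct in the finite set $S$, so the iteration must land in $F_0-S$ after at most $|S|$ steps, giving $p\in\Lcal_S(\pi\circ x\circ\ell_{\bzero,\be_1^\perp})$. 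The inclusion for $y$ follows from the mirrored argument in which modifications go from $\symb{0}$ to $\symb{1}$ at each step. The delicate part will be the combinatorial bookkeeping showing that each $q^{(k+1)}$ really lies in $\Lcal_S(x)$, has the claimed form, and that the $s_k$ are pairwise distinct---all of which reduce to careful applications of \Cref{cor:language-is-the-one-intersecting-F} together with the ordered flip condition at $-\be_1$; once these are in place, termination within $|S|$ iterations is immediate.
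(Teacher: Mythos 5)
Your argument is correct, and it takes a genuinely different route from the paper's. The paper proves the lemma by contradiction: after reducing to $\bv=-\be_1$, it takes large slice patterns $\pi(x)|_{S_n}$ containing the offending pattern, augments them by a single cell at $-\be_j$, and uses indistinguishability of $(\pi(x),\pi(y))$ to produce shift vectors $u_j$ that generate $\{0\}\times\ZZ^{d-1}$ and leave the slice invariant; hence $\pi\circ x\circ\ell_{-\be_1,\be_1^\perp}$ would be constant, and the counting statement of \Cref{lem:ddim_intersection_non_empty} (which rests on \Cref{maintheorem:ddim-complexity-is-FminusS}) then forces the pattern into the central slice after all, a contradiction. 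You instead give a direct ``flip-and-reoccur'' induction in the original alphabet: lift the pattern, use \Cref{cor:language-is-the-one-intersecting-F} (in fact only the existence part, i.e.\ \Cref{lem:exists-occ-intersecting-F}, since uniqueness is never needed) to find an occurrence meeting $F$; if it meets $F$ only at $-\be_1$, replace the symbol $\symb{1}$ there by $y_{-\be_1}=\symb{0}$, which preserves the $\pi$-projection and stays in the language by $\Lcal_S(x)=\Lcal_S(y)$, and repeat. Your joint induction (previously flipped sites carry $\symb{0}$ in the current pattern, while a new bad site must carry $x_{-\be_1}=\symb{1}$) does make the flip positions pairwise distinct, so the process terminates within $\#S$ steps at an occurrence whose support lies in the slice containing $F\setminus\{-\be_1\}$, which is exactly the desired conclusion; the mirrored argument, or the equality $\Lcal(\pi\circ x\circ\ell_{\bzero,\be_1^\perp})=\Lcal(\pi\circ y\circ\ell_{\bzero,\be_1^\perp})$ exploited by the paper, handles $y$. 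Your route avoids \Cref{lem:ddim_intersection_non_empty} and the constancy argument entirely and yields a quantitative bound on the number of steps, while the paper's route produces the structural byproduct that a missing pattern would force the whole offending slice to be constant; there is no circularity in your use of \Cref{cor:language-is-the-one-intersecting-F}, since it is established in Section 3 independently of this lemma.
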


\begin{proof}
	As $\pi(x),\pi(y)$ is an indistinguishable asymptotic pair whose difference set is contained in $\be_1^{\perp}$ it follows that $\pi\circ x\circ\ell_{v,\be_1^\perp} = \pi\circ y\circ\ell_{v,\be_1^\perp}$ for every $v \notin \langle \be_1^\perp\rangle$, and that $\Lcal(\pi\circ x\circ\ell_{\bzero,\be_1^\perp})
	=\Lcal(\pi\circ y\circ\ell_{\bzero,\be_1^\perp})$. Therefore 
    it is sufficient 
    to prove the inclusion $\Lcal(\pi\circ x\circ\ell_{\bv,\be_1^\perp}) \subset 
    \Lcal(\pi\circ x\circ\ell_{\bzero,\be_1^\perp})$.
    By contradiction, suppose that there is $\bv\in\ZZ\be_1$ such that
    $w\in\Lcal(\pi\circ x\circ\ell_{\bv,\be_1^\perp}) \setminus 
         \Lcal(\pi\circ x\circ\ell_{\bzero,\be_1^\perp})\neq\varnothing$. Let $w' \in \pi^{-1}(w)$, it follows that $w' \in \Lcal(x\circ\ell_{\bv,\be_1^\perp})\setminus \Lcal(x\circ\ell_{0,\be_1^\perp})$. Using that $x,y$ are indistinguishable and satisfy the flip condition we conclude using~\Cref{lem:exists-occ-intersecting-F} that $w' \in \Lcal(x\circ\ell_{-\be_1,\be_1^\perp})\setminus \Lcal(x\circ\ell_{0,\be_1^\perp})$ and thus that $w\in\Lcal(\pi\circ x\circ\ell_{-\be_1,\be_1^\perp}) \setminus 
         \Lcal(\pi\circ x\circ\ell_{\bzero,\be_1^\perp})$. In other words, without loss of generality we may assume that $\bv=-\be_1$.
    
    For every sufficiently large $n \in \NN$, if we let 
    $S_n = \{-1\}\times \llbracket-n,n\rrbracket^{d-1}$, 
    then the pattern $p = (\pi(x))|_{S_n}$ contains $w$ 
    and thus does
    not occur in $\pi(x)\circ\ell_{\bzero,\be_1^\perp}$. Define $\be_0 = 0$ and let $j \in \{0,\dots,d\}\setminus \{1\}$, $S_n^j = S_n \cup \{-\be_j \}$ and $p^j = \pi(x)|_{S_n^j}$. 
    As $\pi(x), \pi(y)$ is indistinguishable,
    there must exist ${u_j} \in  (F\setminus\{-\be_1\})-S_n^j$ so that $\sigma^{{u_j}}(\pi(y)) \in
    [p^j]$. As $p = p^j|_{S_n}$ does not occur in
    $\pi(x)\circ\ell_{\bzero,\be_1^\perp}$, we have that ${u_j} \notin 
    (F\setminus\{-\be_1\})-S_n$. From where we obtain that 
    $u_j\in \be_j+(F\setminus\{-\be_1\})$. By the ordered flip condition we have that 
    \begin{enumerate}
    	\item If $j = 0$, then $(\pi(x))_{0} = \symb{0}$ and $(\pi(y))_{-\be_2}=0$, hence we have $u_j =   \be_0-\be_2 = -\be_2$.
    	\item If $2 \leq j < d$, then $(\pi(x))_{-\be_j} = \symb{j}-\symb{1}$ and $(\pi(y))_{-\be_{j+1}}=\symb{j}-\symb{1}$, hence we have $u_j = \be_j - \be_{j+1}$.
    	\item If $j = d$, then $(\pi(x))_{-\be_d} = d-\symb{1}$ and $(\pi(y))_{0}=d-\symb{1}$, hence we have $u_j = -\be_d+\be_0 = \be_d$.
    \end{enumerate}
    
    Notice that in any case we have $u_j \in \langle\be_1^{\perp}\rangle $. As $\pi(x), \pi(y)$ is asymptotic outside of
    $F\setminus\{-\be_1\}$, we conclude that 
    \[
        \sigma^{u_j}(\pi(x))|_{S_{n}} = 
        \sigma^{u_j}(\pi(y))|_{S_{n}} =  
                (\pi(x))|_{S_{n}}\]
            for every large enough $n$. Noting that the set $\mathcal{G} = \{ u_j : j \in \{0,\dots,d\}\setminus \{1\} \}$ generates (as a group) $\{0\}\times \ZZ^{d-1}$, it follows that the configuration $\pi\circ x\circ\ell_{-\be_1,\be_1^\perp}$ is constant and thus
    $\Lcal_S(\pi\circ x\circ\ell_{-\be_1,\be_1^\perp})$ is a singleton
    for every finite support $S\subset\{0\}\times\ZZ^{d-1}$.
    
    Assuming 
    $S\subset\{0\}\times\ZZ^{d-1}$ is the shape of the pattern $w$, we have
    $\Lcal_S(\pi\circ x\circ\ell_{-\be_1,\be_1^\perp})=\{w\}$.
    From \Cref{lem:ddim_intersection_non_empty},
    for all finite nonempty connected subset
    $S\subset\{0\}\times\ZZ^{d-1}$,
	we have $A_S\cap B_S\neq\varnothing$ where
    \begin{align*}
        A_S &=\Lcal_S(x\circ\ell_{\bzero,\be_1^\perp})\cup 
              \Lcal_S(y\circ\ell_{\bzero,\be_1^\perp})
             =\Lcal_S(x\circ\ell_{\bzero,\be_1^\perp}),\\
        B_S &=\Lcal_S(x\circ\ell_{-\be_1,\be_1^\perp})\cup 
              \Lcal_S(y\circ\ell_{-\be_1,\be_1^\perp})
             =\Lcal_S(x\circ\ell_{-\be_1,\be_1^\perp})
    \end{align*}
    which also holds under the projection by $\pi$.
    Therefore, if $S$ is the shape of the pattern $w$, we have
    \begin{align*}
        \varnothing &\neq \pi\left(A_S\right)
                     \cap \pi\left(B_S\right)\\
        &= \Lcal_S(\pi\circ x\circ\ell_{\bzero,\be_1^\perp}) \cap 
           \Lcal_S(\pi\circ x\circ\ell_{-\be_1,\be_1^\perp})\\
        &= \Lcal_S(\pi\circ x\circ\ell_{\bzero,\be_1^\perp})
           \cap \{w\}.
    \end{align*}
    This implies that 
    $w\in \Lcal_S(\pi\circ x\circ\ell_{\bzero,\be_1^\perp})$
    which contradicts the definition of $w$.
    Thus we conclude that
    $\Lcal(\pi\circ x\circ\ell_{\bv,\be_1^\perp}) \subset
     \Lcal(\pi\circ x\circ\ell_{\bzero,\be_1^\perp})$
    for all $\bv\in\ZZd$.
\end{proof}

Given a configuration $x \in A^{\ZZd}$, we say a pattern $p\in A^S$ occurs with \define{bounded gaps} if there exists $n \in \NN$ such that for any $v \in \ZZd$, there is $u \in \llbracket -n,n \rrbracket^d$ such that $\sigma^{v+u}(x)|_S = p$. If a pattern does not occur with bounded gaps, this means that there is a sequence  $(v_i)_{i \in \NN}$ with $v_i \in \ZZd$ such that $p$ does not occur in any accumulation point of the sequence $(\sigma^{v_i}(x))_{i \in \NN}$.

\begin{lemma}\label{lem:the_restriction_of_URwFC_is_UR}
    Let $d\geq1$ be an integer.
	Let $x,y \in\alfad^{\ZZd}$ be an indistinguishable asymptotic pair satisfying the ordered flip condition. 
    If $x$ is uniformly recurrent, then $\pi\circ x\circ\ell_{0,\be_1^\perp}$ is uniformly recurrent.
\end{lemma}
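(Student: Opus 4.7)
My plan is to argue by contradiction. Assume $\pi\circ x\circ\ell_{\bzero,\be_1^\perp}$ is not uniformly recurrent; then some pattern $w\in\Lcal_S(\pi\circ x\circ\ell_{\bzero,\be_1^\perp})$ of finite support $S\subset\ZZ^{d-1}$ admits a sequence $(u_n)_{n\in\NN}\subset\ZZ^{d-1}$ with $\|u_n\|\to\infty$ such that $w$ does not occur in $\pi\circ x\circ\ell_{\bzero,\be_1^\perp}$ within the ball of radius $n$ around $u_n$. Writing $S'=\{0\}\times S\subset\ZZd$, this is exactly the statement that no preimage $w_0\in\pi^{-1}(w)\cap\Lcal_{S'}(x)$ appears in $x$ at any position $(0,v)$ with $v$ in a large neighbourhood of $u_n$.

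The first step is to extract a limit. By compactness of $\alfad^{\ZZd}$, after passing to a subsequence, $\sigma^{(0,u_n)}(x)$ converges to some $z\in X\isdef\overline{\Orb_{\ZZd}(x)}$. Uniform recurrence of $x$ makes $X$ minimal, and indistinguishability together with uniform recurrence forces $\Lcal(y)=\Lcal(x)$, so $y\in X$. Since the translated difference sets $F-(0,u_n)$ escape to infinity, $\sigma^{(0,u_n)}(y)$ also converges to $z$. By minimality, $\Lcal_{\ZZd}(z)=\Lcal_{\ZZd}(x)$, hence $z$ is itself uniformly recurrent and every preimage $w_0\in\pi^{-1}(w)\cap\Lcal_{S'}(x)$ belongs to $\Lcal_{S'}(z)$, so it must appear somewhere in $z$. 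On the other hand, by construction no preimage of $w$ can appear in $z$ on the central hyperplane $\{0\}\times\ZZ^{d-1}$, so every such occurrence lives on a hyperplane $\{k\}\times\ZZ^{d-1}$ with $k\neq 0$.

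The next step is to derive a contradiction from this asymmetry. The tools available are Lemma~\ref{lem:pi_perp_ei_perp_is_sturmian} (the language of every hyperplane slice of $\pi(x)$ is contained in that of the central slice) and Corollary~\ref{cor:language-is-the-one-intersecting-F} (every pattern in $\Lcal_{S'}(x)$ has a unique distinguished occurrence at some position of $F-S'\subset\{-1,0\}\times\ZZ^{d-1}$). I would apply uniform recurrence of $x$ to a thickened pattern straddling the hyperplanes $\{-k,\dots,0\}$ that includes an occurrence of $w_0$, and use a pigeonhole on the bounded first coordinate of the returning shift vector to produce an occurrence of some preimage of $w$ on the central hyperplane inside a suitable shift $z'\in X$ of $z$. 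Propagating back via the identity $\Lcal(z)=\Lcal(z')=\Lcal(x)$ should then yield the required occurrence inside $z$ itself.

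The delicate point, and the main obstacle I expect, is precisely this transfer step: converting an occurrence of a preimage of $w$ from a hyperplane $\{k\}\times\ZZ^{d-1}$ with $k\neq 0$ into an occurrence on the central hyperplane of $z$ \emph{itself}, rather than merely of some shift of $z$. This is where the rigidity of the ordered flip condition should be decisive, since it breaks the symmetry between hyperplanes of $\pi(x)$ and, via the language inclusion of Lemma~\ref{lem:pi_perp_ei_perp_is_sturmian}, forces patterns visible on distant hyperplanes to also appear on the central one.
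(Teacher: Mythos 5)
Your setup (extracting the limit $z$ of $\sigma^{(0,u_n)}(x)$, noting $y\in X$, that $\sigma^{(0,u_n)}(y)\to z$ as well, and that no $\pi$-preimage of $w$ occurs in $z$ on the central hyperplane) is fine, but the proof stops exactly where the real difficulty begins, and the step you defer does not go through as described. Uniform recurrence of $x$ controls the gaps between occurrences of a thickened pattern in all of $\ZZd$; it says nothing about the first coordinate at which those occurrences land relative to the central hyperplane. So when you take a pattern of $z$ straddling $\{-k,\dots,0\}\times\ZZ^{d-1}$ containing a copy of $w_0$ and return it via uniform recurrence, every return can place the copy of $w_0$ on a non-central hyperplane; ``pigeonhole on the bounded first coordinate of the returning shift vector'' has no mechanism to force an occurrence onto the hyperplane $\{0\}\times\ZZ^{d-1}$ of $z$. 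Worse, your final ``propagate back via $\Lcal(z)=\Lcal(z')=\Lcal(x)$'' is circular: transferring an occurrence from a slice of some $z'$ to the central slice of $z$ itself amounts to minimality of the $(d-1)$-dimensional subshift generated by $\pi\circ x\circ\ell_{0,\be_1^\perp}$, i.e.\ precisely the uniform recurrence you are trying to prove. Equality of $d$-dimensional languages $\Lcal(z)=\Lcal(x)$ only gives occurrences somewhere in $\ZZd$, and \Cref{lem:pi_perp_ei_perp_is_sturmian} only gives the inclusion of slice languages into the central one for $x$ and $y$ (not for the limit point $z$, which is not part of an indistinguishable pair), so neither closes the gap.

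What is missing is a quantitative rigidity statement that pins occurrences down near the difference set, and this is how the paper's proof works: it codes occurrences of the offending pattern by a sliding block code ($\smiley$/$\hexagon$), performs an iterated limit to produce a configuration whose slices either are entirely $\hexagon$ or have a well-defined maximal all-$\hexagon$ block size $N(t)$ occurring with bounded gaps, and then shows the set of values $N(t)$ is finite. Finiteness contradicts minimality of the coded system; and finiteness itself is proved by observing that, by \Cref{prop:d-1dim-flip-condition} and \Cref{lem:exists-occ-intersecting-F} (together with \Cref{lem:pi_perp_ei_perp_is_sturmian}), every pattern of the central-slice language must occur in $\pi(x)$ intersecting the fixed finite difference set $F\setminus\{-\be_1\}$, which forces many essentially disjoint large blocks into a window of bounded size and yields a volume contradiction of the form $(K+m)^d\geq(2m)^d$ for all $m$. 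Your plan cites \Cref{cor:language-is-the-one-intersecting-F} as a tool but never uses this ``unique occurrence intersecting $F$'' rigidity in a counting argument; without some such quantitative step, the approach as written cannot rule out that all occurrences of preimages of $w$ drift off the central hyperplane, so the proposal has a genuine gap.
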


\begin{proof}
	Suppose that $\pi\circ x\circ\ell_{0,\be_1^\perp}$ is not uniformly recurrent and let $p \in \Lcal(\pi\circ x\circ\ell_{0,\be_1^\perp})$ be a pattern in its language which does not occur with bounded gaps. Let $S$ be the support of $p$. Let $\psi \colon \alfa{d-1}^{\ZZd} \to \{ \smiley,\hexagon \}^{\ZZd}$ be the sliding-block code such that for any $z \in \alfa{d-1}^{\ZZd}$ and $v \in \ZZd$, \[ \psi(z)_v = \begin{cases}
		\smiley &\mbox{ if } \sigma^v(z)|_{\{0\}\times S} = p,\\
		\hexagon & \mbox{ otherwise. }
	\end{cases}   \]

    As $x$ is uniformly recurrent, the topological closure of its orbit $X=\overline{\Orb(x)}$ is a minimal subshift and it follows that both $\pi(X)$ and $\psi(\pi(X))$ are also  minimal subshifts. Let us denote $w = \psi(\pi(x))$.

	For $n \in \NN$, let $B_n = \llbracket-n,n\rrbracket^{d-1}$ and let $h_n$ denote the pattern with support $B_n$ which is identically $\hexagon$. For every $t \in \ZZ \be_1$, we define \[ N(t) = \sup\{n\in\NN \mid h_n \text{ occurs in }
	w\circ\ell_{t,\be_1^\perp} 
	\text{ with bounded gaps}\}.  \]  
	Notice that the values of $N(t)$ do not change if we replace $w$ by an accumulation point of a sequence of shifts of $w$ by vectors in $\{0\}\times \ZZ^{d-1}$. 
    Let $(t_i)_{i \geq 1}$ be an enumeration of $\ZZ \be_1$. 
    We construct a sequence $(w_i)_{i \geq 0}$ of configurations in $\psi(\pi(X))$ as follows. 
    Let $w_{0} =w$. For every $i \geq 1$,
    we construct the configuration $w_i$ from $w_{i-1}$ according to one of the following three cases:
	
	\textbf{Case 1:} $N(t_i) = -\infty$. In this case, the symbol $\hexagon$ does not occur in $w\circ\ell_{t,\be_1^\perp}$ with bounded gaps and thus there is a sequence $(u_n)_{n \in \NN}$ with $u_n \in \{0\}\times \ZZ^{d-1}$ for which $\hexagon$ does not appear in $(\sigma^{u_n}(w_{i-1}) \circ\ell_{t_i,\be_1^\perp})|_{B_n}$. We let $w_i$ be any accumulation point of this sequence.
	
	\textbf{Case 2:} $N(t_i) \in \NN$. This means that there is a largest $n \in \NN$ for which $h_{n}$ occurs in $w_{i-1}\circ\ell_{t_i,\be_1^\perp}$ with bounded gaps. In this case, there is a sequence $(u_n)_{n \in \NN}$ with $u_n \in \{0\}\times \ZZ^{d-1}$ for which $h_{n+1}$ does not appear in $(\sigma^{u_n}(w_{i-1}) \circ\ell_{t_i,\be_1^\perp})|_{B_n}$. We let $w_i$ be any accumulation point of this sequence.
	
	\textbf{Case 3:} $N(t_i) = \infty$. Here for every $n \in \NN$ the pattern $h_n$ occurs in $w_{i-1}\circ\ell_{t_i,\be_1^\perp}$ with bounded gaps. In this case there is a sequence $(u_n)_{n \in \NN}$ with $u_n \in \{0\}\times \ZZ^{d-1}$ such that $(\sigma^{u_n}(w_{i-1}) \circ\ell_{t_i,\be_1^\perp})|_{B_n}$ is identically $\hexagon$. We let $w_i$ be any accumulation point of this sequence.
	
	By construction, $w_i \in \psi(\pi(X))$ for every $i \in \NN$. Let $\bar{w}$ be an accumulation point of the sequence $(w_i)_{i \in \NN}$. It follows that $\bar{w} \in \psi(\pi(X))$. This sequence has the following properties:
	
	\begin{enumerate}
		\item $N(t)=-\infty$ if and only if $\bar{w}\circ \ell_{t,\be_1^\perp}$ is identically $\smiley$.
		\item $N(t)= n \in \NN$ if and only if $h_n$ occurs with bounded gaps in $\bar{w}\circ \ell_{t,\be_1^\perp}$ and $h_{n+1}$ does not occur.
		\item $N(t) = \infty$ if and only if $\bar{w}\circ \ell_{t,\be_1^\perp}$ is identically $\hexagon$.
	\end{enumerate}

    Let 
    \[\mathcal{N}= \{ N(t): t \in \ZZ \be_1\}\cap \NN.
    \] 

	Suppose that the collection $\mathcal{N}$ is finite.
    This contradicts the minimality of $\psi(\pi(X))$. Indeed, by the assumption on $p$, we have that both the symbol $\smiley$ and the patterns $h_n$ for every $n \in \NN$ occur in $w \circ\ell_{0,\be_1^\perp}$. In particular for every $n \in \NN$ there is a pattern $q_n$ with support $B_{n+1}$ which occurs in $w \circ\ell_{0,\be_1^\perp}$, such that $q_n|_{B_n} = h_n$ and $q_{n}|_{B_{n+1} \setminus B_n}$ is not identically $\hexagon$. For any $n > \max(\mathcal{N})$, it follows that $q_n$ does not occur in $\bar{w}$, and thus $w \notin \overline{\Orb(\bar{w})}$, contradicting minimality.
	
    Suppose now that the collection $\mathcal{N}$ is infinite. For any $\kappa \in \NN$ we can then find $(k_i)_{i=1,\dots,\kappa}$ with $k_i \in \ZZ e_1$ such that \[ 0 \leq N(k_1) < N(k_2) < \dots < N(k_{\kappa}).  \]
	For every $i \in \{1,\dots, \kappa\}$, let $G(k_i)$ be the smallest integer such that every pattern in $\bar{w}_{k_i}$ with support a translate of $B_{G(k_i)}$ contains $h_{N(k_i)}$ as a subpattern. This value exists due to the fact that $h_{N(k_i)}$ occurs in $\bar{w}_{k_i}$ with bounded gaps.
	
	Let $\ell\in \NN$ be such that the support of $p$ is contained in $B_{\ell}$, let $g = 1+2\max_{i=1,\dots,\kappa}G(k_i)$ and let $m \in \NN$ be an arbitrary number which we shall later on take sufficiently large to find a contradiction. Let us consider any pattern $r_i$ with support $B_{g+m}$ in $\Lcal(\bar{w}_{k_i})$. By definition of $\psi$, there is a pattern $r'_i$ with support $B_{g+m+\ell}$ in $\Lcal(\pi \circ x \circ \ell_{k_i,\be_1^\perp})$ whose image under $\psi$ contains $r_i$ as a subpattern. 
	
	By~\Cref{lem:pi_perp_ei_perp_is_sturmian} it follows that $\Lcal(\pi\circ x\circ\ell_{k_i,\be_1^\perp}) \subset 
	\Lcal(\pi\circ x\circ\ell_{\bzero,\be_1^\perp})$. Also, by~\Cref{prop:d-1dim-flip-condition}, the configurations $\pi\circ x\circ\ell_{0,\be_1^\perp}, \pi\circ y\circ\ell_{0,\be_1^\perp}$ are indistinguishable with the ordered flip condition and thus by~\Cref{lem:exists-occ-intersecting-F} every pattern $r'_i$ must occur in $\pi\circ x\circ\ell_{\bzero,\be_1^\perp}$ intersecting its difference set. Applying the map $\psi$, a simple estimate shows that an occurrence of $r_i$ must appear in $w \circ \ell_{\bzero,\be_1^\perp} $ such that its support $B_{g+m}$ is contained in the set $B_{2(g+m+{\ell})+1}$.
	
    Now let $i,j\in \{1,\dots,\kappa\}$ be distinct. By construction, the patterns $r_i$ and $r_{j}$ can overlap at most in their borders. More explicitly, if $r_i$ were to occur at position $v_i \in \ZZ^{d-1}$ and $r_{j}$ at position $v_{j}\in \ZZ^{d-1}$, then $v_i + B_m \cap v_{j}+ B_m= \varnothing$. This is due to the definition of $g$, because if the intersection were to contain a block $B_{g}$, then it would contain a block of $\hexagon$ larger than the maximum allowed size for one of both patterns, see~\Cref{fig:overlap}.

	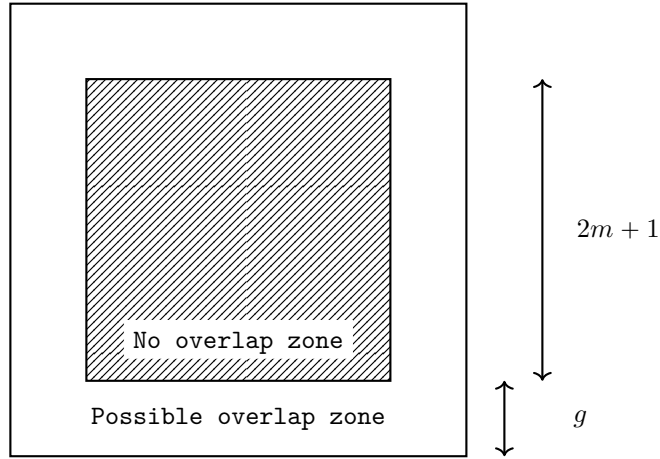
\begin{figure}[ht!]
		\begin{tikzpicture}
		\draw[thick] (0,0) rectangle (6,6);
		\draw[thick, pattern=north east lines] (1,1) rectangle (5,5);
		\draw[thick, <->] (7, 1) to (7,5);
		\node at (8,3) {$2m+1$};
		\draw[thick, <->] (6.5, 0) to (6.5,1);
		\node at (7.5,0.5) {$g$};
		\node at (3,0.5) {$\texttt{Possible overlap zone}$};
		\draw[white, fill = white] (1.5,1.3) rectangle (4.5,1.8);
		\node at (3,1.5) {$\texttt{No overlap zone}$};
		\end{tikzpicture}
		\caption{Structure of the patterns $r_i$}
		\label{fig:overlap}
	\end{figure}
	
	We conclude that within the support $B_{2(g+m+{\ell})+1}$ we must be able to fit $\kappa$ blocks of size $B_m$ with no intersection. In particular we must have that 
    \[ (4(g+m+{\ell})+3)^d= |B_{2(g+m+{\ell})+1}| \geq \kappa|B_m| = \kappa(2m+1)^d. \]
	Let us fix $\kappa = 4^{d}$. This fixes in turn the constant $g$, thus let $K = g+\ell+1$. 
    Using the previous inequality, we obtain 
    \[ 4^d(K+m)^d \geq (4(g+m+{\ell})+3)^d \geq \kappa(2m+1)^d \geq 4^{d}(2m)^d.  \]
	From where we deduce that \[ (K+m)^d \geq (2m)^d  \mbox{ for every } m \in \NN. \]
	The previous inequality is clearly false for large enough $m$, thus we have that $\mathcal{N}$ cannot be infinite either. We conclude that $\pi\circ x\circ\ell_{0,\be_1^\perp}$ is uniformly recurrent.
\end{proof}

The next proposition shows, under some hypothesis, the existence of a factor map
$g\colon \overline{\Orb(x)}\to\RR/\ZZ$.

\begin{proposition}\label{prop:unique_rho}
        Let $d\geq2$ be an integer.
	Let $x,y \in\alfad^{\ZZd}$ be an indistinguishable asymptotic pair satisfying the ordered flip condition
	and assume $x$ is uniformly recurrent.
	Assume there exists a factor map 
	$f\colon \overline{\Orb(\pi\circ x\circ\ell_{0,\be_1^\perp})}\to\RR/\ZZ$
    commuting the actions
	$\dynsys{\overline{\Orb(\pi\circ x\circ\ell_{0,\be_1^\perp})}}{\ZZ^{d-1}}{\sigma}$
	and $\dynsys{\RR/\ZZ}{\ZZ^{d-1}}{T}$.
    Then there is $\rho \in [0,1)$ such that the map $g\colon \overline{\Orb(x)}\to\RR/\ZZ$
	defined by $g(z)=f(\pi\circ z\circ\ell_{0,\be_1^\perp})$
	is a factor map between the actions $\dynsys{\overline{\Orb(x)}}{\ZZ^{d}}{\sigma}$
	and $\dynsys{\RR/\ZZ}{\ZZ^{d}}{R_{\rho}\times T}$ where $R_{\rho}$ is the rotation by $\rho$.
\end{proposition}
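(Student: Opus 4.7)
The plan is to verify that $g$ is a factor map in four steps: well-definedness, continuity/surjectivity, $\ZZ^{d-1}$-equivariance via $T$, and then equivariance for the $\be_1$-direction via a rotation $R_\rho$ to be determined.

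For well-definedness, I need $\pi\circ z\circ\ell_{0,\be_1^\perp}\in\overline{\Orb(\pi\circ x\circ\ell_{0,\be_1^\perp})}$ for every $z\in\overline{\Orb(x)}$. Since $x$ is uniformly recurrent, \Cref{lem:the_restriction_of_URwFC_is_UR} gives that $\pi\circ x\circ\ell_{0,\be_1^\perp}$ is uniformly recurrent too, so its orbit closure is the minimal subshift with language $\Lcal(\pi\circ x\circ\ell_{0,\be_1^\perp})$. By \Cref{lem:pi_perp_ei_perp_is_sturmian} applied to $\ZZd$-shifts of $x$, together with the fact that restriction/projection commute with limits in the prodiscrete topology, every $z\in\overline{\Orb(x)}$ satisfies $\Lcal(\pi\circ z\circ\ell_{0,\be_1^\perp})\subseteq\Lcal(\pi\circ x\circ\ell_{0,\be_1^\perp})$, which gives the membership. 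Continuity of $g$ is immediate since $f$, $\pi$, and restriction are continuous. Surjectivity follows by lifting: for any $t\in\RR/\ZZ$, pick $w\in f^{-1}(t)$ and $\bm_k\in\ZZ^{d-1}$ with $\sigma^{\bm_k}(\pi\circ x\circ\ell_{0,\be_1^\perp})\to w$; a subsequence of $\sigma^{(0,\bm_k)}(x)$ converges in the compact set $\overline{\Orb(x)}$ to some $z$ with $g(z)=t$. The $\ZZ^{d-1}$-equivariance is a direct verification: for $\bm^\perp=(0,\bm)$, one has $g(\sigma^{\bm^\perp}(z))=f(\sigma^{\bm}(\pi\circ z\circ\ell_{0,\be_1^\perp}))=T^{\bm}(g(z))$.

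The main step is to construct $\rho$. Set $\psi(z):=g(\sigma^{\be_1}(z))-g(z)$ and plan to show $\psi$ is a constant $\rho$. In the application of this proposition (inside the inductive proof of \Cref{thm:multidim_sturmian_characterization}), the target $T$ will be the rotation by a totally irrational $\alpha^\perp$ given by \Cref{lem:ddim-sturmian-factor-map}, so $T^\bm$ is translation on $\RR/\ZZ$. This makes $\psi$ a $\ZZ^{d-1}$-invariant continuous function:
\[
\psi(\sigma^{\bm^\perp}(z)) = T^{\bm}(g(\sigma^{\be_1}(z)))-T^{\bm}(g(z)) = g(\sigma^{\be_1}(z))-g(z) = \psi(z).
\]

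The hard part is to leverage this invariance, together with the $\ZZ^d$-minimality of $\overline{\Orb(x)}$ and the equicontinuity of $(\RR/\ZZ,T)$, to conclude $\psi$ is constant. The strategy is the following rigidity argument: for each $z$, the $\ZZ^{d-1}$-orbit closure $M_z$ of $z$ in $\overline{\Orb(x)}$ is a $\sigma^{\ZZ^{d-1}}$-minimal subsystem (since $T$ is minimal, the image $g(M_z)$ is closed, nonempty, and $T$-invariant, hence equals $\RR/\ZZ$, and the preimage of a $T$-minimal system under an equivariant factor map is $\sigma^{\ZZ^{d-1}}$-minimal on each component), on which $\psi$ is therefore constant. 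The homeomorphism $\sigma^{\be_1}$ intertwines $M_z$ with $M_{\sigma^{\be_1}(z)}$ equivariantly over $\ZZ^{d-1}$, and on the equicontinuous target $\RR/\ZZ$ any two such equivariant maps differ by a constant translation in $\RR/\ZZ$; one then exploits uniform recurrence of $x$ (which makes the $\ZZ^d$-return set for any neighborhood of $x$ syndetic, hence its projection along $\be_1$ is syndetic in $\ZZ$) together with continuity of $\psi$ to verify that the constant shift does not depend on the $\ZZ^{d-1}$-minimal component. Setting $\rho=\psi(x)$ then yields $g\circ\sigma^{\be_1}=R_\rho\circ g$, completing the proof. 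The main obstacle will be precisely this last rigidity/uniformity step, where I expect to need a careful interplay between the minimality of the $\ZZ^d$-action on $\overline{\Orb(x)}$ and the fact that $\sigma^{\be_1}$ transports $\psi$-values between a priori distinct $\sigma^{\ZZ^{d-1}}$-minimal components.
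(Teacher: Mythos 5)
Your preliminary steps (well-definedness via \Cref{lem:pi_perp_ei_perp_is_sturmian} and \Cref{lem:the_restriction_of_URwFC_is_UR}, continuity, surjectivity, and $\ZZ^{d-1}$-equivariance) are fine and match the paper. The genuine gap is in the main step, the constancy of $\psi(z)=g(\sigma^{\be_1}z)-g(z)$. Your plan tries to get this from soft dynamical rigidity: $\ZZ^{d-1}$-invariance and continuity of $\psi$, minimality of the $\ZZ^d$-action, equicontinuity of the target, and uniform recurrence. First, one of your sub-claims is false as a general principle: the preimage of a minimal system under an equivariant factor map need not be minimal, so the asserted minimality of the $\ZZ^{d-1}$-orbit closures $M_z$ is unjustified (though it is also unnecessary for ``$\psi$ constant on $M_z$'', which follows from invariance plus continuity alone). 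More seriously, the final step --- that the constant value of $\psi$ is the same on different $\ZZ^{d-1}$-orbit closures --- cannot follow from the ingredients you list. Consider a minimal $\ZZ^2$-system given by two commuting circle rotations $\sigma^{\be_1}(u,v)=(u+\beta,v)$, $\sigma^{\be_2}(u,v)=(u,v+\gamma)$ and take $g(u,v)=v+h(u)$ for a non-constant continuous $h$: this $g$ is continuous, surjective, $\be_2$-equivariant over the rotation by $\gamma$, the system is minimal (hence every point is uniformly recurrent), the target is equicontinuous, yet $\psi(u,v)=h(u+\beta)-h(u)$ is not constant. So any correct proof must use the hypotheses your sketch never touches at this point: the existence of $y$, indistinguishability, and the (ordered) flip condition.

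That is exactly where the paper's proof lives. It first reduces, by minimality and continuity, to proving the identity at $z=x$ or $z=y$, and shows $g(\sigma^{k\be_1}x)=g(\sigma^{k\be_1}y)$ for all $k$. Then it proves a quantitative observation (\Cref{obs:trick-to-prove-equalities-of-the-rho}): if $\pi(z)$ and $\pi(z')$ agree on arbitrarily large patterns placed at $0$ and at a shifted position $t+\be_1$, then $g(z)-g(\sigma^{-\be_1}z)=g(z')-g(\sigma^{-\be_1}z')$, using uniform continuity of $f$ and that cylinders of large patterns map to short intervals. Indistinguishability (patterns must occur intersecting the difference set $F$) confines the increments $g(\sigma^{k\be_1}x)-g(\sigma^{(k-1)\be_1}x)$ to the three values attained at levels $k\in\{1,0,-1\}$, and a delicate analysis of the patterns $H_x=\pi(x)|_S$, $H_y=\pi(y)|_S$ --- which by \Cref{cor:language-is-the-one-intersecting-F} contain exactly one occurrence of every pattern of shape $B$ --- together with a case analysis of where the combined pattern reoccurs intersecting $F\setminus\{-\be_1\}$, shows the three values coincide. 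None of this combinatorial machinery appears in your proposal, and the counterexample above shows it cannot be replaced by the rigidity argument you outline; so the proof as proposed does not go through.
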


\begin{proof}
    The map $g$ is continuous and onto since $f$ is continuous and onto.
    Also since $f$ is a factor map commuting the $\ZZ^{d-1}$-actions, 
    for every $z\in\overline{\Orb(x)}$ and $(0,r)\in\{0\}\times\ZZ^{d-1}$, we have
    \[
        g(\sigma^{(0,r)}z)
    =f(\pi\circ \sigma^{(0,r)}z\circ\ell_{0,\be_1^\perp})
    =f(\sigma^{r}(\pi\circ z\circ\ell_{0,\be_1^\perp}))
    =T^r(f(\pi\circ z\circ\ell_{0,\be_1^\perp}))
    =T^r(g(z)).
    \]
    Thus it remains to show that
    for every $z\in\overline{\Orb(x)}$ and $k\in\ZZ$, we have
    $g(\sigma^{k\be_1}z)=R_\rho^{k}(g(z))$ for some $\rho\in\RR/\ZZ$.
    Since $\overline{\Orb(x)}$ is minimal and $g$ is continuous,
    it is sufficient to prove it for $z=x$ or $z=y$.

    From \Cref {prop:d-1dim-flip-condition},
    the configurations
    $\pi\circ x\circ\ell_{0,\be_1^\perp}$ and 
    $\pi\circ y\circ\ell_{0,\be_1^\perp}\in\alfa{d-1}^{\ZZ^{d-1}}$
    are asymptotic with difference set $F\setminus\{-\be_1\}$.
    Therefore $\pi\circ x\circ\ell_{k,\be_1^\perp}=\pi\circ y\circ\ell_{k,\be_1^\perp}$
    for every $k\in\ZZ\setminus\{0\}$. So we have
    \begin{align*}
    g(\sigma^{k\be_1}x)
        &=f(\pi\circ \sigma^{k\be_1}x\circ\ell_{0,\be_1^\perp})
    =f(\pi\circ x\circ\ell_{k,\be_1^\perp})\\
        &=f(\pi\circ y\circ\ell_{k,\be_1^\perp})
    =f(\pi\circ \sigma^{k\be_1}y\circ\ell_{0,\be_1^\perp})
    =g(\sigma^{k\be_1}y)
    \end{align*}
    for every $k\in\ZZ\setminus\{0\}$.
    Moreover, for every $r\in\ZZ^{d-1}$, we have
    \begin{align*}
     f(\pi\circ x\circ\ell_{0,\be_1^\perp})
    -f(\pi\circ y\circ\ell_{0,\be_1^\perp})
        &=f(\pi\circ x\circ\ell_{0,\be_1^\perp})+T^r(0)
    -f(\pi\circ y\circ\ell_{0,\be_1^\perp})-T^r(0)\\
        &=f(\sigma^r\pi\circ x\circ\ell_{0,\be_1^\perp})
    -f(\sigma^r\pi\circ y\circ\ell_{0,\be_1^\perp})
    \end{align*}
    which goes to 0 when $\Vert r\Vert\to\infty$
    since
    $\pi\circ x\circ\ell_{0,\be_1^\perp}$
    and $\pi\circ y\circ\ell_{0,\be_1^\perp}$
    are asymptotic.
    We conclude that 
    \begin{equation}\label{eq:gkx=gky}
        g(\sigma^{k\be_1}x)=g(\sigma^{k\be_1}y)
    \end{equation}
    for every $k\in\ZZ$.

    The remaining of the proof is based on the following observation which we use several times.
\begin{observation}\label{obs:trick-to-prove-equalities-of-the-rho}
    Let $d\geq2$ be an integer.
    Let $z,z'\in\overline{\Orb(x)}$.
    If for all $m\in\NN$
    there exist two patterns $u$ and $v$ of support
    $\{0\}\times\llbracket 0,m-1\rrbracket^{d-1}$
    and a vector $t\in\{0\}\times\ZZ^{d-1}$ such that
    \[
        \pi(z),\pi(z') \in[u]\cap\sigma^{t+\be_1}[v]
    \]
    then
    \[
          g(z) -g(\sigma^{-\be_1}z)
        = g(z') -g(\sigma^{-\be_1}z').
    \]
\end{observation}

\begin{proof} %
    The domain of the factor map $f$ is compact so $f$ is uniformly continuous.
    Therefore, for all $\varepsilon>0$, there exists
    $m\in\NN$ such that 
    for all patterns $w$ of shape 
    $\llbracket -\lfloor\frac{m}{2}\rfloor,-\lfloor\frac{m}{2}\rfloor+m-1\rrbracket^{d-1}$,
    the Lebesgue measure of the interval $f([w])$ is less than $\varepsilon/2$.
    Since the Lebesgue measure of the interval
    $f(\sigma^k[w])=T^k(f([w]))$ is equal to the Lebesgue measure
    of $f([w])$ for every $k\in\ZZ^{d-1}$, we
    also have that for all patterns $w$ of shape 
    $B=\llbracket 0,m-1\rrbracket^{d-1}$,
    the Lebesgue measure of the interval $f([w])$ is less than $\varepsilon/2$.
    From the hypothesis, let $u$ and $v$ be two patterns of support
    $\{0\}\times\llbracket 0,m-1\rrbracket^{d-1}$
    and $t\in\{0\}\times\ZZ^{d-1}$ be a vector such that
    $\pi(z),\pi(z') \in[u]\cap\sigma^{t+\be_1}[v]$.
    We obtain
    \begin{align*}
        g(z) - g(\sigma^{-\be_1}z) 
        &= f(\pi\circ z\circ\ell_{0,\be_1^\perp}) 
        -  f(\pi\circ\sigma^{-\be_1}z\circ\ell_{0,\be_1^\perp}) \\
        &= f(\pi(z)\circ\ell_{0,\be_1^\perp}) 
        -  f(\sigma^{-\be_1}\pi(z)\circ\ell_{0,\be_1^\perp}) \\
        &\in f([u]\circ\ell_{0,\be_1^\perp}) - f(\sigma^{t} [v]\circ\ell_{0,\be_1^\perp})
    \end{align*}
    which is an interval in $\RR/\ZZ$ of size at most
        $\frac{\varepsilon}{2}
        +\frac{\varepsilon}{2}=\varepsilon$.
    Similarly, 
    \[
        g(z') - g(\sigma^{-\be_1}z') 
        \in f([u]\circ\ell_{0,\be_1^\perp}) - f(\sigma^{t} [v]\circ\ell_{0,\be_1^\perp}).
    \]
    Thus we have
    \[
        \left\vert
        \left(
        g(z) - g(\sigma^{-\be_1}z) 
        \right)
        -
        \left(
        g(z') - g(\sigma^{-\be_1}z') 
        \right)
        \right\vert
        \leq \varepsilon.
    \]
    Since this holds for all $\varepsilon>0$, 
    it concludes the proof of the observation.
\end{proof} %

    Our first goal is to show using \Cref{obs:trick-to-prove-equalities-of-the-rho}
    that for all $k\in\ZZ$ 
    we have
    \begin{equation}\label{eq:diff-in-3-levels}
        g(\sigma^{k\be_1}x) - g(\sigma^{(k-1)\be_1}x)
        \in\left\{ 
        g(\sigma^{\be_1}x) -g(x),
        g(x) -g(\sigma^{-\be_1}x),
        g(\sigma^{-\be_1}x) -g(\sigma^{-2\be_1}x)\right\}.
    \end{equation}
    Let $m\in\NN$.
    Let $B=\{0\}\times\llbracket 0,m-1\rrbracket^{d-1}\subset\ZZd$ be a
    $d$-dimensional box in $\ZZ^d$ of size $m$ in all directions except the direction $\be_1$.
    Let $u$ and $v$ be two patterns of support
    $\{0\}\times\llbracket 0,m-1\rrbracket^{d-1}$
    appearing in the configuration $x$ such that
    $\sigma^{k\be_1}x \in[u]\cap\sigma^{\be_1}[v]$.
    Thus
    \[
        \pi(\sigma^{k\be_1}x)
        \in[\pi(u)]\cap\sigma^{\be_1}[\pi(v)].
    \]
    The fact that the pair $(x,y)$ is indistinguishable implies that 
    the pattern $[u]\cap\sigma^{\be_1}[v]$
    of support $B\cup(B-\be_1)$ must
    appear in $x$ (and $y$) intersecting the difference set. Therefore,
    there exists $r\in\{0\}\times\ZZ^{d-1}$
    and $j\in\{-1,0,1\}$ such that
    $\sigma^{j\be_1+r}x \in[u]\cap\sigma^{\be_1}[v]$.
    Thus
    \[
        \pi(\sigma^{j\be_1+r}x)
        \in[\pi(u)]\cap\sigma^{\be_1}[\pi(v)].
    \]
    From \Cref{obs:trick-to-prove-equalities-of-the-rho} (here $t=0$),
    we obtain
    \begin{align*}
          g(\sigma^{k\be_1}x) -g(\sigma^{-\be_1}\sigma^{k\be_1}x)
        &= g(\sigma^{j\be_1+r}x) -g(\sigma^{-\be_1}\sigma^{j\be_1+r}x)\\
        &= g(\sigma^{j\be_1}x) + T^r(0) -g(\sigma^{(j-1)\be_1}x) - T^r(0)\\
        &= g(\sigma^{j\be_1}x) -g(\sigma^{(j-1)\be_1}x)
    \end{align*}
    which shows that \eqref{eq:diff-in-3-levels} holds.

    Our next goal is to show the existence 
    of some $\rho\in\RR/\ZZ$ such that
    \begin{equation}\label{eq:rho-for-3-levels}
        g(\sigma^{\be_1}x) - g(x)
        = g(x) - g(\sigma^{-\be_1}x)
        = g(\sigma^{-\be_1}x) -g(\sigma^{-2\be_1}x)
        = \rho.
    \end{equation}
    The strategy is to find patterns satisfying
    \Cref{obs:trick-to-prove-equalities-of-the-rho}.
    Let $m\in\NN$.
    Let $B=\{0\}\times\llbracket 0,m-1\rrbracket^{d-1}\subset\ZZd$ be a
    $d$-dimensional box in $\ZZ^d$ of size $m$ in all directions except the direction $\be_1$.
    Let $S\subset\{0\}\times\ZZ^{d-1}\subset\ZZd$ be the union of all
    translates of $B$ that intersect the difference set $F\setminus\{-\be_1\}$
    of the pair $(\pi(x), \pi(y))$,
    that is,
    \[
        S = \bigcup_{k\in K}
            B+k 
            \quad
            \text{ where }
            \quad
        K = \left\{k\in\ZZd \colon (B+k)\cap (F\setminus \{-\be_1\})\neq\varnothing
            \right\}.
    \]
    The restrictions of the configurations 
    $\pi(x)$ and 
    $\pi(y)$ 
    to the support $S$ have the nice property of containing their language of
    patterns of shape $B$ in the most optimal way, i.e., they contain exactly
    one occurrence of each pattern of shape $B$.
    Indeed, from \Cref {prop:d-1dim-flip-condition},
    the configurations
    $\pi\circ x\circ\ell_{0,\be_1^\perp}$ and 
    $\pi\circ y\circ\ell_{0,\be_1^\perp}\in\alfa{d-1}^{\ZZ^{d-1}}$
    satisfy the $(d-1)$-dimensional ordered flip condition. 
    Therefore, from \Cref{cor:language-is-the-one-intersecting-F}, it follows that
    the patterns 
    $H_x=\pi(x)|_S$ and 
    $H_y=\pi(y)|_S$ 
    contain exactly one occurrence of every
    pattern of shape $B$ that are in 
    $\Lcal_B(\pi(x))=\Lcal_B(\pi(y))$.

    We write $\pi(x)\in[H_x]$
    and $\pi(y)\in[H_y]$ where the cylinders are within $\overline{\Orb(\pi(x))}$.
    Since $\pi(x)_0\neq\pi(y)_0$, we have $[H_x]\cap[H_y]=\varnothing$.
    Observe also that,
    from \Cref{cor:language-is-the-one-intersecting-F}, 
    the pattern $H_x$ has only one occurrence in $\pi(x)$ 
    whose support intersects the difference set $F\setminus\{-\be_1\}$.
    More formally, if 
    $v\in F\setminus\{-\be_1\}-S$ and 
    $[H_x]\cap\sigma^v[H_x]\neq\varnothing$, then $v=0$.

    From \Cref{prop:d-1dim-flip-condition}, we also
    have $\Lcal(\pi\circ x\circ\ell_{\bzero,\be_1^\perp}) =
        \Lcal(\pi\circ y\circ\ell_{\bzero,\be_1^\perp})$.
    From \Cref{lem:pi_perp_ei_perp_is_sturmian},
    for every $\bv\in\ZZd$, we have 
    \[
        \Lcal(\pi\circ x\circ\ell_{\bv,\be_1^\perp}) \subseteq
        \Lcal(\pi\circ x\circ\ell_{\bzero,\be_1^\perp}) =
        \Lcal(\pi\circ y\circ\ell_{\bzero,\be_1^\perp}) \supseteq
        \Lcal(\pi\circ y\circ\ell_{\bv,\be_1^\perp}).
    \]
Since $x$ is uniformly recurrent,
we have that 
$\pi\circ x\circ\ell_{\bzero,\be_1^\perp}$
and
$\pi\circ y\circ\ell_{\bzero,\be_1^\perp}$ 
    are uniformly recurrent by~\Cref{lem:the_restriction_of_URwFC_is_UR}. 
    Thus 
    $\overline{\Orb(\pi\circ x\circ\ell_{\bzero,\be_1^\perp})}
    =\overline{\Orb(\pi\circ y\circ\ell_{\bzero,\be_1^\perp})}$ is a
    minimal subshift.
    We deduce 
    $\pi\circ x\circ\ell_{\bv,\be_1^\perp}\in
    \overline{\Orb(\pi\circ x\circ\ell_{\bzero,\be_1^\perp})}$,
    $\pi\circ y\circ\ell_{\bv,\be_1^\perp}\in
    \overline{\Orb(\pi\circ y\circ\ell_{\bzero,\be_1^\perp})}$
    and the equality of the languages:
    \[
        \Lcal(\pi\circ x\circ\ell_{\bv,\be_1^\perp}) =
        \Lcal(\pi\circ x\circ\ell_{\bzero,\be_1^\perp}) =
        \Lcal(\pi\circ y\circ\ell_{\bzero,\be_1^\perp}) =
        \Lcal(\pi\circ y\circ\ell_{\bv,\be_1^\perp})
    \]
    for every $\bv\in\ZZd$.

    Therefore, the pattern $H_x$
    must occur in $\pi\circ y\circ\ell_{\be_1,\be_1^\perp}$.
    Let $t\in \{0\}\times\ZZ^{d-1}$ be such that
    $\pi(y)\in[\sigma^{-\be_1-t}H_x]$.
    Since 
    $\pi\circ x\circ\ell_{\be_1,\be_1^\perp}
    =\pi\circ y\circ\ell_{\be_1,\be_1^\perp}$, we also have
    $\pi(x)\in[\sigma^{-\be_1-t}H_x]$.
	Recall that by~\Cref{prop:invariance_sliding_block_code}, we have that 
	$(\pi(x), \pi(y))$ is an indistinguishable asymptotic pair.
    Since the pattern $\pi(x)|_{S\cup(S-t-\be_1)}$ appears in $\pi(x)$ 
    intersecting the difference set $F\setminus\{-\be_1\}$, 
    it must appear in $\pi(y)$
    intersecting the difference set $F\setminus\{-\be_1\}$.
    Formally, there exists 
    $v\in F\setminus\{-\be_1\}-\left(S\cup(S-t-\be_1)\right)$ such that
    $\sigma^{-v}(\pi(x))\in[\sigma^{-\be_1-t}H_x]\cap[H_y]$.
    There are two cases to consider:
    \begin{itemize}
        \item If $v\in F\setminus\{-\be_1\}-S$, then
            $\sigma^{\be_1+t}(\pi(x))\in[H_x]\cap \sigma^v[H_x]$. Therefore $v=0$ and
            $\pi(x)=\sigma^{-v}(\pi(x))\in[H_y]$. 
            But $\pi(x)\in[H_x]$, which contradicts $[H_x]\cap[H_y]=\varnothing$.
        \item If $v\in F\setminus\{-\be_1\}-\left(S-\be_1-t\right)$, then
            $\pi(x)\in[H_x]\cap \sigma^{v-\be_1-t}[H_x]$. 
            Also $v-\be_1-t\in F\setminus\{-\be_1\}-S$, thus we must have
            $v-\be_1-t=0$. Therefore
            $\pi(x)\in[\sigma^{v}H_y]=[\sigma^{\be_1+t}H_y]$.
    \end{itemize}
    \begin{figure}[ht]
    \begin{center}
    \begin{tikzpicture}[scale=.8]
        \def\t{-1.5}
    \begin{scope}[xshift=-4.8cm]
        \node at (0.75,2) {$\pi(x)$};
        \draw (\t,1)     ellipse (15mm and 5mm) node {$\sigma^{-\be_1-t}H_x$};
        \draw[fill=black!15] (0,0)      ellipse (15mm and 5mm) node {$H_x$};
        \draw[fill=black!15] (-\t,-1)   ellipse (15mm and 5mm) node {$\sigma^{\be_1+t}H_y$};
        \draw[fill=black!15] (-2*\t,-2) ellipse (15mm and 5mm) node {$\sigma^{2\be_1+2t}Q$};
        \foreach \x in {-1}
        {
            \draw[->] (-1.5*\x-1.5,\x) -- node[below] {$t$} ++ (-1.5, 0); 
            \draw[->] (-1.5*\x-3,\x) -- node[left] {$\be_1$} ++ (0,1);
        }
    \end{scope}

    \begin{scope}[xshift=4.8cm]
        \node at (0.75,2) {$\pi(y)$};
        \draw (\t,1)     ellipse (15mm and 5mm) node {$\sigma^{-\be_1-t}H_x$};
        \draw (0,0)      ellipse (15mm and 5mm) node {$H_y$};
        \draw (-\t,-1)   ellipse (15mm and 5mm) node {$\sigma^{\be_1+t}H_y$};
        \draw (-2*\t,-2) ellipse (15mm and 5mm) node {$\sigma^{2\be_1+2t}Q$};
        \foreach \x in {-1}
        {
            \draw[->] (-1.5*\x-1.5,\x) -- node[below] {$t$} ++ (-1.5, 0); 
            \draw[->] (-1.5*\x-3,\x) -- node[left] {$\be_1$} ++ (0,1);
        }
    \end{scope}

    \foreach \y in {-2,...,2}
        \draw[dotted] (-8, \y-.5) -- (9.5,\y-.5);

    \draw (0.1, -3) -- (0.1, 2.5);
    \draw (0.2, -3) -- (0.2, 2.5);
    \draw (1.3, -3) -- (1.3, 2.5);
    \draw (1.4, -3) -- (1.4, 2.5);

    \node at (0.75, 1) {$\be_1$};
    \node at (0.75, 0) {0};
    \node at (0.75,-1) {$-\be_1$};
    \node at (0.75,-2) {$-2\be_1$};

    \end{tikzpicture}
    \end{center}
        \caption{The patterns $H_x$ and $H_y$ appearing in configurations
        $\pi(x)$ and $\pi(y)$.
        The pattern $\pi(x)|_{S\cup(S-t-\be_1)\cup(S-2t-2\be_1)}$ 
        is shown in gray background.
        Since it appears in $\pi(x)$ intersecting the difference set
        $F\setminus\{-\be_1\}$, it must appear in $\pi(y)$ intersecting the
        difference set $F\setminus\{-\be_1\}$.}
        \label{fig:Hx-Hy}
    \end{figure}
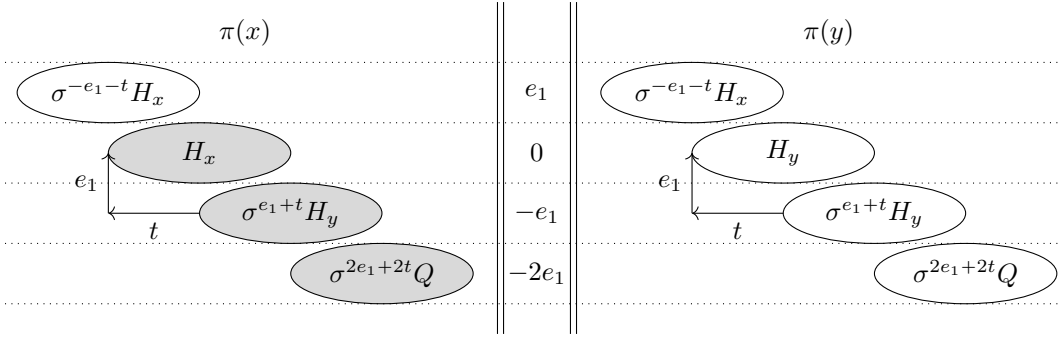
    Let $Q=(\sigma^{-2\be_1-2t}\pi(x))|_S$. 
    We have that
    \begin{align*}
        \pi(x)&\in [\sigma^{-\be_1-t}H_x] \cap [H_x] \cap [\sigma^{\be_1+t}H_y] \cap [\sigma^{2\be_1+2t}Q],\\
        \pi(y)&\in [\sigma^{-\be_1-t}H_x] \cap [H_y] \cap [\sigma^{\be_1+t}H_y] \cap [\sigma^{2\be_1+2t}Q].
    \end{align*}
    The current situation is depicted in \Cref{fig:Hx-Hy}.

    Since the pattern 
    $\pi(x)|_{S\cup(S-t-\be_1)\cup(S-2t-2\be_1)}$ appears in $\pi(x)$ 
    intersecting the difference set $F\setminus\{-\be_1\}$, 
    it must appear in $\pi(y)$
    intersecting the difference set $F\setminus\{-\be_1\}$.
    Formally, there exists 
    $v\in F\setminus\{-\be_1\}-\left(S\cup(S-t-\be_1)\cup(S-2t-2\be_1)\right)$ such that
    $\sigma^{-v}(\pi(x))\in[\sigma^{-\be_1-t}H_x]\cap[H_y]\cap[\sigma^{\be_1+t}H_y]$.
    We consider three cases, see Figure~\ref{fig:overlap_3_cases}:
    \begin{itemize}
        \item If $v\in F\setminus\{-\be_1\}-S$, then
            $\sigma^{\be_1+t}(\pi(x))\in[H_x]\cap \sigma^v[H_x]$. Therefore $v=0$ and
            $\pi(x)=\sigma^{-v}(\pi(x))\in[H_y]$. 
            But $\pi(x)\in[H_x]$, which contradicts $[H_x]\cap[H_y]=\varnothing$.
        \item If $v\in F\setminus\{-\be_1\}-\left(S-t-\be_1\right)$, then
            $\sigma^{-\be_1-t}(\pi(x))\in[H_y]\cap \sigma^{v-\be_1-t}[H_y]$. 
            Also $v-\be_1-t\in F\setminus\{-\be_1\}-S$, thus we must have
            $v-\be_1-t=0$. 
            On the one hand, we have $\sigma^{-v}\pi(x)\in[\sigma^{\be_1+t}H_y]$.
            On the other hand, we have
            $\sigma^{-v}\pi(x)=\sigma^{-\be_1-t}\pi(x)\in[\sigma^{\be_1+t}Q]$.
            We deduce the equality $Q=H_y$.
            We may now use \Cref{obs:trick-to-prove-equalities-of-the-rho}.
            Let $u=\pi(y)|_B$ be the pattern of support $B$ within $H_y$.
            We have 
            \[
                \pi(y)\in[H_y]\subset[u]
                \quad
                \text{ and }
                \quad
                \pi(y)\in[\sigma^{\be_1+t}H_y]\subset[\sigma^{\be_1+t}u].
            \]
            Also
            \[
                \sigma^{-\be_1-t}\pi(y)\in[H_y]\subset[u]
                \quad
                \text{ and }
                \quad
                \sigma^{-\be_1-t}\pi(y)\in[\sigma^{\be_1+t}Q]=[\sigma^{\be_1+t}H_y]\subset[\sigma^{\be_1+t}u].
            \]
            The pattern $u$ also appears in $H_x$
            Let $r\in F\setminus\{-\be_1\}-B$ such that $[\sigma^rH_x]\subset[u]$.
            We have
            \[
                \sigma^{\be_1+t+r}\pi(x)\in[\sigma^rH_x]\subset[u]
                \quad
                \text{ and }
                \quad
                \sigma^{\be_1+t+r}\pi(x)\in[\sigma^{\be_1+t+r}H_x]\subset[\sigma^{\be_1+t}u].
            \]
            Since the above holds for pattern $u$ of arbitrarily large size,
            from \Cref{obs:trick-to-prove-equalities-of-the-rho}, we conclude
            that
            \[
                g(\sigma^{\be_1}x) - g(x)
                = g(y) - g(\sigma^{-\be_1}y)
                = g(\sigma^{-\be_1}y) -g(\sigma^{-2\be_1}y)
                = \rho
            \]
            for some $\rho\in\RR/\ZZ$.

        \item If $v\in F\setminus\{-\be_1\}-\left(S-2t-2\be_1\right)$, then
            $\sigma^{-v+\be_1+t}(\pi(x))\in[H_x]\cap \sigma^{-v+2\be_1+2t}[H_y]$. 
            Since $v-2\be_1-2t\in F\setminus\{-\be_1\}-S$, the support 
            $S+v-2\be_1-2t$ of the
            translated pattern $\sigma^{-v+2\be_1+2t}[H_y]$ intersects the
            difference set $F\setminus\{-\be_1\}$.
            Let $\gamma\in\{0\}\times\ZZ^{d-1}$ be such that
            $B+\gamma\subset S+v-2\be_1-2t$
            and $B+\gamma\cap F\setminus\{-\be_1\}\neq\varnothing$.
            By definition of $S$, we also have $B+\gamma\subset S$.
            In other words, $u_x=\pi(x)|_{B+\gamma}$ is a subpattern of 
            $H_x$ and of $\sigma^{-v+2\be_1+2t}H_y$,
            thus satisfying 
            \[
                [H_x]\cap\sigma^{-v+2\be_1+2t}[H_y]\subset[u_x].
            \]
            Similarly, we have
            $\sigma^{-v}(\pi(x))\in[H_y]\cap \sigma^{-v+2\be_1+2t}[Q]$. 
            We obtain that $u_y=\pi(y)|_{B+\gamma}$ is a subpattern of 
            $H_y$ and of $\sigma^{-v+2\be_1+2t}Q$,
            thus satisfying 
            \[
                [H_y]\cap\sigma^{-v+2\be_1+2t}[Q]\subset[u_y].
            \]
            In summary, we have 
            \[
                \sigma^{\be_1+t}\pi(y)\in[H_x]\subset[u_x]
                \quad
                \text{ and }
                \quad
                \sigma^{\be_1+t}\pi(y)\in[\sigma^{\be_1+t}H_y]\subset[\sigma^{\be_1+t}u_y].
            \]
            Also
            \[
                \pi(x)\in[H_x]\subset[u_x]
                \quad
                \text{ and }
                \quad
                \pi(x)\in[\sigma^{\be_1+t}H_y]\subset[\sigma^{\be_1+t}u_y].
            \]
            Moreover,
            \begin{align*}
                &\sigma^{-v+\be_1+t}\pi(x)\in\sigma^{-v+\be_1+t}[\sigma^{\be_1+t}H_y]
                \subset[u_x]\\
                &\sigma^{-v+\be_1+t}\pi(x)\in\sigma^{-v+\be_1+t}[\sigma^{2\be_1+2t}Q]
                =\sigma^{\be_1+t}[\sigma^{-v+2\be_1+2t}Q]
                \subset[\sigma^{\be_1+t}u_y].
            \end{align*}
            Since the above holds for patterns $u_x$ and $u_y$ of arbitrarily large size,
            from \Cref{obs:trick-to-prove-equalities-of-the-rho}, we conclude
            that
            \[
                g(\sigma^{\be_1}y) - g(y)
                = g(x) - g(\sigma^{-\be_1}x)
                = g(\sigma^{-\be_1}x) -g(\sigma^{-2\be_1}x)
                = \rho
            \]
            for some $\rho\in\RR/\ZZ$.
    \end{itemize}
    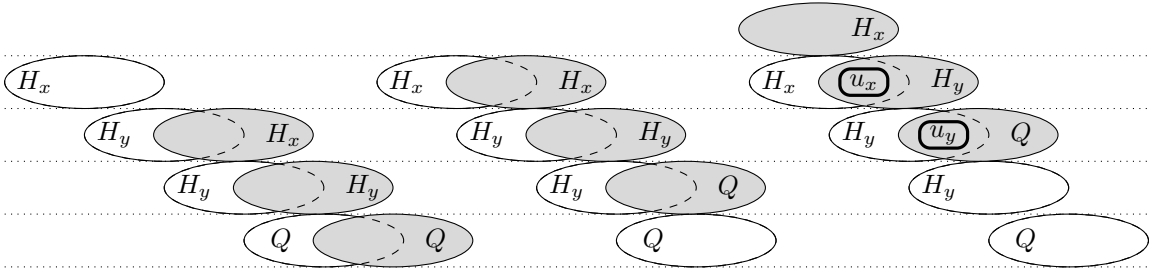
\begin{figure}[ht]
        \begin{tikzpicture}[scale=.7]
\foreach \y in {-2,...,2}
\draw[dotted] (-3, \y-.5) -- (18.5,\y-.5);

\def\t{-1.5}

\begin{scope}
    \draw (\t,1)     ellipse (15mm and 5mm) node[left=3mm] {$H_x$};
    \draw (0,0)      ellipse (15mm and 5mm) node[left=3mm] {$H_y$};
    \draw (-\t,-1)   ellipse (15mm and 5mm) node[left=3mm] {$H_y$};
    \draw (-2*\t,-2) ellipse (15mm and 5mm) node[left=3mm] {$Q$};

    \draw[xshift=13mm,fill=black!15] (0,0)      ellipse (15mm and 5mm) node[right=3mm] {$H_x$};
    \draw[xshift=13mm,fill=black!15] (-\t,-1)   ellipse (15mm and 5mm) node[right=3mm] {$H_y$};
    \draw[xshift=13mm,fill=black!15] (-2*\t,-2) ellipse (15mm and 5mm) node[right=3mm] {$Q$};

    \draw[dashed] (\t,1)     ellipse (15mm and 5mm);
    \draw[dashed] (0,0)      ellipse (15mm and 5mm);
    \draw[dashed] (-\t,-1)   ellipse (15mm and 5mm);
    \draw[dashed] (-2*\t,-2) ellipse (15mm and 5mm);
\end{scope}

\begin{scope}[xshift=7cm]
    \draw (\t,1)     ellipse (15mm and 5mm) node[left=3mm] {$H_x$};
    \draw (0,0)      ellipse (15mm and 5mm) node[left=3mm] {$H_y$};
    \draw (-\t,-1)   ellipse (15mm and 5mm) node[left=3mm] {$H_y$};
    \draw (-2*\t,-2) ellipse (15mm and 5mm) node[left=3mm] {$Q$};

    \draw[xshift=13mm,fill=black!15] (\t,1)    ellipse (15mm and 5mm) node[right=3mm] {$H_x$};
    \draw[xshift=13mm,fill=black!15] (0,0)     ellipse (15mm and 5mm) node[right=3mm] {$H_y$};
    \draw[xshift=13mm,fill=black!15] (-\t,-1)  ellipse (15mm and 5mm) node[right=3mm] {$Q$};

    \draw[dashed] (\t,1)     ellipse (15mm and 5mm);
    \draw[dashed] (0,0)      ellipse (15mm and 5mm);
    \draw[dashed] (-\t,-1)   ellipse (15mm and 5mm);
    \draw[dashed] (-2*\t,-2) ellipse (15mm and 5mm);
\end{scope}

\begin{scope}[xshift=14cm]
    \draw (\t,1)     ellipse (15mm and 5mm) node[left=3mm] {$H_x$};
    \draw (0,0)      ellipse (15mm and 5mm) node[left=3mm] {$H_y$};
    \draw (-\t,-1)   ellipse (15mm and 5mm) node[left=3mm] {$H_y$};
    \draw (-2*\t,-2) ellipse (15mm and 5mm) node[left=3mm] {$Q$};

    \draw[xshift=13mm,fill=black!15] (2*\t,2)  ellipse (15mm and 5mm) node[right=3mm] {$H_x$};
    \draw[xshift=13mm,fill=black!15] (\t,1)    ellipse (15mm and 5mm) node[right=3mm] {$H_y$};
    \draw[xshift=13mm,fill=black!15] (0,0)     ellipse (15mm and 5mm) node[right=3mm] {$Q$};

    \draw[dashed] (\t,1)     ellipse (15mm and 5mm);
    \draw[dashed] (0,0)      ellipse (15mm and 5mm);
    \draw[dashed] (-\t,-1)   ellipse (15mm and 5mm);
    \draw[dashed] (-2*\t,-2) ellipse (15mm and 5mm);

    \def\d{.25}
    \draw[very thick,rounded corners] (-1.3,1-\d)    rectangle node {$u_x$} (-.4,1+\d);
    \draw[very thick,rounded corners] (-\t-1.3,0-\d) rectangle node {$u_y$} (-\t-.4,0+\d);
\end{scope}

\end{tikzpicture}
        \caption{The pattern $\pi(x)|_{S\cup(S-t-\be_1)\cup(S-2t-2\be_1)}$, shown in gray,
                 appears in $\pi(y)$ intersecting the difference set
                 $F\setminus\{-\be_1\}$ in one of three ways.
                 To lighten the figure, we omit the shifts 
                 $\sigma^{\be_1+t}$ in the ellipses. 
                 In the first case, the nontrivial overlap of $H_y$ with itself
                 is impossible, which implies $H_x=H_y$, a contradiction.
                 In the second case, the nontrivial overlap of $H_y$ with itself
                 is impossible, which implies that $H_y=Q$.
                 In the third case, the proof uses the existence of patterns
                 $u_x$ and $u_y$.
                 }
        \label{fig:overlap_3_cases}
    \end{figure}
    Using \Cref{eq:gkx=gky}, we obtain that \Cref{eq:rho-for-3-levels} holds.
    From \Cref{eq:diff-in-3-levels} and
    \Cref{eq:rho-for-3-levels},
    we conclude that there exists $\rho\in\RR/\ZZ$ such that for all $k\in\ZZ$, we have
    \[
        g(\sigma^{k\be_1}x) = g(x) + k\rho = R_\rho^{k}(g(x)).
    \]
    Since $\overline{\Orb(x)}$ is minimal and $g$ is continuous,
    we conclude that
    for every $z\in\overline{\Orb(x)}$ and $k\in\ZZ$, we have
    $g(\sigma^{k\be_1}z)=R_\rho^{k}(g(z))$.
\end{proof}

\subsection{Proof of \Cref{thm:multidim_sturmian_characterization}}

In this subsection, we prove \Cref{thm:multidim_sturmian_characterization}.
The proof is essentially done in \Cref{prop:totally_irrational_case}
which assumes the ordered flip condition.
The proof is done by induction on the dimension using results proved in the
previous subsection.
First, we need the next lemma which is used thereafter.

\begin{lemma}\label{lem:singleton-implies-intervals}
    Let $A,B\subset\RR$ be two closed sets such that $A\cup B$ is a compact interval $I$. If $A\cap B$ is a singleton, then $A$ and $B$ are intervals.
\end{lemma}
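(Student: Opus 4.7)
The plan is to argue by contradiction: if $A$ were not an interval, a gap in $A$ would force an entire subinterval of $I$ to lie in $B$, and the two endpoints of that subinterval would then have to lie in $A\cap B$, contradicting the assumption that this intersection has a single point. The argument is symmetric in $A$ and $B$, so it suffices to prove that $A$ is an interval.

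More precisely, I would proceed as follows. Suppose $A$ is not an interval. Then there exist $a_1,a_2\in A$ and $c\in I$ with $a_1<c<a_2$ and $c\notin A$. Since $A\cup B=I$, we have $c\in B$. Because $A$ is closed, the complement $I\setminus A$ is open in $I$, so its connected component $C$ containing $c$ is an interval that is open in $I$. Since $C$ is contained in the bounded interval $(a_1,a_2)\subset I$, neither endpoint of $C$ is an endpoint of $I$; hence $C=(u,v)$ for some $a_1\le u<v\le a_2$ with $u,v\in A$ (these are the boundary points of a maximal open subinterval of $I\setminus A$).

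Now every point of $(u,v)$ lies in $I\setminus A$, hence in $B$. Since $B$ is closed, we deduce $[u,v]\subseteq B$, and in particular $u,v\in B$. Combined with $u,v\in A$, this gives $\{u,v\}\subseteq A\cap B$, contradicting the hypothesis that $A\cap B$ is a singleton (because $u<v$). Therefore $A$ must be an interval, and the same reasoning applied with the roles of $A$ and $B$ exchanged shows that $B$ is an interval.

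I do not foresee any genuine obstacle: the argument is a short point-set topology exercise, and the only subtlety is to make sure that the maximal open interval of $I\setminus A$ containing $c$ has both endpoints strictly inside $I$ (so that they belong to $A$ rather than being endpoints of $I$), which is immediate from the sandwiching $a_1<c<a_2$.
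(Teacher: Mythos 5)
Your proof is correct, and it takes a slightly different route than the paper. The paper argues directly: writing $x$ for the unique point of $A\cap B$, it sets $A'=\operatorname{Int}(I)\setminus A$ and $B'=\operatorname{Int}(I)\setminus B$, observes that these are disjoint open sets with $A'\cup B'=\operatorname{Int}(I)\setminus\{x\}$, and then splits into cases according to whether $x$ lies on the boundary or in the interior of $I$; connectedness identifies $A'$ and $B'$ with the components of $\operatorname{Int}(I)\setminus\{x\}$, which not only proves the lemma but effectively pins down $A$ and $B$ as the two closed subintervals of $I$ abutting at $x$ --- the form in which the statement is then exploited in the proof of \Cref{prop:totally_irrational_case} to conclude $g([\symb{0}])=[0,-\rho_1]$ and $g([\symb{1}])=[-\rho_1,-\alpha_2]$. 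You instead argue contrapositively: a failure of convexity of $A$ yields a maximal gap $C=(u,v)\subseteq I\setminus A$, whose closure lies in $B$ since $B$ is closed, so $u\neq v$ are two points of $A\cap B$. This avoids any case analysis on the position of the intersection point (you never use where it sits), at the price of not describing $A$ and $B$ explicitly. One harmless imprecision: your parenthetical claim that no endpoint of $C$ can be an endpoint of $I$ is not literally true (if, say, $a_1$ is the left endpoint of $I$ and the gap abuts it, then $u=a_1$ is an endpoint of $I$), but in that situation $u$ equals $a_1\in A$ anyway, and your maximality argument for $u,v\in A$ only requires $u,v\in I$, which follows from $u,v\in[a_1,a_2]\subseteq I$; so the conclusion stands in all cases.
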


\begin{proof}
    Let $x$ be the element in $A \cap B$ and let $A' = \operatorname{Int}(I) \setminus A$ and $B' = \operatorname{Int}(I)  \setminus B$. Then $A' \cup B' = \operatorname{Int}(I) \setminus \{x\}$. 
    
    If $x$ is in the boundary of $I$, we conclude that either $A'$ or $B'$ is empty, as both are open disjoint sets and $\operatorname{Int}(I)$ is connected. Therefore one of $A,B$ is equal to $I$ and the other is equal to the singleton $\{x\}$.
    
    If $x$ is in the interior of $I$, we may write $\operatorname{Int}(I) \setminus \{x\}$ as the union of two open intervals. Again as they are connected it follows that one must be $A'$ and the other $B'$, from where it follows that $A$ and $B$ are intervals.\end{proof}

    \begin{proposition}\label{prop:totally_irrational_case}
        Let $d\geq1$ be an integer.
	    Let $x,y \in\alfad^{\ZZd}$ be an indistinguishable asymptotic pair satisfying the ordered flip condition
        and assume $x$ is uniformly recurrent.
        There exists a totally irrational vector $\alpha = (\alpha_1,\dots,\alpha_d)\in [0,1)^d$ such that $ 1>\alpha_1
        >\alpha_2
        >\dots
        >\alpha_d
        >0$, and 
            $x=c_{\alpha}$, $y=c'_{\alpha}$ are the $d$-dimensional characteristic Sturmian configurations with slope $\alpha$.
	\end{proposition}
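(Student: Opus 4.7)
\medskip

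\noindent\textbf{Proof proposal.} I would prove this proposition by induction on the dimension $d$. The base case $d=1$ is precisely \Cref{thm:sturmian_characterization}, once one observes that the ordered flip condition in dimension one forces the difference set configuration $x_{-1}x_0=\symb{1}\symb{0}$ and $y_{-1}y_0=\symb{0}\symb{1}$. For the inductive step, assume the statement holds in dimension $d-1$. By \Cref{prop:d-1dim-flip-condition}, the pair $(\pi\circ x\circ\ell_{0,\be_1^\perp}, \pi\circ y\circ\ell_{0,\be_1^\perp})$ is an indistinguishable asymptotic pair in $\alfa{d-1}^{\ZZ^{d-1}}$ satisfying the $(d-1)$-dimensional ordered flip condition, and by \Cref{lem:the_restriction_of_URwFC_is_UR} its first coordinate is uniformly recurrent. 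The induction hypothesis therefore yields a totally irrational vector $(\alpha_2,\dots,\alpha_d)\in[0,1)^{d-1}$ with $1>\alpha_2>\dots>\alpha_d>0$ such that $\pi\circ x\circ\ell_{0,\be_1^\perp}=c_{(\alpha_2,\dots,\alpha_d)}$ and $\pi\circ y\circ\ell_{0,\be_1^\perp}=c'_{(\alpha_2,\dots,\alpha_d)}$.

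Next, I would invoke \Cref{lem:ddim-sturmian-factor-map} to obtain the factor map $f\colon\overline{\Orb(\pi\circ x\circ\ell_{0,\be_1^\perp})}\to\RR/\ZZ$ intertwining the $\ZZ^{d-1}$-shift with the rotation by $(\alpha_2,\dots,\alpha_d)$. Applying \Cref{prop:unique_rho} provides some $\alpha_1\in[0,1)$ and a factor map $g\colon\overline{\Orb(x)}\to\RR/\ZZ$ intertwining the $\ZZd$-shift with the rotation by $\alpha\isdef(\alpha_1,\alpha_2,\dots,\alpha_d)$. Since $\pi\circ x\circ\ell_{0,\be_1^\perp}=c_{(\alpha_2,\dots,\alpha_d)}$ is a canonical symbolic representation, we have $g(x)=f(c_{(\alpha_2,\dots,\alpha_d)})=0$, and likewise $g(y)=0$.

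The heart of the proof is then to recover the partition from the symbolic data. For each $i\in\alfad$, let $A_i\isdef g(\{z\in\overline{\Orb(x)}\colon z_0=i\})$, which is closed in $\RR/\ZZ$. The $A_i$ cover $\RR/\ZZ$, and by equivariance of $g$ together with $g(\sigma^n x)=n\cdot\alpha$, the ordered flip condition gives $0\in A_0$ and $-\alpha_i=1-\alpha_i\in A_i$ for $1\leq i\leq d$. The key claim is that consecutive $A_i$'s meet in a single point, so that \Cref{lem:singleton-implies-intervals} forces each $A_i$ to be a closed interval; combined with the prescribed boundary values $\{0,1-\alpha_1,\dots,1-\alpha_d\}$ this identifies $A_i$ with $\overline{W_i}$ after we prove the ordering $1>\alpha_1>\alpha_2>\dots>\alpha_d>0$. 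The ordering $\alpha_1>\alpha_2$ follows from the flip condition in the $\be_1$-direction: if $\alpha_1\leq\alpha_2$, the symbol $\symb{1}$ would be forced to appear at $-\be_1$ only if $1-\alpha_1\in[1-\alpha_1,1-\alpha_2)$, which fails. For the total irrationality of $\alpha$, if $\{1,\alpha_1,\dots,\alpha_d\}$ were $\QQ$-dependent, since $\{1,\alpha_2,\dots,\alpha_d\}$ is already $\QQ$-independent there would exist $\bn\in\ZZd$ with $\bn_1\neq 0$ and $\bn\cdot\alpha\in\ZZ$; then $\sigma^{\bn}x$ would have the same $g$-image as $x$, and by pulling back the partition structure through $g$ this would force $\sigma^{\bn}x=x$, contradicting the fact that $x$ has a nontrivial difference set with $y$ via the indistinguishability machinery. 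Once $\alpha$ is totally irrational and the $W_i$'s are identified, the equality $x_n=i\Leftrightarrow n\cdot\alpha\in W_i$ gives $x=c_\alpha$, and the analogous argument for $y$ using $c'_\alpha$-intervals concludes the proof.

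The main obstacle will be Step~3: rigorously showing that the sets $A_i$ are intervals that touch only at the prescribed boundary points $\{1-\alpha_i\}_{i}$. This is where the indistinguishability hypothesis must be used substantively, as opposed to just the flip condition and the factor-map machinery; the argument parallels the interval structure proof in \Cref{lem:partition_into_intervals} but run in reverse—using \Cref{cor:language-is-the-one-intersecting-F} to control how many distinct configurations in $\overline{\Orb(x)}$ can map to the same point of $\RR/\ZZ$ under $g$, and ruling out overlaps of $A_i\cap A_j$ beyond a single boundary point via the ordered flip condition applied simultaneously in all $d$ directions.
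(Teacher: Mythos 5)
Your overall strategy coincides with the paper's: induction on $d$ with base case \Cref{thm:sturmian_characterization}, reduction via \Cref{prop:d-1dim-flip-condition} and \Cref{lem:the_restriction_of_URwFC_is_UR}, the factor maps from \Cref{lem:ddim-sturmian-factor-map} and \Cref{prop:unique_rho}, the interval criterion \Cref{lem:singleton-implies-intervals}, and the same minimality argument for total irrationality. But there is a genuine gap exactly where you flag your ``main obstacle'': you never actually prove that consecutive sets $A_i$ (in the paper's notation, that $g([\symb{0}])\cap g([\symb{1}])$) meet in a single point. This is not a routine verification that can be deferred; it is the technical heart of the proposition. The paper's argument for it occupies most of the proof: one fixes $\varepsilon$ smaller than both $|\gamma-(-\rho_1)|$ and $\alpha_d$, chooses $m$ by uniform continuity of $f$, selects a pattern $p'$ on a slice $B_m$ maximizing the set of sites carrying symbols in $\{\symb{2},\dots,d\}$, uses the induction hypothesis together with \Cref{maintheorem:ddim-complexity-is-FminusS} (equivalently \Cref{cor:language-is-the-one-intersecting-F}) to get a \emph{unique} occurrence of the subpattern $p$ meeting $F\setminus\{-\be_1\}$, and then plays the three extensions $q_{\symb{0}},q_{\symb{1}},q_d$ of $p$ at the origin against \Cref{lem:exists-occ-intersecting-F} to force two points at distance $\geq\varepsilon$ into an interval of length $<\varepsilon$. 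Your proposed plan --- ``control how many distinct configurations map to the same point under $g$'' and ``run \Cref{lem:partition_into_intervals} in reverse'' --- does not obviously produce this: fibers of $g$ over boundary points genuinely contain several configurations (already $x\neq y$ with $g(x)=g(y)=0$), so fiber-cardinality control is not the right mechanism, and no concrete argument is given that rules out a second intersection point $\gamma\neq-\rho_1$.

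Two smaller points. First, your total-irrationality argument is slightly off as stated: from $\bn\cdot\alpha\in\ZZ$ you cannot conclude $\sigma^{\bn}x=x$ by ``pulling back the partition through $g$,'' because $g(x)=0$ lies on the boundary of the intervals and the fiber over $0$ is not a singleton. The paper instead evaluates $g^{-1}$ at a point $\beta$ rationally independent from $\widetilde{\alpha}$, whose orbit avoids all interval endpoints, obtains a single $\sigma^{\bn}$-fixed configuration there, and only then uses minimality to propagate periodicity to all of $\overline{\Orb(x)}$, contradicting the nonempty finite difference set. Second, your derivation of the ordering $\alpha_1>\alpha_2$ presupposes the interval structure with prescribed endpoints, which is the very thing still to be established; in the paper the ordering falls out once one identifies $g([\symb{0}])=[0,-\rho_1]$ and $g([\symb{1}])=[-\rho_1,-\alpha_2]$ inside the leftmost interval coming from dimension $d-1$.
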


\begin{proof}
    Our proof proceeds by induction on the dimension $d$.
    The base case $d=1$ is given in~\Cref{thm:sturmian_characterization}. Let $d >1$ and assume that~\Cref{prop:totally_irrational_case} holds for $d-1$. As the configurations
    $\pi\circ x\circ\ell_{0,\be_1^\perp}$ and 
    $\pi\circ y\circ\ell_{0,\be_1^\perp}\in\alfa{d-1}^{\ZZ^{d-1}}$
    satisfy the $(d-1)$-dimensional ordered flip condition
    and
    $\pi\circ x\circ\ell_{0,\be_1^\perp}$ is uniformly recurrent (\Cref{lem:the_restriction_of_URwFC_is_UR} and \Cref{prop:d-1dim-flip-condition})
    it follows by the induction hypothesis that
    $\pi\circ x\circ\ell_{0,\be_1^\perp}$ and 
    $\pi\circ y\circ\ell_{0,\be_1^\perp}$
    are $(d-1)$-dimensional characteristic Sturmian configurations
    associated to a totally irrational slope
    $(\alpha_2,\dots,\alpha_d)\in[0,1)^{d-1}$,
    satisfying 
        $ 1>\alpha_2
           >\dots
           >\alpha_d
           >0$, that is,
    \[
    \pi\circ x\circ\ell_{0,\be_1^\perp}=c_{(\alpha_2,\dots,\alpha_d)} 
    \quad \mbox{ and }\quad 
    \pi\circ y\circ\ell_{0,\be_1^\perp}=c'_{(\alpha_2,\dots,\alpha_d)}.
    \]
    Let $f$ be the factor map 
    $f\colon \overline{\Orb(\pi\circ
    x\circ\ell_{0,\be_1^\perp})}\to\RR/\ZZ$
    obtained in \Cref{lem:ddim-sturmian-factor-map}
    which commutes
    the $(d-1)$-dimensional shift on $\overline{\Orb(\pi\circ
    x\circ\ell_{0,\be_1^\perp})}$ with the $(d-1)$-dimensional rotation 
    by $(\alpha_2,\dots,\alpha_d)$ on the circle $\RR/\ZZ$.
    From \Cref{prop:unique_rho}, 
    there exist $\rho_1\in\RR/\ZZ$ and
    a topological factor map $g\colon \overline{\Orb(x)}\to\RR/\ZZ$
    that 
    commutes
    the $d$-dimensional shift 
    with the $d$-dimensional rotation 
    by $(\rho_1,\alpha_2,\dots,\alpha_d)$ on the circle $\RR/\ZZ$.

	The map $g$ is explicitly given by $g(w)=f(\pi\circ w\circ\ell_{0,\be_1^\perp})$.
    In particular, $g([\symb{0}]\cup [\symb{1}])$, $g([\symb{2}])$, $\dots$,
    $g([d])$ are consecutive intervals from left to right on the unit
    interval.
    
    Next we show that $g([\symb{0}])$ and $g([\symb{1}])$ are also intervals 
    using~\Cref{lem:singleton-implies-intervals}. As both $[0]\cap \overline{\Orb(x)}$ and $[1]\cap \overline{\Orb(x)}$ are compact and $g$ is continuous, it follows that their images are closed, thus it suffices to show that their intersection is a singleton.
    
    We have $g(x)=g(y)=0$.
    Thus $g(\sigma^{-\be_1}x)=g(\sigma^{-\be_1}y)=-\rho_1$.
    Also, $\sigma^{-\be_1}x\in[\symb{1}]$ and $\sigma^{-\be_1}y\in[\symb{0}]$.
    Therefore $-\rho_1\in g([\symb{0}])\cap g([\symb{1}])$.
    By contradiction, suppose that
    $g([\symb{0}])\cap g([\symb{1}])$ contains another element $\gamma\neq-\rho_1$.
    Therefore, there exist $w,z\in\overline{\Orb(x)}$ with $w\in[\symb{0}]$ and
    $z\in[\symb{1}]$
    such that $g(w)=g(z)=\gamma$.
    Since $g(w)=g(z)$, the configurations $w$ and $z$ are equal on many positions.
    More precisely, 
    if $v\in\ZZd$ is such that
    $g(\sigma^v(w))=\gamma + v\cdot
    (\rho_1,\alpha_2,\dots,\alpha_d)=g(\sigma^v(z))$ is in the interior of the interval 
    $g([\symb{i}])$
    for some $\symb{i}\in\{\symb{2},\ldots,\symb{d}\}$, then
    $w_v=\symb{i}=z_v$.
    In other words, the set
    \[
        V=\left\{v\in\ZZd\colon
            \gamma + v\cdot (\rho_1,\alpha_2,\dots,\alpha_d)
            \in\operatorname{Int}(g([\symb{i}]))
            \text{ for some }\symb{i}\in\{\symb{2},\ldots,\symb{d}\}\right\}
    \]
    satisfies $w|_V=z|_V$.

    Let $\varepsilon>0$ be such that 
    $\varepsilon<|\gamma-(-\rho_1)|$
    and $\varepsilon<|\alpha_d|$.
    Let $m\in\NN$ be such that the Lebesgue measure of $f([q])$ is less than
    $\varepsilon$ for every allowed pattern 
    $q:\llbracket 0, m-1 \rrbracket^{d-1}\to\{\symb{0},\dots,d-1\}$
    appearing in the configuration
    $\pi\circ x\circ\ell_{0,\be_1^\perp}$.
    Let $B_m = \{0\} \times \llbracket 0, m-1 \rrbracket^{d-1}$ and 
    $p' \in \mathcal{L}_{B_m}(x)$ be a pattern such that the number of sites 
    $S = \{ u \in B_m : p'(u) \in \{\symb{2},\dots,d\}\}$ is maximized. 
    Let $p\colon S\to\{\symb{2},\dots,d\}$ be the restriction of $p'$ to $S$. 
    From the maximality of the set $S\subset B_m$, 
    we know that for every $u\in\ZZd$ such that $\sigma^u(x)\in[p]$,
    we have $\sigma^u(x)|_{B_m\setminus S}$ is a pattern over symbols 
    $\symb{0}$ and $\symb{1}$ only.
    Thus the pattern $p$ occurs at position $u\in\ZZd$ in $x$
    if and only if
    $\pi(p')$ occurs at position $u$ in $\pi(x)$.
    Also the pattern $p\circ\ell_{0,\be_1^\perp}$ occurs at position $u\in\ZZ^{d-1}$ 
    in $x\circ\ell_{0,\be_1^\perp}$
    if and only if
    $\pi(p')\circ\ell_{0,\be_1^\perp}$ occurs at position $u$ in $\pi(x)\circ\ell_{0,\be_1^\perp}$.
    From \Cref{maintheorem:ddim-complexity-is-FminusS},
    the pattern $\pi(p')\circ\ell_{0,\be_1^\perp}$ 
    of connected support $\llbracket 0, m-1 \rrbracket^{d-1}$
    occurs in $\pi(x)\circ\ell_{0,\be_1^\perp}$
    with support intersecting the difference set of 
    the asymptotic pair
    $\left(\pi(x)\circ\ell_{0,\be_1^\perp},
    \pi(y)\circ\ell_{0,\be_1^\perp}\right)$
    in a unique position.
    Therefore, the pattern $p$ occurs in $x$
    in a unique position with support intersecting the set $F\setminus \{-\be_1\}$.
    We use this property multiple times below.

    From the maximality of $S$, we also have that $[\pi(p)]=[\pi(p')]$
    where the cylinders are taken within $\pi\left(\overline{\Orb(x)}\right)$.
    On the one hand, it is clear that $[\pi(p')]\subseteq[\pi(p')|_S]=[\pi(p)]$.
    On the other hand, suppose $c\in\overline{\Orb(x)}$ such that $\pi(c)\in[\pi(p)]$. 
    If $\pi(c_k)\neq\symb{0}$ for some $k\in B_m\setminus S$, then
    $c_n=\pi(c_k)+1\in\{\symb{2},\dots,d\}$. Thus the pattern
    $c|_{S\cup\{k\}}$ is strictly larger than the pattern $p$ and this
    contradicts the maximality of the set $S$.
    Thus $\pi(c)\in[\pi(p')]$, and hence $[\pi(p)]\subseteq[\pi(p')]$.
    Since the support of $[\pi(p')]$ is connected, we deduce 
    from \Cref{lem:partition_into_intervals} that
    $g([p])
    =f([\pi(p)]\circ\ell_{0,\be_1^\perp})
    =f([\pi(p')]\circ\ell_{0,\be_1^\perp})$ 
    is a nonempty interval whose length is at most $\varepsilon$.

    As $(\alpha_2,\dots,\alpha_d)$ is totally irrational, 
    the interval $g([p])$ has nonempty interior and
    there exists $u\in(\{0\}\times\ZZ^{d-1})\setminus S$ such that
    $g(\sigma^{-u}(z))=g(\sigma^{-u}(w))=\gamma - u\cdot (0,\alpha_2,\dots,\alpha_d)
                   \in\operatorname{Int}(g([p]))$.
                   Thus 
                   $\sigma^{-u}(w)\in[p]$
                   and
                   $\sigma^{-u}(z)\in[p]$.
                   Equivalently,
                   $w\in\sigma^u([p])$
                   and
                   $z\in\sigma^u([p])$.
    For each $\symb{i}\in\{\symb{0},\symb{1},d\}$, let
    $q_\symb{i}\colon \{0\}\cup (S-u)\to\alfad$ be the pattern defined by
    \[
    q_\symb{i}(\bn)=
		\begin{cases}
		p(\bn+\bu)   & \mbox{ if } \bn \in S-u,\\
		\symb{i} & \mbox{ if } \bn = 0.
		\end{cases}
    \]
    We have $w\in[\symb{0}]\cap\sigma^{u}([p])=[q_\symb{0}]$
    and     $z\in[\symb{1}]\cap\sigma^{u}([p])=[q_\symb{1}]$.
    Also $[d]\cap\sigma^{u}([p])=[q_d]$.

    By~\Cref{lem:exists-occ-intersecting-F}, the pattern $q_\symb{1}$ must occur in $x$ intersecting the difference set.
    Recall that $x_{-\be_1}=\symb{1}$. But $\sigma^{-\be_1}x\notin [q_\symb{1}]$
    since the opposite implies that
    $-\rho_1= g(\sigma^{-\be_1}x)\in g([q_\symb{1}])\subset g(\sigma^{u}([p]))$.
    Since $\gamma=g(w) \in g(\sigma^{u}([p]))$
    and $g(\sigma^{u}([p]))=g([p])+ u\cdot(\rho_1,\alpha_2,\dots,\alpha_d)\subset\RR/\ZZ$ 
    is an interval of length at most $\varepsilon$, we have
    $|\gamma-(-\rho_1)|<\varepsilon$, which is a contradiction.
    Therefore the pattern $q_\symb{1}$ must occur in $x$ intersecting the
    difference set in such a way that the subpattern $p$ intersects the difference set.
    Since the symbol $\symb{1}=x_{-\be_1}$ is not in the pattern $p$, 
    the pattern $q_\symb{1}$ must occur in $x$ in such a way that the
    subpattern $p$ intersects the set $F\setminus\{-\be_1\}$.

    The pattern $q_\symb{0}$ must also occur in $x$ intersecting the difference set. 
    Since the pattern $q_\symb{0}$ does not contain the symbol $\symb{1}=x_{-\be_1}$,
    the pattern $q_\symb{0}$ must occur in $x$ intersecting the set $F\setminus\{-\be_1\}$.
    As there is exactly one occurrence of the subpattern $p$ occurring in $x$
    intersecting the set $F\setminus\{-\be_1\}$,
    we must have $x\in[q_\symb{0}]\subset\sigma^{u}([p])$.
    This implies that $y\in[q_d]$. Thus the pattern $q_d$ is in
    the language of $x$ and must also appear in $x$ intersecting the difference set.
    Since the pattern $q_d$ does not contain the symbol $\symb{1}=x_{-\be_1}$,
    the pattern $q_d$ must occur in $x$ intersecting the set $F\setminus\{-\be_1\}$.
    Again, we recall that there is exactly one occurrence of the subpattern $p$
    occurring in $x$
    intersecting the set $F\setminus\{-\be_1\}$. Therefore, we must have 
    $\sigma^{-\be_d}x\in[q_d]\subset\sigma^{u}([p])$.
    In summary, we have
    $0=g(x)\in g(\sigma^{u}([p]))$
    and
    $-\alpha_d=g(\sigma^{-\be_d}x)\in g(\sigma^{u}([p]))$. 
    Since $g(\sigma^{u}([p]))$ is an interval of length at most $\varepsilon$, we have
    $|0-(-\alpha_d)|<\varepsilon$, which is a contradiction.
    Thus we conclude that $g([\symb{0}])\cap g([\symb{1}])$ is a singleton and thus~ from~\Cref{lem:singleton-implies-intervals},
    we deduce that $g([\symb{0}])$ and $g([\symb{1}])$ are intervals. More precisely, from the facts that $g(x)=0$, $g(\sigma^{-\be_2}y)=1$ and $-\rho_1 \in g([0])\cap g([1])$ we conclude that $g([0]) = [0,-\rho_1]$ and $g([1]) = [-\rho_1,-\alpha_2]$. 
    
    We now claim that the vector $\widetilde{\alpha} = (\rho_1,\alpha_2,\dots,\alpha_d)$ is totally irrational. Indeed, were it not the case, there would exist a non-zero $n \in \ZZd$ for which $n \cdot \widetilde{\alpha} =0 \bmod{1}$. 
    Let $\beta \in \RR/\ZZ$ be rationally independent with $\widetilde{\alpha}$, it follows that for any $m \in \ZZd$, the value $\beta+m\cdot \widetilde{\alpha} \in \RR/\ZZ$ does not lie in the boundary of the intervals $g([\symb{i}])$. 
    In particular $g^{-1}(\beta)$ is a singleton. 
    Indeed, let $\widetilde{w},\widetilde{z}\in g^{-1}(\beta)$.
    Then for every $m\in\ZZd$ we have 
    $g(\sigma^m(\widetilde{w}))=g(\sigma^m(\widetilde{z}))=\beta+m\cdot \widetilde{\alpha}\in
          \operatorname{Int}(g([\symb{i}]))$ for some $\symb{i}\in\{\symb{0},\dots,d\}$.
          Thus $\widetilde{w}_m=\symb{i}=\widetilde{z}_m$
          and globally we have the equality $\widetilde{w}=\widetilde{z}$.
          Since $n \cdot \widetilde{\alpha} =0 \bmod{1}$,
    then it follows that $\sigma^n(\widetilde{w}) = \widetilde{w}$. 
    By minimality of $\overline{\Orb(x)}$, it follows that every configuration $\widetilde{z}$ in $\overline{\Orb(x)}$ satisfies $\sigma^n(\widetilde{z}) = \widetilde{z}$. This is incompatible with $x$ and $y$ having a finite difference set as we would have \[ x|_F = \sigma^{kn}(x)|_F = \sigma^{kn}(y)|_F = y|_F \mbox{ for arbitrarily large } k \in \NN. \] 
    
    Hence $\widetilde{\alpha}$ is totally irrational, it follows that the orbit $\ZZd \cdot \widetilde{\alpha}$ lies in the boundary of the intervals exactly for values in $F\cdot \widetilde{\alpha}$. An inspection of the values on the difference set shows that 
    \[
    x=c_{\widetilde{\alpha}} 
    \quad \mbox{ and }\quad 
    y=c'_{\widetilde{\alpha}}.\qedhere
    \]
\end{proof}

\begin{remark}
\Cref{prop:totally_irrational_case} is proven by induction starting with the base case $d=1$
which is dealt with in~\Cref{thm:sturmian_characterization}. 
Technically it might be possible to perform the induction using as base the case of dimension $d=0$. In this way \Cref{prop:totally_irrational_case} 
would contain an independent proof of the case $d=1$ about Sturmian configurations in $\ZZ$. In the current state of our proof this would demand significant changes to the previous lemmas so we do not do it here.
\end{remark}

We now present the proof of~\Cref{thm:multidim_sturmian_characterization}.

\begin{proof}[Proof of \Cref{thm:multidim_sturmian_characterization}]
	Let $\balpha \in [0,1)^d$ be totally irrational. 
    From \Cref{lem:sturmian_is_unirec},
    the characteristic $d$-dimensional Sturmian configurations
    $c_{\balpha}$ and $c'_{\balpha}$ are uniformly recurrent.
    From \Cref{thm:ddim-sturmian-is-indistinguishable}, 
    $(c_{\balpha},c'_{\balpha})$ is a non-trivial indistinguishable asymptotic pair.
    From \Cref{prop:sturmian_is_flip}, 
    the pair $(c_{\balpha},c'_{\balpha})$ satisfies the flip condition. 

    Let $x,y\in\alfad^\ZZd$ be an indistinguishable asymptotic pair satisfying 
    the flip condition
    and assume $x$ is uniformly recurrent. Using~\Cref{lem:flip-reduces-to-ordered-flip} we obtain a permutation matrix $A \in \operatorname{GL}_d(\ZZ)$ such that $(x\circ A,y\circ A)$ is an indistinguishable asymptotic pair which satisfies the ordered flip condition. A straightforward computation shows that $x\circ A$ is uniformly recurrent. From \Cref{prop:totally_irrational_case},
        there exists a totally irrational vector $\alpha = (\alpha_1,\dots,\alpha_d)\in [0,1)^d$ such that $ 1>\alpha_1
        >\alpha_2
        >\dots
        >\alpha_d
        >0$, and 
            $x \circ A =c_{\alpha}$, $y \circ A=c'_{\alpha}$ are the $d$-dimensional characteristic Sturmian configurations with slope $\alpha$. Then $x = c_{\alpha}\circ A^{-1}$ and $y=c'_{\alpha}\circ A^{-1}$.

       Let us compute these configurations explicitly. Recall that the adjoint
       of a permutation matrix is its inverse, that is $A^T=A^{-1}$. It follows,
       using \Cref{eq:characteristic-sturmian-formula-in-intro},
       that for every $m \in \ZZd$, we have
       \begin{align*}
           x(m) = c_{\alpha}\circ (A^{-1}m) 
           & = \sum\limits_{i=1}^d \left(\lfloor\alpha_i+(A^{-1}m)\cdot\balpha\rfloor
           -\lfloor (A^{-1}m)\cdot\balpha\rfloor\right)\\
           & = \sum\limits_{i=1}^d \left(\lfloor\alpha_i+m\cdot (A\balpha)\rfloor
           -\lfloor m\cdot (A\balpha)\rfloor\right)
            = c_{A\alpha }(m).
       \end{align*} 
       Similarly, we get that $y(m) = c'_{\alpha}\circ A^{-1}(m) = c'_{A \alpha }(m)$ for every $m \in \ZZd$. Thus we obtain that $x = c_{A \alpha }$ and $y = c'_{A \alpha }$, as required.
\end{proof}

We finish this section by extending our result to~\Cref{maincorollary:affine_multidim_sturmian_characterization}. Let us briefly recall the definition of the affine flip condition.

\begin{definition}\label{def:affine-flip-condition}
	We say that an indistinguishable asymptotic pair $x,y \in \Sigma^{\ZZd}$ with difference set $\underline{F}$ satisfies the \define{affine flip condition} if:
	\begin{enumerate}
		\item there is $m \in \underline{F}$ such that 
            $(\underline{F}-m)\setminus\{0\}$ 
            is a base of $\ZZd$,
        \item the restriction $x|_{\underline{F}}$ is a bijection $\underline{F}\to\Sigma$,
        \item the map $x_n \mapsto y_n$ for all $n\in\underline{F}$ induces a
            cyclic permutation on $\Sigma$.
	\end{enumerate}
\end{definition}

Notice that the first condition of \Cref{def:affine-flip-condition} implies
that $\#\underline{F} = d+1$.

Let us also recall that~\Cref{maincorollary:affine_multidim_sturmian_characterization} states if $x,y\in\Sigma^\ZZd$ is such that $x$ is uniformly recurrent, then
the pair $(x,y)$ is an indistinguishable asymptotic pair satisfying the
affine flip condition if and only if 
there exists a bijection $\tau\colon \alfad\to\Sigma$,
there exists an invertible affine transformation $A\in \operatorname{Aff}(\ZZd)$
and there exists a totally irrational vector $\alpha\in[0,1)^d$
such that $x=\tau\circ c_{\alpha}\circ A$ and $y=\tau\circ c'_{\alpha}\circ A$.

\begin{proof}[Proof of~\Cref{maincorollary:affine_multidim_sturmian_characterization}]
	Recall first that the property that a configuration is uniformly recurrent is invariant under affine transformations of $\ZZd$ and sliding-block codes. We shall use this fact implicitly in this proof.
	
	Suppose first that $x=\tau \circ c_{\balpha} \circ A$
	and $y=\tau \circ c_{\balpha} \circ A$ for some $A \in \operatorname{Aff}(\ZZd)$ and $\tau \colon \alfad\to\Sigma$. By~\Cref{thm:multidim_sturmian_characterization}, $c_{\alpha},c'_{\alpha}$ form an indistinguishable asymptotic pair which satisfies the flip condition. By~\Cref{prop:shifted_SI} and~\Cref{prop:invariance_sliding_block_code}, we obtain that $x = \tau \circ  c_{\balpha} \circ A, y = \tau \circ c'_{\balpha} \circ A$ is an indistinguishable asymptotic pair. Let us show that they satisfy the affine flip condition. As $\tau,A$ are bijections, it is clear that if $F$ is the difference set of $x,y$, then $\underline{F} = A^{-1}(F)$. It follows that $\#\underline{F} = d+1$, that $\{ A^{-1}(n) - A^{-1}(0) : n \in \{-\be_1,\cdots,-\be_d\}\}$ is a base of $\ZZd$, that the restriction $x|_{\overline{F}}$ is a bijection, and that the map $x_n \mapsto y_n$ induces a cyclic permutation on $\Sigma$. That is, $x,y$ satisfy the affine flip condition.
	
	Conversely, if $x,y$ satisfy the affine flip condition there is $m \in \underline{F}$ such that 
            $B=(\underline{F}-m)\setminus\{0\}$ 
    is a base of $\ZZd$. Construct an integer matrix $\textbf{B} \in \operatorname{GL}_d(\ZZ)$ by putting elements of $B = \{b_1,\dots,b_d\}$ in its columns. Let $A^{-1}\in \operatorname{Aff}(\ZZd)$ be the affine transformation such that $n \mapsto m + \textbf{B}n$ and notice that it maps $F$ onto $\underline{F}$ sending $0$ to $m$. By the second and third conditions of the affine flip condition, there is a unique bijection $\tau^{-1} \colon \Sigma \to \alfad$ such that $\tau^{-1}(x_m) = 0$ and $\tau^{-1}(x_n) = \tau^{-1}(y_n)-1 \bmod{d+1}$ for every $n \in \underline{F}$. It follows directly from the choices of $A^{-1}$ and $\tau^{-1}$ that $\tau^{-1} \circ x \circ A^{-1},\tau^{-1} \circ y \circ A^{-1}$ satisfy the flip condition. Furthermore, by~\Cref{prop:shifted_SI,prop:invariance_sliding_block_code} they form an indistinguishable asymptotic pair. Hence  by~\Cref{thm:multidim_sturmian_characterization} it follows that there is a totally irrational vector $\alpha\in [0,1]^d$ such that $\tau^{-1} \circ x \circ A^{-1}= c_{\alpha}$ and $\tau^{-1} \circ y \circ A^{-1}= c'_{\alpha}$. Thus $x = \tau \circ c_{\alpha}\circ A$ and $y = \tau \circ c'_{\alpha}\circ A.$
\end{proof}

\begin{proof}[Proof of \Cref{corollary:language-vs-existence-alpha}]
It follows directly from \Cref{maintheorem:ddim-complexity-is-FminusS}
and~\Cref{thm:multidim_sturmian_characterization}.
\end{proof}

\appendix

\section{Indistinguishable pairs on countable groups}\label{sec:appendix}

As mentioned in~\Cref{sec:preliminaries}, the results in the first part of that section can be stated and proven in the context of an arbitrary countable group $\Gamma$. At this moment we do not have any interesting application in this context, but in order to avoid senseless repetition in potential future work, we provide proofs of those statements in this appendix.

Let $\Sigma$ be a finite set which we call \define{alphabet} and $\Gamma$ a countable group. An element $x \in \Sigma^{\Gamma} = \{x \colon \Gamma \to \Sigma  \}$ is called a \define{configuration}. For $g \in \Gamma$, let $x_g$ denote the value $x(g)$. The set $\Sigma^{\Gamma}$ of all configurations is endowed with the prodiscrete topology.

The (left) \define{shift action} $\Gamma \overset{\sigma}{\curvearrowright} \Sigma^{\Gamma}$ (by right multiplication) is given by the map $\sigma\colon \Gamma \times \Sigma^{\Gamma}\to \Sigma^{\Gamma}$ where
\[ \sigma^g(x)_h \isdef \sigma(g,x)_h =  x_{hg} \quad \mbox{ for every } g,h \in \Gamma, x \in \Sigma^{\Gamma}.  \]

\begin{remark}
	We may alternatively consider the left action by left multiplication given by $\sigma^g(x)_h =  x_{g^{-1}h}$ for every $g,h \in \Gamma$ and $x \in \Sigma^{\Gamma}$. Here we chose right multiplication to be consistent with the definition on $\ZZ^d$. All proofs below are also valid with this choice.
\end{remark}

Two configurations $x,y$ are \define{asymptotic} if the set $F = \{ g \in \Gamma \colon x_g \neq y_g\}$ is finite. $F$ is called the \define{difference set} of $(x,y)$. If $x=y$ we say that the asymptotic pair is \define{trivial}.

For finite $S \subset \Gamma$, an element $p \in \Sigma^S$ is called a \define{pattern} and the set $S$ is its \define{support}. Given a pattern $p \in \Sigma^S$, the \define{cylinder} centered at $p$ is $[p] = \{ x \in \Sigma^{\Gamma} \colon x|_S = p \}$. A pattern $p$ \define{appears} in $x \in \Sigma^{\Gamma}$ if there exists $g \in \Gamma$ such that $\sigma^g(x) \in [p]$. We also denote by $\occ_p(x) = \{g \in \Gamma \colon \sigma^g(x) \in [p]\}$ the set of \define{occurrences} of $p$ in $x \in \Sigma^{\Gamma}$.

For finite $S \subset \Gamma$, the \define{language with support $S$} of a configuration $x$ is the set of patterns \[\Lcal_{S}(x) = \{ p \in \Sigma^S : \mbox{ there is } g \in \Gamma \mbox{ such that } \sigma^g(x) \in [p]\}. \]
The \define{language of $x$} is the union $\Lcal(x)$ of the sets $\Lcal_S(x)$ for every finite $S \subset \Gamma$. 

\begin{definition}
	We say that two asymptotic configurations $x$ and $y$ in $\Sigma^{\Gamma}$ are \define{indistinguishable} if for every pattern $p$ we have \[ \#(\occ_p(x)\setminus \occ_p(y)) = \#(\occ_p(y)\setminus \occ_p(x)). \]
\end{definition}

For a pattern $p \in \Sigma^S$, its discrepancy in $x,y$ is given by \[ \Delta_p (x,y) \isdef \sum_{g \in S^{-1}F } \indicator{[p]}(\sigma^g(y))-\indicator{[p]}(\sigma^g(x)).\]

It is clear that the following conditions are equivalent:
	\begin{enumerate}
		\item $x$ and $y$ are indistinguishable asymptotic configurations with difference set $F$,
		\item for every pattern $p$ with finite support $S \subset \ZZd$, we have 
		\[
		\#\left(\occ_p(x)\cap S^{-1}F\right) = 
		\#\left(\occ_p(y)\cap S^{-1}F\right),
		\] 
		\item for every pattern $p$ with finite support $S \subset \ZZd$, we have $\Delta_p (x,y) = 0$.
	\end{enumerate}

\begin{proposition}\label{prop:trivialite2}
	Let $S_1 \subset S_2$ be finite subsets of $\Gamma$, and let $p \in \Sigma^{S_1}$. 
	We have \[ \Delta_p(x,y) = \sum_{q \in \Sigma^{S_2}, [q] \subset [p] } \Delta_q(x,y).\]
\end{proposition}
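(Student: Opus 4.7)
The plan is to adapt the proof of \Cref{prop:trivialite} for $\ZZ^d$ essentially verbatim, replacing Minkowski sums $F-S$ by the set $S^{-1}F$ wherever necessary. The key observation remains that the cylinder $[p]$ decomposes as the disjoint union $[p] = \bigsqcup_{q \in \Sigma^{S_2},\, [q]\subset [p]} [q]$, since specifying $p$ amounts to specifying values on $S_1$ and any $q \in \Sigma^{S_2}$ compatible with $p$ is uniquely determined by its values on $S_2 \setminus S_1$. Consequently, for any configuration $z \in \Sigma^\Gamma$, we have the pointwise identity $\indicator{[p]}(z) = \sum_{q:\,[q]\subset[p]} \indicator{[q]}(z)$.

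The one technical point that needs care is that the definition of $\Delta_p(x,y)$ sums over $g \in S_1^{-1}F$ while the definition of $\Delta_q(x,y)$ sums over $g \in S_2^{-1}F$. Since $S_1 \subset S_2$, we have $S_1^{-1}F \subset S_2^{-1}F$, so the first step is to enlarge the index set: I will show that
\[
\Delta_p(x,y) = \sum_{g \in S_2^{-1}F} \bigl(\indicator{[p]}(\sigma^g(y)) - \indicator{[p]}(\sigma^g(x))\bigr).
\]
Indeed, for any $g \in S_2^{-1}F \setminus S_1^{-1}F$, by definition $sg \notin F$ for all $s \in S_1$, so $x_{sg} = y_{sg}$, that is $\sigma^g(x)_s = \sigma^g(y)_s$ for every $s \in S_1$. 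Since $p$ is supported on $S_1$, this yields $\indicator{[p]}(\sigma^g(x)) = \indicator{[p]}(\sigma^g(y))$, so the added terms contribute zero.

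Having extended the sum, I will substitute the decomposition of $\indicator{[p]}$ and interchange the two (finite) summations:
\begin{align*}
\Delta_p(x,y)
&= \sum_{g \in S_2^{-1}F} \sum_{\substack{q \in \Sigma^{S_2} \\ [q]\subset[p]}} \bigl(\indicator{[q]}(\sigma^g(y)) - \indicator{[q]}(\sigma^g(x))\bigr) \\
&= \sum_{\substack{q \in \Sigma^{S_2} \\ [q]\subset[p]}} \sum_{g \in S_2^{-1}F} \bigl(\indicator{[q]}(\sigma^g(y)) - \indicator{[q]}(\sigma^g(x))\bigr)
= \sum_{\substack{q \in \Sigma^{S_2} \\ [q]\subset[p]}} \Delta_q(x,y),
\end{align*}
which is the desired identity.

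There is no genuine obstacle here: the argument is purely formal bookkeeping. The only thing to verify carefully is the right-action convention, namely that $\sigma^g(x)_s = x_{sg}$ means the support of $\sigma^g(x)|_{S_1}$ viewed inside $x$ is the set $S_1 g$, so that ``$\sigma^g(x)|_{S_1}$ avoids the difference set'' is precisely the condition $S_1 g \cap F = \varnothing$, i.e., $g \notin S_1^{-1}F$. Once this is fixed, the proof of \Cref{prop:trivialite} translates word for word.
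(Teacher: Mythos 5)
Your proof is correct and follows essentially the same route as the paper: decompose $[p]$ as the disjoint union of the cylinders $[q]$, enlarge the sum from $S_1^{-1}F$ to $S_2^{-1}F$, and exchange the order of summation. The only difference is that you spell out explicitly why the added terms over $S_2^{-1}F \setminus S_1^{-1}F$ vanish, a step the paper's proof leaves implicit.
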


\begin{proof}
	Notice that $[p]$ is the disjoint union of all $[q]$ where $q \in \Sigma^{S_2}$ and $[q] \subset [p]$.  It follows that for any $z \in \Sigma^{\Gamma}$ we have $ \indicator{[p]}(z) =  1$ if and only if there is a unique $q \in \Sigma^{S_2}$ such that $[q] \subset [p]$ and $\indicator{[q]}(z)=1$. Letting $F$ be the difference set of $x,y$ we obtain,
	
	\begin{align*}
		\Delta_p(x,y) & = \sum_{g \in S_1^{-1}F}\indicator{[p]}(\sigma^g(y))-\indicator{[p]}(\sigma^g(x)) \\
		& = \sum_{g \in S_2^{-1}F}\indicator{[p]}(\sigma^g(y))-\indicator{[p]}(\sigma^g(x)) \\
		& = \sum_{g \in S_2^{-1}F}  \sum_{\substack{q \in \Sigma^{S_2} \\ [q] \subset [p]} }\indicator{[q]}(\sigma^g(y))-\indicator{[q]}(\sigma^g(x)).
	\end{align*}
	Exchanging the order of the sums yields the result.\end{proof}

Let us denote the group of automorphisms of $\Gamma$ by $\operatorname{Aut}(\Gamma)$.

\begin{proposition}\label{prop:shifted_SI2}
	Let $(x,y)$ be an indistinguishable asymptotic pair, then
	\begin{enumerate}
		\item $(\sigma^g(x),\sigma^g(y))$ is an indistinguishable asymptotic pair for every $g \in \Gamma$.
		\item $(x\circ \varphi, x \circ \varphi)$ is an indistinguishable asymptotic pair for every $\varphi \in \operatorname{Aut}(\Gamma)$.
	\end{enumerate}
\end{proposition}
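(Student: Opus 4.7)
The statement is the countable-group analogue of \Cref{prop:shifted_SI} (noting that the second part presumably should read $(x\circ\varphi, y\circ\varphi)$). The strategy is to mimic the proof of \Cref{prop:shifted_SI} almost verbatim, but being careful since $\Gamma$ is non-abelian and the shift acts by right multiplication, so translates on the index side enter as $hg$ rather than $h+g$. The plan is therefore to compute the difference sets of the transformed pairs explicitly and then to perform a change of variables in the defining sum for $\Delta_p$.

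For part (1), let $g\in\Gamma$ and set $z=\sigma^g(x)$, $w=\sigma^g(y)$. Since $z_h = x_{hg}$ and $w_h = y_{hg}$, I would first observe that $z$ and $w$ agree away from the finite set $F_1 = Fg^{-1}$, which is therefore the new difference set. Next, for any pattern $p\in\Sigma^S$, I would write out $\Delta_p(z,w)$ as a sum over $h\in S^{-1}F_1 = S^{-1}Fg^{-1}$, perform the substitution $k=hg$ so that $k$ ranges over $S^{-1}F$, and use $\sigma^h(z)=\sigma^{hg}(x)=\sigma^k(x)$ (and similarly for $w$) to recognize the resulting sum as $\Delta_p(x,y)=0$.

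For part (2), let $\varphi\in\operatorname{Aut}(\Gamma)$ and set $z=x\circ\varphi$, $w=y\circ\varphi$. Then $z_h\neq w_h$ iff $\varphi(h)\in F$, so the difference set is $F_2=\varphi^{-1}(F)$, which is finite. For a pattern $p\in\Sigma^S$, the natural object to consider is the pattern $q\in\Sigma^{\varphi(S)}$ defined by $q_{\varphi(s)}=p_s$. A direct computation using $\sigma^h(z)_s = z_{sh} = x_{\varphi(s)\varphi(h)}$ gives the identification
\[
\occ_p(z) = \varphi^{-1}(\occ_q(x)) \qquad \text{and} \qquad \occ_p(w) = \varphi^{-1}(\occ_q(y)).
\]
Since $\varphi^{-1}$ is a bijection of $\Gamma$, it preserves cardinalities of set differences, so the indistinguishability of $(x,y)$ applied to $q$ transfers to indistinguishability of $(z,w)$ applied to $p$.

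The only delicate point is the non-commutativity: one must keep track of whether translates live on the left or the right of group elements, so that the substitution $k=hg$ in part (1) and the identification $\sigma^h(z)_s = x_{\varphi(s)\varphi(h)}$ in part (2) are correctly written. There is no real obstacle beyond this bookkeeping, since the proof of \Cref{prop:shifted_SI} goes through essentially unchanged once the shift action is interpreted via right multiplication.
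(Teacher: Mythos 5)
Your proposal is correct and follows essentially the same route as the paper: part (1) via the same change of variables $t=hg$ in the sum defining $\Delta_p$, and part (2) via the same auxiliary pattern $q\in\Sigma^{\varphi(S)}$ with $q_{\varphi(s)}=p_s$ and the identification $\occ_p(x\circ\varphi)=\varphi^{-1}(\occ_q(x))$. The only cosmetic difference is that the paper transfers indistinguishability through a finitely supported permutation conjugated by $\varphi$, whereas you invoke directly that the bijection $\varphi^{-1}$ preserves cardinalities of the set differences; these are equivalent.
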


\begin{proof}
	Let $F$ be the difference set of $(x,y)$. A straightforward computation shows that the difference set of $(\sigma^g(x),\sigma^g(y))$ is $F_1 = Fg^{-1}$ and the difference set of $(x\circ \varphi, x \circ \varphi)$ is $F_2 = \varphi^{-1}(F)$.
	
	Let $S \subset \Gamma$ be a finite set and $p \in \Sigma^S$. For the first claim we have
	
	\begin{align*}
		\Delta_p(\sigma^g(x),\sigma^g(y)) & = \sum_{h \in S^{-1}F_1}\indicator{[p]}(\sigma^{h}(\sigma^g(y)))-\indicator{[p]}(\sigma^h( \sigma^g(y)  ))\\
		& = \sum_{h \in S^{-1}Fg^{-1}}\indicator{[p]}(\sigma^{hg}(y))-\indicator{[p]}(\sigma^{hg}(y))\\
		& = \sum_{t \in S^{-1}F}\indicator{[p]}(\sigma^{t}(y))-\indicator{[p]}(\sigma^{t}(y)) = \Delta_p(x,y) = 0.
	\end{align*}
	Thus $ (\sigma^g(x),\sigma^g(y))$ is an indistinguishable asymptotic pair.
	
	For the second claim, let $q \in \Sigma^{\varphi(S)}$ be the pattern given by $q(\varphi(s))=p(s)$ for every $s \in S$. We note that for any $h \in \Gamma$, $\sigma^h(x)\in [q]$ if and only if $\sigma^{\varphi^{-1}(h)}(x \circ \varphi) \in [p]$. This means that $h \in \occ_q(x)$ if and only if $\varphi^{-1}(h) \in \occ_p(x \circ \varphi)$.
	
	As $(x,y)$ is an indistinguishable asymptotic pair, there is a finitely supported permutation $\pi$ of $\Gamma$ so that $\occ_q(x) = \pi(\occ_q(y))$. Then $\pi' = \varphi \circ \pi \circ \varphi^{-1}$ is a finitely supported permutation of $\Gamma$ so that $\occ_p(x\circ \varphi)= \pi'(\occ_p(y\circ \varphi))$. We conclude that $\Delta_p(x\circ\varphi,y\circ \varphi)=0$ and thus they are indistinguishable.\end{proof}

Let $\Sigma_1,\Sigma_2$ be alphabets. A map $\phi \colon \Sigma_1^{\Gamma} \to \Sigma_2^{\Gamma}$ is a \define{sliding block code} if there exists a finite set $D \subset \Gamma$ and map $\Phi \colon \Sigma_1^D \to \Sigma_2$ called the \define{block code} such that

\[ \phi(x)_{g} = \Phi( \sigma^{g}(x)|_{D})  \mbox{ for every } g \in \Gamma, x \in \Sigma_1^{\Gamma}.\]

\begin{proposition}\label{prop:invariance_sliding_block_code2}
	Let $x,y \in \Sigma_1^{\Gamma}$ be an indistinguishable asymptotic pair and $\phi \colon \Sigma_1^{\Gamma} \to \Sigma_2^{\Gamma}$ a sliding block code. The pair $\phi(x),\phi(y) \in \Sigma_2^{\Gamma}$ is also an indistinguishable asymptotic pair.
\end{proposition}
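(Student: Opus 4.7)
The plan is to adapt the proof of Proposition \ref{prop:invariance_sliding_block_code} (the $\ZZ^d$ version) essentially verbatim, being careful to translate additive Minkowski sums into one-sided multiplications since $\Gamma$ need not be abelian. Write $F$ for the difference set of $(x,y)$, and let $D\subset\Gamma$ finite and $\Phi\colon\Sigma_1^D\to\Sigma_2$ be data defining $\phi$, so that $\phi(x)_g=\Phi(\sigma^g(x)|_D)=\Phi((x_{dg})_{d\in D})$.

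First I would establish that $(\phi(x),\phi(y))$ is asymptotic. If $u\in\Gamma$ satisfies $Du\cap F=\varnothing$, i.e., $u\notin D^{-1}F$, then $\sigma^u(x)|_D=\sigma^u(y)|_D$ and consequently $\phi(x)_u=\phi(y)_u$. Since $D^{-1}F$ is finite, the difference set $F'$ of $(\phi(x),\phi(y))$ is a (finite) subset of $D^{-1}F$.

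Next, for a pattern $p\in\Sigma_2^S$ with finite support $S\subset\Gamma$, define
\[
\phi^{-1}(p)\isdef\left\{q\in\Sigma_1^{DS}\colon \Phi\bigl((q_{ds})_{d\in D}\bigr)=p_s\text{ for every }s\in S\right\},
\]
so that $\phi^{-1}([p])$ decomposes as the disjoint union $\bigsqcup_{q\in\phi^{-1}(p)}[q]$. Set $W=D^{-1}F$, so that $S^{-1}W=(DS)^{-1}F$. Observing that $u\notin(DS)^{-1}F$ forces $DSu\cap F=\varnothing$ and hence $\sigma^u(x)|_{DS}=\sigma^u(y)|_{DS}$, indistinguishability of $(x,y)$ applied to each pattern $q$ of support $DS$ gives $\#(\occ_q(x)\cap S^{-1}W)=\#(\occ_q(y)\cap S^{-1}W)$. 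Summing over $q\in\phi^{-1}(p)$ yields
\[
\#\{u\in S^{-1}W\colon \sigma^u(\phi(x))\in[p]\}=\#\{u\in S^{-1}W\colon \sigma^u(\phi(y))\in[p]\}.
\]

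Finally, I would conclude by removing the part of the count outside $S^{-1}F'$: since $F'\subseteq W$, for any $u\in S^{-1}W\setminus S^{-1}F'$ we have $Su\cap F'=\varnothing$, so $\sigma^u(\phi(x))|_S=\sigma^u(\phi(y))|_S$ and these positions contribute equally to both counts. Subtracting, $\#(\occ_p(\phi(x))\cap S^{-1}F')=\#(\occ_p(\phi(y))\cap S^{-1}F')$, which is the required indistinguishability for the pattern $p$. The only real care needed is to keep left and right multiplications straight throughout; the mild obstacle is simply checking that $S^{-1}W$ matches $(DS)^{-1}F$ so that indistinguishability of $(x,y)$ can be transferred to each preimage pattern of shape $DS$ in the right ``window'' of positions.
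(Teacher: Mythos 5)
Your proof is correct and follows essentially the same route as the paper: decompose $\phi^{-1}([p])$ into the disjoint cylinders of support $DS$, count occurrences over a window of the form $S^{-1}W$, transfer indistinguishability of $(x,y)$ patternwise, and then restrict back to the difference set of $(\phi(x),\phi(y))$. If anything, your choice $W=D^{-1}F$ (so that $S^{-1}W=(DS)^{-1}F$) and the explicit final subtraction step are slightly more careful than the paper's, which requests $W\supseteq F\cup DF$ and leaves the last reduction implicit.
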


\begin{proof}
	Let $F$ be the difference set of $x,y$ and $D \subset \Gamma$, $\Phi \colon \Sigma_1^D \to \Sigma_2$ be the set and block code which define $\phi$. If $g \notin D^{-1}F$, then $\sigma^{g}(x)|_D = \sigma^{g}(y)|_D$ and thus $\phi(x)_{g} = \phi(y)_{g}$. As $D^{-1}F$ is finite, it follows that $\phi(x),\phi(y)$ are asymptotic.
	
	Let $S\subset\Gamma$ be finite and $p\colon S\to\Sigma_2$ be a pattern.
	Let $\phi^{-1}(p) \subset (\Sigma_1)^{DS}$ be the set of patterns $q$ so that for every $s \in S$, $\Phi((q_{ds})_{d \in D}) = p_s$. It follows that $\phi^{-1}([p]) = \bigcup_{q \in \phi^{-1}(p)}[q]$.
	
	Let $W \subset \Gamma$ be a finite set which is large enough such that $W \supseteq F\cup DF$. We have,
	\begin{align*}
		\#\{g \in S^{-1}W \mid \sigma^g(\phi(x))\in[p]\}
		&= \sum_{q \in \phi^{-1}(p)}\#\{g\in S^{-1}W \mid \sigma^g(x)\in[q]\}\\
		&= \sum_{q \in \phi^{-1}(p)}\#\{g\in S^{-1}W \mid \sigma^g(y)\in[q]\}\\
		&= \#\{g\in S^{-1}W \mid \sigma^g(\phi(y))\in[p]\}.
	\end{align*}
	Taking $W$ large enough such that $W \supseteq F\cup DF$, we conclude that $(\phi(x), \phi(y))$
	is an indistinguishable asymptotic pair.
\end{proof}

	Let $(x_n,y_n)_{n \in \NN}$ be a sequence of asymptotic pairs. We say that $(x_n,y_n)_{n \in \NN}$ \define{converges in the asymptotic relation} to a pair $(x,y)$ if $(x_n)_{n \in \NN}$ converges to $x$, $(y_n)_{n \in \NN}$ converges to $y$, and there exists a finite set $F \subset \Gamma$ so that $x_n|_{\Gamma \setminus F} = y_n|_{\Gamma \setminus F}$ for all large enough $n \in \NN$. We say that $(x,y)$ is the \define{\'etale limit} of $(x_n,y_n)_{n \in \NN}$.

\begin{proposition}\label{prop:limit-of-indist-is-indist2}
	Let $(x_n,y_n)_{n \in \NN}$ be a sequence of asymptotic pairs in $\Sigma^{\Gamma}$ which converges in the asymptotic relation to $(x,y)$. If for every $n \in \NN$ we have that $(x_n,y_n)$ is indistinguishable, then $(x,y)$ is indistinguishable.
\end{proposition}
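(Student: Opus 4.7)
The plan is to follow essentially the same argument as in the proof of \Cref{prop:limit-of-indist-is-indist}, translating the additive $\ZZd$ notation into multiplicative group notation. The core observation is that the three ingredients used there --- (a) the finiteness of the set of possible positions where $\occ_p(x) \triangle \occ_p(y)$ can live, (b) prodiscrete convergence on a finite region suffices to freeze $\sigma^g(\cdot)|_S$ for those positions, and (c) indistinguishability of the approximants --- all make sense and combine in the same way over an arbitrary countable group $\Gamma$.

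Concretely, I would fix a pattern $p \in \Sigma^S$ with $S \subset \Gamma$ finite and show that the discrepancy on $p$ vanishes for $(x,y)$. Using the shift convention $\sigma^g(x)_h = x_{hg}$, the observation that for any asymptotic pair with difference set contained in some finite $F \subset \Gamma$ one has $\occ_p(x) \triangle \occ_p(y) \subseteq S^{-1}F$ carries over verbatim, so it suffices to prove the equality
\[
\#\left(\occ_p(x) \cap S^{-1}F\right) = \#\left(\occ_p(y) \cap S^{-1}F\right)
\]
for a single finite $F$ that simultaneously contains the difference set of $(x,y)$ and, eventually, those of the $(x_n,y_n)$. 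Such an $F$ is furnished directly by the first half of \Cref{def:etale} applied in the group setting: there is a finite $F \subset \Gamma$ and $N_1 \in \NN$ with $x_n|_{\Gamma \setminus F} = y_n|_{\Gamma \setminus F}$ for all $n \geq N_1$, and taking the prodiscrete limit gives $x|_{\Gamma \setminus F} = y|_{\Gamma \setminus F}$ as well.

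Next I would invoke prodiscrete convergence of $x_n \to x$ and $y_n \to y$ on the finite set $(S^{-1}F) \cdot S \subset \Gamma$ to produce $N_2 \in \NN$ such that, for every $n \geq N_2$ and every $g \in S^{-1}F$, one has $\sigma^g(x_n)|_S = \sigma^g(x)|_S$ and $\sigma^g(y_n)|_S = \sigma^g(y)|_S$. This yields the equalities
\[
\occ_p(x) \cap S^{-1}F = \occ_p(x_n) \cap S^{-1}F
\quad\text{and}\quad
\occ_p(y) \cap S^{-1}F = \occ_p(y_n) \cap S^{-1}F.
\]
For any $n \geq \max(N_1, N_2)$ the pair $(x_n, y_n)$ is indistinguishable with difference set inside $F$, so the two cardinalities on the right agree, hence so do those on the left. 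Since $p$ was arbitrary, $(x,y)$ is indistinguishable.

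I do not foresee a genuine obstacle: all sets appearing above ($F$, $S^{-1}F$, $(S^{-1}F) \cdot S$) are finite in any group, and no commutativity or abelianness of $\Gamma$ is used. The only point requiring a slight amount of care is the correct bookkeeping of left versus right multiplication coming from the convention $\sigma^g(x)_h = x_{hg}$, which forces the supports appearing in the computation to be of the form $Sg$ and hence the relevant finite index set to be $S^{-1}F$ rather than $F S^{-1}$; this is purely notational and does not affect the logical structure of the argument.
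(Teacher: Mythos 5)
Your proposal is correct and takes essentially the same route as the paper's proof: obtain a single finite $F$ and $N_1$ from convergence in the asymptotic relation, use prodiscrete convergence to freeze $\occ_p(\cdot)\cap S^{-1}F$ for $x_n,y_n$ versus $x,y$, and then transfer the indistinguishability of $(x_n,y_n)$ for large $n$. The only quibble is that with the convention $\sigma^g(x)_h = x_{hg}$ the finite region on which $x_n$ must agree with $x$ is $SS^{-1}F$ rather than $(S^{-1}F)\cdot S$ (these differ when $\Gamma$ is non-abelian), a purely notational point that does not affect the argument.
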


\begin{proof}
	Let $p \in \Sigma^S$ be a pattern. As $(x_n,y_n)_{n \in \NN}$ converges in the asymptotic relation to $(x,y)$, there exists a finite set $F \subset \Gamma$ and $N_1 \in \NN$ so that $x_n|_{\Gamma \setminus F} = y_n|_{\Gamma \setminus F}$ for every $n \geq N_1$. In particular we have that the difference sets of $(x,y)$ and $(x_n,y_n)$ for $n \geq N_1$ are contained in $F$. It suffices thus to show that \[ \# \{ \occ_p(x) \cap S^{-1}F\} = \#\{ \occ_p(y) \cap S^{-1}F\}.   \]
	As $(x_n)_{n\in \NN}$ converges to $x$ and $(y_n)_{n} \in \NN$ converges to $y$, there exists $N_2 \in \NN$ so that $x_n|_{SS^{-1}F} = x|_{SS^{-1}F}$ and $y_n|_{SS^{-1}F} = y|_{SS^{-1}F}$ for all $n \geq N_2$. This implies that $\occ_p(x) \cap S^{-1}F = \occ_p(x_n) \cap S^{-1}F$ and $\occ_p(y) \cap S^{-1}F = \occ_p(y_n) \cap S^{-1}F$ for every $n \geq N_2$.
	
	Let $N = \max\{N_1,N_2\}$ and let $n \geq N$. As $n \geq N_1$, we have that $(x_n,y_n)$ is an indistinguishable asymptotic pair whose difference set is contained in $F$, it follows that $\# \{ \occ_p(x_n) \cap S^{-1}F\} = \#\{ \occ_p(y_n) \cap S^{-1}F\}.$ As $n \geq N_2$, we obtain $\# \{ \occ_p(x) \cap S^{-1}F\} = \#\{ \occ_p(y) \cap S^{-1}F\}.$ As this argument holds for every pattern $p$, we conclude that $(x,y)$ is indistinguishable.
\end{proof}

	A configuration $x \in \Sigma^{\Gamma}$ is \define{recurrent} if for every $p \in \Lcal(x)$ we have that $\occ_p(x)$ is infinite.

\begin{proposition}\label{prop:recurrence2}
	Let $x,y \in \Sigma^{\Gamma}$ be an indistinguishable asymptotic pair. If $x$ is not recurrent, then $x,y$ lie in the same orbit.
\end{proposition}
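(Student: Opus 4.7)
The plan is to mirror the proof of Proposition~\ref{prop:recurrence} almost verbatim, with $\Gamma$ in place of $\ZZd$. Since $\Gamma$ is countable we still have an exhaustion by finite sets, and the combinatorial bookkeeping from the $\ZZd$ argument transfers without modification once the support multiplications are rewritten with the group operation (and the convention $\sigma^g(x)_h = x_{hg}$).

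First I would use the hypothesis that $x$ is not recurrent to choose a finite support $S \subset \Gamma$ and a pattern $p \in \Lcal_S(x)$ such that $\occ_p(x)$ is finite. If $x = y$ the conclusion is immediate, so I may assume the difference set $F$ of $(x,y)$ is a nonempty finite subset of $\Gamma$. By indistinguishability and the equivalent reformulation recalled just after the definition, we have
\[
\#\left(\occ_p(x)\cap S^{-1}F\right) = \#\left(\occ_p(y)\cap S^{-1}F\right),
\]
and, because $x,y$ agree outside $F$, the occurrences of $p$ in $x$ and in $y$ that lie outside $S^{-1}F$ coincide. Consequently $\occ_p(y)$ is also finite.

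Next, enumerate $\Gamma = \{g_0,g_1,\dots\}$ and set $S_n = S \cup \{g_0,\dots,g_n\}$, so that $(S_n)_{n \in \NN}$ is an increasing exhaustion of $\Gamma$ with $S_0 \supseteq S$. Let $q_n = x|_{S_n}$; then $q_n|_S = p$ and $e \in \occ_{q_n}(x)$. For $n$ large enough that $F \subset S_n$ we have $e \in S_n^{-1}F$, hence $e \in \occ_{q_n}(x) \cap S_n^{-1}F$. Applying Proposition~\ref{prop:trivialite2} (or directly using $\Delta_{q_n}(x,y) = 0$) then gives an element $u_n \in \occ_{q_n}(y) \cap S_n^{-1}F$. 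Since $q_n|_S = p$, we automatically have $u_n \in \occ_p(y)$.

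Finally, since $\occ_p(y)$ is finite, by pigeonhole some value $v \in \occ_p(y)$ is attained infinitely often among the $u_n$; along that subsequence $\sigma^v(y) \in [q_{n(k)}]$ for every $k$. Because $\bigcup_n S_n = \Gamma$, the intersection $\bigcap_n [q_n]$ is the singleton $\{x\}$, so $\sigma^v(y) = x$ and $x,y$ lie in the same orbit. The only step demanding any care is the translation of ``$\Delta_{q_n}(x,y) = 0$ forces an occurrence of $q_n$ in $y$'' to the group setting, which I addressed above by first ensuring $F \subset S_n$ so that the identity $e$ already contributes a count on the $x$-side of the indistinguishability equality; there is no real obstacle, since the entire argument is a routine porting of the $\ZZd$ proof.
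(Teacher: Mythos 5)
Your proof is correct and follows essentially the same route as the paper: get a pattern $p$ with finitely many occurrences, deduce $\occ_p(y)$ is finite, use an exhaustion $S_n\supseteq S$ with $q_n=x|_{S_n}$ and $\Delta_{q_n}(x,y)=0$ to find occurrences of $q_n$ in $y$ landing in the finite set $\occ_p(y)$, then pigeonhole and the fact that $\bigcap_n[q_n]=\{x\}$ to conclude $\sigma^v(y)=x$. Your extra care in taking $n$ large enough that $F\subset S_n$ (so the identity contributes to the count on the $x$-side) is a harmless refinement of the same argument.
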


\begin{proof}
	If $x$ is not recurrent, there is a finite $S \subset \Gamma$ and $p \in \Lcal_S(x)$ such that $\occ_p(x)$ is finite. As $\Delta_p(x,y)=0$, it follows that $\occ_p(y)$ is also finite.
	
	Let $(S_n)_{n \in \NN}$ be an increasing sequence of finite subsets of $\Gamma$ such that $S_0 = S$ and $\bigcup_{n \in \NN}S_n = \Gamma$ and let $q_n = x|_{S_n}$. As $x \in [q_n]$ and $\Delta_{q_n}(x,y)=0$, there exists $g_n \in \Gamma$ so that $\sigma^{g_n}(y) \in [q_n]$. Furthermore, as $q_n|_S = p$, it follows that $\sigma^{g_n}(y) \in [p]$ and thus $g_n \in \occ_p(y)$. As $\occ_p(y)$ is finite, there exists $h \in \occ_p(y)$ and a subsequence such that $g_{n(k)} = h$ and thus $\sigma^{h}(y) \in [q_{n(k)}]$ for every $k \in \NN$. As  $\bigcap_{n \in \NN}[q_n] = \bigcap_{k \in \NN}[q_{n(k)}] = \{x\}$ we deduce that $\sigma^{h}(y) = x$.
\end{proof}

\bibliographystyle{abbrv}
\bibliography{bibliofj}

\end{document}